\documentclass[10pt]{amsart}

\usepackage{xypic,amscd,amssymb,latexsym}
\usepackage{amsmath}	

\setlength{\parindent}{0.5em}

\usepackage[breaklinks=true]{hyperref}

\usepackage{mathrsfs}

\usepackage{pifont}

\usepackage{url}

\textwidth=14.5cm \oddsidemargin=1cm \evensidemargin=1cm

\usepackage{graphicx}

\usepackage{cancel}

\usepackage{caption}
\usepackage{subcaption}

\begin{document}
\pagenumbering{arabic}

\newtheorem{theorem}{Theorem}[section]
\newtheorem{lemma}[theorem]{Lemma}
\newtheorem{proposition}[theorem]{Proposition}
\newtheorem{corollary}[theorem]{Corollary}
\newtheorem{definition}[theorem]{Definition}
\newtheorem{remark}[theorem]{Remark}
\newtheorem{notation}[theorem]{Notation}
\newtheorem{question}[theorem]{Question}

\newcommand{\vs}[0]{\vspace{2mm}}

\newcommand{\mcal}[1]{\mathcal{#1}}
\newcommand{\ul}[1]{\underline{#1}}
\newcommand{\ulu}[2]{\underset{#2}{\underline{#1}}}
\newcommand{\ol}[1]{\overline{#1}}
\newcommand{\til}[1]{\widetilde{#1}}
\newcommand{\wh}[1]{\widehat{#1}}

\newcommand{\smallmattwo}[4]{
\left(
\begin{smallmatrix}
#1 & #2 \\
#3 & #4 \\
\end{smallmatrix}
\right)
}

\newcommand{\smallvectwo}[2]{
\left(
\begin{smallmatrix}
#1 \\
#2 \\
\end{smallmatrix}
\right)
}

\newcommand{\smallvecthree}[3]{
\left(
\begin{smallmatrix}
#1 \\
#2 \\
#3 \\
\end{smallmatrix}
\right)
}

\newcommand{\vectwo}[2]{
\left(
\begin{array}{cc}
#1 \\
#2 \\
\end{array}
\right)
}

\newcommand{\vecthree}[3]{
\ensuremath{\left(
\renewcommand{\arraystretch}{1.2}
\begin{array}{cc}
#1 \\
#2 \\
#3
\end{array}}
\right)
}

\newcommand{\mattwo}[4]{
\left(
\begin{array}{cc}
#1 & #2 \\
#3 & #4 \\
\end{array}
\right)
}

\newcommand{\matthree}[9]{
\left(
\begin{array}{ccc}
#1 & #2 & #3 \\
#4 & #5 & #6 \\
#7 & #8 & #9
\end{array}
\right)
}



\newcommand{\cone}[0]{\mbox{\ding{192}}}
\newcommand{\ctwo}[0]{\mbox{\ding{193}}}
\newcommand{\cthree}[0]{\mbox{\ding{194}}}
\newcommand{\cfour}[0]{\mbox{\ding{195}}}
\newcommand{\cfive}[0]{\mbox{\ding{196}}}
\newcommand{\csix}[0]{\mbox{\ding{197}}}
\newcommand{\cseven}[0]{\mbox{\ding{198}}}
\newcommand{\ceight}[0]{\mbox{\ding{199}}}
\newcommand{\cnine}[0]{\mbox{\ding{200}}}
\newcommand{\cten}[0]{\mbox{\ding{201}}}

\makeatletter
\newcommand\footnoteref[1]{\protected@xdef\@thefnmark{\ref{#1}}\@footnotemark}
\makeatother

\address{Department of Mathematics, Yale University, New Haven, Connecticut 06511, USA}

\address{Department of Mathematics, Ewha Womans University, 52 Ewhayeodae-gil, Seodaemun-gu, Seoul 03760, Republic of Korea}

\email[I.~B.~Frenkel]{frenkel-igor@yale.edu}

\email[H.~Kim]{hyunkyukim@ewha.ac.kr, hyunkyu87@gmail.com}

\author{Igor B. Frenkel}
\author{Hyun Kyu Kim}

\numberwithin{equation}{section}

\title[Three dimensional construction of the Virasoro-Bott group]{\resizebox{150mm}{!}{Three dimensional construction of the Virasoro-Bott group}}

\begin{abstract}
We present a three-dimensional geometric construction of the Virasoro-Bott group, which is a central extension of the group of diffeomorphisms of the circle. Our approach is analogous to the well-known construction of a central extension of the loop group by means of the Wess-Zumino topological term. In particular, the Virasoro-Bott group is realized as a quotient group of diffeomorphisms of the disc with special boundary conditions. We identify the Lie algebra corresponding to our group with the Virasoro algebra. We also show that for generalized boundary conditions the Virasoro algebra is extended to a semidirect product with the Heisenberg algebra. We discuss the relation between our construction, the Chern-Simons theory, and the three-dimensional gravity.
\end{abstract}

\maketitle

\tableofcontents

\section{Introduction}

There is a profound analogy between the structure and the representation theories of the affine Lie algebras and the Virasoro algebra, as well as of the groups corresponding to these Lie algebras. In particular, the affine Lie groups are central extensions of the groups of smooth maps ${\rm Map}(S^1,G)$ from the unit circle $S^1$ to simple compact simply-connected Lie groups $G$ with pointwise multiplication, while the Virasoro-Bott group is a central extension of the orientation-preserving diffeomorphism group ${\rm Diff}_+(S^1)$. Similarly, the corresponding Lie algebras are central extensions of the Lie algebras of smooth maps ${\rm Map}(S^1,\frak{g})$, where $\frak{g}$ is the Lie algebra of $G$, and the Lie algebra of vector fields ${\rm Vect}(S^1)$. By definition the elements of both Lie algebras and groups depend on one parameter, i.e. these Lie algebras and groups have functional dimension $1$. However, the explicit construction of the affine Lie groups, first obtained in \cite{M} based on the previous work \cite{W1}, requires a realization of $\mathrm{Map}(S^1,G)$ as a factorgroup of $\mathrm{Map}(D^2,G)$, where $D^2$ is the unit disc with the boundary $S^1$, thus lifting the functional dimension to $2$. Moreover, the factorization of the central extension of $\mathrm{Map}(D^2,G)$ by an appropriate normal subgroup can be written explicitly only if we extend trivially an element in $\mathrm{Map}(D^2,G)$ to $\mathrm{Map}(S^2,G)$ and then further to $\mathrm{Map}(B^3,G)$, where $B^3$ is the unit ball in $\mathbb{R}^3$ with the boundary $S^2$ containing the unit disc $D^2$ as a hemisphere. This brings the functional dimension of the realization of affine Lie groups to $3$.

\vs

In the present paper we will construct the central extension of the group $\mathrm{Diff}_+(S^1)$ in complete parallel to the construction of the central extension of $\mathrm{Map}(S^1,G)$ mentioned above, replacing the groups of maps by the corresponding groups of diffeomorphisms.

\vs

It is important to note that there is also a ``complexification" of the above construction of the central extension of $\mathrm{Map}(S^1,G)$ to $\mathrm{Map}(\Sigma^2,G)$, where $\Sigma^2$ is a Riemann surface \cite{EF} \cite{FK96}. This paper combined with \cite{FK96} suggests that there should be a ``complexification" of the present construction that yields a central extension of the group $\mathrm{Diff}_+(\Sigma^2)$. The corresponding central extension of the Lie algebra $\mathrm{Vect}(\Sigma^2)$ has been considered in \cite{EF}. One can combine the four groups that admit similar constructions of the central extensions in the following table.
$$
{\renewcommand{\arraystretch}{1.4} \begin{array}{|c|c|}
\hline
\mathrm{Map}(S^1,G) & \mathrm{Map}(\Sigma^2,G) \\ \hline
\mathrm{Diff}_+(S^1) & \mathrm{Diff}_+(\Sigma^2) \\ \hline
\end{array}}
$$

\vs

There is another explicit construction of the Virasoro-Bott group based on the so-called Bott-Thurston 2-cocycle \cite{B}, which does not require a use of higher functional dimensions. This might partly explain why the three dimensional functional realization of the Virasoro group has not been studied before. However for a certain natural completion of $\mathrm{Diff}_+(S^1)$ by a Sobolev $\frac{3}{2}$ norm that arises in the theory of the universal Teichm\"uller space \cite{GR}, Bott-Thurston $2$-cocycle is no longer defined. Though in this paper we are working for simplicity with smooth diffeomorphisms, we strongly believe that our construction still extends to the above completion (see e.g. \cite{HK}).

\vs

Now we explain our construction in more detail. First we recall the construction in \cite{M} (see also \cite[II.1.3]{KW09} and \cite{FK96}) of the central extension by $\mathrm{U}(1) \approx S^1$ of $\mathrm{Map}(S^1,G)$ as a quotient group. Let ${\rm Map}(D^2, G)$ be the group of $C^\infty$ maps from the open unit disc $D^2$ to $G$ whose boundary value is an element of $\mathrm{Map}(S^1,G)$, which perhaps should be required to have certain analytic conditions near the boundary $S^1$ of $D^2$ in order for the construction to completely make sense. By letting $\mathrm{Map}(D^2,G)_{S^1} := \{ g \in \mathrm{Map}(D^2,G) \, : \, g|_{S^1} = 1_G \}$, where $1_G$ means the constant function whose value is the identity element of $G$, one finds that $\mathrm{Map}(D^2,G)_{S^1}$ is a normal subgroup of $\mathrm{Map}(D^2,G)$, yielding a natural isomorphism $\mathrm{Map}(S^1,G) \cong \mathrm{Map}(D^2,G)/\mathrm{Map}(D^2,G)_{S^1}$. Define a function $\gamma : \mathrm{Map}(D^2,G) \times \mathrm{Map}(D^2,G) \to \mathbb{R}$ by the formula
$$
\gamma(g_1,g_2) = \frac{\vartheta^2}{16\pi^2} 
\int_{D^2} {\rm tr}( g_1^{-1} dg_1 \wedge d g_2 \, g_2^{-1}), \qquad \forall g_1,g_2 \in \mathrm{Map}(D^2,G),
$$
where ${\rm tr}$ is any symmetric invariant $\mathbb{R}$-bilinear form on $\frak{g}$ and $\vartheta$ is the length of the longest root of $\frak{g}$ with respect to $\mathrm{tr}$. Here, $g_1^{-1} dg_1$ and $dg_2 \, g_2^{-1}$ are $\frak{g}$-valued $1$-forms on $D^2$; we take the wedge of the differential form parts and evaluate $\mathrm{tr}$ to the $\frak{g}$-valued parts, to obtain a differential $2$-form on $D^2$. Then $\gamma$ is a normalized $\mathbb{R}$-valued group $2$-cocycle of $\mathrm{Map}(D^2,G)$, i.e. satisfies the $2$-cocycle property
$$
\gamma(g_1, g_2) + \gamma(g_1 g_2, g_3) = \gamma(g_2,g_3) + \gamma(g_1, g_2 g_3), \qquad \forall g_1,g_2,g_3 \in \mathrm{Map}(D^2,G),
$$
and satisfies the normalization $\gamma(g,g^{-1})=0$, $\forall g\in \mathrm{Map}(D^2,G)$. One can then construct a central extension $\mathrm{Map}(D^2,G)^\wedge$ of $\mathrm{Map}(D^2,G)$ by ${\rm U}(1)$, corresponding to the ${\rm U}(1)$-valued $2$-cocycle $e^{2\pi {\rm i}  \gamma}$, where ${\rm i}=\sqrt{-1}$ is a fixed square root of unity. More precisely, $\mathrm{Map}(D^2,G)^\wedge := \mathrm{Map}(D^2,G) \times {\rm U}(1)$ as a manifold, with the multiplication given by
$$
(g_1,\alpha_1) \cdot (g_2,\alpha_1) = (g_1g_2, \, \alpha_1\alpha_2 e^{2\pi {\rm i} \,\gamma(g_1,g_2)}), \qquad \forall g_1,g_2 \in \mathcal{G}, \quad \forall \alpha_1,\alpha_2 \in {\rm U}(1).
$$
Next, consider the embedding $\iota : \mathrm{Map}(D^2,G)_{S^1} \to \mathrm{Map}(D^2,G)^{\wedge}$, $h\mapsto (h, e^{2\pi {\rm i} \, \omega_0(h)})$, where the function $\omega_0 : \mathrm{Map}(D^2,G)_{S^1} \to \mathbb{R}$ is defined as follows. For $h \in \mathrm{Map}(D^2,G)_{S^1}$, extend it to an element of $\mathrm{Map}(S^2,G)$ by setting it to be the constant function with value $1_G$ on the complement of $D^2$ in $S^2$. Then extend it further to an element $\til{h}$ of $\mathrm{Map}(B^3,G)$ in \emph{any} way, which is possible at all because $\pi_2(G)=1$, and define
$$
\omega_0(h) := \frac{\vartheta^2}{48\pi^2} \int_{B^3} {\rm tr} \left(\til{h}^{-1} d\til{h} \wedge \frac{1}{2} [\til{h}^{-1} d\til{h}, \til{h}^{-1} d\til{h}]\right), \qquad \forall h \in \mathrm{Map}(D^2,G)_{S^1},
$$
where the bracket $[\, ,\, ]$ of two $\frak{g}$-valued $1$-forms means to take the wedge of the differential form parts and take the Lie bracket of the $\frak{g}$-value parts. The freedom of the choice of an extension $\til{h}$ results in the ambiguity of the number $\omega_0(h)$ up to the addition of an integer; so $e^{2\pi {\rm i} \, \omega_0(h)} \in \mathrm{U}(1)$ is well-defined. One shows by using Stokes' Theorem that 
$$
\gamma(h_1,h_2) = \omega_0(h_1h_2) - \omega_0(h_1) - \omega_0(h_2) \quad \mathrm{mod}~\mathbb{Z}, \quad \mbox{$\forall h_1,h_2\in \mathrm{Map}(D^2,G)_{S^1}$,}
$$
from which it follows that $\iota$ is a group homomorphism. It also turns out that
$$
\omega_0(h) + \gamma(g,h) + \gamma(gh, g^{-1}) = \omega_0(ghg^{-1}) \quad \mathrm{mod}~\mathbb{Z}, \quad \mbox{$\forall g\in \mathrm{Map}(D^2,G)$, \,\, $\forall h\in\mathrm{Map}(D^2,G)_{S^1}$,}
$$
which in \cite{M89} is proved by using the perfectness of $\mathrm{Map}(D^2,G)_{S^1}$, and from which it follows that the image of $\iota$ is a normal subgroup of $\mathrm{Map}(D^2,G)^\wedge$. Then one observes that the group $\mathrm{Map}(D^2,G)^\wedge/\iota(\mathrm{Map}(D^2,G)_{S^1})$ is a central extension of $\mathrm{Map}(S^1,G) \cong \mathrm{Map}(D^2,G)/\mathrm{Map}(D^2,G)_{S^1}$ by $\mathrm{U}(1)$; one thus obtains a factorgroup realization of the `basic' central extension $\mathrm{Map}(S^1,G)^\wedge$ of the `loop group' $\mathrm{Map}(S^1,G)$. 

\vs

In our case, the main group of interest is $\mathrm{Diff}_+(S^1)$, the group of all orientation-preserving $C^\infty$ self-diffeomorphisms of the unit circle $S^1$. We first realize it as the factorgroup
$$
\mathrm{Diff}_+(S^1) = \mathrm{Diff}_*(D^2) / \mathrm{Diff}_*(D^2)_{S^1},
$$
where $\mathrm{Diff}_*(D^2)$ (Def.\ref{def:our_model_of_the_disc_diffeomorphism_group}) is the group of all orientation-preserving $C^\infty$ self-diffeomorphisms of the open unit disc $D^2$ satisfying a certain natural analytic condition near the boundary $S^1$ which we call `asymptotically radial', and $\mathrm{Diff}_*(D^2)_{S^1}$ is the subgroup of all elements of $\mathrm{Diff}_*(D^2)$ whose boundary value is the identity diffeomorphism of $S^1$. One can easily show that $\mathrm{Diff}_*(D^2)_{S^1}$ is normal in $\mathrm{Diff}_*(D^2)$. For each $g\in \mathrm{Diff}_*(D^2)$, we take its Jacobian matrix to obtain a map $J_{D^2}(g) : D^2 \to {\rm GL}(2,\mathbb{R})$, and define a $\frak{gl}(2,\mathbb{R})$-valued $1$-form on $D^2$ by
$$
\theta_{D^2}(g) := J_{D^2}(g)^{-1} \, dJ_{D^2}(g),
$$
which can be viewed as the pullback of the Maurer-Cartan $1$-form ${\rm g}^{-1} d{\rm g}$ on ${\rm GL}(2,\mathbb{R})$ along the map $J_{D^2}(g)$. Such can be defined for any manifold other than $D^2$ too. Then, for any fixed constant $c_0 \in \mathbb{R}$, we check that
$$
\gamma(g_1,g_2) := 3c_0 \, \int_{D^2} {\rm tr}_{{\rm mat}_2} (\theta_{D^2}(g_1) \wedge \theta_{D^2}(g_2^{-1})), \qquad \forall g_1,g_2\in \mathrm{Diff}_*(D^2),
$$
where ${\rm tr}_{{\rm mat}_n}$ is the usual trace of $n\times n$ matrices, defines a normalized $\mathbb{R}$-valued $2$-cocycle of $\mathrm{Diff}_*(D^2)$; the issue of convergence of the integral is dealt with by the analytic property of elements of $\mathrm{Diff}_*(D^2)$, while the $2$-cocycle property is proven by using the composition formula for $\theta_{D^2}$:
$$
\theta_{D^2}(g_1g_2) = J_{D^2}(g_2)^{-1} \, [\theta_{D^2}(g_1) \circ g_2] \, J_{D^2}(g_2) + \theta_{D^2}(g_2), \qquad \forall g_1,g_2 \in \mathrm{Diff}_*(D^2),
$$
which also holds for other manifolds. This $2$-cocycle allows us to construct a central extension $\mathrm{Diff}_*(D^2)^\wedge$ of $\mathrm{Diff}_*(D^2)$ by $\mathbb{R}$:
\begin{align*}
& \mathrm{Diff}_*(D^2)^\wedge := \mathrm{Diff}_*(D^2) \times \mathbb{R} \qquad\mbox{as a manifold, with the multiplication defined as} \\
& (g_1,a_1) \cdot(g_2,a_2) = (g_1g_2, a_1+a_2+\gamma(g_1,g_2)), \qquad \forall g_1,g_2 \in \mathrm{Diff}_*(D^2), \quad \forall a_1,a_2 \in \mathbb{R}.
\end{align*}
We then construct an $\mathbb{R}$-valued $1$-cochain $\omega_0$ of $\mathrm{Diff}_*(D^2)_{S^1}$ as follows. For $h\in \mathrm{Diff}_*(D^2)_{S^1}$, extend $h$ to a self-diffeomorphism $B_h$ of $S^2 \approx \wh{\mathbb{C}}=\mathbb{C}\cup\{\infty\}$ by letting it to be the identity diffeomorphism on the complement of $D^2$ in $S^2$. Viewing $S^2$ as the boundary of the unit $3$-ball $B^3$, extend $B_h$ to a self-diffeomorphism $\til{B}_h$ of $B^3$ in {\em any} way, only making sure that it satisfies a certain analytic property near the boundary $S^2$. Then the Jacobian matrix $J_{B^3}(\til{B}_h)$ of $\til{B}_h$ is a map from $B^3$ to ${\rm GL}(3,\mathbb{R})$; just like in the two dimensional case, let $\theta_{B^3}(\til{B}_h)$ be the pullback of the Maurer-Cartan $1$-form $\mathrm{g}^{-1}d\mathrm{g}$ on ${\rm GL}(3,\mathbb{R})$ along $J_{B^3}(\til{B}_h)$. Define
$$
\omega_0(h) := c_0 \int_{B^3} {\rm tr}_{{\rm mat}_3} ( (\theta_{B^3}(\til{B}_h))^{\wedge 3} ), \qquad \forall h \in \mathrm{Diff}_*(D^2)_{S^1}.
$$
The convergence of this integral is guaranteed by the analytic property of $\til{B}_h$. We prove that $\omega_0(h)$ does not depend on the choice of an extension $\til{B}_h$ of $B_h$, being well-defined as an element of $\mathbb{R}$, not just up to an integer multiple of some nonzero constant. Then, using the composition formula of $\theta_{B^3}$ and the Stokes' Theorem, we show that the corresponding $2$-coboundary of $\omega_0$ coincides with the $2$-cocycle $\gamma$ restricted to $\mathrm{Diff}_*(D^2)_{S^1} \times \mathrm{Diff}_*(D^2)_{S^1}$, namely the following holds:
$$
\gamma(h_1,h_2) = \omega_0(h_1h_2) - \omega_0(h_1) - \omega_0(h_2), \qquad \forall h_1,h_1\in \mathrm{Diff}_*(D^2)_{S^1}.
$$
Therefore one obtains an embedding of groups
$$
\iota : \mathrm{Diff}_*(D^2)_{S^1} \hookrightarrow \mathrm{Diff}_*(D^2)^\wedge, \qquad h \mapsto (h, \omega_0(h)), \quad \forall h \in \mathrm{Diff}_*(D^2)_{S^1}.
$$
To establish the normality of the image of $\iota$ in $\mathrm{Diff}_*(D^2)^\wedge$ it suffices to show
$$
\omega_0(h) + \gamma(g,h) + \gamma(gh,g^{-1}) = \omega_0(ghg^{-1}), \qquad \forall g\in \mathrm{Diff}_*(D^2), \quad \forall h\in \mathrm{Diff}_*(D^2)_{S^1},
$$
which can be proved by a similar argument as in \cite{M89} provided that $\mathrm{Diff}_*(D^2)_{S^1}$ is perfect. We observe that the analytic condition imposed on elements of $\mathrm{Diff}_*(D^2)$ is such that the boundary trivial subgroup $\mathrm{Diff}_*(D^2)_{S^1}$ coincides with the identity component of the group of all smooth self-diffeomorphisms of the open disc $D^2$ with compact support, i.e. acting as identity outside a compact subset of $D^2$. The latter group is known to be perfect, by a theorem usually credited to William Thurston. Thus one obtains a central extension $\mathrm{Diff}_+(S^1)^\wedge$ of $\mathrm{Diff}_+(S^1)$ by $\mathbb{R}$ as the factorgroup
$$
\mathrm{Diff}_+(S^1)^\wedge = \mathrm{Diff}_*(D^2)^\wedge / \iota(\mathrm{Diff}_*(D^2)_{S^1})
$$
via three-dimensional geometry, especially using the so-called Wess-Zumino term $\omega_0(h)$. By `exponentiating' the cocycles and cochains, one can also turn this into a central extension of $\mathrm{Diff}_+(S^1)$ by $\mathrm{U}(1)$, which is usually called the \emph{Virasoro-Bott group}. Our construction of the Virasoro-Bott group can be summarized by the following diagram:
$$
\xymatrix@R-2mm{
& & 1 \ar[d] & 1 \ar[d] & \\
1 \ar[r] & {\rm Diff}_*(D^2)_{S^1} \ar[d]^{=} \ar[r] & {\rm Diff}_*(D^2) \ar[d] \ar[r] & {\rm Diff}_+(S^1) \ar[d] \ar[r] & 1 \\
1 \ar[r] & {\rm Diff}_*(D^2)_{S^1} \ar[r] & {\rm Diff}_*(D^2)^\wedge \ar[d] \ar[r] & {\rm Diff}_+(S^1)^\wedge = {\rm Diff}_*(D^2)^\wedge / {\rm Diff}_*(D^2)_{S^1} \ar[d] \ar[r] & 1 \\
&  & \mathbb{R} \ar[d] & \mathbb{R} \ar[d] & \\
&  & 1 & 1 &
}
$$
By `differentiating' one gets a central extension by $\mathbb{R}$ of $\mathrm{Vect}(S^1)$, the Lie algebra of smooth vector fields on $S^1$, which is the Lie algebra corresponding to $\mathrm{Diff}_+(S^1)$. A complexified version of this central extension of $\mathrm{Vect}(S^1)$ is what is referred to as the \textrm{Virasoro algebra}. Additionally, we note that one can compare our construction of the Virasoro-Bott group with the construction in [Bi] of the Bott-Thurston cocycle which implicitly uses an extension to two dimensions but not three.

\vs

The three dimensional functional realization of the central extension of $\mathrm{Diff}_+(S^1)$ allows various modifications that do not have analogues for $\mathrm{Map}(S^1,G)$. In particular, we can consider a more general Lie algebra $\mathrm{GVect}(D^2)$ by suspending the tangent condition at $S^1$, then the Lie subalgebra ${\rm Vect}(D^2)_{S^1}$ is still an ideal and the corresponding factor algebra is isomorphic to the semidirect product
$$
\mathrm{Vect}(S^1) \ltimes \mathrm{Map}(S^1,\mathbb{R}).
$$

\vs

The explicit geometric realization of the central extensions of $\mathrm{Map}(S^1,G)$ and $\mathrm{Diff}_+(S^1)$ suggests a construction of the regular representations and their irreducible components as a generalization of the classical Peter-Weyl and Borel-Weil constructions in the case of the compact group $G$. The geometric construction of the regular representation of $\mathrm{Map}(S^1,G)$ is known in the physics literature as WZW model and it provides a basic example of two dimensional conformal field theory. Much less is understood about the geometric construction of the regular representation of $\mathrm{Diff}_+(S^1)$, but the corresponding two-dimensional conformal field theory is constructed by algebraic methods. It is an interesting problem to construct an analogue of WZW model using our geometric approach and determine the geometric meaning of the special central charges.

\vs

In the physics literature there is also a geometric construction of spaces of intertwining operators for the tensor products of the irreducible representations of $\mathrm{Map}(S^1,G)$ known as the CSW model and it provides a basic example of three dimensional topological field theory. It is based on the following functional
\begin{align*}
  S(A) = \frac{k}{4\pi} \int_{M^3} \mathrm{tr}(A\wedge dA + \frac{2}{3} A \wedge A \wedge A),
\end{align*}
where $A$ is a $\frak{g}$-valued connection $1$-form on a three dimensional manifold $M^3$, possibly with a boundary. Note that for a connection $A = g^{-1} dg$ induced by $g\in \mathrm{Map}(M^3,G)$ this functional is reduced to the topological term in the construction of the central extension of $\mathrm{Map}(S^1,G)$. By considering the manifolds with boundaries and corners it is even possible to extract in CSW theory an explicit information about the individual irreducible representations of $\mathrm{Map}(S^1,G)$ \cite{EMSS}. 

\vs

The analogy between the geometric constructions of ${\rm Map}(S^1,G)$ and ${\rm Diff}_+(S^1)$ suggests that there is a version of the CSW theory for the latter group. The natural candidate for the functional $S(A)$ is the original Chern-Simons functional with $A$ being now the Christoffel connection and the trace is taken in the space ${\rm End}(\mathbb{R}^3)$. Again when the metrics induced from a trivial metric by a diffeomorphism, e.g. $g \in {\rm Diff}_+(B^3)$, the functional is reduced to our topological term. These geometric constructions have direct relation to the three dimensional gravity (see e.g. \cite{W2}, \cite{ABDGPR}). It is interesting to note that for the basic Chern-Simons theory with $k=1$ in $S(A)$, the central charge of the Virasoro algebra is $c=24$ and the partition function of the pure three dimensional gravity must be the modular invariant $J(q)=q^{-1} + 0 + 196884 + ...$ (cf. the discussion in \cite{W2} and also \cite{ABDGPR}). This suggests that the Monstrous moonshine is somehow hidden behind the three dimensional construction of the Virasoro-Bott group and related structures. The additional indication comes from the framings of the Chern-Simons theory which form a cyclic group of order $24$.  ``Sine qua non" of all framings in one formation might explain the unexpectedly large partition function of the pure three dimensional gravity. It is natural to conjecture that further understanding of the three dimensional construction of the Virasoro-Bott group and related structures will eventually reveal the true meaning of the present mysteries.

\vs

{\bf Comments.} The present paper was mostly written by 2014. The results of the paper were announced in the talk of the first author under the title ``Geometric constructions of central extensions of certain infinite dimensional groups" in General Mathematics Seminar of the St Petrsburg Division of Steklov Institute of Mathematics, RAS (Russian Academy of Sciences), for which a video is available in the internet.

\vs

{\bf Acknowledgments.} I.F. thanks Leon Takhtajan and Vlad Sergiescu for discussions. H.K. thanks Sang-hyun (Sam) Kim, Kathryn Mann and Sam Nariman for discussions.

\section{General construction of a central extension group as a factor group}
\label{sec:general_construction_of_a_central_extension_group}

Let $\mathbf{G}$ be a (possibly infinite-dimensional) real Lie group presented as a factor group
\begin{align}
\label{eq:G_as_factor}
\mathbf{G} \cong \mathcal{G}/\mathcal{H},
\end{align}
for some connected real Lie group $\mathcal{G}$ and its closed normal connected subgroup $\mathcal{H}$. A motivation to consider such a factor presentation is that sometimes the quotient may be easier to study or is more natural than $\mathbf{G}$ in a certain sense. Suppose we have a {\em $\mathbb{R}$-valued group $2$-cocycle} $\gamma$ of $\mathcal{G}$, that is, a smooth function
$$
\gamma : \mathcal{G} \times \mathcal{G} \to \mathbb{R}
$$
satisfying the {\em $2$-cocycle property}
\begin{align}
\label{eq:gamma_2-cocycle_property}
\gamma(g_1, g_2) + \gamma(g_1 g_2, g_3)
= \gamma(g_2, g_3) + \gamma(g_1, g_2 g_3), \qquad \forall g_1,g_2,g_3\in\mathcal{G}.
\end{align}
We also assume the normalization condition
\begin{align}
\label{eq:gamma_normalization}
\gamma(g,g^{-1})= 0, \qquad \forall g\in \mathcal{G}.
\end{align}
Then we define a central extension $\wh{\mathcal{G}}$ of $\mathcal{G}$ by $\mathbb{R}$, 
which is 
$$
\wh{\mathcal{G}} := \mathcal{G} \times \mathbb{R} = \{ (g,\alpha) \, | \, g\in \mathcal{G}, ~ \alpha \in \mathbb{R} \}
$$
as a manifold and whose multiplication is defined by
\begin{align}
\label{eq:central_extension_multiplication}
(g_1, \alpha_1) \cdot (g_2, \alpha_2) = (g_1 g_2, \, \alpha_1 +\alpha_2+ \gamma(g_1, g_2) ), \qquad \forall g_1, g_2 \in \mathcal{G}, \quad \forall\alpha_1, \alpha_2 \in \mathbb{R},
\end{align}
One can easily show using \eqref{eq:gamma_2-cocycle_property} that \eqref{eq:central_extension_multiplication} is indeed a well-defined group multiplication.

\vs

On the other hand, we consider an {\em $\mathbb{R}$-valued trivial $2$-cocycle} $\gamma_0$ of the subgroup $\mathcal{H}$, that is, a smooth function $\gamma_0 : \mathcal{H} \times \mathcal{H} \to \mathbb{R}$ given by
\begin{align}
\label{eq:trivial_cocycle}
\gamma_0(g_1,g_2) = \omega_0(g_1g_2) - \omega_0(g_1) - \omega_0(g_2), \qquad \forall g_1, g_2 \in \mathcal{H},
\end{align}
for some smooth function $\omega_0 : \mathcal{H} \to \mathbb{R}$, i.e. a {\em $1$-cochain} of $\mathcal{H}$. We say that $\gamma_0$ is the {\em $2$-coboundary} of $\omega_0$.
\begin{remark}
Although the `coboundary' $d\omega_0$ of a $1$-cochain $\omega_0(g)$ is usually thought of as being given by the formula $(d\omega_0)(g,h) = \omega_0(h) - \omega_0(gh) + \omega_0(g)$, it does not harm to use  \eqref{eq:trivial_cocycle} for our purposes.
\end{remark}

In the present paper, we shall require that the restriction of $\gamma$ to $\mathcal{H} \times \mathcal{H}$ coincides with $\gamma_0$:
\begin{align}
\label{eq:gamma_equals_gamma_0}
\gamma(g,h) = \gamma_0(g,h), \qquad \forall g, h \in \mathcal{H}.
\end{align}
Furthermore, we assume that the group $\mathcal{H}$ is perfect, i.e. it equals its commutator subgroup:
\begin{align}
\label{eq:perfect}
[\mathcal{H}, \mathcal{H}] = \mathcal{H}.
\end{align}

\vs

Under these conditions, we now wish to embed $\mathcal{H}$ as a normal subgroup of $\wh{\mathcal{G}}$, via the following embedding $\iota : \mathcal{H} \hookrightarrow \wh{\mathcal{G}}$
\begin{align}
\label{eq:embedding_of_H}
\iota : \mathcal{H}\ni h \longmapsto (h,\, \omega_0(h)) \in \wh{\mathcal{G}}.
\end{align}
Then we will be able to construct a central extension $\wh{\mathbf{G}}$ of $\mathbf{G} = \mathcal{G}/\mathcal{H}$ by $\mathbb{R}$ in the form of the quotient $\wh{\mathcal{G}}/\iota(\mathcal{H})$ (see Rem.\ref{rem:extension_by_U1} for an extension by ${\rm U}(1)$).

\begin{proposition}
\label{prop:embedding_group_homomorphism}
The map $\iota : \mathcal{H} \hookrightarrow \wh{\mathcal{G}}$ in \eqref{eq:embedding_of_H} is a group homomorphism.
\end{proposition}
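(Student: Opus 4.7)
The plan is a direct unwinding of the definitions, using the two structural hypotheses already set up in the section: the compatibility \eqref{eq:gamma_equals_gamma_0} between $\gamma$ and the coboundary $\gamma_0$, and the defining formula \eqref{eq:trivial_cocycle} for $\gamma_0$ in terms of $\omega_0$. No analytic input, no use of perfectness \eqref{eq:perfect}, and no three-dimensional geometry is needed here; those enter only in the later step about normality of $\iota(\mathcal{H})$.

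Concretely, I would start from two elements $h_1,h_2\in\mathcal{H}$ and compute the product $\iota(h_1)\cdot\iota(h_2)$ in $\wh{\mathcal{G}}$ using the multiplication rule \eqref{eq:central_extension_multiplication}. This gives
$$
\iota(h_1)\cdot\iota(h_2) = (h_1,\omega_0(h_1))\cdot(h_2,\omega_0(h_2)) = \bigl(h_1h_2,\;\omega_0(h_1)+\omega_0(h_2)+\gamma(h_1,h_2)\bigr).
$$
Since $h_1,h_2\in\mathcal{H}$, hypothesis \eqref{eq:gamma_equals_gamma_0} lets me replace $\gamma(h_1,h_2)$ by $\gamma_0(h_1,h_2)$, and then \eqref{eq:trivial_cocycle} expands $\gamma_0(h_1,h_2) = \omega_0(h_1h_2)-\omega_0(h_1)-\omega_0(h_2)$. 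Substituting, the terms $\omega_0(h_1)$ and $\omega_0(h_2)$ cancel and the second coordinate collapses to $\omega_0(h_1h_2)$, so the product equals $(h_1h_2,\omega_0(h_1h_2)) = \iota(h_1h_2)$, which is exactly what is required for $\iota$ to be a homomorphism.

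For completeness I would also note that $\iota$ sends the identity to the identity: applying the normalization \eqref{eq:gamma_normalization} with $g=1$ gives $\gamma(1,1)=0$, so the identity of $\wh{\mathcal{G}}$ is $(1,0)$; and applying \eqref{eq:trivial_cocycle} with $g_1=g_2=1$ together with $\gamma_0(1,1)=\gamma(1,1)=0$ yields $\omega_0(1)=0$, hence $\iota(1)=(1,0)$. Thus $\iota$ is an algebraic group homomorphism, and smoothness of $\iota$ follows from smoothness of $\omega_0$ and of the product map of $\wh{\mathcal{G}}$.

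There is essentially no obstacle in this statement; it is a bookkeeping verification, and the only subtlety worth flagging is that the injectivity of $\iota$ (giving it the status of an embedding, as advertised in \eqref{eq:embedding_of_H}) is automatic from projection to the first factor $\mathcal{G}\times\mathbb{R}\to\mathcal{G}$, independently of any property of $\omega_0$.
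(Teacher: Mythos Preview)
Your proposal is correct and follows essentially the same approach as the paper: both unwind the multiplication \eqref{eq:central_extension_multiplication} and invoke \eqref{eq:gamma_equals_gamma_0} together with \eqref{eq:trivial_cocycle} to collapse the second coordinate to $\omega_0(h_1h_2)$, then check $\omega_0(\mathrm{id})=0$ separately (the paper isolates this as Lem.~\ref{lem:small}). The only cosmetic difference is the direction of the computation.
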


\begin{proposition}
\label{prop:embedding_normality}
The image of $\mathcal{H}$ under the map $\iota$ in \eqref{eq:embedding_of_H} is normal in $\wh{\mathcal{G}}$.
\end{proposition}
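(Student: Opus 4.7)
The plan is to reformulate the normality statement as the equality of two abstract group homomorphisms $\mathcal{H} \to \wh{\mathcal{G}}$, and then kill their difference using the perfectness hypothesis \eqref{eq:perfect}. Fix any $(g,\alpha) \in \wh{\mathcal{G}}$ and consider the inner automorphism $\phi := \mathrm{Ad}_{(g,\alpha)}$ of $\wh{\mathcal{G}}$. Because the $\mathbb{R}$-factor is central by construction, $\phi$ is independent of $\alpha$; and because $\mathcal{H}$ is normal in $\mathcal{G}$ (implicit in \eqref{eq:G_as_factor}), the conjugation $c_g : x \mapsto g x g^{-1}$ restricts to a map $\mathcal{H} \to \mathcal{H}$. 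Writing $\pi : \wh{\mathcal{G}} \to \mathcal{G}$ for the projection, the proposition is equivalent to the identity of maps
\begin{align*}
A \ := \ \phi \circ \iota \ = \ \iota \circ c_g|_{\mathcal{H}} \ =: \ B \, : \, \mathcal{H} \to \wh{\mathcal{G}},
\end{align*}
since this says precisely that $(g,\alpha)\iota(h)(g,\alpha)^{-1} = \iota(ghg^{-1}) \in \iota(\mathcal{H})$.

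The key observation is that $A$ and $B$ are both group homomorphisms---$A$ because $\phi$ and $\iota$ (by Proposition \ref{prop:embedding_group_homomorphism}) are, and $B$ because $c_g$ and $\iota$ are---and they satisfy $\pi \circ A = \pi \circ B = c_g|_{\mathcal{H}}$. I would therefore introduce a function $\delta : \mathcal{H} \to \mathbb{R}$ by the rule $A(h) = B(h) \cdot (e, \delta(h))$, which makes sense because $A(h) B(h)^{-1}$ lies in $\ker \pi = \{e\} \times \mathbb{R}$. A short check, using only the centrality of the $\mathbb{R}$-factor to move $(e, \delta(h_1))$ past $B(h_2)$, will yield the additivity
\begin{align*}
\delta(h_1 h_2) = \delta(h_1) + \delta(h_2), \qquad \forall h_1, h_2 \in \mathcal{H},
\end{align*}
so $\delta$ is a homomorphism from $\mathcal{H}$ to the abelian group $(\mathbb{R},+)$.

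At this point the perfectness hypothesis \eqref{eq:perfect} carries the argument home: any homomorphism to an abelian group kills commutators, and $\mathcal{H} = [\mathcal{H},\mathcal{H}]$ then forces $\delta \equiv 0$, hence $A = B$. Unwinding $A = B$ via the explicit product \eqref{eq:central_extension_multiplication}, together with $(g,\alpha)^{-1} = (g^{-1}, -\alpha)$ (which follows from the normalization \eqref{eq:gamma_normalization}), is precisely equivalent to the cocycle identity
\begin{align*}
\omega_0(h) + \gamma(g, h) + \gamma(gh, g^{-1}) = \omega_0(ghg^{-1}), \qquad \forall g \in \mathcal{G}, \ \forall h \in \mathcal{H},
\end{align*}
highlighted in the introduction; no further calculation is needed.

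The main conceptual obstacle, for someone not given the guidance of the introduction, is recognizing that perfectness is the tool to use. A direct attack through the $2$-cocycle identity \eqref{eq:gamma_2-cocycle_property} plus \eqref{eq:gamma_equals_gamma_0} does not seem to close, because the variable $g$ ranges over all of $\mathcal{G}$ and there is no $\omega_0(g)$ available to produce the required compensating coboundary terms. The homomorphism-and-perfectness trick bypasses this asymmetry entirely, reducing the whole assertion to the purely algebraic fact that $\mathrm{Hom}(\mathcal{H}, \mathbb{R}) = 0$ whenever $\mathcal{H}$ is perfect.
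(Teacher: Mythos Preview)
Your proof is correct and follows the same overall strategy as the paper: reduce normality to showing that a certain function $\mathcal{H} \to \mathbb{R}$ (the paper's $\Delta_g$, your $\delta$) is a group homomorphism, then kill it with perfectness. The one noteworthy difference is in how additivity is established: the paper verifies $\Delta_g(h_1 h_2) = \Delta_g(h_1) + \Delta_g(h_2)$ by a direct computation invoking the $2$-cocycle identity \eqref{eq:gamma_2-cocycle_property} five separate times (equations \eqref{eq:normality_proof_5}--\eqref{eq:normality_proof_9}), whereas you observe that $A = \phi \circ \iota$ and $B = \iota \circ c_g$ are each compositions of homomorphisms, so the additivity of $\delta$ falls out immediately from the centrality of the $\mathbb{R}$-factor. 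Your route is cleaner and makes the structural reason for additivity transparent; the paper's explicit calculation, on the other hand, exhibits concretely how the cocycle identity is doing the work.
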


In order to prove Prop.\ref{prop:embedding_group_homomorphism}, we first need a small lemma:
\begin{lemma}
\label{lem:small}
The normalization condition \eqref{eq:gamma_normalization} implies
\begin{align}
\label{eq:small_lemma}
\gamma(\mathrm{id}_\mathcal{G},g) = \gamma(g,\mathrm{id}_\mathcal{G}) = \omega_0(\mathrm{id}_\mathcal{G})= 0, \qquad \forall g\in \mathcal{G}.
\end{align}
\end{lemma}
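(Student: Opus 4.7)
The plan is to extract all three equalities from the 2-cocycle property \eqref{eq:gamma_2-cocycle_property} by plugging in carefully chosen arguments, using the normalization \eqref{eq:gamma_normalization} and the coboundary identity \eqref{eq:trivial_cocycle} together with the compatibility \eqref{eq:gamma_equals_gamma_0}.

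First I would note that the normalization $\gamma(g,g^{-1})=0$ applied to $g=\mathrm{id}_\mathcal{G}$ immediately gives $\gamma(\mathrm{id}_\mathcal{G}, \mathrm{id}_\mathcal{G}) = 0$. Next, in the cocycle identity I would set $g_1 = g_2 = \mathrm{id}_\mathcal{G}$ and let $g_3 = g$ be arbitrary; the identity reduces to
$$
\gamma(\mathrm{id}_\mathcal{G}, \mathrm{id}_\mathcal{G}) + \gamma(\mathrm{id}_\mathcal{G}, g) = \gamma(\mathrm{id}_\mathcal{G}, g) + \gamma(\mathrm{id}_\mathcal{G}, g),
$$
which gives $\gamma(\mathrm{id}_\mathcal{G}, g) = \gamma(\mathrm{id}_\mathcal{G}, \mathrm{id}_\mathcal{G}) = 0$. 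Symmetrically, setting $g_2 = g_3 = \mathrm{id}_\mathcal{G}$ and $g_1 = g$ yields $\gamma(g, \mathrm{id}_\mathcal{G}) = \gamma(\mathrm{id}_\mathcal{G}, \mathrm{id}_\mathcal{G}) = 0$.

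For the statement about $\omega_0$, I would use \eqref{eq:gamma_equals_gamma_0} to conclude that the just-established $\gamma(\mathrm{id}_\mathcal{G}, \mathrm{id}_\mathcal{G}) = 0$ forces $\gamma_0(\mathrm{id}_\mathcal{G}, \mathrm{id}_\mathcal{G}) = 0$, and then apply the coboundary formula \eqref{eq:trivial_cocycle} with $g_1 = g_2 = \mathrm{id}_\mathcal{G}$:
$$
0 = \gamma_0(\mathrm{id}_\mathcal{G}, \mathrm{id}_\mathcal{G}) = \omega_0(\mathrm{id}_\mathcal{G}) - \omega_0(\mathrm{id}_\mathcal{G}) - \omega_0(\mathrm{id}_\mathcal{G}) = -\omega_0(\mathrm{id}_\mathcal{G}),
$$
so $\omega_0(\mathrm{id}_\mathcal{G}) = 0$.

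There is essentially no obstacle here: each equality comes from a single, directly verifiable substitution, and the only subtle point worth flagging is that \eqref{eq:gamma_equals_gamma_0} applies to $\gamma_0$ rather than only to $\omega_0$, so the identification $\gamma_0(\mathrm{id}_\mathcal{G}, \mathrm{id}_\mathcal{G}) = \gamma(\mathrm{id}_\mathcal{G}, \mathrm{id}_\mathcal{G})$ must be invoked before one can read off $\omega_0(\mathrm{id}_\mathcal{G})$.
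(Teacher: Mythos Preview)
Your proof is correct and follows essentially the same approach as the paper: obtain $\gamma(\mathrm{id}_\mathcal{G},\mathrm{id}_\mathcal{G})=0$ from the normalization, use the cocycle identity with two of the three arguments set to $\mathrm{id}_\mathcal{G}$ to get $\gamma(\mathrm{id}_\mathcal{G},g)=\gamma(g,\mathrm{id}_\mathcal{G})=0$, and then substitute $g_1=\mathrm{id}_\mathcal{G}$ into \eqref{eq:trivial_cocycle} to conclude $\omega_0(\mathrm{id}_\mathcal{G})=0$. The only cosmetic difference is that you explicitly route through \eqref{eq:gamma_equals_gamma_0} before applying \eqref{eq:trivial_cocycle}, whereas the paper leaves that step implicit.
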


\begin{proof}[Proof of Lem.\ref{lem:small}]
By putting $g=\mathrm{id}_\mathcal{G}$ in \eqref{eq:gamma_normalization} we get $\gamma(\mathrm{id}_\mathcal{G},\mathrm{id}_\mathcal{G})=0$. It is easy to show that \eqref{eq:gamma_2-cocycle_property} implies $\gamma(\mathrm{id}_\mathcal{G},\mathrm{id}_\mathcal{G}) = \gamma(\mathrm{id}_\mathcal{G},g) = \gamma(g,\mathrm{id}_\mathcal{G})$ for all $g\in \mathcal{G}$; hence $\gamma(\mathrm{id}_\mathcal{G},g)=\gamma(g,\mathrm{id}_\mathcal{G})=0$. Now we get $\omega_0(\mathrm{id}_\mathcal{G})=0$ by putting $g_1=\mathrm{id}_\mathcal{G}$ in \eqref{eq:trivial_cocycle}.
\end{proof}

\begin{proof}[Proof of Prop.\ref{prop:embedding_group_homomorphism}]
For $h_1,h_2 \in \mathcal{H}$ we have
\begin{align*}
(h_1h_2, \omega_0(h_1h_2)) \stackrel{\eqref{eq:trivial_cocycle}, \eqref{eq:gamma_equals_gamma_0}}{=} (h_1 h_2, \omega_0(h_1)+\omega_0(h_2) + \gamma(h_1, h_2))
\stackrel{\eqref{eq:central_extension_multiplication}}{=} (h_1, \omega_0(h_1)) \cdot (h_2, \omega_0(h_2)),
\end{align*}
so the map \eqref{eq:embedding_of_H} preserves the product. Since $\omega_0(\mathrm{id}_\mathcal{G})=0$ (from \eqref{eq:small_lemma}), this map sends $\mathrm{id}_\mathcal{H} \in \mathcal{H}$ to $\mathrm{id}_{\wh{\mathcal{G}}} \in \wh{\mathcal{G}}$, hence it is indeed a group homomorphism.
\end{proof}

\begin{proof}[Proof of Prop.\ref{prop:embedding_normality}] This proof mimicks that for the loop group case, where $\mathcal{G}$ is $C^\infty(\ol{D^2}, G)$, the group of smooth maps from the closed unit disc $\ol{D^2}$ to a simple Lie group $G$ with pointwise multiplication, and $\mathcal{H}$ is $C^\infty(\ol{D^2},G)_{S^1}$, the subgroup of all elements whose restriction to $S^1$ is a constant map having the identity of $G$ as its value there. The argument for the loop groups is in the work of Mickelsson \cite[Lemma 4.2.7]{M89}.

\vs

First, let $h \in \mathcal{H}$ and $(g,\, \alpha) \in \wh{\mathcal{G}}$. Since $\gamma(g,g^{-1})=0$ (see \eqref{eq:gamma_normalization}), it's easy to see that $(g^{-1}, -\alpha)$ is the inverse of $(g,\, \alpha)$. We will now show that  $(g,\, \alpha) (h, \omega_0(h)) (g^{-1}, - \alpha)$ is in the image of the map \eqref{eq:embedding_of_H}. Note that
\begin{align}
\nonumber
(g,\, \alpha) (h,\omega_0(h)) (g, -\alpha)
& = (gh, \alpha \, + \, \omega_0(h) + \gamma(g,h))(g^{-1}, - \alpha) \\
\label{eq:normality_proof_1}
& = (ghg^{-1}, \, \omega_0(h) + \gamma(g,h) +  \gamma(gh, g^{-1})).
\end{align}
Since $\mathcal{H} \lhd \mathcal{G}$, we get $ghg^{-1} \in \mathcal{H}$. In order for \eqref{eq:normality_proof_1} to be in the image of \eqref{eq:embedding_of_H}, we must have
\begin{align}
\label{eq:normality_proof_2}
\omega_0(h) + \gamma(g,h) + \gamma(gh,g^{-1}) = \omega_0(ghg^{-1}). 
\end{align}
So we should show that \eqref{eq:normality_proof_2} is true for all $h\in \mathcal{H}$ and all $g\in \mathcal{G}$. For each given $g\in \mathcal{G}$, define a mapping $\Delta_g : \mathcal{H} \to \mathbb{R}$ by
\begin{align}
\label{eq:Delta_definition}
\Delta_g(h) := \omega_0(h) + \gamma(g,h) + \gamma(gh,g^{-1}) -  \omega_0(ghg^{-1}), \qquad \forall h \in \mathcal{H},
\end{align}
which is continuous. We shall show that $\Delta_g$ satisfies $\Delta_g(h_1h_2) = \Delta_g(h_1) + \Delta_g(h_2)$ for all $h_1,h_2\in \mathcal{H}$, therefore $\Delta_g$ is a group homomorphism. Because $\mathbb{R}$ is abelian, this homomorphism $\Delta_g : \mathcal{H} \to \mathbb{R}$ should factor through the abelianization map $\mathcal{H} \to \mathcal{H}/[\mathcal{H}, \mathcal{H}]$, yielding a homomorphism 
$\mathcal{H}/[\mathcal{H}, \mathcal{H}] \to \mathbb{R}$. Since $\mathcal{H}/[\mathcal{H}, \mathcal{H}]=1$ by hypothesis \eqref{eq:perfect}, 
this map $\mathcal{H}/[\mathcal{H},\mathcal{H}] \to \mathbb{R}$, and hence also $\Delta_g : \mathcal{H} \to \mathbb{R}$, must be the trivial homomorphisms, sending everything to 
$0 \in \mathbb{R}$. So $\Delta_g(h) =0$, $\forall h \in \mathcal{H}$, hence \eqref{eq:normality_proof_2}.

\vs

Let's now show $\Delta_g(h_1h_2) = \Delta_g(h_1) + \Delta_g(h_2)$ for all $h_1,h_2\in \mathcal{H}$. From \eqref{eq:trivial_cocycle} and  \eqref{eq:gamma_equals_gamma_0} we have
\begin{align}
\label{eq:normality_proof_3}
\omega_0(h_1h_2) = \omega_0(h_1)+\omega_0(h_2) + \gamma(h_1,h_2).
\end{align}
From $\mathcal{H} \lhd \mathcal{G}$ we get $gh_1g^{-1}, gh_2g^{-1} \in \mathcal{H}$, and hence \eqref{eq:normality_proof_3} can be applied to these two elements:
\begin{align}
\label{eq:normality_proof_4}
\hspace{-5mm}
\omega_0(gh_1h_2g^{-1}) = \omega_0( (gh_1g^{-1}) (gh_2 g^{-1})) \stackrel{\eqref{eq:normality_proof_3}}{=} \omega_0(gh_1g^{-1})+\omega_0(gh_2g^{-1}) + \gamma(gh_1g^{-1},gh_2g^{-1}).
\end{align}
Now, using the definition \eqref{eq:Delta_definition} of $\Delta_g$ together with \eqref{eq:normality_proof_3} and \eqref{eq:normality_proof_4}, one observes
\begin{align*}
\Delta_g(h_1h_2) - \Delta_g(h_1) - \Delta_g(h_2) 
& \stackrel{\eqref{eq:Delta_definition}}{=} \omega_0(h_1h_2) + \gamma(g,h_1h_2) + \gamma(g h_1h_2, g^{-1}) - \omega_0(g h_1h_2g^{-1}) \\
& \quad - \omega_0(h_1) - \gamma(g,h_1) - \gamma(gh_1,g^{-1})   + \omega_0(gh_1g^{-1}) \\
& \quad - \omega_0(h_2) - \gamma(g,h_2) - \gamma(gh_2,g^{-1})    + \omega_0(gh_2g^{-1}) \\
& \stackrel{\eqref{eq:normality_proof_3}, \eqref{eq:normality_proof_4}}{=} \gamma(h_1,h_2) + \gamma(g,h_1h_2) - \gamma(g,h_1) - \gamma(g,h_2) + \gamma(gh_1h_2, g^{-1}) \\
& \quad  - \gamma(gh_1, g^{-1}) - \gamma(gh_2,g^{-1}) - \gamma(gh_1g^{-1}, gh_2g^{-1}).
\end{align*}
From the $2$-cocycle property of $\gamma$ (see \eqref{eq:gamma_2-cocycle_property}) we have
\begin{align}
\label{eq:normality_proof_5}
\gamma(h_1,h_2) + \gamma(g,h_1h_2) & = \gamma(g,h_1) + \gamma(gh_1, h_2), \\
\label{eq:normality_proof_6}
\gamma(gh_1,h_2) + \gamma(gh_1h_2,g^{-1}) & = \gamma(h_2,g^{-1}) + \gamma(gh_1, h_2g^{-1}), \\
\label{eq:normality_proof_7}
\gamma(gh_1,g^{-1}) + \gamma(gh_1g^{-1}, gh_2g^{-1}) & = \gamma(g^{-1}, gh_2g^{-1}) + \gamma(gh_1, g^{-1} g h_2 g^{-1}), \\
\label{eq:normality_proof_8}
\gamma(gh_2, g^{-1}) + \gamma(g^{-1}, gh_2g^{-1}) & = \gamma(g^{-1} gh_2, g^{-1}) + \gamma(g^{-1}, gh_2), \\
\label{eq:normality_proof_9}
\gamma(g,h_2) + \gamma(g^{-1}, gh_2) & = \gamma(g^{-1},g) + \gamma(g^{-1}g, h_2)
\end{align}
Using these, we try to prove $\Delta_g(h_1h_2) - \Delta_g(h_1) - \Delta_g(h_2) = 0$:
\begin{align*}
\Delta_g(h_1h_2) - \Delta_g(h_1) - \Delta_g(h_2)
& \stackrel{\eqref{eq:normality_proof_5}}{=}
\gamma(gh_1,h_2) - \gamma(g,h_2) + \gamma(gh_1h_2, g^{-1}) \\
& \quad  - \gamma(gh_1, g^{-1}) - \gamma(gh_2,g^{-1}) - \gamma(gh_1g^{-1}, gh_2g^{-1}) \\
& \stackrel{\eqref{eq:normality_proof_6}, \eqref{eq:normality_proof_7}}{=}
\gamma(h_2,g^{-1}) + \cancel{\gamma(gh_1, h_2g^{-1})} - \gamma(g,h_2)  \\
& \quad  - \gamma(g^{-1}, gh_2g^{-1}) - \cancel{\gamma(gh_1, h_2g^{-1})} - \gamma(gh_2,g^{-1}) \\
& \stackrel{\eqref{eq:normality_proof_8}}{=}
 - \gamma(g,h_2)  - \gamma(g^{-1}, gh_2)\stackrel{\eqref{eq:normality_proof_9}}{=}
-\gamma(g^{-1},g) - \gamma(\mathrm{id}_\mathcal{G},h_2)
\stackrel{\eqref{eq:gamma_normalization}, \eqref{eq:small_lemma}}{=} 0,
\end{align*}
as desired.
\end{proof}

\begin{remark}
Is there another proof of Prop.\ref{prop:embedding_normality} that does not rely on the perfectness of $\mathcal{H}$?
\end{remark}

\begin{remark}
\label{rem:extension_by_U1}
We could have constructed $\wh{\mathcal{G}}$ as a central extension of $\mathcal{G}$ by $\mathrm{U}(1) = \{ \alpha \in \mathbb{C} \, : \, |\alpha|=1\}$, with multiplication $(g_1,\alpha_1)\cdot(g_2,\alpha_2) = (g_1g_2, \alpha_1 \alpha_2 e^{{\rm i}\gamma(g_1,g_2)})$. Such is the case for loop groups, as certain proofs work only modulo $2\pi \mathbb{Z}$ there. As all proof in our case goes through in a genuine sense, not just modulo $2\pi\mathbb{Z}$, we use central extension by $\mathbb{R}$. If we want, we can translate all our results in terms of the central extension by $\mathrm{U}(1)$.
\end{remark}

\section{$2$d construction of $\mathbf{G} = {\rm Diff}_+(S^1)$ using diffeomorphisms of the closed disc $\ol{D^2}$}

In this section, we shall express the diffeomorphism group $\mathbf{G} = {\rm Diff}_+(S^1)$ of the one-dimensional manifold $S^1$ as the quotient $\mathcal{G}/\mathcal{H}$ of certain diffeomorphism groups of the two-dimensional manifold $D^2$ bounded by $S^1$. So, here we are going from $1$d geometry to $2$d geometry.

\subsection{Definition of our diffeomorphism groups $G$, $\mathcal{G}$ and  $\mathcal{H}$}
\label{sec:defin-our-diff}

\begin{definition}[relevant $2$-dimensional manifolds]
\label{def:relevant_2-dimensional_manifolds}
Let $\mathbb{C}$ be the complex plane, and let $\wh{\mathbb{C}} = \mathbb{C} \cup \{\infty\} = \mathbb{CP}^1$ be the usual Riemann sphere, or the {\em extended complex plane}. We sometimes identify this extended complex plane by $S^2$. Let $D^2$ be the open unit disc in $\mathbb{C} \subset \wh{\mathbb{C}}$, and $S^1 = \partial D^2$ the unit circle. Let $\ol{D^2} = D^2 \cup S^1$ be closed unit disc. Let the {\em outside unit disc} $(D^2)^\star$ be the complement in $\wh{\mathbb{C}} = S^2$ of $\ol{D^2}$, i.e.
$$
(D^2)^\star = \{ z \in \mathbb{C} : |z|>1 \} \cup \{\infty\} \subset \wh{\mathbb{C}}.
$$
Let $\ol{(D^2)^\star} = (D^2)^\star \cup S^1$ be the closure of the outside unit disc in $\wh{\mathbb{C}}$. 

\vs

We use $z$ for the usual complex coordinate for $\mathbb{C}$, while $x,y \in \mathbb{R}$ for the usual real and imaginary parts of $z$, that is, $z = x + iy$. We use $r \in [0,\infty)$ and $\theta\in \mathbb{R}/2\pi\mathbb{Z}$ for the polar coordinate, that is, $z = r \, e^{i\theta}$.
\end{definition}

The smooth structures and the orientations on the manifolds appearing in Def.\ref{def:relevant_2-dimensional_manifolds} are inherited from the usual ones on the Riemann sphere $S^2 = \wh{\mathbb{C}}$.

\begin{definition}[see e.g. \cite{B97}]
For a smooth oriented manifold $M$ possibly with boundary, denote by ${\rm Diff}_+(M)$ the group of all smooth orientation-preserving self-diffeomorphisms of $M$. An isotopy between two elements $f,g \in {\rm Diff}_+(M)$ is a smooth map $H : M \times [0,1] \to M$ such that for each $t\in [0,1]$, $H_t : M \to M$ defined by $H_t(x)=H(x,t)$ belongs to ${\rm Diff}_+(M)$, while $H_0 = f$ and $H_1=g$. We say $f,g$ are isotopic if there is an isotopy between them. For each subgroup $\mathscr{G}$ of ${\rm Diff}_+(M)$, denote by $\mathscr{G}_0$ the subset of all elements of $\mathscr{G}$ isotopic to the identify diffeomorphism.
\end{definition}

See \cite{B97} and references therein for a topology and a smooth structure on ${\rm Diff}_+(M)$.

\vspace{2mm}

Note that $\mathscr{G}_0$ is a subgroup of $\mathscr{G}$. In case $M=S^1$, it is well known that ${\rm Diff}_+(S^1)_0$ coincides with ${\rm Diff}_+(S^1)$, which we set to be our ${\bf G}$.

\begin{definition}
Let ${\bf G} := {\rm Diff}_+(S^1) = {\rm Diff}_+(S^1)_0$.
\end{definition}

We shall construct $\mathcal{G}$ and $\mathcal{H}$ as certain groups of diffeomorphisms of the unit disc $D^2$ bounded by $S^1$. We will apply the following definition to $M=\ol{D^2}$. 

\begin{definition}
\label{def:diffeomorphism_groups_of_bordered_manifolds}
Let $M$ be a smooth manifold with boundary. For any subgroup $\mathscr{G}$ of ${\rm Diff}_+(M)$, the {\em boundary-trivial subgroup} of $\mathscr{G}$ is defined as
\begin{align}
\label{eq:bt}
\mathscr{G}_{\rm bt} := \{ g \in \mathscr{G} \, : \, g|_{\partial M} = \mathrm{id}_{\partial M} \}.
\end{align}
\end{definition}
In the introduction, we denoted $\mathscr{G}_{\rm bt}$ by $\mathscr{G}_{\partial M}$.

\vs

The following notion is specific to the situation $M=\ol{D^2}$, but could also be applied to a general manifold with boundary if one chooses an identification of an open neighborhood of each boundary component, say $C$, with $C \times (0,1]$ where $C$ corresponds to $C\times \{1\}$.

\begin{definition}[asymptotically radial disc diffeomorphisms]
An element $g$ of ${\rm Diff}_+(\ol{D^2})$ is said to be \emph{asymptotically radial} if there exists $\epsilon \in (0,1)$ and $f\in {\rm Diff}_+(S^1)$, written in terms of the angular coordinate $f: \mathbb{R}/2\pi \mathbb{Z} \to \mathbb{R}/2\pi \mathbb{Z}$, such that
\begin{align}
  \nonumber
  g(r \, e^{{\rm i}\theta}) = r \, e^{{\rm i} f(\theta)}, \qquad \forall r \in (1-\epsilon,1], \quad \forall \theta \in \mathbb{R}/2\pi \mathbb{Z}.
\end{align}
Denote by ${\rm Diff}_{+{\rm ar}}(\ol{D^2})$ the set of all asymptotically radial elements of ${\rm Diff}_+(\ol{D^2})$.
\end{definition}
It is straightforward to see that ${\rm Diff}_{+{\rm ar}}(\ol{D^2})$ is a group.

\vs

We propose the following model of a diffeomorphism group of the closed disc $\ol{D^2}$ as our $\mathcal{G}$:
\begin{definition}[our model of the disc diffeomorphism group]
\label{def:our_model_of_the_disc_diffeomorphism_group}
Define
$$
\mathcal{G} := {\rm Diff}_{+{\rm ar}}(\ol{D^2})_0,
$$
i.e. the $C^\infty$-path component of the identity of ${\rm Diff}_{+{\rm ar}}(\ol{D^2})$. 

\vs

Define $\mathcal{H}$ to be its boundary-trivial subgroup:
\begin{align*}
  \mathcal{H} := \mathcal{G}_{\rm bt}.
\end{align*}
\end{definition}

\begin{proposition}
This $\mathcal{G}$ is a group. \qed
\end{proposition}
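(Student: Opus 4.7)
The plan is to verify that $\mathcal{G}$ is a subgroup of ${\rm Diff}_{+{\rm ar}}(\ol{D^2})$, which we are already told is a group. Since being closed under the ambient operations is automatic once we track paths properly, the essential content is to check that the path-component of the identity is closed under composition and inversion.

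First I would handle the identity element: the constant path $t \mapsto \mathrm{id}_{\ol{D^2}}$ is a smooth path in ${\rm Diff}_{+{\rm ar}}(\ol{D^2})$ joining the identity to itself, so $\mathrm{id}_{\ol{D^2}} \in \mathcal{G}$. For closure under composition, suppose $f, g \in \mathcal{G}$ and choose smooth paths $F, G : [0,1] \to {\rm Diff}_{+{\rm ar}}(\ol{D^2})$ with $F_0 = G_0 = \mathrm{id}_{\ol{D^2}}$, $F_1 = f$, $G_1 = g$. Define $H_t := F_t \circ G_t$. For each fixed $t \in [0,1]$, since ${\rm Diff}_{+{\rm ar}}(\ol{D^2})$ is a group, we have $H_t \in {\rm Diff}_{+{\rm ar}}(\ol{D^2})$; moreover $H_0 = \mathrm{id}_{\ol{D^2}}$ and $H_1 = fg$. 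Smoothness of $t \mapsto H_t$ as a path in ${\rm Diff}_+(\ol{D^2})$ follows from the smoothness of composition in the diffeomorphism group (see \cite{B97}). Hence $fg \in \mathcal{G}$.

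For closure under inversion, given $f \in \mathcal{G}$ with smooth path $F : [0,1] \to {\rm Diff}_{+{\rm ar}}(\ol{D^2})$ joining $\mathrm{id}_{\ol{D^2}}$ to $f$, set $K_t := F_t^{-1}$. Again each $K_t$ lies in ${\rm Diff}_{+{\rm ar}}(\ol{D^2})$ because the latter is a group, and $K_0 = \mathrm{id}_{\ol{D^2}}$, $K_1 = f^{-1}$. The smoothness of the inversion map in ${\rm Diff}_+(\ol{D^2})$ guarantees that $t \mapsto K_t$ is smooth, so $f^{-1} \in \mathcal{G}$.

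The only mild obstacle is conceptual rather than technical: one must be comfortable with the smooth structure on the (infinite-dimensional) group ${\rm Diff}_{+{\rm ar}}(\ol{D^2})$, inherited from that of ${\rm Diff}_+(\ol{D^2})$, and with the fact that composition and inversion are smooth there. Once this is invoked from the standard references, the proof reduces to the elementary observation that in any topological (or smooth) group the path-component of the identity is a subgroup, applied to the sub-group ${\rm Diff}_{+{\rm ar}}(\ol{D^2})$. Therefore $\mathcal{G}$ is a group.
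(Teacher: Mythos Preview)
Your proposal is correct and matches the paper's treatment: the paper simply marks this proposition with \qed, treating it as the standard fact that the identity path-component of a group is a subgroup. One small remark: the paper's notion of isotopy is a smooth map $H:M\times[0,1]\to M$, so strictly speaking you could avoid invoking the smooth structure on the infinite-dimensional group and instead verify directly that $(x,t)\mapsto F_t(G_t(x))$ and $(x,t)\mapsto F_t^{-1}(x)$ are smooth maps $\ol{D^2}\times[0,1]\to\ol{D^2}$ via the chain rule and the implicit function theorem, but this is a cosmetic difference.
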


Now we must make sure that $\mathcal{G}/\mathcal{H} \cong \mathbf{G}$ and $[\mathcal{H},\mathcal{H}]=\mathcal{H}$; these are established in \S\ref{subsec:proof-requ-prop}.

\subsection{Proof of the required properties of our $\mathcal{G}$ and $\mathcal{H}$}
\label{subsec:proof-requ-prop}

In our proofs, we shall frequently use the following cutoff functions, whose existence is well known.

\begin{lemma}[cutoff functions]
\label{lem:xi}
For any positive real numbers $\epsilon_1,\epsilon_2$ such that $0  <\epsilon_1 < 1-\epsilon_2 < 1$, there exists a monotone increasing smooth function $\xi_{\epsilon_1,\epsilon_2} : [0,1] \to [0,1]$ satisfying $\xi_{\epsilon_1,\epsilon_2}\equiv 0$ on $[0,\epsilon_1]$ and $\xi_{\epsilon_1,\epsilon_2}\equiv 1$ on $[1-\epsilon_2,1]$. \qed
\end{lemma}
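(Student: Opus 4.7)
The plan is to invoke the classical construction of smooth-but-not-real-analytic bump and transition functions, and then rescale to fit the given interval endpoints. Since the lemma is standard, the proof will be short and explicit.

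First I would recall the standard building block $\phi : \mathbb{R}\to\mathbb{R}$ defined by $\phi(t)=e^{-1/t}$ for $t>0$ and $\phi(t)=0$ for $t\le 0$. A classical induction argument shows that $\phi$ is $C^\infty$ on all of $\mathbb{R}$, with all derivatives vanishing at $t=0$; in particular $\phi\ge 0$ and $\phi$ is strictly increasing on $[0,\infty)$.

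Next I would form the normalized transition function
$$
\psi(t) := \frac{\phi(t)}{\phi(t)+\phi(1-t)}, \qquad t\in \mathbb{R}.
$$
The denominator is strictly positive for every $t\in\mathbb{R}$ (since at least one of $t, 1-t$ is positive), so $\psi$ is smooth. A direct check gives $\psi(t)=0$ for $t\le 0$, $\psi(t)=1$ for $t\ge 1$, and monotonicity of $\psi$ on $[0,1]$ follows from the fact that $\phi$ is increasing while $\phi(1-\cdot)$ is decreasing, so the quotient defining $\psi$ is non-decreasing. In particular $\psi$ maps $[0,1]$ monotonically onto $[0,1]$.

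Finally I would rescale: under the hypothesis $0<\epsilon_1<1-\epsilon_2<1$, the affine map $t\mapsto \frac{t-\epsilon_1}{1-\epsilon_2-\epsilon_1}$ sends $[0,\epsilon_1]$ into $(-\infty,0]$ and $[1-\epsilon_2,1]$ into $[1,\infty)$, so
$$
\xi_{\epsilon_1,\epsilon_2}(t) := \psi\!\left(\frac{t-\epsilon_1}{1-\epsilon_2-\epsilon_1}\right), \qquad t\in[0,1],
$$
is a smooth monotone increasing function from $[0,1]$ to $[0,1]$, identically $0$ on $[0,\epsilon_1]$ and identically $1$ on $[1-\epsilon_2,1]$, as required.

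There is essentially no obstacle; the only delicate point is that the matching at $t=\epsilon_1$ and $t=1-\epsilon_2$ must be $C^\infty$, not merely continuous, and this is precisely what the flatness of $\phi$ at $0$ guarantees. No real-analytic function could achieve this, which is why the construction relies on the non-analytic $\phi(t)=e^{-1/t}$.
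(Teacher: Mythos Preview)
Your construction is correct and completely standard. The paper does not actually prove this lemma at all: it is stated with a terminal \qed\ and prefaced by the remark that the existence of such cutoff functions ``is well known,'' so there is no argument to compare against. Your explicit $e^{-1/t}$-based transition function is exactly the textbook construction one would supply if asked to fill in the omitted proof.
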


\begin{proposition}
\label{prop:quotient_map_is_surjection}
$\mathcal{H}$ is a closed normal subgroup of $\mathcal{G}$, and the natural map
$$
\mathcal{G} / \mathcal{H}  ~ \longrightarrow ~ {\rm Diff}_+(S^1) = \mathbf{G}, \qquad g\mathcal{H} \longmapsto g|_{S^1},
$$
given by `restriction to $S^1$', is an isomorphism.
\end{proposition}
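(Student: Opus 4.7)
My plan is to split the statement into four routine-looking pieces: $\mathcal{H}$ is a subgroup; $\mathcal{H}$ is closed; $\mathcal{H}$ is normal; and the restriction map descends to a well-defined isomorphism $\mathcal{G}/\mathcal{H} \xrightarrow{\sim} \mathrm{Diff}_+(S^1)$.

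For the first three, I would argue as follows. That $\mathcal{H}$ is a subgroup is immediate from the definition \eqref{eq:bt}, since the boundary-trivial condition is clearly preserved under products and inverses. Closedness follows from the fact that the restriction map $\mathrm{res}_{S^1}: \mathcal{G} \to \mathrm{Diff}_+(S^1)$, $g\mapsto g|_{S^1}$, is continuous in the $C^\infty$-topology (this is essentially the definition of the Fréchet topology on $\mathrm{Diff}_+(\ol{D^2})$); then $\mathcal{H} = \mathrm{res}_{S^1}^{-1}(\mathrm{id}_{S^1})$. For normality, take $g\in \mathcal{G}$ and $h\in \mathcal{H}$ and observe $(ghg^{-1})|_{S^1} = g|_{S^1}\circ \mathrm{id}_{S^1}\circ (g|_{S^1})^{-1} = \mathrm{id}_{S^1}$; I still need to verify $ghg^{-1} \in \mathcal{G}$, i.e. that it is asymptotically radial and in the identity component. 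For the former, near the boundary $g$ and $g^{-1}$ act by $re^{{\rm i}\theta}\mapsto re^{{\rm i}f_g(\theta)}$ and $re^{{\rm i}\theta}\mapsto re^{{\rm i}f_g^{-1}(\theta)}$ while $h$ is the identity, so $ghg^{-1}$ acts as the identity on a boundary collar, which is vacuously asymptotically radial. For the latter, concatenating isotopies-to-identity of $g$ and $h$ produces one for $ghg^{-1}$.

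Once $\mathcal{H}\lhd \mathcal{G}$ is closed, the restriction map factors through the quotient as $\ol{\mathrm{res}_{S^1}}: \mathcal{G}/\mathcal{H}\to \mathrm{Diff}_+(S^1)$, which is a group homomorphism because restriction to $S^1$ respects composition. Injectivity is clear: if $g_1|_{S^1} = g_2|_{S^1}$, then $g_2^{-1}g_1$ is boundary-trivial and still lies in $\mathcal{G}$, so it lies in $\mathcal{H}$. The real content is therefore surjectivity: given $f\in \mathrm{Diff}_+(S^1)$, I need to produce $g\in \mathcal{G}$ with $g|_{S^1}=f$. For this I would use the well-known fact that $\mathrm{Diff}_+(S^1)$ is $C^\infty$-path-connected, choose a smooth isotopy $\{f_t\}_{t\in [0,1]}$ in $\mathrm{Diff}_+(S^1)$ with $f_0=\mathrm{id}_{S^1}$ and $f_1=f$, pick a cutoff $\xi = \xi_{\epsilon_1,\epsilon_2}$ from Lem.\ref{lem:xi} with $\epsilon_1=1/2$ and some small $\epsilon_2$, and define
$$
g(r\,e^{{\rm i}\theta}) := r\, e^{{\rm i}\, f_{\xi(r)}(\theta)}, \qquad r\in [0,1], \quad \theta\in \mathbb{R}/2\pi\mathbb{Z}.
$$
Near the origin $g$ is the identity (so smooth there), near the boundary $g(r\,e^{{\rm i}\theta})=r\,e^{{\rm i}f(\theta)}$ (so asymptotically radial with boundary value $f$), in polar coordinates the Jacobian determinant equals $\partial_\theta f_{\xi(r)}(\theta)>0$, and for each fixed $r$ the angular map $\theta\mapsto f_{\xi(r)}(\theta)$ is a circle diffeomorphism, giving bijectivity. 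Thus $g\in \mathrm{Diff}_{+\mathrm{ar}}(\ol{D^2})$, and rescaling the isotopy as $g_s(r\,e^{{\rm i}\theta}) := r\, e^{{\rm i}\, f_{s\xi(r)}(\theta)}$ provides a path from $\mathrm{id}$ to $g$ inside $\mathrm{Diff}_{+\mathrm{ar}}(\ol{D^2})$, showing $g\in \mathcal{G}$.

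The main obstacle is ensuring this lift $g$ genuinely lies in the narrow class $\mathcal{G}=\mathrm{Diff}_{+\mathrm{ar}}(\ol{D^2})_0$: one must simultaneously guarantee smoothness across $r=0$ and $r=1$, an everywhere-positive Jacobian, the asymptotic radiality on a whole collar (not merely at $r=1$), and connectedness to the identity. The polar-coordinate ansatz with a cutoff handles all four at once, which is why I would organize the surjectivity argument around it; the other pieces are essentially bookkeeping.
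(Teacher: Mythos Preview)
Your proof is correct and follows essentially the same approach as the paper's: both reduce everything to surjectivity and construct the lift via the polar ansatz $g_t(r e^{{\rm i}\theta}) = r\,e^{{\rm i} f_{t\cdot\xi(r)}(\theta)}$ using a smooth isotopy $f_t$ from $\mathrm{id}_{S^1}$ to $f$ and a cutoff $\xi$. The only cosmetic differences are that the paper first lifts each $f_t$ to $\til{f}_t:\mathbb{R}\to\mathbb{R}$ before writing the formula (which is not strictly needed here since $e^{{\rm i}\cdot}$ is well-defined on $\mathbb{R}/2\pi\mathbb{Z}$, but becomes relevant later in Prop.~\ref{prop:our_H_coincides_with_Diff_c_D2_0}), and that your explicit verification of normality is redundant once you observe $\mathcal{H}$ is the kernel of the restriction homomorphism $\mathcal{G}\to\mathrm{Diff}_+(S^1)$.
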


\begin{proof}
`Restriction to $S^1$' gives the natural homomorphism $\mathcal{G} \subset {\rm Diff}_+(\ol{D^2}) \to {\rm Diff}_+(S^1) = {\bf G}$, and the kernel of this map is manifestly $\mathcal{H}$, telling us that $\mathcal{H}$ is a closed normal subgroup of $\mathcal{G}$. All we need to show is that this map $\mathcal{G} \to \mathbf{G}$ is surjective. Pick any $f\in {\rm Diff}_+(S^1)$. It suffices to construct $g\in \mathcal{G} \subset {\rm Diff}_+(\ol{D^2})$ whose restriction to $S^1$ is $f$. A key idea is the usage of a modification of the so-called `Alexander trick'; this modified version was used in \cite{Pf61} for the so-called `conformal welding', which is directly related to what we shall be doing later in the present paper.

\vs

Since ${\rm Diff}_+(S^1) = {\rm Diff}_+(S^1)_0$, there is a smooth isotopy $t\mapsto f_t$ in ${\rm Diff}_+(S^1)$ from $f_0 = {\rm id}_{S^1}$ to $f_1 = f$. Let each $f_t$ be given in terms of the angular coordinate $f_t : \mathbb{R}/2\pi\mathbb{Z} \to \mathbb{R}/2\pi\mathbb{Z}$. Lift it to $\til{f}_t : \mathbb{R} \to \mathbb{R}$, an orientation-preserving (i.e. $\til{f}_t'>0$) self-diffeomorphism of $\mathbb{R}$ satisfying $\til{f}_t(\theta+2n\pi) = \til{f}_t(\theta)+2n\pi$, $\forall n\in \mathbb{Z}$; for each $t$, the lift $f_t \leadsto \til{f}_t$ is unique up to the addition of an integer multiple of $2\pi$. Pick the lift of $f_0 = \mathrm{id}_{S^1}$ to be $\til{f}_0 = \mathrm{id}_\mathbb{R}$, i.e. $\til{f}_0(\theta)=\theta$, $\forall \theta \in \mathbb{R}$. Then for each $t \in (0,1]$ the lift $\til{f}_t$ of $f_t$ is uniquely determined by the smoothness of the isotopy $t\mapsto f_t$. One then observes that $t\mapsto \til{f}_t$ is a smooth isotopy in ${\rm Diff}_+(\mathbb{R})$.

\vs

Pick and fix any real number $\epsilon \in (0,1/2)$, and consider a smooth cutoff function $\xi := \xi_{\epsilon,\epsilon} : [0,1]\to [0,1]$ in Lem.\ref{lem:xi}. 
For each $t\in [0,1]$, define a map $g_t : \ol{D^2} \to \ol{D^2}$ as
\begin{align}
\label{eq:g_t_extension_of_til_f_t}
g_t (r \, e^{{\rm i}\theta}) :=  r \, e^{ {\rm i} \til{f}_{t \cdot \xi(r)} (\theta) }, \qquad \forall r \in [0,1], \quad \forall \theta\in \mathbb{R}/2\pi\mathbb{Z}.
\end{align}
For each $t$ and $r$, note that $\til{f}_{t\cdot \xi(r)} : \mathbb{R} \to \mathbb{R}$ descends to an orientation-preserving self-diffeomorphism of $\mathbb{R}/2\pi\mathbb{Z}$. For each $t$, it is easy to observe that $g_t$ is indeed a self-diffeomorphism of $\ol{D^2}$, and is asymptotically radial with the restriction to the boundary $S^1$ being $f_t \in {\rm Diff}_+(S^1)$. In particular, $g_t \in {\rm Diff}_{+{\rm ar}}(\ol{D^2})$ for each $t \in [0,1]$. For each fixed $r$ and $\theta$, the map $t\mapsto g_t(r \, e^{{\rm i}\theta})$ is $C^\infty$. Hence $t\mapsto g_t$ is a smooth isotopy in ${\rm Diff}_{+{\rm ar}}(\ol{D^2})$. From $\til{f}_0 = \mathrm{id}_\mathbb{R}$ one notes that $\til{f}_{0;r} = \mathrm{id}_\mathbb{R}$, $\forall r \in [0,1]$, hence $g_0=\mathrm{id}_{D^2}$. Therefore, the endpoint $g_1$ of the path $\{g_t\}_{t\in [0,1]}$ belongs to ${\rm Diff}_{+{\rm ar}}(\ol{D^2})_0 = \mathcal{G}$. Meanwhile, the boundary value of $g_1$ is $f_1=f$. Hence $g_1$ is an answer for the sought-for element $g\in\mathcal{G}$ satisfying $g|_{S^1}=f$.
\end{proof}

\begin{remark}
There is another famous extension of circle homeomorphisms to disc diffeomorphisms by Douady-Earle \cite{DE86}, which satisfies certain nice properties. However, the Douady-Earle extension is not asymptotically radial.
\end{remark}

It remains to show the perfectness of our $\mathcal{H}$. The perfectness and the simpleness of diffeomorphism groups are subjects of extensive study, one of the main investigators being William Thurston; we shall quote some known results that we need, instead of giving a proof here. What we should do is to identify our $\mathcal{H}$ with one of the diffeomorphism groups whose perfectness is proven.

\begin{definition}
For a self-homeomophism $f$ of a manifold $M$, the \emph{support} of $f$ is defined as the closure of the set $\{ x \in M \, | \, f(x) \neq x \}$ in $M$. We say $f$ is \emph{compactly supported} if the support of $f$ is a compact subset of $M$. Denote by ${\rm Diff}_{\rm c}(M)$ be the set of all compactly supported elements of ${\rm Diff}_+(M)$.
\end{definition}

\begin{lemma}
${\rm Diff}_{\rm c}(M)$ is a group. \qed
\end{lemma}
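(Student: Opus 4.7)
The plan is to verify the three group axioms for $\mathrm{Diff}_{\rm c}(M)$ as a subset of the already-known group $\mathrm{Diff}_+(M)$; all that really needs checking is that the compact-support condition is preserved under composition and inversion. The argument rests entirely on the elementary set-theoretic identity that if $x$ lies outside $\mathrm{supp}(f) \cup \mathrm{supp}(g)$ then both $f$ and $g$ fix $x$, together with the fact that closed subsets of compact sets are compact.

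First I would handle the identity: $\mathrm{id}_M$ has empty support, which is vacuously compact, so $\mathrm{id}_M \in \mathrm{Diff}_{\rm c}(M)$. Next, for closure under composition, let $f, g \in \mathrm{Diff}_{\rm c}(M)$ with supports $K_f$ and $K_g$. If $x \notin K_f \cup K_g$ then $g(x) = x$ and $f(x) = x$, so $(f \circ g)(x) = x$; thus the set $\{x : (f\circ g)(x) \neq x\}$ is contained in $K_f \cup K_g$, and taking closures gives $\mathrm{supp}(f \circ g) \subseteq K_f \cup K_g$. Since the right-hand side is compact (finite union of compact sets) and $\mathrm{supp}(f\circ g)$ is closed, it is compact. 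Orientation preservation is automatic since $\mathrm{Diff}_+(M)$ is a group.

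For closure under inverses, observe that $f(x) = x$ if and only if $f^{-1}(x) = x$, so the fixed-point sets of $f$ and $f^{-1}$ coincide; therefore $\mathrm{supp}(f^{-1}) = \mathrm{supp}(f)$ is compact, and $f^{-1} \in \mathrm{Diff}_{\rm c}(M)$. Combined with the fact that $\mathrm{Diff}_{\rm c}(M) \subseteq \mathrm{Diff}_+(M)$ inherits associativity, this shows $\mathrm{Diff}_{\rm c}(M)$ is a subgroup of $\mathrm{Diff}_+(M)$.

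There is no real obstacle in this proof; the only mild subtlety is being careful that $\mathrm{supp}(f)$ is defined as a closure, so the inclusion for composites is at the level of the raw sets $\{x : (fg)(x) \neq x\}$ and only afterward does one take the closure, using that $K_f \cup K_g$ is already closed to conclude the inclusion passes to closures. This is a routine check and the statement could reasonably be left as an exercise, which appears to be the author's intent given the \qed immediately following.
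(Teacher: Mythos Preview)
Your proof is correct, and the paper omits any argument for this lemma (the \qed\ immediately after the statement signals it is left to the reader), so there is nothing to compare against. You have supplied exactly the routine subgroup verification the authors deemed unnecessary to spell out.
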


\begin{theorem}[Thurston \cite{T74}]
\label{thm:Thurston}
If $M$ is a smooth manifold, ${\rm Diff}_{\rm c}(M)_0$ is perfect.
\end{theorem}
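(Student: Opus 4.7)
The plan is to prove this classical theorem of Thurston in two steps: a \emph{fragmentation lemma} reducing the problem to the model case $M = \mathbb{R}^n$ (with $n = \dim M$), followed by an \emph{infinite-product commutator trick} on $\mathbb{R}^n$ going back to Anderson and Mather. The deep content — the reason Thurston's paper is needed rather than a short exercise — lies in making everything work smoothly in $C^\infty$ regularity; the analogous argument in $C^r$ regularity is known to fail precisely at $r = n+1$.

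For the fragmentation step, I would take an isotopy $\{f_t\}_{t\in[0,1]}$ in ${\rm Diff}_{\rm c}(M)_0$ from $\mathrm{id}_M$ to $f$, cover $\mathrm{supp}(f)$ by finitely many coordinate balls $U_1,\ldots,U_N$ each diffeomorphic to $\mathbb{R}^n$, and use a smooth partition of unity to split the time-dependent vector field generating $\{f_t\}$ into pieces supported in the individual $U_i$. Integrating these pieces and composing in a suitable order yields a factorization $f = g_1 g_2 \cdots g_N$ with each $g_i \in {\rm Diff}_{\rm c}(M)_0$ supported in a single $U_i$. Since the commutator subgroup is closed under products, it suffices to prove perfectness in the single case ${\rm Diff}_{\rm c}(\mathbb{R}^n)_0$.

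For that model case, given $f \in {\rm Diff}_{\rm c}(\mathbb{R}^n)_0$ with $\mathrm{supp}(f) \subset K$, I would choose $\phi \in {\rm Diff}_{\rm c}(\mathbb{R}^n)_0$ that smoothly contracts some ball $B \supset K$ toward an interior attracting fixed point $p$, arranged so that the iterates $\phi^k(K)$, $k \geq 0$, are pairwise disjoint and shrink geometrically to $\{p\}$. The conjugates $\phi^k f \phi^{-k}$ then have pairwise disjoint supports, so they commute, and their infinite product
\[
F := \prod_{k=0}^{\infty} \phi^k f \phi^{-k}
\]
is a well-defined compactly supported self-map of $\mathbb{R}^n$. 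The identity $\phi F \phi^{-1} = F f^{-1}$ is obvious from the telescoping definition of $F$ and rearranges algebraically to
\[
f = \phi F^{-1} \phi^{-1} F = [\phi, F^{-1}],
\]
displaying $f$ as a single commutator in ${\rm Diff}_{\rm c}(\mathbb{R}^n)_0$.

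The main obstacle is showing that $F$ really is $C^\infty$ at the accumulation point $p$: for every multi-index $\alpha$, the partial derivative $\partial^\alpha (\phi^k f \phi^{-k})$ at points approaching $p$ must decay sufficiently fast as $k \to \infty$, which requires the contraction rates of $\phi$ at $p$ in every direction to dominate the higher-derivative growth produced by $k$-fold conjugation. This quantitative estimate is exactly what breaks down at $C^{n+1}$ regularity. In the $C^\infty$ case one can always engineer $\phi$ with strong enough contraction; Thurston's argument in \cite{T74} in fact proceeds differently, via a homotopy-theoretic analysis of classifying spaces of Haefliger structures, but the commutator trick above captures the essential algebraic mechanism once the smoothness estimates are granted.
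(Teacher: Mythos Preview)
The paper does not prove this theorem at all: it is quoted as a known result, with pointers to Banyaga's book \cite{B97} for Thurston's original approach, to Haller--Rybicki--Teichmann \cite{HT03,HRT04} for a fundamentally different argument, and to Mann \cite{Ma} for the short self-contained proof in the case $M=\mathbb{R}^n$ (which, since $D^2\cong\mathbb{R}^2$, is all the paper actually needs). So there is no ``paper's own proof'' to compare against; your sketch is being measured against the cited literature rather than anything in the text.

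On the sketch itself: the fragmentation step is fine and standard. The commutator trick you describe is the Anderson--Mather mechanism, and you correctly isolate the real obstacle, namely $C^\infty$ smoothness of the infinite product $F$ at the accumulation point $p$. But your one-line resolution --- ``in the $C^\infty$ case one can always engineer $\phi$ with strong enough contraction'' --- is exactly where the na\"ive argument breaks. If $\phi$ is, say, a linear contraction by ratio $\lambda<1$, then the $m$-th derivative of $\phi^k f\phi^{-k}$ on $\phi^k(K)$ scales like $\lambda^{-(m-1)k}$, so no single $\lambda$ controls all orders simultaneously; one cannot simply ``contract harder.'' This is why Thurston's original proof goes through Haefliger classifying spaces, and why the later elementary proofs (Haller--Teichmann, Mann) do \emph{not} use the shrinking-to-a-point construction verbatim but instead rely on different geometric arrangements (e.g.\ displacing supports off to infinity in $\mathbb{R}^n$ with $n\ge 2$, or decomposing into fiber-preserving pieces). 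Your final paragraph half-acknowledges this, but the sentence asserting one can engineer $\phi$ should be withdrawn or replaced with a pointer to one of those alternative mechanisms.
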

A detailed proof of Thurston's theorem, as well as references to related results, can be found in Augustin Banyaga's book \cite{B97}; a fundamentally different proof is given by Haller-Rybicki-Teichmann \cite{HT03} \cite{HRT04}\footnote{\label{foot:Mann}as pointed out to the second author by Kathryn Mann}, which in turn is simplified by Kathryn Mann \cite{Ma}. One finds a self-contained proof for $M= \mathbb{R}^n$, $n\ge 2$, in a very short note of Mann \cite{Ma}. As we shall use the result for $M=D^2$ only, and since $D^2$ and $\mathbb{R}^2$ are diffeomorphic to each other\footnoteref{foot:Mann}, in principle we could just quote \cite{Ma} and nothing else. It remains to show:
\begin{proposition}
\label{prop:our_H_coincides_with_Diff_c_D2_0}
Our $\mathcal{H}$ in Def.\ref{def:our_model_of_the_disc_diffeomorphism_group} coincides with ${\rm Diff}_{\rm c}(D^2)_0$, as subgroups of ${\rm Diff}_+(D^2)$.
\end{proposition}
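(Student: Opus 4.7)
The plan is to show the two subgroups coincide via the bijection given by extension-by-identity at the boundary. First, I observe that if $h \in \mathcal{H}$, asymptotic radiality combined with $h|_{S^1} = {\rm id}_{S^1}$ forces the map $f \in {\rm Diff}_+(S^1)$ in the definition of asymptotically radial to be ${\rm id}_{S^1}$, so $h(re^{{\rm i}\theta}) = re^{{\rm i}\theta}$ on some annulus $\{1-\epsilon_h < r \le 1\}$; hence $h|_{D^2}$ has support in the compact disc $\{|z| \le 1-\epsilon_h\}$, putting it in ${\rm Diff}_{\rm c}(D^2)$. Conversely, any $\phi \in {\rm Diff}_{\rm c}(D^2)$ equals identity on a neighborhood of $S^1$, so it extends by identity to a smooth asymptotically radial self-diffeomorphism $\til\phi$ of $\ol{D^2}$ with $\til\phi|_{S^1} = {\rm id}$. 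These two operations are mutually inverse, establishing a set-theoretic bijection between ${\rm Diff}_{\rm c}(D^2)$ and the boundary-trivial subgroup of ${\rm Diff}_{+{\rm ar}}(\ol{D^2})$.

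The next step is to match the identity components under this bijection. For ${\rm Diff}_{\rm c}(D^2)_0 \subseteq \mathcal{H}$, I would take an isotopy $\{\phi_t\}$ in ${\rm Diff}_{\rm c}(D^2)$ from ${\rm id}$ to $\phi$---assumed to have supports uniformly contained in some compact $K \subset D^2$ via the standard topology on ${\rm Diff}_{\rm c}$---and extend each $\phi_t$ by identity to get a smooth isotopy $\{\til\phi_t\}$ in ${\rm Diff}_{+{\rm ar}}(\ol{D^2})$, placing $\til\phi$ in $\mathcal{G}_0 = \mathcal{G}$ and hence in $\mathcal{H}$. For the reverse inclusion $\mathcal{H} \subseteq {\rm Diff}_{\rm c}(D^2)_0$, I would start with an isotopy $\{H_t\}$ in $\mathcal{G}$ from ${\rm id}$ to $h$; its boundary $f_t := H_t|_{S^1}$ is a loop in ${\rm Diff}_+(S^1)$ based at ${\rm id}_{S^1}$ with some winding number $n \in \pi_1({\rm Diff}_+(S^1)) = \mathbb{Z}$. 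I would kill this winding by precomposing $H_t$ with the radial rotation loop $R_t(re^{{\rm i}\theta}) := re^{{\rm i}(\theta - 2\pi n t)}$, which lives in $\mathcal{G}$ with $R_0 = R_1 = {\rm id}$; after this correction, the lift $\til f_t$ of $f_t$ to ${\rm Diff}_+(\mathbb{R})$ may be chosen with $\til f_0 = \til f_1 = {\rm id}_\mathbb{R}$. Then I would construct a linearly-interpolated Pfluger-type family $g_t(re^{{\rm i}\theta}) := re^{{\rm i}[(1 - \xi(r))\theta + \xi(r) \til f_t(\theta)]}$ (a variant of the formula \eqref{eq:g_t_extension_of_til_f_t} tailored to loops) that is asymptotically radial for each $t$ with $g_t|_{S^1} = f_t$ and $g_0 = g_1 = {\rm id}$, so that $\til H_t := g_t^{-1} \circ H_t$ is a smooth isotopy in $\mathcal{G}$ from ${\rm id}$ to $h$ satisfying $\til H_t|_{S^1} = {\rm id}_{S^1}$ throughout; under the bijection above, $\{\til H_t|_{D^2}\}$ is then a smooth isotopy in ${\rm Diff}_{\rm c}(D^2)$ witnessing $h|_{D^2} \in {\rm Diff}_{\rm c}(D^2)_0$.

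The main obstacle will be the boundary-loop winding-number correction, which relies on the surjectivity of $\pi_1(\mathcal{G}) \to \pi_1({\rm Diff}_+(S^1))$ exhibited by the radial rotation loop: without killing the winding, the interpolation endpoint $g_1$ would fail to equal ${\rm id}$, and so $\til H_t$ would not reach $h$, breaking the argument. The remaining technicalities---smoothness of the identity-extension on $\ol{D^2} \times [0,1]$, uniform compactness of isotopy supports, and smooth $t$-dependence together with asymptotic radiality of the Pfluger-type family $\{g_t\}$---should be routine verifications from the explicit formulas.
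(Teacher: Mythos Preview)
Your proposal is correct and follows essentially the same approach as the paper: both identify ${\rm Diff}_{\rm c}(D^2)$ with ${\rm Diff}_{+{\rm ar}}(\ol{D^2})_{\rm bt}$, and for the hard inclusion $\mathcal{H} \subseteq {\rm Diff}_{\rm c}(D^2)_0$ both resolve the winding-number obstruction of the boundary loop $\{f_t\}$ via the radial rotation loop and a Pfluger-type radial extension. The only difference is organizational: you apply the rotation correction \emph{before} building $g_t$ and use a spatial convex-combination formula $re^{{\rm i}[(1-\xi(r))\theta + \xi(r)\til f_t(\theta)]}$ so that $g_1 = {\rm id}$ falls out directly, whereas the paper uses the time-interpolation formula $re^{{\rm i}\til f_{t\xi(r)}(\theta)}$ first (yielding a nontrivial $g_1$) and then separately shows $g_1 \in {\rm Diff}_{\rm c}(D^2)_0$ via the rotation trick---a minor reshuffling of the same ingredients.
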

\begin{proof}
Note that ${\rm Diff}_{\rm c}(D^2)$ coincides with ${\rm Diff}_{+{\rm ar}}(\ol{D^2})_{\rm bt}$. Thus
$$
{\rm Diff}_{\rm c}(D^2)_0 = ({\rm Diff}_{+{\rm ar}}(\ol{D^2})_{\rm bt})_0 \subset ({\rm Diff}_{+{\rm ar}}(\ol{D^2})_0)_{\rm bt} = \mathcal{G}_{\rm bt} = \mathcal{H}.
$$
Conversely, let $h\in \mathcal{H} = \mathcal{G}_{\rm bt}$. Since $h \in \mathcal{G}= {\rm Diff}_{+{\rm ar}}(\ol{D^2})_0$, there is a smooth isotopy $t\mapsto h_t$ in ${\rm Diff}_{+{\rm ar}}(\ol{D^2})$ with $h_0 = {\rm id}_{\ol{D^2}}$ and $h_1 = h$. For each $t\in [0,1]$ let $f_t := (h_t)|_{S^1}$ be the boundary value of $h_t$, so that $f_0 = f_1 = {\rm id}_{S^1}$. For any chosen $\epsilon \in (0,1/2)$, apply the argument as in the proof of Prop.\ref{prop:quotient_map_is_surjection} to construct a smooth isotopy $t \mapsto g_t$ in ${\rm Diff}_{+{\rm ar}}(\ol{D^2})$ such that the boundary value of each $g_t$ is $f_t$, and that $g_0 = {\rm id}_{\ol{D^2}}$. For each $t\in [0,1]$, define $\ell_t := h_t \circ g_t^{-1}$, which belongs to ${\rm Diff}_{+{\rm ar}}(\ol{D}^2)$. Note that $(\ell_t)|_{S^1} = (h_t|_{S^1})\circ(g_t|_{S^1})^{-1} = f_t \circ f_t^{-1} = {\rm id}_{S^1}$ for each $t\in [0,1]$. Hence $\ell_t \in {\rm Diff}_{+{\rm ar}}(\ol{D^2})_{\rm bt} = {\rm Diff}_{\rm c}(D^2)$. So we constructed a smooth isotopy $t\mapsto \ell_t$ in ${\rm Diff}_{\rm c}(D^2)$ with $\ell_0 = h_0 \circ g_0^{-1} = {\rm id}_{D^2}$ and $\ell_1 = h_1 \circ g_1^{-1} = h \circ g_1^{-1}$; thus $h\circ g_1^{-1} \in {\rm Diff}_{\rm c}(D^2)_0$. Since ${\rm Diff}_{\rm c}(D^2)_0$ is a group, it suffices to show $g_1 \in {\rm Diff}_{\rm c}(D^2)_0$, in order to show $h\in {\rm Diff}_{\rm c}(D^2)_0$.

\vs

Note that $g_1$ is given by the formula \eqref{eq:g_t_extension_of_til_f_t} for $t=1$, with $\til{f}_1 : \mathbb{R} \to \mathbb{R}$ being a lift of $f_1 = {\rm id}_{S^1}$, hence there is some $n\in \mathbb{Z}$ s.t. $\til{f}_1(\theta) = \theta + 2n\pi$, $\forall \theta \in \mathbb{R}$. We build a smooth isotopy $t\mapsto \til{F}_t$ in ${\rm Diff}_+(\mathbb{R})$ by
$$
\til{F}_t(\theta) := \theta + (2n\pi) t, \qquad \forall t \in [0,1], ~ \forall \theta \in \mathbb{R},
$$
from $\til{F}_0 = {\rm id}_\mathbb{R}$ to $\til{F}_1 = \til{f}_1$. We then proceed as in the proof of Prop.\ref{prop:quotient_map_is_surjection}, to define $\xi = \xi_{\epsilon,\epsilon}$ by Lem.\ref{lem:xi} with the same $\epsilon\in (0,1/2)$ as above, and mimic \eqref{eq:g_t_extension_of_til_f_t} to define $k_t : \ol{D^2} \to \ol{D^2}$ as
\begin{align*}
k_t (r \, e^{{\rm i}\theta}) := r \, e^{{\rm i} \til{F}_{t \cdot \xi(r)} (\theta) }, \qquad \forall r \in [0,1], \quad \forall \theta\in \mathbb{R}/2\pi\mathbb{Z}.
\end{align*}
Then $t\mapsto k_t$ is a smooth isotopy in ${\rm Diff}_{+{\rm ar}}(\ol{D^2})$, $k_0 = \mathrm{id}_{\ol{D^2}}$, and $k_1 = g_1$. For any $\alpha \in \mathbb{R}$ (or $\alpha\in\mathbb{R}/2\pi\mathbb{Z}$), let $R_\alpha : \ol{D^2} \to \ol{D^2}$ be the counterclockwise rotation by angle $\alpha$, i.e.
\begin{align*}
  R_\alpha(r \, e^{{\rm i}\theta}) := r \, e^{{\rm i}(\theta+\alpha)}, \qquad \forall r \in [0,1], \quad \forall \theta \in \mathbb{R}/2\pi\mathbb{Z}.
\end{align*}
Note that $R_\alpha \in {\rm Diff}_{+{\rm ar}}(\ol{D^2})$; its smoothness on $D^2$ can be seen e.g. by recalling that it is a conformal automorphism of $D^2$. Note also that $k_t$ coincides with $R_{2n\pi t}$ on the annulus $1-\epsilon<r<1$, i.e. an open neighborhood (in $\ol{D^2}$) of the boundary $S^1$. Consider the smooth isotopy $t\mapsto R_{2n\pi t}$ in ${\rm Diff}_{+{\rm ar}}(\ol{D^2})$. For each $t\in [0,1]$ define $K_t := k_t \circ R_{2n\pi t}^{-1}$, which is an element of $\mathrm{Diff}_{+{\rm ar}}(\ol{D^2})$, and which is identity on the annulus $1-\epsilon<r<1$. Thus the support of $K_t$ is a compact subset of $D^2$, so $K_t \in \mathrm{Diff}_{\rm c}(D^2)$. Therefore, $t\mapsto K_t$ is a smooth isotopy in $\mathrm{Diff}_{\rm c} (D^2)$, where $K_0 = k_0 \circ R_0^{-1} = \mathrm{id}_{D^2}$, and $K_1 = k_1 \circ R_{2n\pi}^{-1} = g_1 \circ R_{2n \pi}^{-1} = g_1 \circ \mathrm{id}_{D^2}^{-1} = g_1$. This shows that $g_1 \in \mathrm{Diff}_{\rm c}(D^2)_0$, finishing the proof. \qed
\end{proof}

Now, Prop.\ref{prop:our_H_coincides_with_Diff_c_D2_0}, together with Thm.\ref{thm:Thurston} applied to $M=D^2$, yields the perfectness of our $\mathcal{H}$. 

\begin{corollary}
\label{cor:our_H_is_perfect}
Our $\mathcal{H}$ in Def.\ref{def:our_model_of_the_disc_diffeomorphism_group} is perfect. \qed
\end{corollary}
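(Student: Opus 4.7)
The statement is essentially a direct corollary of the two results that immediately precede it, so the plan is simply to combine them. The plan is to invoke Proposition \ref{prop:our_H_coincides_with_Diff_c_D2_0}, which identifies $\mathcal{H}$ with $\mathrm{Diff}_{\rm c}(D^2)_0$ as subgroups of $\mathrm{Diff}_+(D^2)$, and then apply Thurston's Theorem \ref{thm:Thurston} to the smooth manifold $M = D^2$ to conclude that $\mathrm{Diff}_{\rm c}(D^2)_0$ is perfect.

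First I would remind the reader that the open unit disc $D^2$ is a smooth manifold (in fact diffeomorphic to $\mathbb{R}^2$), so the hypothesis of Thm.\ref{thm:Thurston} is satisfied. The theorem then yields $[\mathrm{Diff}_{\rm c}(D^2)_0, \, \mathrm{Diff}_{\rm c}(D^2)_0] = \mathrm{Diff}_{\rm c}(D^2)_0$. Next I would invoke Prop.\ref{prop:our_H_coincides_with_Diff_c_D2_0}, which has already been proved via the explicit isotopy constructions in \S\ref{subsec:proof-requ-prop}, to rewrite this identity as $[\mathcal{H},\mathcal{H}] = \mathcal{H}$, which is exactly the perfectness statement.

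Since both ingredients are already in place, there is no real obstacle here; the corollary is formal. The only mild point worth noting, if one wants to be scrupulous, is that Thurston's theorem as stated in \cite{T74} is for manifolds without boundary, so one should be mindful that we are applying it to the open disc $D^2$ (not the closed disc $\ol{D^2}$), which is indeed a boundaryless smooth manifold; equivalently, as the authors point out, one may appeal to Mann's self-contained proof for $M = \mathbb{R}^n$ with $n \geq 2$ via the diffeomorphism $D^2 \cong \mathbb{R}^2$. With this cosmetic remark, the proof is essentially a single line.
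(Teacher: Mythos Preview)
Your proposal is correct and matches the paper's own reasoning exactly: the paper states just before the corollary that Prop.\ref{prop:our_H_coincides_with_Diff_c_D2_0} together with Thm.\ref{thm:Thurston} applied to $M=D^2$ yields the perfectness of $\mathcal{H}$, and marks the corollary with a \qed. Your additional remark about $D^2$ being boundaryless (and the alternative appeal to Mann's proof via $D^2 \cong \mathbb{R}^2$) is also consistent with the paper's own commentary preceding Prop.\ref{prop:our_H_coincides_with_Diff_c_D2_0}.
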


\section{The $2$-cocycle $\gamma$ of the disc diffeomorphism group $\mathcal{G}$}

We shall be more general, and construct a group $2$-cocycle of the diffeomorphism group of any smooth manifold with boundary. 
\begin{definition}
\label{def:G_M_theta_M}
Let $M$ be an oriented smooth $n$-dimensional manifold possibly with a smooth $(n-1)$-dimensional boundary, smoothly embedded as a compact subset in the standard Euclidean space $\mathbb{R}^n$ equipped with the coordinate system ${\bf x} = \left(\begin{smallmatrix} x_1 \\ \vdots \\ x_n\end{smallmatrix}\right)$. 

\vs

For $g\in {\rm Diff}_+(M)$, denote by $J_M(g)$ the Jacobian matrix of $g$ with respect to ${\bf x}$:
\begin{align}
\label{eq:J_M}
J_M(g)({\bf x}) := \left(\partial_{x_j} g_i \right)_{i,j}
\end{align}
That is, $J_M(g)$ is a ${\rm GL}(n,\mathbb{R})$-valued smooth function on $M$, with the $(i,j)$-component being $\partial_{x_j} g_i = \frac{\partial g_i}{\partial x_j}$, where $g_i$ is the $i$-th component function of $g$. Denote by $\theta_M(g)$ the following $\mathfrak{gl}(n,\mathbb{R})$-vaued $1$-form on $M$:
\begin{align}
\label{eq:theta_M}
\theta_M(g) := J_M(g)^{-1} dJ_M(g).
\end{align}
\end{definition}

Each $g\in {\rm Diff}_+(M)$ provides a smooth map $J_M(g) : M \to {\rm GL}(n,\mathbb{R})$, and $\theta_M(g)$ is just the pullback of the {\em Maurer-Cartan $1$-form} on the Lie group ${\rm GL}(n,\mathbb{R})$, which is ${\rm g}^{-1} d{\rm g}$ for the matrix-valued variable ${\rm g}$ running in ${\rm GL}(n,\mathbb{R})$. 

\vs

With this $1$-form we construct of a group $2$-cocycle of ${\rm Diff}_+(M)$:

\begin{proposition}[the $2$-cocycle $\gamma_M$]
\label{prop:gamma_M_2-cocycle}
Let $M$ be as in Def.\ref{def:G_M_theta_M}. The function $\gamma_M : {\rm Diff}_+(M) \times {\rm Diff}_+(M) \to \mathbb{R}$ defined by
\begin{align}
\label{eq:gamma_M_definition}
\gamma_M (g,h) := \int_M {\rm tr}_{\mathrm{mat}_n} \left( \theta_M(g) \wedge \theta_M(h^{-1}) \right), \qquad \forall g,h\in {\rm Diff}_+(M),
\end{align}
where ${\rm tr}_{\mathrm{mat}_n}$ means the ordinary matrix trace of $n$ by $n$ matrices,
is an $\mathbb{R}$-valued group $2$-cocycle of ${\rm Diff}_+(M)$, and satisfies the normalization condition \eqref{eq:gamma_normalization}.
\end{proposition}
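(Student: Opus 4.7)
The proof will rest on one key algebraic identity, the \emph{composition formula}
\begin{align}
\label{eq:plan-comp}
\theta_M(g_1 g_2) = J_M(g_2)^{-1}\bigl(g_2^*\theta_M(g_1)\bigr)\, J_M(g_2) + \theta_M(g_2),
\end{align}
which is derived from the chain rule $J_M(g_1 g_2) = (J_M(g_1)\circ g_2)\cdot J_M(g_2)$ by applying $d$, using the Leibniz rule, left-multiplying by $J_M(g_1 g_2)^{-1}$, and noting $(J_M(g_1)\circ g_2)^{-1} d(J_M(g_1)\circ g_2) = g_2^*\theta_M(g_1)$. The formula is mentioned in the Introduction for $M = D^2$ and the derivation is identical for any $M$ as in Def.\ref{def:G_M_theta_M}. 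Smoothness of $\gamma_M$ will follow from smoothness of $g \mapsto \theta_M(g)$ together with compactness of $M$.

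Normalization \eqref{eq:gamma_normalization} is immediate: since the inverse of $g^{-1}$ is $g$, one has $\gamma_M(g,g^{-1}) = \int_M {\rm tr}_{\mathrm{mat}_n}(\theta_M(g) \wedge \theta_M(g))$. For any $\mathfrak{gl}(n,\mathbb{R})$-valued $1$-form $A$ with scalar entries $A_{ij}$, the dummy-index swap $i \leftrightarrow k$ in $\sum_{i,k} A_{ik} \wedge A_{ki}$ combined with anticommutativity of $1$-forms gives ${\rm tr}(A \wedge A) = -{\rm tr}(A \wedge A) = 0$, so the integrand vanishes identically.

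The cocycle property \eqref{eq:gamma_2-cocycle_property} will follow from the equivalent rearrangement $\gamma_M(g_1 g_2, g_3) - \gamma_M(g_2, g_3) = \gamma_M(g_1, g_2 g_3) - \gamma_M(g_1, g_2)$. Substituting \eqref{eq:plan-comp} into $\gamma_M(g_1 g_2, g_3)$ at once isolates a $\gamma_M(g_2, g_3)$ piece plus a remainder $\int_M {\rm tr}\bigl(J_2^{-1}(g_2^*\theta_M(g_1)) J_2 \wedge \theta_M(g_3^{-1})\bigr)$, where $J_2 := J_M(g_2)$. I will move the inner conjugation to the other factor via the identity ${\rm tr}\bigl((J^{-1}XJ) \wedge Y\bigr) = {\rm tr}\bigl(X \wedge (JYJ^{-1})\bigr)$ (valid because entries of $J, J^{-1}$ are $0$-forms and commute freely past entries of $X, Y$, after which matrix indices cycle), and then identify $J_2\theta_M(g_3^{-1}) J_2^{-1}$ by applying \eqref{eq:plan-comp} to the factorization $(g_2 g_3)^{-1} = g_3^{-1} g_2^{-1}$ and pulling back the resulting equation by $g_2$. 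The key intermediate identity $g_2^* J_M(g_2^{-1}) = J_2^{-1}$ follows from $J_M(g_2^{-1} \circ g_2) = I$ via the chain rule; it yields $J_2 \theta_M(g_3^{-1}) J_2^{-1} = g_2^* \theta_M((g_2 g_3)^{-1}) - g_2^* \theta_M(g_2^{-1})$. Since $g_2$ is orientation-preserving, the pullback drops out under $\int_M$, producing exactly $\gamma_M(g_1, g_2 g_3) - \gamma_M(g_1, g_2)$, as needed. The main technical obstacle will be bookkeeping — cleanly separating pullbacks by $g_2$ from pointwise compositions with $g_2$, and correctly tracking signs when sliding matrix-valued $0$-forms past matrix-valued $1$-forms inside the trace.
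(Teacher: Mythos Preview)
Your proposal is correct and follows essentially the same route as the paper: both proofs hinge on the composition formula \eqref{eq:plan-comp} (the paper's Lem.~4.2), the chain-rule identity $J_M(g_2^{-1})\circ g_2 = J_M(g_2)^{-1}$ (the paper's Lem.~4.4), and invariance of $\int_M$ under pullback by an orientation-preserving diffeomorphism; your organization differs only in that you apply the composition formula to $(g_2g_3)^{-1}=g_3^{-1}g_2^{-1}$ and then pull back by $g_2$, whereas the paper expands both $\theta(gh)\wedge\theta(k^{-1})$ and $\theta(g)\wedge\theta(k^{-1}h^{-1})$ first and then matches the leftover integrals via the same pullback. The normalization argument is likewise the same in substance, the paper phrasing it as $\gamma_M(g,h)=-\gamma_M(h^{-1},g^{-1})$ and then setting $h=g^{-1}$.
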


\vs

The rest of the present subsection is dedicated to a proof of Prop.\ref{prop:gamma_M_2-cocycle}. Whenever $M$ is unambiguous from the context, we omit the subscript $M$. For convenience, for a $\frak{gl}(n,\mathbb{R})$-valued $p$-form $A$ on $M$ and $f\in {\rm Diff}_+(M)$, we often write $f^*A$ as $A\circ f$. We begin with a useful lemma.

\begin{lemma}[composition law for $\theta_M$]
\label{lem:theta_composition}
For $g,h\in {\rm Diff}_+(M)$, one has
\begin{align}
\label{eq:theta_composition}
\theta(g\circ h) = J(h)^{-1} \, (\theta(g) \circ h) \, J(h) + \theta(h).
\end{align}
\end{lemma}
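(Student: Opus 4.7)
The plan is to reduce the identity to the chain rule for Jacobians combined with the Leibniz rule for the exterior differential of a product of matrix-valued functions.

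First I would observe that by the standard chain rule applied componentwise,
\begin{equation*}
J(g\circ h)({\bf x}) \;=\; J(g)(h({\bf x}))\cdot J(h)({\bf x}),
\end{equation*}
i.e.\ $J(g\circ h) = (J(g)\circ h)\cdot J(h)$ as ${\rm GL}(n,\mathbb{R})$-valued functions on $M$. Inverting gives $J(g\circ h)^{-1} = J(h)^{-1}\cdot (J(g)^{-1}\circ h)$.

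Next I would compute $dJ(g\circ h)$ using the Leibniz rule for matrix-valued $0$-forms:
\begin{equation*}
dJ(g\circ h) \;=\; d\bigl(J(g)\circ h\bigr)\cdot J(h) \;+\; \bigl(J(g)\circ h\bigr)\cdot dJ(h).
\end{equation*}
Since the exterior derivative commutes with pullback, $d(J(g)\circ h) = h^{\ast}(dJ(g)) = dJ(g)\circ h$. Multiplying $J(g\circ h)^{-1}$ on the left by the two pieces yields
\begin{align*}
J(g\circ h)^{-1}\,dJ(g\circ h)
&= J(h)^{-1}\bigl(J(g)^{-1}\circ h\bigr)\bigl(dJ(g)\circ h\bigr)J(h) \\
&\quad + J(h)^{-1}\bigl(J(g)^{-1}\circ h\bigr)\bigl(J(g)\circ h\bigr)\,dJ(h).
\end{align*}
In the first summand the product $(J(g)^{-1}\circ h)(dJ(g)\circ h)$ equals $(J(g)^{-1}\,dJ(g))\circ h = \theta(g)\circ h$; in the second the two factors involving $J(g)$ cancel to give $dJ(h)$, so the second summand collapses to $J(h)^{-1}dJ(h) = \theta(h)$. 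Reading off the left-hand side as $\theta(g\circ h)$ yields the desired identity \eqref{eq:theta_composition}.

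There is no serious obstacle here; the only mild care needed is keeping the noncommutative matrix factors in the correct order when applying Leibniz and when taking the inverse of the product $J(g\circ h)$, and noting the compatibility $d\circ h^{\ast} = h^{\ast}\circ d$ that allows the pullback of $\theta(g)$ to appear. Everything is a direct computation, so this will serve as a short lemma-level verification rather than requiring any additional input.
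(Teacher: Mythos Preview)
Your proof is correct and follows essentially the same approach as the paper's own argument: both use the chain rule $J(g\circ h) = (J(g)\circ h)\,J(h)$, apply the Leibniz rule to differentiate the product, and invoke $d\circ h^\ast = h^\ast\circ d$ to identify $(J(g)^{-1}\circ h)(dJ(g)\circ h)$ with $\theta(g)\circ h$.
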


\begin{proof}
Using the chain rule we have
\begin{align}
\label{eq:chain_rule}
J(g\circ h) = (J(g) \circ h) \, J(h),
\end{align}
and thus its differential is
\begin{align*}
dJ(g\circ h) = (d(J(g) \circ h) ) \, J(h) + (J(g) \circ h) \, dJ(h).
\end{align*}
Hence we can combine and get
\begin{align*}
\theta(g\circ h) & = J(g\circ h)^{-1} \, dJ(g\circ h) \\
& = J(h)^{-1} \, (J(g) \circ h)^{-1} \, d(J(g) \circ h) \, J(h) + J(h)^{-1} \, dJ(h).
\end{align*}
Since pullback commutes with $d$, we can simplify $(J(g) \circ h)^{-1} \, d(J(g) \circ h)$ as $\theta(g) \circ h$.
\end{proof}

\begin{lemma}
\label{lem:d_inverse}
Let $X$ be a ${\rm GL}(n,\mathbb{R})$-valued function on $M$, and $X^{-1}$ be the ${\rm GL}(n,\mathbb{R})$-valued function on $M$ whose value is the inverse matrix of the value of $X$. The differential of $X^{-1}$ is
\begin{align}
\label{eq:d_inverse}
d(X^{-1}) = -X^{-1} d X \, X^{-1}.
\end{align}
\end{lemma}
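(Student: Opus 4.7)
The plan is to derive the identity by differentiating the defining relation $X \cdot X^{-1} = I$, where $I$ is the constant identity matrix in $\mathrm{GL}(n,\mathbb{R})$. Since $I$ is constant, its exterior derivative vanishes, and this single observation together with a matrix Leibniz rule for $d$ will do all the work.

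First, I would record the matrix-valued Leibniz rule: for any two smooth $\mathrm{mat}_n$-valued functions $A, B$ on $M$, one has $d(AB) = (dA) B + A \, dB$, where the products are the usual matrix products (the entries of $AB$ are polynomial in the entries of $A$ and $B$, so this is just the ordinary Leibniz rule applied componentwise, with care to preserve the matrix order so the formula is meaningful when $A$ or $B$ is replaced by a matrix-valued $1$-form). Applying this to $A = X$ and $B = X^{-1}$ and using $d(I) = 0$ gives
\begin{align*}
0 = d(X \cdot X^{-1}) = (dX) X^{-1} + X \, d(X^{-1}).
\end{align*}

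Then I would simply left-multiply by $X^{-1}$ to isolate $d(X^{-1})$, obtaining
\begin{align*}
d(X^{-1}) = -X^{-1} (dX) X^{-1},
\end{align*}
which is the claimed formula \eqref{eq:d_inverse}.

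There is no real obstacle here; the only point worth flagging is to be explicit that the Leibniz rule for $d$ holds for matrix-valued functions in the form $d(AB) = (dA)B + A(dB)$ with the matrix order preserved, so that when we evaluate at a point and plug in tangent vectors, the entries come out correctly. Once that is acknowledged, the proof is a two-line computation.
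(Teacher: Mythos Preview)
Your proposal is correct and follows exactly the same approach as the paper: differentiate the identity $X X^{-1} = I$ using the matrix Leibniz rule to get $0 = (dX)X^{-1} + X\,d(X^{-1})$, then left-multiply by $X^{-1}$. The paper's proof is in fact terser than yours, omitting the explicit left-multiplication step.
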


\begin{proof}
From $XX^{-1} = I$ where $I$ is the (constant) identity matrix, we have
\begin{align}
0 = (dX) X^{-1} + X (d(X^{-1})).
\end{align}
\end{proof}

\begin{lemma}[Jacobian matrix of inverse]
\label{lem:compose_h_inverse}
One can easily observe
\begin{align}
\label{eq:lem_compose_h_inverse}
J(h)\circ h^{-1} = J(h^{-1})^{-1}, \qquad \forall \, h \in {\rm Diff}_+(M).
\end{align}
By taking the exterior derivative of \eqref{eq:lem_compose_h_inverse} we get:
\begin{align}
\label{eq:d_J_compose_inverse}
dJ(h)\circ h^{-1} = - J(h^{-1})^{-1} \, dJ(h^{-1}) \, J(h^{-1})^{-1}, \qquad \forall \, h \in {\rm Diff}_+(M).
\end{align}
\end{lemma}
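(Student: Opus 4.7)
The plan is to deduce both identities directly from the chain rule \eqref{eq:chain_rule}, together with Lem.\ref{lem:d_inverse} for the second one.

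For the first identity \eqref{eq:lem_compose_h_inverse}, I would start from $h \circ h^{-1} = \mathrm{id}_M$ and apply the composition formula \eqref{eq:chain_rule} with $g$ replaced by $h$ and $h$ replaced by $h^{-1}$. Since the Jacobian of the identity diffeomorphism is the constant identity matrix $I$, this yields
\begin{equation*}
(J(h) \circ h^{-1}) \, J(h^{-1}) = I,
\end{equation*}
and right-multiplying both sides by $J(h^{-1})^{-1}$ gives \eqref{eq:lem_compose_h_inverse}.

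For the second identity \eqref{eq:d_J_compose_inverse}, I would apply $d$ to both sides of \eqref{eq:lem_compose_h_inverse}. On the left-hand side, since pullback commutes with the exterior derivative and the paper's notational convention writes $(h^{-1})^* A$ as $A \circ h^{-1}$, one has
\begin{equation*}
d(J(h) \circ h^{-1}) = d\bigl((h^{-1})^* J(h)\bigr) = (h^{-1})^* \, dJ(h) = dJ(h) \circ h^{-1}.
\end{equation*}
On the right-hand side, I would apply Lem.\ref{lem:d_inverse} with $X = J(h^{-1})$ to obtain
\begin{equation*}
d(J(h^{-1})^{-1}) = -J(h^{-1})^{-1} \, dJ(h^{-1}) \, J(h^{-1})^{-1},
\end{equation*}
and equating the two expressions gives \eqref{eq:d_J_compose_inverse}.

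There is no real obstacle here; the only point requiring a bit of care is verifying that the notational identification $d(A \circ f) = (dA) \circ f$ genuinely amounts to the naturality $d \circ f^* = f^* \circ d$, which is a standard property of pullbacks of differential forms. Everything else is formal matrix manipulation, and both identities follow in a few lines.
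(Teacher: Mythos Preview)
Your proof is correct and follows essentially the same approach as the paper: both derive \eqref{eq:lem_compose_h_inverse} from $h\circ h^{-1}=\mathrm{id}_M$ via the chain rule \eqref{eq:chain_rule}, and both obtain \eqref{eq:d_J_compose_inverse} by applying $d$ to each side, invoking Lem.\ref{lem:d_inverse} on the right and the commutation of $d$ with pullback on the left. Your version just spells out the intermediate steps a bit more explicitly.
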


\begin{proof}
We have $id = h \circ h^{-1}$, so $I = J(h\circ h^{-1})$, where $I$ means the constant function with its value equal to the identity matrix. Then by the chain rule we have
\begin{align}
I = (J(h) \circ h^{-1}) \, J(h^{-1}). 
\end{align}
For \eqref{eq:d_J_compose_inverse}, using Lem.\ref{lem:d_inverse} and keeping in mind that the exterior derivative $d$ commutes with pullback, we can apply $d$ to both sides of \eqref{eq:lem_compose_h_inverse} and get \eqref{eq:d_J_compose_inverse}.
\end{proof}

Now we're ready to prove Prop.\ref{prop:gamma_M_2-cocycle}.

\begin{proof}[Proof of Prop.\ref{prop:gamma_M_2-cocycle}]
The well-defineness of the formula \eqref{eq:gamma_M_definition} is clear. Let's first prove the group $2$-cocycle property \eqref{eq:gamma_2-cocycle_property} of our $\gamma_M$ \eqref{eq:gamma_M_definition}:
\begin{align}
\label{eq:gamma_2-cocycle_property_to_prove}
\gamma_M(gh,k) + \gamma_M(g,h) = \gamma_M(g,hk) + \gamma_M(h,k),
\end{align}
for $g,h,k \in {\rm Diff}_+(M)$. Using the composition law \eqref{eq:theta_composition} for $\theta$ (Lem.\ref{lem:theta_composition}), we get
\begin{align*}
\theta(gh) \wedge \theta(k^{-1}) & = J(h)^{-1} \, (\theta(g) \circ h) \, J(h) \wedge \theta(k^{-1})
+ \theta(h) \wedge \theta(k^{-1}), \\
\theta(g) \wedge \theta(k^{-1} h^{-1}) & = \theta(g) \wedge J(h^{-1})^{-1} \,  (\theta(k^{-1})\circ h^{-1}) \, J(h^{-1}) + \theta(g) \wedge \theta(h^{-1}).
\end{align*}
In view of the definition \eqref{eq:gamma_M_definition} of $\gamma = \gamma_M$, the above observations about $\theta(gh) \wedge \theta(k^{-1})$ and $\theta(g) \wedge \theta(k^{-1} h^{-1})$ tell us that in order to prove \eqref{eq:gamma_2-cocycle_property_to_prove} it suffices to show:
\begin{align}
\label{eq:gamma_2-cocycle_proof_eq1}
& \int_M {\rm tr}_{\mathrm{mat}_n} \left( J(h)^{-1} \, (\theta(g)\circ h) \, J(h) \wedge \theta(k^{-1}) \right)  \\
\label{eq:gamma_2-cocycle_proof_eq2}
& = \int_M {\rm tr}_{\mathrm{mat}_n} \left( \theta(g) \wedge J(h^{-1})^{-1} \, (\theta(k^{-1}) \circ h^{-1}) \, J(h^{-1}) \right).
\end{align}
We will pull-back the integrand of \eqref{eq:gamma_2-cocycle_proof_eq2} by $h$. Recalling Lem.\ref{lem:compose_h_inverse} which says $J(h)\circ h^{-1} = J(h^{-1})^{-1}$, we get
\begin{align}
\hspace{-3mm}
\mbox{integral in } \eqref{eq:gamma_2-cocycle_proof_eq2}
= \int_{h^{-1}(M)} {\rm tr}_{\mathrm{mat}_n} \left( (\theta(g) \circ h) \wedge \, J(h) \, \theta(k^{-1}) \, J(h)^{-1} \right)
\end{align}
which is easily seen to be same as \eqref{eq:gamma_2-cocycle_proof_eq1}.

\vs

For the normalization condition \eqref{eq:gamma_normalization} of $\gamma_M$, observe for any $g,h\in{\rm Diff}_+(M)$ that
\begin{align*}
\gamma_M(g,h) \stackrel{\eqref{eq:gamma_M_definition}}{=} \int_M {\rm tr}_{\mathrm{mat}_n} (\theta(g) \wedge \theta(h^{-1}))
\,=\, - \int_M {\rm tr}_{\mathrm{mat}_n} (\theta(h^{-1}) \wedge \theta(g))
\stackrel{\eqref{eq:gamma_M_definition}}{=} - \gamma_M(h^{-1}, g^{-1}),
\end{align*}
which immediately implies $\gamma_M(g,g^{-1})=0$ (put $h=g^{-1}$). In fact, it is easy to show that the condition \eqref{eq:gamma_normalization} is equivalent to the condition $\gamma(g,h) = - \gamma(h^{-1},g^{-1})$, $\forall g,h \in {\rm Diff}_+(M)$.
\end{proof}

\begin{remark}
A priori, it is not clear if $\gamma_M$ defined in \eqref{eq:gamma_M_definition} is a non-trivial $2$-cocycle of ${\rm Diff}_+(M)$, i.e. if it is not the coboundary of a $1$-cochain.
\end{remark}

Applying this construction to the subgroup $\mathcal{G} = {\rm Diff}_{+{\rm ar}}(\ol{D^2})_0$ of ${\rm Diff}_+(\ol{D^2})$ defined in Def.\ref{def:our_model_of_the_disc_diffeomorphism_group}, we obtain a (possibly nontrivial) $\mathbb{R}$-valued group $2$-cocycle $\gamma$ of our disc diffeomorphism group $\mathcal{G}$.

\begin{definition}[group $2$-cocycle of our disc diffeomorphism group]
\label{def:our_gamma}
For a fixed scalar $c_0 \in \mathbb{R}$, we define the $\mathbb{R}$-valued $2$-cocycle $\gamma$ of our disc diffeomorphism group $\mathcal{G}$ in Def.\ref{def:our_model_of_the_disc_diffeomorphism_group} as
\begin{align}
\label{eq:gamma_definition}
\hspace{-3mm}
\gamma(g,h) := 3c_0 \, \gamma_{\ol{D^2}}(g,h) \stackrel{\eqref{eq:gamma_M_definition}}{=} 3c_0 \int_{D^2} {\rm tr}_{\mathrm{mat}_2} (\theta_{D^2}(g) \wedge \theta_{D^2}(h^{-1})),
\end{align}
for all $g,h\in \mathcal{G}$, where $\theta_{D^2}= \theta_{\ol{D^2}}$ is as in Def.\ref{def:G_M_theta_M} using the usual embedding of $D^2$ into the complex plane with the real coordinate system $\smallvectwo{x}{y}$, as described in Def.\ref{def:relevant_2-dimensional_manifolds}.
\end{definition}

\begin{remark}[suggestion for other definitions of $\mathcal{G}$]
The reason why we considered the asymptotically-radial and isotopic-to-identity condition for our subgroup $\mathcal{G}$ of ${\rm Diff}_+(\ol{D^2})$ in the present paper is mainly to guarantee that its boundary trivial subgroup $\mathcal{G}_{\rm bt} = \mathcal{H}$ is perfect (Cor.\ref{cor:our_H_is_perfect}). One may consider another subgroup of ${\rm Diff}_+(\ol{D^2})$ as $\mathcal{G}$ provided that $\mathcal{G}_{\rm bt} = \mathcal{H}$ is perfect.
\end{remark}

\begin{remark}
We are informed by Sam Nariman that the cocycle in eq.\eqref{eq:gamma_M_definition} has been considered in \cite{Billig} for the case when $M$ is a torus $\mathbb{T}^N$.
\end{remark}

\section{$3$d construction of a trivial $2$-cocycle $\gamma_0$ of $\mathcal{H}$}
\label{sec:gamma_0}

The next step is to construct a trivial $2$-cocycle $\gamma_0$ of $\mathcal{H}$, defined as the coboundary of some $1$-cochain (i.e. just a function on $\mathcal{H}$) $\omega_0$ as in \eqref{eq:trivial_cocycle}. In order to do this, we first `extend' each element $h$ of our boundary-trivial disc diffeomorphism group $\mathcal{H} = {\rm Diff}_{\rm c}(D^2)_0$ to a self-diffeomorphism of $S^2\approx \wh{\mathbb{C}}$ (recall that $\ol{D^2} \subset S^2$), and in turn, to a self-diffeomorphism of the open unit $3$-ball $B^3 \subset \mathbb{R}^3$ bounded by the unit sphere $S^2$ in $\mathbb{R}^3$. We shall then define $\omega_0(g)$ using this $3$d diffeomorphism obtained from $g$.

\subsection{$2$d$\to 3$d extension, and the Wess-Zumino terms}
\label{subsec:extension_from_S2_to_B3}

We first extend the boundary-trivial self-diffeomorphisms of the disc $D^2$ to self-diffeomorphisms of the Riemann sphere $S^2$.
\begin{definition}
\label{def:B_h}
For each $h\in \mathcal{H} = {\rm Diff}_{\rm c}(D^2)_0$ ($\because$Prop.\ref{prop:our_H_coincides_with_Diff_c_D2_0}), define $B_h:\wh{\mathbb{C}} \to \wh{\mathbb{C}}$ as
\begin{align}
\label{eq:B_on_subgroup}
B_h = \left\{ \begin{array}{ll} h & \mbox{on $D^2$}, \\ {\rm id} & \mbox{on $\ol{(D^2)^\star}$}, \end{array} \right.
\end{align}
where $\wh{\mathbb{C}}\approx S^2$, $D^2$, $(D^2)^\star$, $\ol{(D^2)^\star}$ are as in Def.\ref{def:relevant_2-dimensional_manifolds}.
\end{definition}

As the support of $h \in \mathcal{H}$ is a compact subset of the open unit disc $D^2$, we see that the self-diffeomorphism $h$ is identity on the complement in $D^2$ of its support; hence $B_h$ is $C^\infty$ on the whole $\wh{\mathbb{C}}$, and its support in $\wh{\mathbb{C}} \approx S^2$ is compact and does not contain the point $\infty$. 

\begin{definition}
An element of ${\rm Diff}_+(S^2)$ is said to be \emph{compactly supported from infinity} if its support in $\wh{\mathbb{C}} \approx S^2$ is compact and does not contain the point $\infty$. The set of all such elements is denoted by ${\rm Diff}_{\rm ci}(S^2)$.
\end{definition}

\begin{remark}
One can identify ${\rm Diff}_{\rm ci}(S^2)$ with ${\rm Diff}_{\rm c}(\mathbb{C})$.
\end{remark}

Then ${\rm Diff}_{\rm ci}(S^2)$ is a group. The following two observations are immediate.

\begin{lemma}
\label{lem:B_h_smoothness}
One has $B_h \in {\rm Diff}_{\rm ci}(S^2)_0 \subset {\rm Diff}_+(S^2)_0$ 
for all $h \in \mathcal{H}$. \qed
\end{lemma}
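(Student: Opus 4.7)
The plan is to verify three things in order: (i) $B_h$ is a genuine $C^\infty$ self-diffeomorphism of $S^2\approx\wh{\mathbb{C}}$; (ii) its support in $S^2$ is compact and avoids $\infty$, so $B_h\in{\rm Diff}_{\rm ci}(S^2)$; (iii) $B_h$ admits a smooth isotopy to the identity inside ${\rm Diff}_{\rm ci}(S^2)$, witnessing $B_h\in{\rm Diff}_{\rm ci}(S^2)_0$.

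For (i), I would use the characterization of $\mathcal{H}$ proved in Prop.\ref{prop:our_H_coincides_with_Diff_c_D2_0}: $h\in\mathcal{H}={\rm Diff}_{\rm c}(D^2)_0$ has compact support inside the open disc $D^2$, so there exists $\epsilon\in(0,1)$ such that $h$ is the identity on the annulus $1-\epsilon<|z|<1$. Combined with the definition of $B_h$ as the identity on $\ol{(D^2)^\star}$, this means $B_h$ equals $\mathrm{id}_{\wh{\mathbb{C}}}$ on the open neighborhood $|z|>1-\epsilon$ of $S^1\cup\{\infty\}$. Therefore the two defining pieces agree on an overlap, and $B_h$ is $C^\infty$ on all of $\wh{\mathbb{C}}$ (with Jacobian equal to the identity matrix along $S^1$). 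Bijectivity follows because $h:D^2\to D^2$ is a bijection restricting to $\mathrm{id}$ near $S^1$ and $B_h$ is the identity on the complement; orientation preservation is inherited from $h$ and from $\mathrm{id}$. A smooth inverse is provided by $B_{h^{-1}}$, since $h^{-1}$ is also in ${\rm Diff}_{\rm c}(D^2)$.

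For (ii), it is immediate that $\mathrm{supp}(B_h)=\mathrm{supp}(h)$, viewed inside $\wh{\mathbb{C}}$; this is a compact subset of $D^2\subset\mathbb{C}$ and in particular does not contain $\infty$, so $B_h\in{\rm Diff}_{\rm ci}(S^2)$.

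For (iii), since $h\in\mathcal{H}={\rm Diff}_{\rm c}(D^2)_0$, there is a smooth isotopy $t\mapsto h_t$ in ${\rm Diff}_{\rm c}(D^2)$ from $h_0=\mathrm{id}_{D^2}$ to $h_1=h$. Setting $H_t:=B_{h_t}$ for $t\in[0,1]$, each $H_t$ lies in ${\rm Diff}_{\rm ci}(S^2)$ by the argument of (i)--(ii) applied to $h_t$; the smoothness of the two-variable map $(x,t)\mapsto H_t(x)$ follows from the smoothness of $(x,t)\mapsto h_t(x)$ on $D^2\times[0,1]$ together with the fact that $H_t\equiv\mathrm{id}$ on the open set $\ol{(D^2)^\star}$, so the two pieces glue smoothly across $S^1$ for every $t$. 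Hence $t\mapsto H_t$ is a smooth isotopy in ${\rm Diff}_{\rm ci}(S^2)$ from $\mathrm{id}_{S^2}$ to $B_h$, giving $B_h\in{\rm Diff}_{\rm ci}(S^2)_0\subset{\rm Diff}_+(S^2)_0$. The one technical subtlety here is checking smoothness of the glued isotopy uniformly across $S^1$; this is not really an obstacle, however, because the support condition ensures each $h_t$ is identity on a neighborhood of $S^1$, making the gluing locally trivial.
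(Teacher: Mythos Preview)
Your proposal is correct and follows essentially the same reasoning the paper gives. Note that the paper does not provide a formal proof of this lemma (it is marked with \qed\ as immediate); the justification is contained in the short paragraph preceding it, which observes that the compact support of $h$ inside $D^2$ makes $B_h$ smooth across $S^1$ with compact support avoiding $\infty$. Your argument spells this out in detail and adds the explicit isotopy $t\mapsto B_{h_t}$ for membership in the identity component, which is the natural way to complete the observation.
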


\begin{lemma}
\label{lem:composition_of_B_for_subgroup}
One has $B_{gh} = B_g \circ B_h$ for all $g,h\in \mathcal{H}$. \qed
\end{lemma}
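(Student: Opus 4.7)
The claim is essentially a definition-unwinding, so my plan is a straightforward two-case verification on the partition $\wh{\mathbb{C}} = D^2 \sqcup \ol{(D^2)^\star}$ used in the definition of $B_{(\,\cdot\,)}$.

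First I would fix $g,h \in \mathcal{H}$ and evaluate both $B_{gh}$ and $B_g \circ B_h$ at an arbitrary point $z \in \wh{\mathbb{C}}$, splitting into the two cases $z \in D^2$ and $z \in \ol{(D^2)^\star}$. For $z \in D^2$: since $h \in \mathcal{H} \subset \mathrm{Diff}_{\rm c}(D^2)_0$ is a self-diffeomorphism of the open disc $D^2$, we have $h(z) \in D^2$, and hence by Def.\ref{def:B_h} applied twice, $(B_g \circ B_h)(z) = B_g(h(z)) = g(h(z)) = (gh)(z) = B_{gh}(z)$, the last equality using that $gh \in \mathcal{H}$ so $B_{gh}$ agrees with $gh$ on $D^2$. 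For $z \in \ol{(D^2)^\star}$: by definition $B_h(z) = z$ and $B_g(z) = z$, so $(B_g \circ B_h)(z) = z$; similarly $B_{gh}(z) = z$, so both sides agree.

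The only conceptual point worth highlighting is that $h$ preserves $D^2$ (so that the composition $B_g(h(z))$ lands back in the region where $B_g$ is given by $g$), but this is immediate from $h$ being a self-diffeomorphism of $D^2$. I do not anticipate any real obstacle; this lemma is purely bookkeeping on the extension-by-identity construction of Def.\ref{def:B_h}, and will probably be written in a few lines.
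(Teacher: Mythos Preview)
Your proposal is correct and matches the paper's treatment: the paper marks this lemma with an immediate \qed, regarding it as a trivial consequence of Def.\ref{def:B_h}, and your two-case check on $D^2$ versus $\ol{(D^2)^\star}$ is exactly the obvious unwinding behind that.
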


In order to define the $1$-cochain $\omega_0$ of $\mathcal{H}$, we need yet another extension from $S^2$ to $B^3$. Here $B^3$ denotes the open unit ball in $\mathbb{R}^3$, while its boundary is $\partial B^3$ is identified with $S^2$, the Riemann sphere $\wh{\mathbb{C}}$. We still use the coordinate $\smallvectwo{x}{y}$ for $S^2$, or more precisely, for $S^2 \setminus\{\infty\}$, as described in Def.\ref{def:relevant_2-dimensional_manifolds}. We shall carefully avoid the subtlety that might possibly arise near the point $\infty$ of $S^2$. Thinking of the embedding $S^2 = \partial B^3 \subset B^3 \subset \mathbb{R}^3$, we choose to use a radial coordinate $\rho \in [0,1]$ so that $\smallvecthree{\rho}{x}{y}$ gives a coordinate system for the closed $3$-ball $\ol{B^3} = B^3 \cup S^2$; at $\rho=1$ we have the identification $\partial B^3 \leftrightarrow S^2$, $\smallvecthree{1}{x}{y} \leftrightarrow \smallvectwo{x}{y}$, and $\rho=0$ means the origin of $\mathbb{R}^3$, where one needs to be careful when dealing with the smoothness. Then we have a chain of inclusions inside $\mathbb{R}^3$:
\begin{align}
S^1 = \partial D^2 \subset \ol{D^2} = D^2 \cup S^1 \subset \wh{\mathbb{C}} \approx S^2 = \partial B^3 \subset \ol{B^3} = B^3 \cup S^2.
\end{align}

As $B^3$ has a radial coordinate, one can define the `asymptotically radial' self-diffeomorphisms of $B^3$. Let us focus on orientation-preserving ones.
\begin{definition}
An element $\mathbf{B}$ of ${\rm Diff}_+(\ol{B^3})$ is said to be \emph{asymptotically radial} if there exists a self-diffeomorphism $B$ of $S^2$ and $\epsilon \in (0,1)$ such that
\begin{align*}
  \mathbf{B} \vectwo{\rho}{\smallvectwo{x}{y}} = \vectwo{\rho}{B\smallvectwo{x}{y}}, \qquad \forall \rho \in (1-\epsilon,1], \quad \forall \smallvectwo{x}{y} \in S^2.
\end{align*}
Let ${\rm Diff}_{+{\rm ar}}(\ol{B^3})$ be the set of all asymptotically radial elements of ${\rm Diff}_+(\ol{B^3})$.
\end{definition}

\begin{definition}
\label{def:lci}
An element $\mathbf{B}$ of ${\rm Diff}_+(\ol{B^3})$ is said to be \emph{layered with compact-from-infinity supports} if it is given by
\begin{align*}
\mathbf{B}\vectwo{\rho}{\smallvectwo{x}{y}} = \vectwo{\rho}{B_\rho\smallvectwo{x}{y}}, \qquad \forall \rho \in [0,1], \quad \forall \smallvectwo{x}{y} \in S^2,
\end{align*}
for some elements $B_\rho$ of ${\rm Diff}_{\rm ci}(S^2)$ for $\rho\in [0,1]$. Let ${\rm Diff}_{\rm lci}(\ol{B^3})$ be the set of all such elements of ${\rm Diff}_+(\ol{B^3})$.
\end{definition}

Then ${\rm Diff}_{+{\rm ar}}(\ol{B^3})$ and ${\rm Diff}_{\rm lci}(\ol{B^3})$ are groups. Notice that we are requiring $\mathbf{B}|_{S^2} \in {\rm Diff}_{\rm ci}(S^2)$ for an element $\mathbf{B}$ of ${\rm Diff}_{\rm lci}(\ol{B^3})$.

\begin{definition}
\label{def:diff_B3_star}
Define
\begin{align}
\nonumber
{\rm Diff}_{+*}(\ol{B^3}) := {\rm Diff}_{+{\rm ar}}(\ol{B^3}) \cap {\rm Diff}_{\rm lci}(\ol{B^3}).
\end{align}
\end{definition}

\begin{proposition}
\label{prop:S2_extension_to_B3}
Any element $B \in {\rm Diff}_{\rm ci}(S^2)_0$ can be extended to an element $\til{B}$ of ${\rm Diff}_{+*}(\ol{B^3})_0$.
\end{proposition}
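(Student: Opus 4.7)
The plan is to mimic the Alexander-trick style argument used in the proof of Proposition~\ref{prop:quotient_map_is_surjection}, extending now from $S^2$ to $\ol{B^3}$ by sliding along the radial coordinate $\rho$. Since $B \in {\rm Diff}_{\rm ci}(S^2)_0$, there is a smooth isotopy $t \mapsto B_t$ in ${\rm Diff}_{\rm ci}(S^2)$ with $B_0 = \mathrm{id}_{S^2}$ and $B_1 = B$. Fix any $\epsilon \in (0, 1/2)$ and let $\xi := \xi_{\epsilon, \epsilon} : [0,1] \to [0,1]$ be the cutoff function provided by Lemma~\ref{lem:xi}, so $\xi \equiv 0$ on $[0, \epsilon]$ and $\xi \equiv 1$ on $[1-\epsilon, 1]$.

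I would then define $\til{B} : \ol{B^3} \to \ol{B^3}$ by
\begin{align*}
\til{B}\vectwo{\rho}{\smallvectwo{x}{y}} := \vectwo{\rho}{B_{\xi(\rho)}\smallvectwo{x}{y}}, \qquad \forall \rho \in [0,1], \quad \forall \smallvectwo{x}{y} \in S^2.
\end{align*}
By construction $\til{B}|_{S^2} = B_{\xi(1)} = B_1 = B$, while on the neighborhood $\{\rho \leq \epsilon\}$ of the origin $\til{B}$ agrees with $\mathrm{id}_{\ol{B^3}}$. The latter is what secures smoothness of $\til{B}$ at the origin, where the polar-type coordinates $(\rho, (x,y))$ are singular; elsewhere smoothness follows from the joint smoothness of the isotopy $(t, (x,y)) \mapsto B_t(x,y)$ together with $\xi$. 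An inverse for $\til{B}$ is the analogous formula obtained by replacing $B_{\xi(\rho)}$ with $B_{\xi(\rho)}^{-1}$, so $\til{B} \in {\rm Diff}_+(\ol{B^3})$; asymptotic radiality holds because $B_{\xi(\rho)} = B$ for $\rho \in [1-\epsilon, 1]$, and the layered compact-from-infinity support property is immediate since each $B_{\xi(\rho)} \in {\rm Diff}_{\rm ci}(S^2)$. Hence $\til{B} \in {\rm Diff}_{+*}(\ol{B^3})$.

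To upgrade this to $\til{B} \in {\rm Diff}_{+*}(\ol{B^3})_0$, I would rescale the isotopy parameter: set
\begin{align*}
\til{B}^{(s)}\vectwo{\rho}{\smallvectwo{x}{y}} := \vectwo{\rho}{B_{s\,\xi(\rho)}\smallvectwo{x}{y}}, \qquad s \in [0,1].
\end{align*}
The identical verification shows $\til{B}^{(s)} \in {\rm Diff}_{+*}(\ol{B^3})$ for every $s$ (its boundary value is $B_s \in {\rm Diff}_{\rm ci}(S^2)$), with $\til{B}^{(0)} = \mathrm{id}_{\ol{B^3}}$ and $\til{B}^{(1)} = \til{B}$, and the joint smoothness in $(s, \rho, (x,y))$ is inherited from that of $(t, (x,y)) \mapsto B_t(x,y)$. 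The main delicate point, in complete parallel with the two-dimensional case, is smoothness at the center $\rho = 0$, and the choice of cutoff $\xi$ handles this by making every $\til{B}^{(s)}$ literally equal to the identity on a full $\ol{B^3}$-neighborhood of the origin, so that no subtlety of the non-Cartesian nature of the $(\rho, (x,y))$ coordinates enters.
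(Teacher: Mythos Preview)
Your proof is correct and follows essentially the same Alexander-trick-with-cutoff approach as the paper: the paper defines the whole isotopy $\til{B}_t(\rho,(x,y)) = (\rho, B_{t\cdot\xi(\rho)}(x,y))$ at once and takes $\til{B}:=\til{B}_1$, which coincides with your $\til{B}^{(s)}$ family and your $\til{B}$. The only difference is presentational order; the verifications of smoothness near the origin, asymptotic radiality, and membership in ${\rm Diff}_{+*}(\ol{B^3})$ are the same.
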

\begin{proof}
A $3$-dimensional analog of the modified Alexander trick used in the proof of Prop.\ref{prop:quotient_map_is_surjection} would work. Here we present yet another slight modification of that idea, which in fact could also have been adapted to the proof of Prop.\ref{prop:quotient_map_is_surjection}. Pick any $\epsilon \in (0,1/2)$, and let $\xi := \xi_{\epsilon,\epsilon} : [0,1] \to [0,1]$ be given as in Lem.\ref{lem:xi}. By assumption, there is a smooth isotopy $t\mapsto B_t$ in ${\rm Diff}_{\rm ci}(S^2)$ such that $B_0 = \mathrm{id}_{S^2}$ and $B_1 = B$; in particular, each $B_t$ is orientation-preserving. For each $t\in [0,1]$ define a map $\til{B}_t : \ol{B^3} \to \ol{B^3}$ as
\begin{align*}
    \til{B}_t \vectwo{\rho}{\smallvectwo{x}{y}} = \vectwo{\rho}{B_{t\cdot \xi(\rho)} \smallvectwo{x}{y}}, \qquad \forall \rho \in [0,1], \qquad \forall \smallvectwo{x}{y} \in S^2.
\end{align*}
From the properties of $\xi=\xi_{\epsilon,\epsilon}$ and the smooth isotopy $t\mapsto B_t$, it is easy to see for each $t\in [0,1]$ that $\til{B}_t$ is a self-homeomorphism of $\ol{B^3}$, is a diffeomorphism on $\ol{B^3} \setminus \{\smallvecthree{0}{0}{0}\}$, is identity near the origin, and is asymptotically radial with the boundary value $B_t \in {\rm Diff}_+(S^2)$; hence $\til{B}_t \in {\rm Diff}_{+{\rm ar}}(\ol{B^3}) \subset {\rm Diff}_+(\ol{B^3})$. It is manifest from the formula for $\til{B}_t$ and Def.\ref{def:lci} that $\til{B}_t \in {\rm Diff}_{\rm lci}(\ol{B^3})$; hence $\til{B}_t \in {\rm Diff}_{+*}(\ol{B^3})$. Moreover, $\til{B}_0 = \mathrm{id}_{\ol{B^3}}$, and $t\mapsto \til{B}_t$ is  a smooth isotopy in ${\rm Diff}_{+*}(\ol{B^3})$. Hence $\til{B}_1 \in {\rm Diff}_{+*}(\ol{B^3})_0$, with $\til{B}_1|_{S^2} = B$. Thus $\til{B} := \til{B}_1$ is a sought-for extension of $B$.
\end{proof}

\begin{remark}
One can use the higher dimensional Douady-Earle extension \cite[\S11]{DE86} to extend any $B\in {\rm Diff}_+(S^2)$ to an element of $\til{B}$ of ${\rm Diff}_+(B^3)$. However, for our purposes, it is not necessary to quote such a deep result.
\end{remark}

For any element $h \in \mathcal{H} = {\rm Diff}_{\rm c}(D^2)_0$, consider $B_h : S^2 \to S^2$ constructed in Def.\ref{def:B_h}, which is an element of ${\rm Diff}_{\rm ci}(S^2)_0$ (Lem.\ref{lem:B_h_smoothness}). Now, consider {\em any} extension of $B_h$ to an element of ${\rm Diff}_{+*}(\ol{B^3})_0$, i.e. choose any
\begin{align}
\label{eq:B_h_to_B3}
\til{B}_h \in {\rm Diff}_{+*}(\ol{B^3})_0 \quad \mbox{such that} \quad \til{B}_h|_{S^2} = B_h.
\end{align}
Prop.\ref{prop:S2_extension_to_B3} tells us that there exists at least one such extension. We shall define the function $\omega_0$ on $\mathcal{H}$ using any chosen extensions $h \in \mathcal{H} \leadsto \til{B}_h \in {\rm Diff}_{+*}(\ol{B^3})_0$.

\begin{definition}[the ``Wess-Zumino" term]
\label{def:WZ}
Define the function $W : {\rm Diff}_+(\ol{B^3})\to \mathbb{R}$ by
\begin{align}
\label{eq:W_definition}
W({\bf g}) := \int_{B^3} {\rm tr}_{{\rm mat}_3} \left( (\theta_{B^3}({\bf g}))^{\wedge 3} \right), \qquad \forall {\bf g}\in {\rm Diff}_+(\ol{B^3}),
\end{align}
where $\theta_{B^3} = \theta_{\ol{B^3}}$ is defined as in \eqref{eq:theta_M} for the coordinate system $\smallvecthree{\rho}{x}{y}$ for $B^3$, $c_0$ the same constant as in Def.\ref{def:our_gamma}, and ${\rm tr}_{\mathrm{mat}_3}$ is the trace of the $3$ by $3$ matrices. Define the function $\omega_0$ on $\mathcal{H}$ as
\begin{align}
\label{eq:omega_definition}
\omega_0(h) := c_0 \, W(\til{B}_h) = c_0 \int_{B^3} {\rm tr}_{{\rm mat}_3} \left( (\theta_{B^3}(\til{B}_h))^{\wedge 3}\right), \qquad \forall h\in \mathcal{H} = {\rm Diff}_{\rm c}(D^2)_0,
\end{align}
where $B_h$ is as defined in \eqref{eq:B_on_subgroup} and $\til{B}_h$ is any chosen extension \eqref{eq:B_h_to_B3} of $B_h$ to ${\rm Diff}_{+*}(\ol{B^3})_0$.
\end{definition}

We shall prove shortly:
\begin{proposition}
\label{prop:W_depends_only_on_S2}
$W({\bf g})$ for $\mathbf{g} \in {\rm Diff}_{+*}(\ol{B^3})_0$ depends only on the boundary value ${\bf g}|_{S^2}$.
\end{proposition}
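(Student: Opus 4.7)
The plan is to show that if $\mathbf{g}_0, \mathbf{g}_1 \in {\rm Diff}_{+*}(\ol{B^3})_0$ share the same boundary value $B := \mathbf{g}_0|_{S^2} = \mathbf{g}_1|_{S^2}$, then $W(\mathbf{g}_0) = W(\mathbf{g}_1)$. The strategy is a standard Wess--Zumino-type Stokes' theorem argument: I would realize the integrand as the pullback of a closed $3$-form on ${\rm GL}(3,\mathbb{R})$, build an isotopy between the two extensions whose boundary behaviour is constant in the isotopy parameter, and then apply Stokes' theorem on $\ol{B^3} \times [0,1]$.

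First I would identify the integrand $\mathrm{tr}_{\mathrm{mat}_3}(\theta_{B^3}(\mathbf{g})^{\wedge 3})$ with $J_{B^3}(\mathbf{g})^* \alpha$, where $\alpha := \mathrm{tr}_{\mathrm{mat}_3}(\omega^{\wedge 3})$ is built from the Maurer--Cartan $1$-form $\omega = \mathrm{g}^{-1}d\mathrm{g}$ on ${\rm GL}(3,\mathbb{R})$. Differentiating via the Maurer--Cartan equation $d\omega = -\omega \wedge \omega$ yields $d\alpha = -\mathrm{tr}_{\mathrm{mat}_3}(\omega^{\wedge 4})$, and the graded-cyclic property of the matrix trace on wedge products of matrix-valued forms (moving a $1$-form past a $3$-form introduces a sign $(-1)^{1\cdot 3}$) gives $\mathrm{tr}(\omega^{\wedge 4}) = -\mathrm{tr}(\omega^{\wedge 4}) = 0$. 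So $J_{B^3}(\mathbf{g})^* \alpha$ is closed on $\ol{B^3}$ for every $\mathbf{g}$.

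Next I would construct a smooth family $\{\mathbf{g}_t\}_{t \in [0,1]}$ of orientation-preserving diffeomorphisms of $\ol{B^3}$ joining $\mathbf{g}_0$ and $\mathbf{g}_1$ such that each $\mathbf{g}_t$ is asymptotically radial with boundary map $B$; in particular the Jacobian $J_{B^3}(\mathbf{g}_t)|_{S^2}$ is then independent of $t$. The key observation is that $\mathbf{f} := \mathbf{g}_1 \circ \mathbf{g}_0^{-1}$ equals the identity on a common annular neighborhood of $S^2$, because both $\mathbf{g}_i$ are asymptotically radial with the same boundary map $B$ and the radial pieces cancel; thus $\mathbf{f} \in {\rm Diff}_{\rm c}(B^3)$. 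Invoking the path-connectedness of ${\rm Diff}_{\rm c}(B^3)$---equivalently the contractibility of ${\rm Diff}(\ol{B^3}, \partial \ol{B^3})$ from Hatcher's proof of the Smale conjecture in dimension three, which is the $3$-dimensional analog of the input used in Prop.\ref{prop:our_H_coincides_with_Diff_c_D2_0}---one obtains a smooth path $\{\mathbf{f}_t\}_{t\in[0,1]}$ in ${\rm Diff}_{\rm c}(B^3)$ from $\mathrm{id}$ to $\mathbf{f}$. Setting $\mathbf{g}_t := \mathbf{f}_t \circ \mathbf{g}_0$, each $\mathbf{f}_t$ is identity near $S^2$, so $\mathbf{g}_t$ coincides with $\mathbf{g}_0$ in a collar and is asymptotically radial with boundary map $B$ for every $t$.

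Finally I would apply Stokes' theorem to the closed $3$-form $F^*\alpha$ on $\ol{B^3} \times [0,1]$, where $F(x,t) := J_{B^3}(\mathbf{g}_t)(x)$, obtaining
\[
0 \;=\; \int_{\ol{B^3} \times [0,1]} d(F^*\alpha) \;=\; W(\mathbf{g}_1) - W(\mathbf{g}_0) + \int_{S^2 \times [0,1]} (F|_{S^2 \times [0,1]})^*\alpha
\]
(with the appropriate orientation conventions on $\partial(\ol{B^3}\times[0,1])$). By construction $F|_{S^2 \times [0,1]}$ factors through the projection $S^2 \times [0,1] \to S^2$, so the pullback of the $3$-form $\alpha$ to the $2$-dimensional $S^2$-factor vanishes identically. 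Hence $W(\mathbf{g}_0) = W(\mathbf{g}_1)$, as desired. The main obstacle is the middle step: mere membership in ${\rm Diff}_{+*}(\ol{B^3})_0$ only provides isotopies to the identity through layered and asymptotically-radial diffeomorphisms whose boundary maps vary along the isotopy, so one cannot simply concatenate the given isotopies. The argument above circumvents this by passing to $\mathbf{f} \in {\rm Diff}_{\rm c}(B^3)$ and importing an external connectedness result for compactly supported diffeomorphisms of the open $3$-ball, in direct analogy with how Thurston's perfectness theorem enters the $2$-dimensional setting.
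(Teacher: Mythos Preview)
Your argument is correct but takes a genuinely different route from the paper. The paper deduces Prop.~\ref{prop:W_depends_only_on_S2} from two auxiliary results: the coboundary formula $W(\mathbf{g}\mathbf{h}) - W(\mathbf{g}) - W(\mathbf{h}) = 3\int_{S^2} \mathrm{tr}_{\mathrm{mat}_2}(\theta_{S^2}(\mathbf{g}|_{S^2}) \wedge \theta_{S^2}(\mathbf{h}^{-1}|_{S^2}))$ (Prop.~\ref{prop:W_coboundary}) and the vanishing $W(\mathbf{h}) = 0$ for $\mathbf{h} \in (\mathrm{Diff}_{+*}(\ol{B^3})_0)_{\rm bt}$ (Prop.~\ref{prop:W_zero_on_subgroup}). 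The latter is where the topology enters, and it exploits the \emph{layered} (lci) structure of Def.~\ref{def:diff_B3_star}: because such an $\mathbf{h}$ preserves each $\rho$-slice, its Jacobian is block-triangular with a $\mathrm{GL}_+(2,\mathbb{R})$-block, and $W(\mathbf{h})$ reduces to the pairing of $[\eta_2] \in H^3_{\rm dR}(\mathrm{GL}_+(2,\mathbb{R}))$ with a singular $3$-cycle, which vanishes by the elementary fact $H_3(\mathrm{GL}_+(2,\mathbb{R})) \cong H_3(S^1) = 0$. Your Stokes-on-the-cylinder argument is analytically slicker and uses neither the coboundary identity nor the lci condition (only asymptotic radiality), but the price is the connectedness of $\mathrm{Diff}_{\rm c}(B^3)$, i.e.\ Cerf's theorem $\Gamma_4=0$ (or Hatcher's Smale-conjecture proof), which is vastly deeper than anything the paper invokes; indeed the lci hypothesis in Def.~\ref{def:diff_B3_star} is imposed precisely so that this three-dimensional smoothing difficulty can be sidestepped. (A small correction: Prop.~\ref{prop:our_H_coincides_with_Diff_c_D2_0} does not use the two-dimensional Smale theorem---it is an explicit bump-function construction---so your analogy there is looser than you suggest.)
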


This immediately implies the following.
\begin{corollary}
$\omega_0(h)$ \eqref{eq:omega_definition} does not depend on the choice of extension of $B_h$ to $\til{B}_h$ \eqref{eq:B_h_to_B3}. \qed
\end{corollary}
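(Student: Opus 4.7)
The plan is to prove that $W$ is constant along smooth paths $\mathbf{g}_t$ in $\mathrm{Diff}_+(\ol{B^3})$ whose members all coincide with a common fixed diffeomorphism on some open neighborhood of $S^2$, and then to exhibit such a path connecting any two candidate extensions $\mathbf{g}_1, \mathbf{g}_2 \in \mathrm{Diff}_{+*}(\ol{B^3})_0$ that share a boundary value.

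The first step is a variational formula. Writing $J_t := J_{B^3}(\mathbf{g}_t)$, $\theta_t := J_t^{-1}\,dJ_t$, and $N_t := J_t^{-1}\partial_t J_t$, a direct calculation yields $\partial_t \theta_t = dN_t + [\theta_t, N_t]$, where $[\cdot,\cdot]$ is the graded commutator (here just the ordinary matrix commutator since $N_t$ is a $0$-form). Since $\partial_t\,\mathrm{tr}_{\mathrm{mat}_3}(\theta_t^{\wedge 3}) = 3\,\mathrm{tr}_{\mathrm{mat}_3}(\partial_t\theta_t \wedge \theta_t^{\wedge 2})$, the commutator piece drops out by cyclic invariance of the trace, while the Maurer--Cartan relation $d\theta_t = -\theta_t^{\wedge 2}$ forces $d(\theta_t^{\wedge 2}) = 0$, leaving $3\,d\,\mathrm{tr}_{\mathrm{mat}_3}(N_t\,\theta_t^{\wedge 2})$. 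Stokes' theorem on $B^3$ then gives
\[
\frac{d}{dt} W(\mathbf{g}_t) \;=\; 3\int_{S^2} \mathrm{tr}_{\mathrm{mat}_3}\bigl(N_t\,\theta_t^{\wedge 2}\bigr).
\]
When every $\mathbf{g}_t$ agrees with a fixed $\mathbf{g}$ on some open neighborhood $U \supset S^2$ in $\ol{B^3}$, the Jacobian $J_t$ equals $J(\mathbf{g})$ on $U$, so $N_t \equiv 0$ there, the boundary integrand vanishes, and $W(\mathbf{g}_t)$ is constant along the path.

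To construct such a path, set $\mathbf{k} := \mathbf{g}_2 \circ \mathbf{g}_1^{-1} \in \mathrm{Diff}_{+*}(\ol{B^3})_0$. Since $\mathbf{g}_1$ and $\mathbf{g}_2$ are asymptotically radial with the same boundary value, $\mathbf{k}$ equals $\mathrm{id}_{\ol{B^3}}$ on an entire neighborhood of $S^2$, and $\mathbf{k}|_{B^3}$ is compactly supported in $B^3$. Invoking the three-dimensional counterpart of Prop.\ref{prop:our_H_coincides_with_Diff_c_D2_0}---whose proof transcribes the two-dimensional one, using the Alexander-type extension of Prop.\ref{prop:S2_extension_to_B3} in place of that of Prop.\ref{prop:quotient_map_is_surjection} and Thm.\ref{thm:Thurston} applied to $M = B^3$---I obtain $\mathbf{k}|_{B^3} \in \mathrm{Diff}_{\mathrm c}(B^3)_0$. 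Hence there is a smooth isotopy $t \mapsto \mathbf{k}_t$ in $\mathrm{Diff}_{\mathrm c}(B^3)$ from $\mathrm{id}$ to $\mathbf{k}|_{B^3}$, all of whose members are the identity outside a fixed compact subset of $B^3$. Extending each $\mathbf{k}_t$ to $\ol{B^3}$ by the identity on $S^2$ and setting $\mathbf{g}_t^{\natural} := \mathbf{k}_t \circ \mathbf{g}_1$ produces a smooth path in $\mathrm{Diff}_+(\ol{B^3})$ from $\mathbf{g}_1$ to $\mathbf{g}_2$, every member of which coincides with $\mathbf{g}_1$ in a fixed neighborhood of $S^2$. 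The variational formula then yields $W(\mathbf{g}_1) = W(\mathbf{g}_2)$.

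The main obstacle I anticipate is this last step, namely the three-dimensional analog of Prop.\ref{prop:our_H_coincides_with_Diff_c_D2_0}. The variational computation and the boundary-vanishing observation are routine local exercises, but faithfully porting the Alexander-type construction from $D^2$ to $B^3$ and combining it cleanly with Thurston's theorem for the three-ball is where most of the care will be needed.
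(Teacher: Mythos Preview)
Your variational identity $\frac{d}{dt}W(\mathbf{g}_t)=3\int_{S^2}\mathrm{tr}_{\mathrm{mat}_3}(N_t\,\theta_t^{\wedge 2})$ is correct, and the paper uses a closely related infinitesimal computation in Lem.~\ref{lem:W_is_zero_of_bt_p0}. The genuine gap is in your last step. You assert that the proof of Prop.~\ref{prop:our_H_coincides_with_Diff_c_D2_0} ``transcribes'' to three dimensions, listing Thurston's theorem as an ingredient. But Thm.~\ref{thm:Thurston} is a \emph{perfectness} statement and plays no role whatsoever in the proof of Prop.~\ref{prop:our_H_coincides_with_Diff_c_D2_0}. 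What that proof actually uses, after the $\ell_t$ reduction, is a \emph{rotation trick}: the residual loop $t\mapsto f_t$ in $\mathrm{Diff}_+(S^1)$ based at $\mathrm{id}$ is detected by an integer $n$ via its lift to $\mathbb{R}$, and $g_1$ is unwound by composing with the explicit rotation $R_{2n\pi t}$. This succeeds because $\pi_1(\mathrm{Diff}_+(S^1))\cong\mathbb{Z}$ is generated by the rotation loop and a full turn is the identity. In the three-dimensional situation the relevant boundary group is $\mathrm{Diff}_{\rm ci}(S^2)\cong\mathrm{Diff}_{\rm c}(\mathbb{R}^2)$, and there is no rotation trick; to place the residual $\til{h}_1$ in $\mathrm{Diff}_{\rm c}(B^3)_0$ you would need real information about $\pi_1(\mathrm{Diff}_{\rm c}(\mathbb{R}^2))$, which you have not supplied and which does not follow from Thurston.

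The paper sidesteps this entirely. It first proves the coboundary formula (Prop.~\ref{prop:W_coboundary}) and reduces the corollary to Prop.~\ref{prop:W_zero_on_subgroup}. For the latter, after the same $\ell_t$ reduction you describe (so $W(\mathbf{h})=W(\til{h}_1)$), it does \emph{not} try to isotope $\til{h}_1$ to the identity through compactly supported maps. Instead, because $\til{h}_1$ is layered with first component $\rho\mapsto\rho$, the $3\times 3$ trace collapses to a $2\times 2$ trace and one gets $W(\til{h}_1)=\int_{B^3}(J_{S^2}(B_\rho))^*\eta_2$, where $\eta_2=\mathrm{tr}_{\mathrm{mat}_2}((\mathrm{g}^{-1}d\mathrm{g})^{\wedge 3})$ is a closed $3$-form on $\mathrm{GL}_+(2,\mathbb{R})$. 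This integral is the de~Rham pairing of $[\eta_2]$ with a singular $3$-cycle, and it vanishes because $\mathrm{GL}_+(2,\mathbb{R})$ deformation retracts onto $\mathrm{SO}(2)\cong S^1$, whence $H_3(\mathrm{GL}_+(2,\mathbb{R}))=0$. This elementary homological input replaces the diffeomorphism-group topology your route would require.
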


\begin{remark}
\label{rem:winding_number}
The three-dimensional integral $W({\bf g})$ for ${\bf g}|_{S^2} \in {\rm Diff}_+(S^2)$ resembles what is usually called the {\em Wess-Zumino term} in the Wess-Zumino-(Novikov)-Witten model of $2$-dimensional conformal field theory associated to a compact simple simply-connected Lie group $G$, where our $\theta_{B^3}({\bf g})$ is replaced by the pullback of the Maurer-Cartan $1$-form ${\rm g}^{-1} d{\rm g}$ on the Lie group $G$ along a smooth map $B^3 \to G$ which replaces our $J_{B^3} ({\bf g})$, and where ${\rm tr}_{{\rm mat}_3}$ is replaced by the suitably normalized Killing form of the Lie algebra of $G$.

\vs

In fact, this conformal field theory is a $2$-dimensional theory, so the initial input is a smooth map from a $2$-dimensional manifold, in our case $S^2$, to $G$, not something like $B^3 \to G$. So the usual Wess-Zumino story also involves the extension of a map $S^2\to G$ to a map $B^3 \to G$, and the Wess-Zumino term is well-defined only up to $2\pi \mathbb{Z}$. That is, if we choose a different extension to $B^3\to G$, then the Wess-Zumino term might change by an integer multiple of $2\pi$. For the two different choices of extensions, the integer resulting from the difference of Wess-Zumino terms is sometimes called the {\em winding number}. For us, the winding number is always zero, as we do not allow any maps $B^3 \to {\rm GL}_+(3,\mathbb{R}) = \{\mathrm{g} \in \mathrm{GL}(3,\mathbb{R}) \, | \, \det \mathrm{g} >0 \}$ other than the ones realized as the Jacobian of diffeomorphisms of $B^3$ isotopic to identity. Perhaps, if for example we replace ${\rm Diff}_+(S^2)$ by ${\rm Map}^\infty (S^2,S^2)$ (smooth maps from $S^2$ to itself) in our construction, we may get some winding numbers.
\end{remark}

In order to prove Prop.\ref{prop:W_depends_only_on_S2}, we shall show the following two propositions.

\begin{proposition}
\label{prop:W_coboundary}
The coboundary of $W$ can be computed to be
\begin{align}
\label{eq:W_coboundary}
W({\bf g}\circ {\bf h}) - W({\bf g}) - W({\bf h}) = 3 \int_{S^2} {\rm tr}_{\mathrm{mat}_2} (\theta_{S^2}({\bf g}|_{S^2}) \wedge \theta_{S^2} ({\bf h}^{-1}|_{S^2})),
\end{align}
for all ${\bf g},{\bf h}\in {\rm Diff}_+(\ol{B^3})$.
\end{proposition}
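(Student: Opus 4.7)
The plan is to apply the composition law of Lem.\ref{lem:theta_composition} together with the Maurer-Cartan equation $d\theta = -\theta\wedge\theta$ (which follows from Lem.\ref{lem:d_inverse}), and then reduce the three-dimensional integral to a two-dimensional boundary integral by Stokes' theorem. Writing $\theta_{B^3}(\mathbf{g}\circ\mathbf{h}) = \tilde A + B$ with
$$\tilde A := J_{B^3}(\mathbf{h})^{-1}(\theta_{B^3}(\mathbf{g})\circ\mathbf{h})J_{B^3}(\mathbf{h}),\qquad B:=\theta_{B^3}(\mathbf{h}),$$
and noting that matrix trace is cyclically invariant with no sign on triples of 1-forms, one expands
$$\mathrm{tr}_{\mathrm{mat}_3}((\tilde A+B)^{\wedge 3}) = \mathrm{tr}(\tilde A^{\wedge 3}) + 3\,\mathrm{tr}(\tilde A^{\wedge 2}\wedge B) + 3\,\mathrm{tr}(\tilde A\wedge B^{\wedge 2}) + \mathrm{tr}(B^{\wedge 3}).$$
Because conjugation by the $0$-form $J_{B^3}(\mathbf{h})$ passes through the wedge, $\mathrm{tr}(\tilde A^{\wedge 3}) = \mathbf{h}^*\mathrm{tr}(\theta_{B^3}(\mathbf{g})^{\wedge 3})$; hence orientation-preserving change of variable yields $\int_{B^3}\mathrm{tr}(\tilde A^{\wedge 3}) = W(\mathbf{g})$ and of course $\int_{B^3}\mathrm{tr}(B^{\wedge 3}) = W(\mathbf{h})$, leaving the two cross terms to handle.

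Next, from $d\theta(\mathbf{g}\circ\mathbf{h}) = -\theta(\mathbf{g}\circ\mathbf{h})^{\wedge 2}$ and $dB = -B^{\wedge 2}$ one gets $d\tilde A = -\tilde A^{\wedge 2} - \tilde A\wedge B - B\wedge \tilde A$. Using the Leibniz rule and the cyclic property of trace on triples of 1-forms, a short computation gives
$$d\,\mathrm{tr}(\tilde A\wedge B) = -\mathrm{tr}(\tilde A^{\wedge 2}\wedge B) - \mathrm{tr}(\tilde A\wedge B^{\wedge 2}).$$
Therefore by Stokes' theorem,
$$W(\mathbf{g}\circ\mathbf{h}) - W(\mathbf{g}) - W(\mathbf{h}) = -3\int_{S^2} \mathrm{tr}_{\mathrm{mat}_3}(\tilde A\wedge B)\big|_{S^2}.$$

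The main task—and the principal obstacle—is to identify this boundary integral with the claimed right-hand side, since the LHS is a $3\times 3$ trace while the RHS is a $2\times 2$ trace. The key is the block structure at $\rho=1$: for any $\mathbf{f}\in \mathrm{Diff}_+(\overline{B^3})$, the condition $\mathbf{f}_1|_{\rho=1}\equiv 1$ kills the tangential derivatives of $\mathbf{f}_1$ on $S^2$, so in the coordinates $\left(\begin{smallmatrix}\rho\\x\\y\end{smallmatrix}\right)$,
$$J_{B^3}(\mathbf{f})\big|_{\rho=1} = \begin{pmatrix} a_\mathbf{f} & 0 \\ \mathbf{v}_\mathbf{f} & J_{S^2}(\mathbf{f}|_{S^2}) \end{pmatrix},\qquad a_\mathbf{f} = \partial_\rho \mathbf{f}_1\big|_{\rho=1} >0,$$
and, after inverting and pulling back to $S^2$ (i.e.\ setting $d\rho=0$),
$$\theta_{B^3}(\mathbf{f})\big|_{S^2} = \begin{pmatrix} d\log a_\mathbf{f} & 0 \\ * & \theta_{S^2}(\mathbf{f}|_{S^2}) \end{pmatrix}.$$
Applying this to $\mathbf{h}$ for $B$ and to $\mathbf{g}\circ\mathbf{h}$ to extract $\tilde A|_{S^2}$ via $\tilde A = \theta(\mathbf{g}\circ\mathbf{h}) - B$, the lower-triangular structure is preserved under wedge, and the $\mathrm{mat}_3$-trace splits as a $(1,1)$-contribution plus a $(2,2)$-contribution. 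The $(1,1)$ piece is $d(\log a_{\mathbf{g}\circ\mathbf{h}} - \log a_\mathbf{h})\wedge d\log a_\mathbf{h}$, which is the exterior derivative of $(\log a_{\mathbf{g}\circ\mathbf{h}}-\log a_\mathbf{h})\,d\log a_\mathbf{h}$ on the closed surface $S^2$, hence integrates to zero. The $(2,2)$ piece, by Lem.\ref{lem:theta_composition} applied to $g\circ h$ with $g := \mathbf{g}|_{S^2}$, $h := \mathbf{h}|_{S^2}$, equals
$$\mathrm{tr}_{\mathrm{mat}_2}\bigl(J_{S^2}(h)^{-1}(\theta_{S^2}(g)\circ h)\,J_{S^2}(h) \wedge \theta_{S^2}(h)\bigr).$$

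Finally, pulling back the integrand along $h$ (as in the proof of Prop.\ref{prop:gamma_M_2-cocycle}) and using Lem.\ref{lem:compose_h_inverse} together with the identity $\theta_{S^2}(h)\circ h^{-1} = -J_{S^2}(h^{-1})\,\theta_{S^2}(h^{-1})\,J_{S^2}(h^{-1})^{-1}$ (obtained by applying Lem.\ref{lem:theta_composition} to $h\circ h^{-1}=\mathrm{id}$), the $J_{S^2}(h^{-1})$-conjugations collapse by matrix cyclicity of the $\mathrm{mat}_2$-trace, yielding
$$\int_{S^2}\mathrm{tr}_{\mathrm{mat}_2}(J_{S^2}(h)^{-1}(\theta_{S^2}(g)\circ h)J_{S^2}(h)\wedge \theta_{S^2}(h)) = -\int_{S^2}\mathrm{tr}_{\mathrm{mat}_2}(\theta_{S^2}(g)\wedge \theta_{S^2}(h^{-1})).$$
Combined with the factor $-3$ from Stokes, this gives the formula \eqref{eq:W_coboundary}. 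The two subtleties to watch are (i) the behavior of these tensors near the point at infinity of $S^2 \approx \widehat{\mathbb{C}}$, which is handled either by restricting attention to the class $\mathrm{Diff}_{+*}(\overline{B^3})$ where everything is identity in a neighborhood of $\infty$ or by a separate convergence argument, and (ii) smoothness of the coordinate system near $\rho=0$, which is irrelevant since all manipulations above occur on $\partial B^3 = S^2$.
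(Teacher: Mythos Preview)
Your proof is correct and follows essentially the same strategy as the paper: expand $\theta_{B^3}(\mathbf{g}\circ\mathbf{h})^{\wedge 3}$ via the composition law, identify the cross terms as an exact $3$-form, apply Stokes, and then use the block lower-triangular structure of $J_{B^3}(\cdot)|_{\rho=1}$ to reduce the $\mathrm{mat}_3$-trace to a $\mathrm{mat}_2$-trace plus an exact $2$-form on $S^2$. The only organizational difference is that the paper first rewrites the cross terms (their Lemma on $\theta^{\wedge 3}$) directly in terms of $\theta(\mathbf{h}^{-1})$, obtaining $3\,d\,\mathrm{tr}(\theta(\mathbf{g})\wedge\theta(\mathbf{h}^{-1}))\circ\mathbf{h}$ as a $3$-form identity, whereas you keep everything in terms of $B=\theta(\mathbf{h})$ and perform the $\mathbf{h}\to\mathbf{h}^{-1}$ conversion only on the boundary via the pullback trick; both routes use the same ingredients (Lem.~\ref{lem:theta_composition}, Lem.~\ref{lem:d_inverse}, Lem.~\ref{lem:compose_h_inverse}) and the same block analysis at $\rho=1$.
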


\begin{proposition}
\label{prop:W_zero_on_subgroup}
One has
\begin{align}
\label{eq:W_on_subgroup}
W({\bf h}) = 0, \qquad \forall \, {\bf h}\in ( \mathrm{Diff}_{+*}(\ol{B^3})_0)_{\rm bt} \stackrel{\eqref{eq:bt}}{=} \{ {\bf h}\in \mathrm{Diff}_{+*}(\ol{B^3})_0 \, : \, {\bf h}|_{S^2} = \mathrm{id}_{S^2} \}.
\end{align}
\end{proposition}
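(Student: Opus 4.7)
The plan is to combine an isotopy of $\mathbf{h}$ to the identity with two applications of Stokes' Theorem, using the vanishing of $H^3(\mathrm{SL}(2,\mathbb{R}))$ as the decisive topological input. Since $\mathbf{h}\in\mathrm{Diff}_{+*}(\ol{B^3})_0$, pick a smooth isotopy $\{\mathbf{h}_t\}_{t\in[0,1]}$ in that group with $\mathbf{h}_0=\mathrm{id}$ and $\mathbf{h}_1=\mathbf{h}$, and let $\Omega:=\mathrm{tr}_{\mathrm{mat}_3}(\omega^{\wedge 3})$ denote the closed, bi-invariant Maurer-Cartan $3$-form on $\mathrm{GL}_+(3,\mathbb{R})$, so that $W(\mathbf{g})=\int_{B^3}J_{B^3}(\mathbf{g})^{*}\Omega$. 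Define $\Phi:\ol{B^3}\times[0,1]\to\mathrm{GL}_+(3,\mathbb{R})$ by $\Phi(p,t):=J_{B^3}(\mathbf{h}_t)(p)$. Since $\Omega$ is closed, so is $\Phi^{*}\Omega$, and Stokes' Theorem on the $4$-manifold $\ol{B^3}\times[0,1]$ gives
\[
0 \;=\; \int_{\partial(B^3\times[0,1])}\Phi^{*}\Omega \;=\; W(\mathbf{h}) - W(\mathrm{id}) + \int_{S^2\times[0,1]}\Phi^{*}\Omega,
\]
so since $W(\mathrm{id})=0$, the task reduces to showing that the boundary integral over $S^2\times[0,1]$ vanishes.

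On $S^2\times[0,1]$, the asymptotic radiality of each $\mathbf{h}_t$ writes it near $\rho=1$ as $(\rho,p)\mapsto(\rho,H_t(p))$ with $H_t:=\mathbf{h}_t|_{S^2}$, so in the spherical coordinates $(\rho,x,y)$ the Jacobian is block-diagonal,
\[
\Phi(p,t) \;=\; \begin{pmatrix}1 & 0 \\ 0 & J_{S^2}(H_t)(p)\end{pmatrix},
\]
and $\Phi|_{S^2\times[0,1]}$ factors as $\iota\circ\Psi$, where $\Psi(p,t):=J_{S^2}(H_t)(p)$ lands in $\mathrm{GL}_+(2,\mathbb{R})$ and $\iota:\mathrm{GL}_+(2,\mathbb{R})\hookrightarrow\mathrm{GL}_+(3,\mathbb{R})$ is the block-diagonal embedding. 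Writing $A=e^{s}U$ in the decomposition $\mathrm{GL}_+(2,\mathbb{R})\cong\mathbb{R}_{>0}\times\mathrm{SL}(2,\mathbb{R})$, and using the elementary identity $\mathrm{tr}(\beta\wedge\beta)\equiv 0$ for any $\mathfrak{gl}_n$-valued $1$-form $\beta$, a direct computation shows that $\iota^{*}\Omega$ coincides with the pullback of the Maurer-Cartan $3$-form $\mathrm{tr}_{\mathrm{mat}_2}((U^{-1}dU)^{\wedge 3})$ along the projection $\mathrm{GL}_+(2,\mathbb{R})\to\mathrm{SL}(2,\mathbb{R})$.

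The key topological ingredient is that $\mathrm{SL}(2,\mathbb{R})$ deformation retracts to $\mathrm{SO}(2)\simeq S^1$ (e.g.\ by polar decomposition), so $H^3(\mathrm{SL}(2,\mathbb{R}))=0$; hence the closed $3$-form $\mathrm{tr}_{\mathrm{mat}_2}((U^{-1}dU)^{\wedge 3})$ is globally exact, yielding $\iota^{*}\Omega=d\tilde\alpha$ for some smooth $2$-form $\tilde\alpha$. A second application of Stokes' Theorem then reduces $\int_{S^2\times[0,1]}\Phi^{*}\Omega$ to integrals on $S^2\times\{0\}$ and $S^2\times\{1\}$. But $\mathbf{h}_0=\mathrm{id}$ forces $\Psi(\cdot,0)\equiv I$, while $\mathbf{h}|_{S^2}=\mathrm{id}_{S^2}$ combined with asymptotic radiality forces $\mathbf{h}$ itself to equal the identity on an entire collar of $S^2$, giving $\Psi(\cdot,1)\equiv I$ as well; the pullback of the positive-degree form $\tilde\alpha$ by a constant map is zero, so both boundary terms vanish and $W(\mathbf{h})=0$. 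The main obstacle I anticipate is reconciling the block-diagonal Jacobian computation with the coordinate singularities of the spherical system at the origin $\rho=0$ and at the angular infinity, which the asymptotic/layered-support conditions built into the definition of $\mathrm{Diff}_{+*}(\ol{B^3})$ should handle.
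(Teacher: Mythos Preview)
Your argument is correct and takes a genuinely different route from the paper's. Both proofs rest on the same topological input --- that $\mathrm{GL}_+(2,\mathbb{R})$ (equivalently $\mathrm{SL}(2,\mathbb{R})$) deformation retracts onto $S^1$, forcing the relevant degree-$3$ (co)homology to vanish --- but they organize the reduction to that fact differently. The paper proceeds in three stages: first a variational lemma showing $W$ is constant along isotopies within $(\mathrm{Diff}_+(\ol{B^3})_{\rm bt})_0$; then the coboundary formula for $W$ (Prop.~\ref{prop:W_coboundary}) to replace an arbitrary $\mathbf{h}$ by a specific layered model $\til{h}_1$ built from the isotopy of boundary values; and finally a pairing of the homology class of the singular $3$-cycle $J_{S^2}(B_\rho)\circ\sigma$ with $[\eta_2]\in H^3_{\rm dR}(\mathrm{GL}_+(2,\mathbb{R}))$. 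Your approach collapses all of this into two Stokes steps: one on $\ol{B^3}\times[0,1]$ to push the problem to $S^2\times[0,1]$, and one on $S^2\times[0,1]$ after using $H^3_{\rm dR}(\mathrm{SL}(2,\mathbb{R}))=0$ to write $\iota^{*}\Omega=d\tilde\alpha$ globally. Your route is more streamlined and avoids invoking Prop.~\ref{prop:W_coboundary}; the paper's route is more modular, reusing a result it needs anyway. The coordinate issues you flag at $\rho=0$ and at $\infty\in S^2$ are real but are exactly the ones the paper's conditions (layered, compact-from-infinity supports) are designed to absorb, and the paper treats them with the same level of care you do.
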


Once we have the above two Propositions, it is easy to prove Prop.\ref{prop:W_depends_only_on_S2}:

\begin{proof}[Proof of Prop.\ref{prop:W_depends_only_on_S2}, using Propositions \ref{prop:W_coboundary} and \ref{prop:W_zero_on_subgroup}]
Suppose that ${\bf g}_1, {\bf g}_2 \in {\rm Diff}_{+*}(\ol{B^3})_0$ satisfy ${\bf g}_1|_{S^2} = {\bf g}_2|_{S^2}$. Let ${\bf h} := {\bf g}_1^{-1} \circ {\bf g}_2 \in {\rm Diff}_{+*}(\ol{B^3})_0$. Then $\mathbf{h}|_{S^2} = (\mathbf{g}_1|_{S^2})^{-1} \circ (\mathbf{g}_2|_{S^2}) = \mathrm{id}_{S^2}$, so
$$
{\bf g}_2 = {\bf g}_1 \circ {\bf h}, \qquad \mbox{while} \quad {\bf h} \in (\mathrm{Diff}_{+*}(\ol{B^3})_0)_{\rm bt}.
$$
Put ${\bf g}_1$ into the place of ${\bf g}$ in \eqref{eq:W_coboundary} of Prop.\ref{prop:W_coboundary}, which is possible because ${\rm Diff}_{+*}(\ol{B^3})_0  \subset {\rm Diff}_+(\ol{B^3})$; since $W({\bf h})=0$ from Prop.\ref{prop:W_zero_on_subgroup}, we get
\begin{align*}
W({\bf g}_2) - W({\bf g}_1) = 3 \int_{S^2} {\rm tr}_{\mathrm{mat}_2}(\theta_{S^2}({\bf g}_1) \wedge \theta_{S^2}({\bf h}^{-1})).
\end{align*}
Since ${\bf h}|_{S^2} = \mathrm{id}_{S^2}$, we get $\theta_{S^2}({\bf h}^{-1}) \equiv 0$ on $S^2$, hence $W({\bf g}_2) = W({\bf g}_1)$.
\end{proof}

\vs

Using Prop.\ref{prop:W_coboundary} we can now compute the coboundary of $\omega_0$ and confirm that \eqref{eq:gamma_equals_gamma_0} holds:

\begin{lemma}[coboundary of $\omega_0$]
\label{lem:omega_coboundary_restriction}
One has
\begin{align}
\label{eq:omega_coboundary_restriction}
\omega_0(g\circ h) - \omega_0(g) - \omega_0(h) = \gamma(g,h), \qquad 
\forall \, g,\, h\in \mathcal{H}.
\end{align}
\end{lemma}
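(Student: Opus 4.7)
The plan is to leverage Propositions \ref{prop:W_coboundary} and \ref{prop:W_depends_only_on_S2}, together with the definitions of $\omega_0$ and $\gamma$, by making a clever choice of extensions.

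First, fix any $g, h \in \mathcal{H}$ and choose extensions $\til{B}_g, \til{B}_h \in {\rm Diff}_{+*}(\ol{B^3})_0$ of $B_g, B_h$ respectively. I would first verify that the composition $\til{B}_g \circ \til{B}_h$ still lies in ${\rm Diff}_{+*}(\ol{B^3})_0$: the asymptotic radiality and the ``layered with compact-from-infinity supports'' condition are both preserved under composition because the radial coordinate $\rho$ is untouched in both factors, so $(\til{B}_g \circ \til{B}_h)(\rho,\smallvectwo{x}{y}) = (\rho,\, (B_g)_\rho \circ (B_h)_\rho \smallvectwo{x}{y})$, with $(B_g)_\rho \circ (B_h)_\rho \in {\rm Diff}_{\rm ci}(S^2)$; membership in the identity component is automatic. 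The boundary value is $(\til{B}_g \circ \til{B}_h)|_{S^2} = B_g \circ B_h = B_{gh}$ by Lem.\ref{lem:composition_of_B_for_subgroup}. Hence $\til{B}_g \circ \til{B}_h$ is a legitimate choice of extension for $B_{gh}$, and by Prop.\ref{prop:W_depends_only_on_S2} we have
$$
\omega_0(gh) = c_0 \, W(\til{B}_g \circ \til{B}_h).
$$

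Second, I would apply Prop.\ref{prop:W_coboundary} to $\mathbf{g} = \til{B}_g$ and $\mathbf{h} = \til{B}_h$, giving
$$
W(\til{B}_g \circ \til{B}_h) - W(\til{B}_g) - W(\til{B}_h) = 3 \int_{S^2} {\rm tr}_{{\rm mat}_2}\bigl(\theta_{S^2}(B_g) \wedge \theta_{S^2}(B_h^{-1})\bigr).
$$
Multiplying by $c_0$ and using the definition of $\omega_0$ turns the left-hand side into $\omega_0(gh) - \omega_0(g) - \omega_0(h)$.

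Third, I reduce the $S^2$ integral on the right to a $D^2$ integral. On the complement $\ol{(D^2)^\star}$ of $D^2$ in $S^2$, both $B_g$ and $B_h^{-1}$ equal the identity by Def.\ref{def:B_h}, so $J_{S^2}(B_g)$ and $J_{S^2}(B_h^{-1})$ are locally constant equal to the identity matrix (away from $\infty$, where one argues via compact support of $h \in {\rm Diff}_{\rm c}(D^2)_0$ to avoid the coordinate singularity). Thus $\theta_{S^2}(B_g) = 0$ and $\theta_{S^2}(B_h^{-1}) = 0$ on $\ol{(D^2)^\star}$, and the integrand vanishes there. On $D^2$, the coordinate system $\smallvectwo{x}{y}$ is the same one used in Def.\ref{def:our_gamma}, so $\theta_{S^2}(B_g)|_{D^2} = \theta_{D^2}(g)$ and $\theta_{S^2}(B_h^{-1})|_{D^2} = \theta_{D^2}(h^{-1})$. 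Therefore
$$
3c_0 \int_{S^2} {\rm tr}_{{\rm mat}_2}\bigl(\theta_{S^2}(B_g) \wedge \theta_{S^2}(B_h^{-1})\bigr) = 3c_0 \int_{D^2} {\rm tr}_{{\rm mat}_2}\bigl(\theta_{D^2}(g) \wedge \theta_{D^2}(h^{-1})\bigr) = \gamma(g,h),
$$
yielding \eqref{eq:omega_coboundary_restriction}.

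The only genuinely delicate point is the reduction from $S^2$ to $D^2$: one must confirm that nothing pathological happens at the point $\infty \in S^2$, but this is handled by the compact support of $B_g$ and $B_h$ away from $\infty$, which ensures the integrand is identically zero on an open neighborhood of $\infty$. Everything else is a direct invocation of the two propositions already proved.
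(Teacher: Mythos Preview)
Your proof is correct and follows essentially the same route as the paper: invoke Prop.\ref{prop:W_depends_only_on_S2} to justify using $\til{B}_g \circ \til{B}_h$ as an extension of $B_{gh}$, apply Prop.\ref{prop:W_coboundary}, and then reduce the $S^2$-integral to $D^2$ using that $B_g, B_h$ are the identity on $\ol{(D^2)^\star}$. The only cosmetic difference is that the paper phrases the first step as comparing two extensions of $B_{gh}$ via Prop.\ref{prop:W_depends_only_on_S2}, whereas you declare $\til{B}_g \circ \til{B}_h$ to be the chosen extension outright; your explicit closure check for ${\rm Diff}_{+*}(\ol{B^3})_0$ is unnecessary since it is already a group.
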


\begin{proof}
First, note that the restriction map ${\rm Diff}_+(\ol{B^3})_0 \to {\rm Diff}_+(S^2)_0$, ${\bf g} \mapsto {\bf g}|_{S^2}$, is a group homomorphism. For $g, h \in \mathcal{H}$, this restriction map sends $\til{B}_g$, $\til{B}_h$, $\til{B}_{g\circ h}\in$ ${\rm Diff}_{+*}(\ol{B^3})_0$ to $B_g$, $B_h$, $B_{g\circ h} \in$ ${\rm Diff}_{\rm ci}(S^2)_0 \subset {\rm Diff}_+(S^2)_0$ respectively, while we have $B_{g\circ h} = B_g \circ B_h$ from Lem.\ref{lem:composition_of_B_for_subgroup}. Therefore
$$
(\til{B}_{g\circ h})|_{S^2} = (\til{B}_g \circ \til{B}_h)|_{S^2}, \qquad \forall \, g,\, h \in \mathcal{H}.
$$
Thus, from Prop.\ref{prop:W_depends_only_on_S2} we have
\begin{align}
\label{eq:omega_coboundary_proof1}
W(\til{B}_{g\circ h}) = W(\til{B}_g \circ \til{B}_h), \qquad \forall \, g, \, h \in \mathcal{H},
\end{align}
and hence
\begin{align*}
\omega_0(g\circ h) - \omega_0(g) - \omega_0(h) & \stackrel{\eqref{eq:omega_definition}}{=} c_0\left( W(\til{B}_{g\circ h}) - W(\til{B}_g) - W(\til{B}_h) \right) \\
& \stackrel{\eqref{eq:omega_coboundary_proof1}}{=} c_0 \left( W(\til{B}_g \circ \til{B}_h) - W(\til{B}_g) - W(\til{B}_h) \right) \\
& \stackrel{{\rm Prop}.\ref{prop:W_coboundary}}{=} \, 3c_0 \int_{S^2} {\rm tr}_{\mathrm{mat}_2} (\theta_{S^2} (B_g) \wedge \theta_{S^2} (B_h^{-1})), \qquad \forall \, g, \, h \in \mathcal{H},
\end{align*}
where Prop.\ref{prop:W_coboundary} is applicable because ${\rm Diff}_{+*}(\ol{B^3})_0 \subset {\rm Diff}_+(\ol{B^3})$. From the definition \eqref{eq:B_on_subgroup}, for $g,h\in \mathcal{H}$ we have $\theta_{S^2}(B_h^{-1}) = \theta_{S^2}(\mathrm{id}_{S^2}) = 0$ and likewise $\theta_{S^2}(B_g)=0$ on $(D^2)^\star$, while $\theta_{S^2}(B_h^{-1}) = \theta_{S^2}(h^{-1}) = \theta_{D^2}(h^{-1})$ and likewise $\theta_{S^2}(B_g) = \theta_{S^2}(g) = \theta_{D^2}(g)$ on $D^2$. The result follows from the definition \eqref{eq:gamma_definition} of $\gamma(g,h)$.
\end{proof}

This allows us to apply the construction of \S\ref{sec:general_construction_of_a_central_extension_group}.

\vs

Propositions \ref{prop:W_coboundary} and \ref{prop:W_zero_on_subgroup} are proved in \S\ref{subsec:coboundary_of_W} and \S\ref{subsec:W_zero_on_sbugroup}, respectively.

\subsection{Proof of Prop.\ref{prop:W_coboundary}: computation of the coboundary of the Wess-Zumino term $W$}
\label{subsec:coboundary_of_W}

We first prove the following result about $\theta_M$, which is shown by standard computations about $J_M$, $\theta_M$, and ${\rm tr}_{\mathrm{mat}_n}$:
\begin{lemma}
\label{lem:theta_cubed_coboundary}
Let $M$ be an $n$-dimensional smooth manifold with boundary embedded in $\mathbb{R}^n$, as in Def.\ref{def:G_M_theta_M}. 
Then
\begin{align}
\label{eq:lem_theta_cubed_coboundary}
{\rm tr}_{{\rm mat}_n} \left( (\theta_M ({\bf g}\circ {\bf h}))^{\wedge 3} - (\theta_M ({\bf g}))^{\wedge 3} \circ {\bf h} - (\theta_M ({\bf h}))^{\wedge 3} \right) = 3  \, d\left({\rm tr}_{{\rm mat}_n} (\theta_M ({\bf g}) \wedge \theta_M ({\bf h}^{-1}))\right) \circ {\bf h}
\end{align}
holds as $3$-forms on $M$, for all ${\bf g},{\bf h} \in {\rm Diff}_+(M)$, where
$\theta_M$ is as in Def.\ref{def:G_M_theta_M}, ${\rm tr}_{{\rm mat}_n}$ is the usual trace of $n$ by $n$ matrices. 
\end{lemma}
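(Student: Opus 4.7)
The plan is to reduce the identity \eqref{eq:lem_theta_cubed_coboundary} to an algebraic manipulation involving only two matrix-valued $1$-forms on $M$, exploiting the composition law \eqref{eq:theta_composition} and the Maurer-Cartan equation $d\theta(\mathbf{f}) = -\theta(\mathbf{f})\wedge\theta(\mathbf{f})$ (which follows from $\theta(\mathbf{f})$ being the pullback of the Maurer-Cartan form on $\mathrm{GL}(n,\mathbb{R})$). Throughout, I will repeatedly use two basic algebraic facts about matrix-valued differential forms: (i) a matrix-valued $0$-form can be slid across a wedge product of matrix-valued forms provided matrix multiplication order is preserved, so that in particular $(X \alpha X^{-1})\wedge(X\beta X^{-1}) = X(\alpha\wedge\beta)X^{-1}$ for any $\mathrm{GL}(n,\mathbb{R})$-valued $0$-form $X$ and matrix-valued forms $\alpha,\beta$; and (ii) the trace is cyclic on products of three matrix-valued $1$-forms, i.e.\ $\mathrm{tr}_{\mathrm{mat}_n}(\alpha\wedge\beta\wedge\gamma) = \mathrm{tr}_{\mathrm{mat}_n}(\beta\wedge\gamma\wedge\alpha)$ (the sign is $+1$ because $1\cdot(1+1)$ is even).

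By Lem.\ref{lem:theta_composition} we have $\theta(\mathbf{g}\circ\mathbf{h}) = A + B$, where I set $A := J(\mathbf{h})^{-1}(\theta(\mathbf{g})\circ\mathbf{h})J(\mathbf{h})$ and $B := \theta(\mathbf{h})$. Expanding $(A+B)^{\wedge 3}$ gives eight terms, and fact (ii) collapses the three $A\wedge A\wedge B$ terms into one triple, and similarly for the $A\wedge B\wedge B$ terms, so that
\[
\mathrm{tr}_{\mathrm{mat}_n}\bigl((A+B)^{\wedge 3}\bigr) = \mathrm{tr}_{\mathrm{mat}_n}(A^{\wedge 3}) + 3\,\mathrm{tr}_{\mathrm{mat}_n}(A^{\wedge 2}\wedge B) + 3\,\mathrm{tr}_{\mathrm{mat}_n}(A\wedge B^{\wedge 2}) + \mathrm{tr}_{\mathrm{mat}_n}(B^{\wedge 3}).
\]
Using fact (i), $A^{\wedge 3} = J(\mathbf{h})^{-1}\bigl((\theta(\mathbf{g}))^{\wedge 3}\circ\mathbf{h}\bigr)J(\mathbf{h})$, so the trace is cyclic-invariant and yields $\mathrm{tr}_{\mathrm{mat}_n}((\theta(\mathbf{g}))^{\wedge 3})\circ\mathbf{h}$; and $\mathrm{tr}_{\mathrm{mat}_n}(B^{\wedge 3}) = \mathrm{tr}_{\mathrm{mat}_n}((\theta(\mathbf{h}))^{\wedge 3})$ trivially. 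Thus the left-hand side of \eqref{eq:lem_theta_cubed_coboundary} reduces to $3\,\mathrm{tr}_{\mathrm{mat}_n}(A^{\wedge 2}\wedge B) + 3\,\mathrm{tr}_{\mathrm{mat}_n}(A\wedge B^{\wedge 2})$.

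Next I would compute the right-hand side. Applying $d$ to $\mathrm{tr}_{\mathrm{mat}_n}(\theta(\mathbf{g})\wedge\theta(\mathbf{h}^{-1}))$ using the Leibniz rule and the Maurer-Cartan identity, one gets
\[
d\,\mathrm{tr}_{\mathrm{mat}_n}(\theta(\mathbf{g})\wedge\theta(\mathbf{h}^{-1})) = -\mathrm{tr}_{\mathrm{mat}_n}\bigl((\theta(\mathbf{g}))^{\wedge 2}\wedge\theta(\mathbf{h}^{-1})\bigr) + \mathrm{tr}_{\mathrm{mat}_n}\bigl(\theta(\mathbf{g})\wedge(\theta(\mathbf{h}^{-1}))^{\wedge 2}\bigr).
\]
To pull back by $\mathbf{h}$, the key formula is $\theta(\mathbf{h}^{-1})\circ\mathbf{h} = -J(\mathbf{h})\,\theta(\mathbf{h})\,J(\mathbf{h})^{-1}$, obtained by setting $\mathbf{g} = \mathbf{h}^{-1}$ in \eqref{eq:theta_composition} and using $\theta(\mathrm{id}_M) = 0$. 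Combined with the identity $\theta(\mathbf{g})\circ\mathbf{h} = J(\mathbf{h})\,A\,J(\mathbf{h})^{-1}$, fact (i) yields $(\theta(\mathbf{g})\circ\mathbf{h})^{\wedge 2}\wedge(\theta(\mathbf{h}^{-1})\circ\mathbf{h}) = -J(\mathbf{h})(A^{\wedge 2}\wedge B)J(\mathbf{h})^{-1}$ and $(\theta(\mathbf{g})\circ\mathbf{h})\wedge(\theta(\mathbf{h}^{-1})\circ\mathbf{h})^{\wedge 2} = J(\mathbf{h})(A\wedge B^{\wedge 2})J(\mathbf{h})^{-1}$, whose traces are $-\mathrm{tr}_{\mathrm{mat}_n}(A^{\wedge 2}\wedge B)$ and $\mathrm{tr}_{\mathrm{mat}_n}(A\wedge B^{\wedge 2})$ respectively. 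Substituting back, the right-hand side of \eqref{eq:lem_theta_cubed_coboundary} becomes exactly $3\,\mathrm{tr}_{\mathrm{mat}_n}(A^{\wedge 2}\wedge B) + 3\,\mathrm{tr}_{\mathrm{mat}_n}(A\wedge B^{\wedge 2})$, matching the left-hand side.

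The main obstacle is bookkeeping rather than any conceptual difficulty: one must be scrupulous about the non-commutativity of matrix multiplication when sliding $0$-form conjugations across wedge products, and careful about the sign coming from the Maurer-Cartan equation $d\theta = -\theta\wedge\theta$ (easy to verify directly from $\theta = J^{-1}dJ$ together with Lem.\ref{lem:d_inverse}). Once those two ingredients are set up, everything collapses cleanly, and the factor of $3$ on both sides has the same origin: the three middle terms in the expansion of $(A+B)^{\wedge 3}$ of each type, and the two terms from the Leibniz rule recombined after identifying the cross-pullbacks.
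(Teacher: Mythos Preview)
Your proof is correct and follows essentially the same route as the paper's: both expand $(\theta(\mathbf{g}\circ\mathbf{h}))^{\wedge 3}$ via the composition law \eqref{eq:theta_composition}, identify the pure terms with $(\theta(\mathbf{g}))^{\wedge 3}\circ\mathbf{h}$ and $(\theta(\mathbf{h}))^{\wedge 3}$, and then match the cross terms against the exterior derivative of $\mathrm{tr}(\theta(\mathbf{g})\wedge\theta(\mathbf{h}^{-1}))$. Your version is somewhat more streamlined---you invoke the Maurer--Cartan equation $d\theta=-\theta\wedge\theta$ by name and use the clean identity $\theta(\mathbf{h}^{-1})\circ\mathbf{h} = -J(\mathbf{h})\,\theta(\mathbf{h})\,J(\mathbf{h})^{-1}$ (from \eqref{eq:theta_composition} with $\mathbf{g}=\mathbf{h}^{-1}$), whereas the paper computes both of these by hand via Lem.\ref{lem:d_inverse} and Lem.\ref{lem:compose_h_inverse}; but the underlying computation is identical.
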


{\it Proof of Lem.\ref{lem:theta_cubed_coboundary}.} For convenience, denote $\theta = \theta_M$, $J = J_M$ and ${\rm tr} = {\rm tr}_{{\rm mat}_n}$. Let ${\bf g},{\bf h}\in {\rm Diff}_+(M)$. We first compute ${\rm tr} (\theta({\bf g}\circ {\bf h})^{\wedge 3})$:
\begin{align}
\nonumber
{\rm tr} (\theta({\bf g}\circ {\bf h})^{\wedge 3})
& \, \stackrel{{\rm Lem}.\ref{lem:theta_composition}}{=} \, {\rm tr} \left( \left(J({\bf h})^{-1} \, [\theta({\bf g}) \circ {\bf h}] \, J({\bf h}) + \theta({\bf h}) \right)^{\wedge 3} \right) \\
\nonumber
& = {\rm tr} \left( \cancel{J({\bf h})^{-1}}^{\cone} (\theta({\bf g})\circ {\bf h})^{\wedge 3} \, \cancel{J({\bf h})}^{\cone} + \theta({\bf h})^{\wedge 3}    \right) \\
\label{eq:pf_eq1}
& \quad + 3 \, {\rm tr} \left(
J({\bf h})^{-1} [\theta({\bf g})\circ {\bf h}] \, \wedge\, [\theta({\bf g})\circ {\bf h}] \, \wedge\, dJ({\bf h}) \right) \\
\label{eq:pf_eq2}
& \quad + 3 \, {\rm tr}\left(
J({\bf h})^{-1} [\theta({\bf g})\circ {\bf h}] \,\wedge\, dJ({\bf h}) \,\wedge\, J({\bf h})^{-1} \, dJ({\bf h}) \right),
\end{align}
which can easily be checked using the definition $\theta({\bf h}) = J({\bf h})^{-1} \, dJ({\bf h})$ and the skew-symmetry of the wedge product, where the cancellation labeled by $\cone$ is due to the invariance of the trace under conjugation. We continue:
\begin{align*}
{\rm tr} ( \theta({\bf g} \circ {\bf h})^{\wedge 3} )
& = {\rm tr}\left(  (\theta({\bf g}) \circ {\bf h})^{\wedge 3} + (\theta({\bf h}))^{\wedge 3} \right) \\
& \quad + 3 \, {\rm tr} \left(
[J({\bf h})^{-1}\circ {\bf h}^{-1}] \, \theta({\bf g}) \,\wedge\, \theta({\bf g}) \,\wedge\, [dJ({\bf h})\circ {\bf h}^{-1}] \right) \circ {\bf h}\\
& \quad + 3 \, {\rm tr}\left(
[J({\bf h})^{-1}\circ {\bf h}^{-1}] \,  \theta({\bf g}) \,\wedge\, [dJ({\bf h})\circ {\bf h}^{-1}] \,\wedge\, [J({\bf h})^{-1} \circ {\bf h}^{-1}] [dJ({\bf h})\circ {\bf h}^{-1}] \right) \circ {\bf h}.
\end{align*}
By taking the inverse of both sides of \eqref{eq:lem_compose_h_inverse} of Lem.\ref{lem:compose_h_inverse} we have $J({\bf h})^{-1} \circ {\bf h}^{-1} = J({\bf h}^{-1})$. Using this, together with \eqref{eq:d_J_compose_inverse} of the same Lemma, we can now write the above as
\begin{align*}
& {\rm tr} ( \theta({\bf g} \circ {\bf h})^{\wedge 3} ) \\
& = {\rm tr}\left(  (\theta({\bf g}) \circ {\bf h})^{\wedge 3} +  (\theta({\bf h}))^{\wedge 3} \right) \\
& \quad - 3 \, {\rm tr} \left(
\cancel{J({\bf h}^{-1})}^{\ctwo} \, \theta({\bf g}) \, \wedge \, \theta({\bf g}) \, \wedge \, J({\bf h}^{-1})^{-1} \, dJ({\bf h}^{-1}) \, \cancel{J({\bf h}^{-1})^{-1}}^{\ctwo} \right) \circ {\bf h} \\
& \quad + 3 \, {\rm tr}\left(
\cancel{J({\bf h}^{-1})}^{\cthree} \,  \theta({\bf g}) \,\wedge\, J({\bf h}^{-1})^{-1} \, dJ({\bf h}^{-1}) \, J({\bf h}^{-1})^{-1} \,  \wedge\, dJ({\bf h}^{-1}) \, \cancel{J({\bf h}^{-1})^{-1}}^{\cthree} \right) \circ {\bf h} \\
& = {\rm tr}(\theta({\bf g})^{\wedge 3}) \circ {\bf h} + {\rm tr} (\theta({\bf h})^{\wedge 3} )
- 3 \, {\rm tr} \left(
\theta({\bf g}) \,\wedge\, \theta({\bf g}) \,\wedge\, \theta({\bf h}^{-1}) \, \right) \circ {\bf h}
+ 3 \, {\rm tr}\left(
\theta({\bf g}) \,\wedge\, \theta({\bf h}^{-1}) \,\wedge\, \theta({\bf h}^{-1})  \right) \circ {\bf h},
\end{align*}
where the cancellations labeled by $\ctwo$ and $\cthree$ are by the invariance of the trace under conjugation.

\vs

On the other hand, note that
\begin{align*}
& d(\theta({\bf g}) \,\wedge\, \theta({\bf h}^{-1})) = d\left( J({\bf g})^{-1} \, dJ({\bf g}) \,\wedge\, J({\bf h}^{-1})^{-1} \, dJ({\bf h}^{-1}) \right) \\
& \hspace{-2mm} = d(J({\bf g})^{-1}) \,\wedge\, dJ({\bf g}) \,\wedge\, J({\bf h}^{-1})^{-1} \, dJ({\bf h}^{-1})
- J({\bf g})^{-1}\,  dJ({\bf g}) \,\wedge\, d(J({\bf h}^{-1})^{-1}) \,\wedge\, dJ({\bf h}^{-1}) \\
& \hspace{-3mm} \stackrel{{\rm Lem.\ref{lem:d_inverse}}}{=}
\left( - J({\bf g})^{-1} \, dJ({\bf g}) \, J({\bf g})^{-1} \right) \,\wedge\, dJ({\bf g}) \,\wedge\, J({\bf h}^{-1})^{-1} \, dJ({\bf h}^{-1}) \\
& \qquad - J({\bf g})^{-1} \, dJ({\bf g}) \,\wedge\, \left( - J({\bf h}^{-1})^{-1} \, dJ({\bf h}^{-1}) \, J({\bf h}^{-1})^{-1} \right) \, \wedge\, dJ({\bf h}^{-1}) \\
& = - \theta({\bf g}) \,\wedge\, \theta({\bf g}) \,\wedge\, \theta({\bf h}^{-1}) + \theta({\bf g}) \,\wedge\, \theta({\bf h}^{-1}) \,\wedge\, \theta({\bf h}^{-1}).
\end{align*}
The trace of this, together with the above computation about ${\rm tr}(\theta({\bf g}\circ {\bf h})^{\wedge 3})$, yields the result. \qed

\vs

Prop.\ref{prop:W_coboundary} can now be shown by taking the integral of the result of Lem.\ref{lem:theta_cubed_coboundary}, and rewriting ${\rm tr}_{{\rm mat}_3}$ in the right-hand side of \eqref{eq:lem_theta_cubed_coboundary} in terms of ${\rm tr}_{{\rm mat}_2}$:

\begin{proof}[Proof of Prop.\ref{prop:W_coboundary}]
Take Lem.\ref{lem:theta_cubed_coboundary} for $M= \ol{B^3}$. Then the left-hand-side in \eqref{eq:lem_theta_cubed_coboundary} of Lem.\ref{lem:theta_cubed_coboundary} is a $3$-form on $B^3$, so let's take the integral of this over $B^3$: in view of \eqref{eq:W_definition}, this integral is
\begin{align*}
& W({\bf g}\circ {\bf h}) - \int_{B^3} {\rm tr}_{{\rm mat}_3}(\theta_{B^3}({\bf g})^{\wedge 3}) \circ {\bf h} - W({\bf h}) \\
& = W({\bf g}\circ {\bf h}) - \int_{{\bf h}(B^3)} {\rm tr}_{{\rm mat}_3}(\theta_{B^3}({\bf g})^{\wedge 3}) - W({\bf h}) = W({\bf g}\circ {\bf h}) - W({\bf g}) - W({\bf h}) \quad (\because {\bf h}(B^3)=B^3),
\end{align*}
which is the left-hand-side of \eqref{eq:W_coboundary}. Now, for any ${\bf g},{\bf h}\in {\rm Diff}_+(\ol{B^3})$, we shall confirm that the $2$-form 
\begin{align}
\label{eq:D3_2-form}
{\rm tr}_{{\rm mat}_3} (\theta_{B^3}({\bf g}) \wedge \theta_{B^3}({\bf h}^{-1}))
\end{align}
restricted (i.e. pulled back) to $S^2 = \partial B^3$ coincides with the $2$-form 
\begin{align}
\label{eq:S2_2-form}
{\rm tr}_{{\rm mat}_2} (\theta_{S^2}({\bf g}|_{S^2}) \wedge \theta_{S^2}({\bf h}^{-1}|_{S^2}))
\end{align}
on $S^2$, up to an exact $2$-form on $S^2$. Then, by Stokes' Theorem one can show that the integral of the right-hand-side of \eqref{eq:lem_theta_cubed_coboundary} on $B^3$ equals the right-hand-side of  \eqref{eq:W_coboundary}, finishing the proof of Prop.\ref{prop:W_coboundary}.

\vs

Of course, when proving that \eqref{eq:D3_2-form} equals  \eqref{eq:S2_2-form} as $2$-forms on $S^2$ (up to an exact $2$-form), we can replace ${\bf h}^{-1}$ by ${\bf h}$ for convenience. First, recall from \S\ref{subsec:extension_from_S2_to_B3} that we use the coordinate system $\smallvecthree{\rho}{x}{y}$ for $\ol{B^3}$ where $\smallvectwo{x}{y}$ parametrizes $S^2$ (or $S^2\setminus\{\infty\}$) and $\rho \in [0,1]$ is the radial coordinate, i.e. when $\rho=1$ we get $S^2 = \partial B^3$ parametrized by $\smallvectwo{x}{y}$. From now on, we fix ${\bf g},{\bf h}\in {\rm Diff}_+(\ol{B^3})$. We first write
\begin{align}
\label{eq:first_trace_as_sum_of_forms}
{\rm tr}_{{\rm mat}_3} (\theta_{B^3}({\bf g}) \wedge \theta_{B^3}({\bf h}))
= \square^{{\bf g},{\bf h}}_{\rho,x} \, d\rho\wedge dx + \square^{{\bf g},{\bf h}}_{\rho,y} \, d\rho \wedge dy + \square^{{\bf g},{\bf h}}_{x,y} \, dx \wedge dy
\end{align}
for some scalar-valued functions $\square^{{\bf g},{\bf h}}_{\rho,x}, \square^{{\bf g},{\bf h}}_{\rho,y}, \square^{{\bf g},{\bf h}}_{x,y}$ on $B^3$. The pullback of this $2$-form along the embedding $S^2 \hookrightarrow \ol{B^3}$ is then the following $2$-form on $S^2 = \partial B^3$:
\begin{align*}
\left. \square^{{\bf g},{\bf h}}_{x,y}  \right|_{\rho=1} \, dx\wedge dy.
\end{align*}
Now, let us write
\begin{align}
\label{eq:theta_as_sum}
\theta_{B^3}({\bf g}) = T^{\bf g}_\rho \, d\rho + T^{\bf g}_x\, dx + T^{\bf g}_y\, dy,
\end{align}
for some $\mathfrak{gl}(3,\mathbb{R})$-valued functions $T^{\bf g}_\rho, T^{\bf g}_x, T^{\bf g}_y$ on $B^3$, and likewise for ${\bf h}$ with the functions $T^{\bf h}_\rho$, $T^{\bf h}_x$, $T^{\bf h}_y$. Then, by plugging \eqref{eq:theta_as_sum} into \eqref{eq:first_trace_as_sum_of_forms} and using the skew-symmetry of the wedge product, we get
\begin{align}
\label{eq:square_as_commutator}
\square^{{\bf g},{\bf h}}_{x,y} = {\rm tr}_{{\rm mat}_3} (T^{\bf g}_x \, T^{\bf h}_y - T^{\bf h}_x \, T^{\bf g}_y).
\end{align}
In order to compute this, we first denote by ${\bf g}^{(1)},{\bf g}^{(2)},{\bf g}^{(3)}$ the component functions of ${\bf g}$, i.e.
\begin{align*}
{\bf g}: \vecthree{\rho}{x}{y} \mapsto \vecthree{ {\bf g}^{(1)}\smallvecthree{\rho}{x}{y} }{ {\bf g}^{(2)}\smallvecthree{\rho}{x}{y} }{ {\bf g}^{(3)}\smallvecthree{\rho}{x}{y} },
\end{align*}
with respect to the coordinate system $\smallvecthree{\rho}{x}{y}$ of $B^3$, and likewise for ${\bf h}$. So the Jacobian matrix for ${\bf g} \in {\rm Diff}_+(\ol{B^3})$ is
\begin{align*}
J_{B^3}({\bf g})\smallvecthree{\rho}{x}{y}  = \left({\renewcommand{\arraystretch}{1.2} \begin{array}{ccc}
(\partial_\rho \, {\bf g}^{(1)}) \smallvecthree{\rho}{x}{y} & (\partial_x {\bf g}^{(1)}) \smallvecthree{\rho}{x}{y} & (\partial_y {\bf g}^{(1)}) \smallvecthree{\rho}{x}{y} \\
(\partial_\rho \, {\bf g}^{(2)}) \smallvecthree{\rho}{x}{y} & (\partial_x {\bf g}^{(2)}) \smallvecthree{\rho}{x}{y} & (\partial_y {\bf g}^{(2)}) \smallvecthree{\rho}{x}{y} \\
(\partial_\rho \, {\bf g}^{(3)}) \smallvecthree{\rho}{x}{y} & (\partial_x {\bf g}^{(3)}) \smallvecthree{\rho}{x}{y} & (\partial_y {\bf g}^{(3)}) \smallvecthree{\rho}{x}{y}
\end{array}} \right)
\end{align*}
and hence
\begin{align*}
\theta_{B^3}({\bf g}) = J_{B^3}({\bf g})^{-1} \, dJ_{B^3}({\bf g}) = 
J_{B^3}({\bf g})^{-1} \left( \, \partial_\rho (J_{B^3}({\bf g})) \, d\rho + \partial_x (J_{B^3}({\bf g})) \, dx + \partial_y (J_{B^3}({\bf g})) \, dy\, \right).
\end{align*}
Matching with \eqref{eq:theta_as_sum} gives
\begin{align}
\label{eq:Tgr_Tgtau}
T^{\bf g}_x = J_{B^3}({\bf g})^{-1} \, \partial_x (J_{B^3}({\bf g})), \qquad
T^{\bf g}_y = J_{B^3}({\bf g})^{-1} \, \partial_y (J_{B^3}({\bf g})),
\end{align}
and likewise for ${\bf h}$. Eventually we want to compute the restriction of \eqref{eq:square_as_commutator} to $\rho=1$, therefore we'd like to know the restriction of \eqref{eq:Tgr_Tgtau} to $\rho=1$. Note that on $S^2$, i.e. if we fix $\rho=1$, then ${\bf g}^{(1)}$ is a constant function, so $(\partial_x {\bf g}^{(1)})|_{\rho=1} = \partial_x( {\bf g}^{(1)}|_{\rho=1})= 0 = (\partial_y {\bf g}^{(1)})|_{\rho=1}$. Also we note that the lower-right $2$ by $2$ part of $J_{B^3}({\bf g})$ restricted to $\rho=1$ is exactly $J_{S^2}({\bf g}|_{S^2})$, because
$$
{\bf g}|_{S^2} : \vectwo{x}{y} \to \left({\renewcommand{\arraystretch}{1.2} \begin{array}{c} {\bf g}^{(2)}\smallvecthree{1}{x}{y} \\ {\bf g}^{(3)}\smallvecthree{1}{x}{y} \end{array} }\right).
$$
Hence we can write the restriction of $J_{B^3}({\bf g})$ to $\rho=1$ as
\begin{align}
\label{eq:J_restriction}
(J_{B^3}({\bf g}))|_{\rho=1} \smallvectwo{x}{y} = \left( \begin{array}{c|c}
(\partial_\rho \, {\bf g}^{(1)}) \smallvecthree{1}{x}{y} & 0 \qquad 0 \\\hline
\begin{array}{c} * \\ * \end{array} & J_{S^2}({\bf g}|_{S^2}) \smallvectwo{x}{y}
\end{array} \right)
\end{align}
By a similar reasoning it's easy to see
\begin{align*}
\partial_x (J_{B^3}({\bf g})) |_{\rho=1} \smallvectwo{x}{y} & = \left( \begin{array}{c|c}
(\partial_{x \rho} \, {\bf g}^{(1)}) \smallvecthree{1}{x}{y} & 0 \qquad 0 \\\hline
\begin{array}{c} * \\ * \end{array} & \partial_x (J_{S^2}({\bf g}|_{S^2})) \smallvectwo{x}{y}
\end{array} \right), \\
\partial_y (J_{D^3}({\bf g}))|_{\rho=1} \smallvectwo{x}{y} & = \left( \begin{array}{c|c}
(\partial_{y\rho}\, {\bf g}^{(1)}) \smallvecthree{1}{x}{y} & 0 \qquad 0 \\\hline
\begin{array}{c} * \\ * \end{array} & \partial_y (J_{S^2}({\bf g}|_{S^2})) \smallvectwo{x}{y}
\end{array} \right),
\end{align*}
where the double subscript for $\partial$ means the second order partial derivatives, e.g. $\partial_{x\rho} = \partial_x \partial_\rho$. It is a simple exercise to show that the inverse of $(J_{B^3}({\bf g}))|_{\rho=1}$ is given by the similar block lower-triangular form as $(J_{B^3}({\bf g}))|_{\rho=1}$:
\begin{align*}
(J_{B^3}({\bf g}))^{-1}|_{\rho=1} \smallvectwo{x}{y} = \left( \begin{array}{c|c}
\left( (\partial_\rho \, {\bf g}^{(1)}) \smallvecthree{1}{x}{y} \right)^{-1} & 0 \qquad 0 \\\hline
\begin{array}{c} * \\ * \end{array} & \left( J_{S^2}({\bf g}|_{S^2})  \smallvectwo{x}{y} \right)^{-1}
\end{array} \right).
\end{align*}
Now, plugging these into \eqref{eq:Tgr_Tgtau} and \eqref{eq:square_as_commutator}, by a little bit of calculation we get
\begin{align*}
\square^{{\bf g},{\bf h}}_{x,y} & = \frac{\left. ( (\partial_{x\rho} \, {\bf g}^{(1)}) (\partial_{y\rho} \, {\bf h}^{(1)}) - (\partial_{x\rho} \, {\bf h}^{(1)}) ( \partial_{y\rho} \, {\bf g}^{(1)}) ) \right|_{\rho=1} }{ \left.( (\partial_\rho \, {\bf g}^{(1)}) (\partial_\rho \, {\bf h}^{(1)}) ) \right|_{\rho=1} } \\
& \quad
+ \underbrace{ {\rm tr}_{{\rm mat}_2} \left( \begin{array}{l}
(J_{S^2}({\bf g}|_{S^2}))^{-1} \, \partial_x (J_{S^2}({\bf g}|_{S^2})) \, (J_{S^2}({\bf h}|_{S^2}))^{-1} \, \partial_y (J_{S^2}({\bf h}|_{S^2})) \\
- (J_{S^2}({\bf h}|_{S^2}))^{-1} \, \partial_x (J_{S^2}({\bf h}|_{S^2})) \, (J_{S^2}({\bf g}|_{S^2}))^{-1} \, \partial_y (J_{S^2}({\bf g}|_{S^2})) 
\end{array} \right) }_{=: S\smallvectwo{x}{y}},
\end{align*}
and by a similar argument as \eqref{eq:first_trace_as_sum_of_forms} applied to $S^2$, we can prove that the underbraced second term, which is a scalar-valued function on $S^2$, denoted by $S\smallvectwo{x}{y}$, gives what we want to get:
$$
S\smallvectwo{x}{y} \, dx\wedge dy = {\rm tr}_{{\rm mat}_2} (\theta_{S^2}({\bf g}|_{S^2}) \wedge \theta_{S^2}({\bf h}|_{S^2})),
$$
as $2$-forms on $S^2$. Hence, what remains to be proved is that the first term of $\square^{{\bf g},{\bf h}}_{x,y}$ corresponds to an exact $2$-form on $S^2$, i.e. that
\begin{align}
\label{eq:to_be_exact}
\frac{\left. \left(\, (\partial_{x\rho} \, {\bf g}^{(1)}) (\partial_{y\rho} \, {\bf h}^{(1)}) - (\partial_{x\rho} \, {\bf h}^{(1)}) (\partial_{y\rho} \, {\bf g}^{(1)}) ) \, \right) \right|_{\rho=1} }{ \left.\left( (\partial_\rho \, {\bf g}^{(1)}) (\partial_\rho \, {\bf h}^{(1)}) \right) \right|_{\rho=1} } \,\, dx \wedge dy
\end{align}
is an exact $2$-form on $S^2$. In \eqref{eq:to_be_exact}, note for example that $(\partial_{x\rho} \, {\bf g}^{(1)})|_{\rho=1} = \partial_x ((\partial_\rho \, {\bf g}^{(1)})|_{\rho=1})$, so we can express \eqref{eq:to_be_exact} using the two functions $(\partial_\rho \, {\bf g}^{(1)})|_{\rho=1}$ and $(\partial_\rho \, {\bf h}^{(1)})|_{\rho=1}$ and their partial derivatives with respect to $x$ and $y$.

\vs

We can show that the function $(\partial_\rho \, {\bf g}^{(1)})|_{\rho=1}$ on $S^2$ is always positive. Note that the determinant of $(J_{B^3}({\bf g}))|_{\rho=1}$ is positive; moreover, by \eqref{eq:J_restriction} it equals $(\partial_\rho \, {\bf g}^{(1)})|_{\rho=1}$ times the determinant of $J_{S^2}({\bf g}|_{S^2})$, which is positive because ${\bf g}|_{S^2}$ is an orientation-preserving self-diffeomorphism of $S^2$. Thus $(\partial_\rho \, {\bf g}^{(1)})|_{\rho=1}$ must be positive.

\vs

So we can define the function $\phi^{\bf g}$ on $S^2$ as
$$
\phi^{\bf g} := \log \left( (\partial_\rho \,{\bf g}^{(1)})|_{\rho=1}\right),
$$
and likewise for ${\bf h}$, i.e. $\phi^{\bf h} := \log (\partial_\rho \, {\bf h}^{(1)}_\rho)|_{\rho=1}$. Then the $2$-form \eqref{eq:to_be_exact} becomes
\begin{align*}
& \frac{ \partial_x ((\partial_\rho \, {\bf g}^{(1)}_\rho)|_{\rho=1}) \cdot \partial_y ((\partial_\rho \, {\bf h}^{(1)})|_{\rho=1}) - \partial_x ((\partial_\rho \, {\bf h}^{(1)})|_{\rho=1}) \cdot \partial_y((\partial_\rho \, {\bf g}^{(1)})|_{\rho=1}) }{ (\partial_\rho \, {\bf g}^{(1)})|_{\rho=1}  \cdot (\partial_\rho \, {\bf h}^{(1)})|_{\rho=1} } \,\, dx\wedge dy \\
& = \left(\partial_x (\phi^{\bf g}) \cdot \partial_y (\phi^{\bf h}) - \partial_x (\phi^{\bf h}) \cdot \partial_y (\phi^{\bf g}) \right) dx \wedge dy,
\end{align*}
which is indeed an exact $2$-form on $S^2$, because we have the following, as $2$-forms on $S^2$:
\begin{align*}
d\left( (\phi^{\bf g}) \, d (\phi^{\bf h}) \right) 
= d(\phi^{\bf g}) \wedge d(\phi^{\bf h})
& = (\partial_x(\phi^{\bf g}) \, dx + \partial_y (\phi^{\bf g})\, dy) \wedge (\partial_x(\phi^{\bf h}) \, dx + \partial_y (\phi^{\bf h})\, dy)  \\
& = \left(\partial_x (\phi^{\bf g}) \cdot \partial_y (\phi^{\bf h}) - \partial_x (\phi^{\bf h}) \cdot \partial_y (\phi^{\bf g}) \right) dx \wedge dy.
\end{align*}

\end{proof}

\subsection{Proof of Prop.\ref{prop:W_zero_on_subgroup}: $W$ is zero on the boundary trivial $3$d diffeomorphisms}
\label{subsec:W_zero_on_sbugroup}

\begin{lemma}
\label{lem:W_is_zero_of_bt_p0}
$W$ defined in \eqref{eq:W_definition} is zero on $({\rm Diff}_+(\ol{B^3})_{\rm bt})_0$.
\end{lemma}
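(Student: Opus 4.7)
The plan is to show that, when restricted to the subgroup $({\rm Diff}_+(\ol{B^3})_{\rm bt})_0$, the function $W$ is an additive group homomorphism into $(\mathbb{R},+)$, and then to invoke a Thurston-type perfectness theorem to force it to vanish identically. First, observe that for any $\mathbf{g},\mathbf{h} \in ({\rm Diff}_+(\ol{B^3})_{\rm bt})_0$ we have $\mathbf{g}|_{S^2} = \mathbf{h}^{-1}|_{S^2} = \mathrm{id}_{S^2}$, so both $\theta_{S^2}(\mathbf{g}|_{S^2})$ and $\theta_{S^2}(\mathbf{h}^{-1}|_{S^2})$ vanish identically on $S^2$. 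Applying Prop.\ref{prop:W_coboundary}, the right-hand side of \eqref{eq:W_coboundary} is therefore zero, yielding $W(\mathbf{g}\circ \mathbf{h}) = W(\mathbf{g}) + W(\mathbf{h})$. Together with $W(\mathrm{id}_{\ol{B^3}})=0$ (immediate from $\theta_{B^3}(\mathrm{id})=0$), this makes the restriction $W\colon ({\rm Diff}_+(\ol{B^3})_{\rm bt})_0 \to \mathbb{R}$ a group homomorphism.

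Since $(\mathbb{R},+)$ is abelian, such a homomorphism factors through the abelianization of its domain; hence it suffices to prove that $({\rm Diff}_+(\ol{B^3})_{\rm bt})_0$ is perfect. This is the analog of Thm.\ref{thm:Thurston} for a compact smooth manifold with boundary where the diffeomorphisms are required to fix the boundary pointwise. Such a version is established by Thurston and Mather, and further developed by Rybicki and Haller--Teichmann; see \cite{B97} and references therein. Applied to $M = \ol{B^3}$, it gives exactly the perfectness we need, and the desired conclusion $W \equiv 0$ on $({\rm Diff}_+(\ol{B^3})_{\rm bt})_0$ follows.

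The main obstacle is the second step: invoking (or proving) the perfectness of $({\rm Diff}_+(\ol{B^3})_{\rm bt})_0$, as this does not follow directly from Thm.\ref{thm:Thurston}, which is stated only for $\mathrm{Diff}_{\rm c}(M)_0$ with $M$ without boundary. Should one wish to avoid appealing to the boundary version of Thurston's theorem, an alternative would be to argue, via a collar-and-Alexander-trick reduction analogous to the proof of Prop.\ref{prop:our_H_coincides_with_Diff_c_D2_0}, that any $\mathbf{h} \in ({\rm Diff}_+(\ol{B^3})_{\rm bt})_0$ can be written as a product of an element of $\mathrm{Diff}_{\rm c}(B^3)_0$ and an element supported in a collar of $S^2$; the first factor is handled by Thm.\ref{thm:Thurston} applied to $M=B^3$, and the second by a direct commutator/swindle argument inside the collar. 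Either way, perfectness is the non-formal ingredient that carries the proof.
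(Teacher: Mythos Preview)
Your argument is correct in principle but takes a genuinely different route from the paper's. Both proofs begin the same way: using Prop.~\ref{prop:W_coboundary} together with $\mathbf{g}|_{S^2}=\mathrm{id}_{S^2}$ to see that $W$ is additive on $({\rm Diff}_+(\ol{B^3})_{\rm bt})_0$. From there you go \emph{algebraic}, invoking a Thurston--Mather--type perfectness theorem for diffeomorphism groups of manifolds with boundary to kill any homomorphism into $\mathbb{R}$. The paper instead goes \emph{analytic/variational}: it shows directly that for any smooth isotopy $t\mapsto \mathbf{k}_t$ through $\mathrm{id}_{\ol{B^3}}$ one has $\theta_{B^3}(\mathbf{k}_t)=t\,dJ_{B^3}(v)+o(t)$, hence $\theta_{B^3}(\mathbf{k}_t)^{\wedge 3}=o(t)$ and $\left.\frac{d}{dt}W(\mathbf{k}_t)\right|_{t=0}=0$. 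Combined with the additivity from Prop.~\ref{prop:W_coboundary}, this forces $t\mapsto W(\ell_t)$ to have vanishing derivative along any isotopy inside ${\rm Diff}_+(\ol{B^3})_{\rm bt}$, so $W$ is constant on the identity component and equal to $W(\mathrm{id})=0$.

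What each approach buys: the paper's computation is entirely self-contained---no perfectness input is needed for this lemma, only an order-of-vanishing estimate and Prop.~\ref{prop:W_coboundary}. Your approach is conceptually clean and parallels the paper's later use of perfectness (Prop.~\ref{prop:embedding_normality}, Cor.~\ref{cor:our_H_is_perfect}), but it imports a nontrivial external fact: the perfectness of $({\rm Diff}_+(\ol{B^3})_{\rm bt})_0$ is \emph{not} Thm.~\ref{thm:Thurston} as stated (that concerns $\mathrm{Diff}_{\rm c}(M)_0$ for $M$ boundaryless), and note too that this group is strictly larger than $\mathrm{Diff}_{\rm c}(B^3)_0$ since no asymptotically-radial or compact-support condition near $S^2$ is imposed. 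The result you need is indeed in the literature, but you are right to flag it as the non-formal ingredient; the paper simply sidesteps it.
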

\begin{proof}
Consider any smooth isotopy $t\mapsto {\bf k}_t$, $t\in (-\epsilon,\epsilon)$, in ${\rm Diff}_+(\ol{B^3})$ passing through $\mathrm{id}_{B^3}$ at $t=0$. So there is a smooth function $v : \ol{B^3} \to \mathbb{R}^3$ such that $\mathbf{k}_t (\mathbf{x}) = \mathbf{x} + t \, v(\mathbf{x}) + o(t)$, $\forall \mathbf{x}\in B^3$. Then $J_{B^3}(\mathbf{k}_t) = J_{B^3}(\mathrm{id}_{B^3}) + t \, J_{B^3}(v) + o(t) = I + t J_{B^3}(v) + o(t)$, where $I$ is the $3\times 3$ identity matrix. Thus
\begin{align*}
  (J_{B^3}(\mathbf{k}_t))^{-1} & = I - t \, J_{B^3}(v) + o(t), \\
  dJ_{B^3}(\mathbf{k}_t) & = t\, dJ_{B^3}(v) + o(t), \\
  \theta_{B^3}(\mathbf{k}_t) & = (J_{B^3}(\mathbf{h}_t))^{-1} dJ_{B^3}(\mathbf{h}_t) = t \, dJ_{B^3}(v) + o(t), \\
  \theta_{B^3}(\mathbf{k}_t)^{\wedge 3} & = t^3 (dJ_{B^3}(v))^{\wedge 3} + o(t^3),
\end{align*}
where $dJ_{B^3}(v)$ is a well-defined $\mathfrak{gl}(3,\mathbb{R})$-valued $1$-form on $\ol{B^3}$. Thus $\int_{B^3} \mathrm{tr}_{\mathrm{mat}_3}(\theta_{B^3}(\mathbf{k}_t)^{\wedge 3})$ is $o(t)$, and hence $\frac{d}{dt} W(\mathbf{k}_t)|_{t=0} = 0$.

\vs

Now let $\mathbf{g} \in {\rm Diff}_+(\ol{B^3})_{\rm bt}$, and let $t\mapsto {\bf g}_t$, $t\in (-\epsilon,\epsilon)$, be any smooth isotopy in ${\rm Diff}_+(\ol{B^3})$ passing through $\mathbf{g}$ at $t=0$. For each $t$ define $\mathbf{k}_t := \mathbf{g}^{-1} \circ \mathbf{g}_t$; then $t\mapsto {\bf k}_t$, $t\in (-\epsilon,\epsilon)$, is a smooth isotopy in ${\rm Diff}_+(\ol{B^3})$ passing through $\mathbf{g}^{-1} \circ \mathbf{g} = \mathrm{id}_{\ol{B^3}}$ at $t=0$. So, by what we just proved, we have $\frac{d}{dt} W(\mathbf{k}_t)|_{t=0}=0$. Meanwhile, we have $W(\mathbf{g}_t) = W( \mathbf{g} \circ \mathbf{k}_t) = W(\mathbf{g}) + W(\mathbf{k}_t)$ from \eqref{eq:W_coboundary} of Prop.\ref{prop:W_coboundary}, because the integral in the the right-hand-side of \eqref{eq:W_coboundary} is zero in this case because $\theta_{S^2}(\mathbf{g}|_{S^2}) = \theta_{S^2}(\mathrm{id}_{S^2})=0$, for our $\mathbf{g}$ is in ${\rm Diff}_+(\ol{B^3})_{\rm bt}$.

\vs

So, for any smooth isotopy $t\mapsto \ell_t$, $t\in [0,1]$, in ${\rm Diff}_+(\ol{B^3})_{\rm bt}$, the function $[0,1]\to \mathbb{R}$ defined by $t \mapsto W(\ell_t)$ has zero first derivative, hence is a constant function. This, together with $W(\mathrm{id}_{\ol{B^3}})=0$ which one can directly verify, shows that $W$ is the zero function on $({\rm Diff}_+(\ol{B^3})_{\rm bt})_0$.
\end{proof}

In order to prove Prop.\ref{prop:W_zero_on_subgroup}, consider any ${\bf h} \in ({\rm Diff}_{+*}(\ol{B^3})_0)_{\rm bt}$; we shall eventually prove $W(\mathbf{h})=0$. By definition, there is a smooth isotopy $t \mapsto {\bf h}_t$, $t\in [0,1]$, in ${\rm Diff}_{+*}(\ol{B^3})$, with $\mathbf{h}_0 = \mathrm{id}_{\ol{B^3}}$, $\mathbf{h}_1 = \mathbf{h}$, while $\mathbf{h}|_{S^2} = \mathrm{id}_{S^2}$. For each $t\in [0,1]$ let $h_t := \mathbf{h}_t|_{S^2} \in {\rm Diff}_{\rm ci}(S^2)$. Then $t\mapsto h_t$ is a smooth isotopy in ${\rm Diff}_{\rm ci}(S^2)$ with $h_0=\mathrm{id}_{S^2}=h_1$, so we have $h_t \in {\rm Diff}_{\rm ci}(S^2)_0$ for each $t\in [0,1]$. Namely, for each $t\in [0,1]$ and $u \in [0,1]$, let $h^{(t)}_u := h_{ut}$; then, for each $t\in [0,1]$, $u\mapsto h^{(t)}_u$ is a smooth isotopy in ${\rm Diff}_{\rm ci}(S^2)$ with $h^{(t)}_0 = \mathrm{id}_{S^2}$ and $h^{(t)}_1 = h_t$. Now, for each $t\in [0,1]$ we lift $h_t \in {\rm Diff}_{\rm ci}(S^2)_0$ to $\til{h}_t \in {\rm Diff}_{+*}(\ol{B^3})_0$ as in the proof of Prop.\ref{prop:S2_extension_to_B3}.

\vs

Let us be more precise. Choose and fix any $\epsilon \in (0,1/2)$ and let $\xi := \xi_{\epsilon,\epsilon}$ (Lem.\ref{lem:xi}). For each $t\in [0,1]$ define $\til{h}_t : \ol{B^3} \to \ol{B^3}$ as
\begin{align}
\label{eq:til_h_t}
  \til{h}_t \vectwo{\rho}{\smallvectwo{x}{y}} = \vectwo{\rho}{h_{t\cdot\xi(\rho)}\smallvectwo{x}{y}}, \qquad \forall \rho\in [0,1], \qquad \forall \smallvectwo{x}{y} \in S^2.
\end{align}
Then $t\mapsto \til{h}_t$ is a smooth isotopy in ${\rm Diff}_{+*}(\ol{B^3})$ with $\til{h}_0 = \mathrm{id}_{\ol{B^3}}$; hence it is a smooth isotopy in ${\rm Diff}_{+*}(\ol{B^3})_0$. Meanwhile, for each $t \in [0,1]$ we have $\til{h}_t|_{S^2} = h_t$, so $\ell_t := \mathbf{h}_t \circ \til{h}_t^{-1}$ is an element of ${\rm Diff}_+(\ol{B^3})$ whose boundary value is $\ell_t|_{S^2} = (\mathbf{h}_t|_{S^2}) \circ (\til{h}_t|_{S^2})^{-1} = h_t \circ h_t^{-1} = \mathrm{id}_{S^2}$. So $t\mapsto \ell_t$ is a smooth isotopy in ${\rm Diff}_+(\ol{B^3})_{\rm bt}$ with $\ell_0 = \mathbf{h}_0 \circ \til{h}_0^{-1} = \mathrm{id}_{\ol{B^3}}$; in particular, each $\ell_t$ is in $({\rm Diff}_+(\ol{B^3})_{\rm bt})_0$, so Lem.\ref{lem:W_is_zero_of_bt_p0} tells us $W(\ell_t)=0$. Observe now that $W(\mathbf{h}_t) = W(\ell_t \circ \til{h}_t) \stackrel{\eqref{eq:W_coboundary}}{=} W(\ell_t) + W(\til{h}_t) = W(\til{h}_t)$ for each $t\in [0,1]$; when applying \eqref{eq:W_coboundary} of Prop.\ref{prop:W_coboundary} here, note that the right-hand-side of \eqref{eq:W_coboundary} is zero in this case because $\theta_{S^2}(\ell_t|_{S^2}) = \theta_{S^2}(\mathrm{id}_{S^2})=0$. So $W(\mathbf{h}) = W(\mathbf{h}_1) = W(\til{h}_1)$. Thus, establishing
\begin{align}
\label{eq:to_establish}
W(\til{h}_1)=0
\end{align}
would finish our proof of Prop.\ref{prop:W_zero_on_subgroup}.

\vs

For each $\rho\in [0,1]$, define $B_\rho \in {\rm Diff}_+(S^2)$ as $B_\rho := h_{\xi(\rho)}$; then $\rho\mapsto B_\rho$ is smooth. From \eqref{eq:til_h_t},
\begin{align*}
  J_{B^3}(\til{h}_1) = \mattwo{1}{0}{*_1}{J_{S^2}(B_\rho)}, \qquad\mbox{so}\qquad dJ_{B^3}(\til{h}_1) = \mattwo{0}{0}{*_2}{dJ_{S^2}(B_\rho)},
\end{align*}
where the two $d$'s are exterior derivative for $B^3$. Thus
\begin{align*}
  \theta_{B^3}(\til{h}_1) = J_{B^3}(\til{h}_1)^{-1} \, dJ_{B^3}(\til{h}_1) = \mattwo{0}{0}{*_3}{J_{S^2}(B_\rho)^{-1} \, dJ_{S^2}(B_\rho)},
\end{align*}
and therefore one observes that
\begin{align*}
  W(\til{h}_1) \stackrel{\eqref{eq:W_definition}}{=} \int_{B^3} \mathrm{tr}_{\mathrm{mat}_3} \left( (\theta_{B^3}(\til{h}_1))^{\wedge 3} \right)
= \int_{B^3} \mathrm{tr}_{\mathrm{mat}_2} \left( ( J_{S^2}(B_\rho)^{-1} \, dJ_{S^2}(B_\rho) )^{\wedge 3} \right).
\end{align*}
For each $\rho\in [0,1]$, $J_{S^2}(B_\rho)$ is a $\mathrm{GL}_+(2,\mathbb{R})$-valued smooth function on $S^2$, where
\begin{align*}
  \mathrm{GL}_+(n,\mathbb{R}) := \{ \mathrm{g} \in \mathrm{GL}(n,\mathbb{R}) \, | \, \det \mathrm{g} >0 \} \qquad (\mbox{for any positive integer $n$}),
\end{align*}
for $B_\rho$ is an orientation-preserving self-diffeomorphism of $S^2$. As $\rho\mapsto B_\rho$ is smooth, one can regard the symbol $J_{S^2}(B_\rho)$ as a $\mathrm{GL}_+(2,\mathbb{R})$-valued smooth function on $\ol{B^3}$, i.e. one can write
\begin{align*}
  J_{S^2}(B_\rho) : \ol{B^3} \to \mathrm{GL}_+(2,\mathbb{R}) \qquad (\mbox{of class $C^\infty$}).
\end{align*}
When $\rho=1$ we have $B_1 = h_{\xi(1)} = h_1 = \mathrm{id}_{S^2}$, hence
\begin{align}
  \label{eq:J_S2_of_B1}
  \begin{array}{l}
  \mbox{for each $\rho>1-\epsilon$, the map $J_{S^2}(B_\rho) : S^2 \to \mathrm{GL}_+(2,\mathbb{R})$}\\
  \mbox{is the constant function with value $\mathrm{id}_{2\times 2}$},
  \end{array}
\end{align}
where $\mathrm{id}_{2\times 2}$ stands for the $2\times 2$ identity matrix.

\begin{definition}[$3$-form $\eta$ on linear Lie groups]
\label{def:eta}
Let $n$ be a positive integer. Consider the following real-valued differential $3$-form on the Lie group $\mathrm{GL}(n,\mathbb{R})$:
\begin{align}
\label{eq:eta}
\eta_n := \mathrm{tr}_{{\rm mat}_n} \left( (\mathrm{g}^{-1} d\mathrm{g})^{\wedge 3} \right),
\end{align}
where $\mathrm{g}$ in \eqref{eq:eta} denotes a matrix variable parametrizing $\mathrm{GL}(n,\mathbb{R})$, and $\mathrm{tr}_{\mathrm{mat}_n}$ is the usual trace of $n\times n$ matrices.
\end{definition}

Note that the identity component of $\mathrm{GL}(n,\mathbb{R})$ is $\mathrm{GL}_+(n,\mathbb{R})$, which forms a Lie group; we denote the pullback of $\eta_n$ along the embedding $\mathrm{GL}_+(n,\mathbb{R}) \hookrightarrow \mathrm{GL}(n,\mathbb{R})$ also by $\eta_n$.

\vs

So one can now write
\begin{align}
\label{eq:W_til_h_1_integral}
  W(\til{h}_1) = \int_{B^3} (J_{S^2}(B_\rho))^* \, \eta_2,
\end{align}
where $\eta_2$ is regarded as a differential $2$-form on $\mathrm{GL}_+(2,\mathbb{R})$. In order to take advantage of some results in algebraic topology, we should first establish the following fact, which must be well known; we provide a quick proof for completeness:

\begin{lemma}
\label{lem:closedform}
$\eta_n$ is a closed $3$-form on $\mathrm{GL}(n,\mathbb{R})$, hence also on $\mathrm{GL}_+(n,\mathbb{R})$, i.e. $d\eta_n =0$.
\end{lemma}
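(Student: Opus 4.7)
The plan is to reduce $d\eta_n = 0$ to the Maurer-Cartan structure equation on $\mathrm{GL}(n,\mathbb{R})$. Writing $\omega := \mathrm{g}^{-1} d\mathrm{g}$ for the matrix-valued Maurer-Cartan $1$-form, so that $\eta_n = \mathrm{tr}_{\mathrm{mat}_n}(\omega^{\wedge 3})$, the starting point is the identity $d\omega = -\omega \wedge \omega$. This is a direct consequence of Lem.\ref{lem:d_inverse}: one simply computes
\begin{align*}
d\omega = d(\mathrm{g}^{-1}) \wedge d\mathrm{g} = -\mathrm{g}^{-1} \, d\mathrm{g} \, \mathrm{g}^{-1} \wedge d\mathrm{g} = -\omega \wedge \omega.
\end{align*}

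Next I would apply the graded Leibniz rule to the triple wedge of the $1$-form $\omega$, and substitute $d\omega = -\omega\wedge\omega$, to obtain
\begin{align*}
d(\omega \wedge \omega \wedge \omega) = d\omega \wedge \omega \wedge \omega - \omega \wedge d\omega \wedge \omega + \omega \wedge \omega \wedge d\omega = -\omega^{\wedge 4} + \omega^{\wedge 4} - \omega^{\wedge 4} = -\omega^{\wedge 4}.
\end{align*}
Since exterior differentiation commutes with the matrix trace (the trace is an $\mathbb{R}$-linear map on constant matrices, applied pointwise), this yields $d\eta_n = -\mathrm{tr}_{\mathrm{mat}_n}(\omega^{\wedge 4})$.

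The final step is to observe that this trace vanishes by the graded-cyclic symmetry for the trace of matrix-valued forms, namely $\mathrm{tr}_{\mathrm{mat}_n}(\alpha \wedge \beta) = (-1)^{(\deg \alpha)(\deg \beta)}\,\mathrm{tr}_{\mathrm{mat}_n}(\beta \wedge \alpha)$. Applied with $\alpha = \omega$ of degree $1$ and $\beta = \omega^{\wedge 3}$ of degree $3$, this gives $\mathrm{tr}_{\mathrm{mat}_n}(\omega^{\wedge 4}) = -\mathrm{tr}_{\mathrm{mat}_n}(\omega^{\wedge 4})$, hence $\mathrm{tr}_{\mathrm{mat}_n}(\omega^{\wedge 4}) = 0$, so $d\eta_n = 0$.

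I do not expect any real obstacle: the graded Leibniz rule and graded-cyclic trace identity for matrix-valued differential forms are standard, and the assertion for $\mathrm{GL}_+(n,\mathbb{R})$ is immediate from the one for $\mathrm{GL}(n,\mathbb{R})$ since the embedding $\mathrm{GL}_+(n,\mathbb{R}) \hookrightarrow \mathrm{GL}(n,\mathbb{R})$ is smooth and pullback commutes with $d$. The only minor care needed is to verify the signs in the graded Leibniz rule applied successively, which is where the cancellation pattern $-1 + 1 - 1 = -1$ arises.
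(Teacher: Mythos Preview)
Your proof is correct and shares the same skeleton as the paper's: both compute $d(\omega^{\wedge 3}) = -\omega^{\wedge 4}$ for $\omega = \mathrm{g}^{-1}d\mathrm{g}$, conclude $d\eta_n = -\mathrm{tr}_{\mathrm{mat}_n}(\omega^{\wedge 4})$, and then argue this trace vanishes. The difference lies in how the vanishing is established. You invoke the graded-cyclic trace identity $\mathrm{tr}(\alpha\wedge\beta) = (-1)^{(\deg\alpha)(\deg\beta)}\mathrm{tr}(\beta\wedge\alpha)$ directly with $\alpha=\omega$, $\beta=\omega^{\wedge 3}$, which is clean and conceptual. The paper instead takes a small detour through the \emph{right} Maurer-Cartan form $\omega_R := d\mathrm{g}\,\mathrm{g}^{-1}$: it shows $\mathrm{tr}(\omega^{\wedge 3}) = \mathrm{tr}(\omega_R^{\wedge 3})$ and $\mathrm{tr}(\omega^{\wedge 4}) = \mathrm{tr}(\omega_R^{\wedge 4})$ by explicit cyclic rearrangement, computes $d(\omega_R^{\wedge 3}) = +\omega_R^{\wedge 4}$, and chains these to get $\mathrm{tr}(\omega^{\wedge 4}) = -\mathrm{tr}(\omega^{\wedge 4})$. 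This is really the same identity unpacked by hand, with the opposite-sign Maurer-Cartan equation for $\omega_R$ doing the work your $(-1)^{1\cdot 3}$ does. Your version is shorter and makes the algebraic reason for the vanishing transparent; the paper's version is more self-contained in that it does not name either the Maurer-Cartan equation or the graded-cyclic trace law as prerequisites.
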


\begin{proof}
Let $\mathrm{g} \in \mathrm{GL}(n,\mathbb{R})$ denote the matrix variable parametrizing $\mathrm{GL}(n,\mathbb{R})$; thus $\mathrm{g}$ can be thought of as the identity map $\mathrm{g}\mapsto \mathrm{g}$ from $\mathrm{GL}(n,\mathbb{R})$ to $\mathrm{GL}(n,\mathbb{R})$. From Lem.\ref{lem:d_inverse} we have
\begin{align}
\label{eq:d_g_inverse}
d(\mathrm{g}^{-1}) = - \mathrm{g}^{-1} (d \mathrm{g}) \, \mathrm{g}^{-1}.
\end{align}
Now, using \eqref{eq:d_g_inverse} and $d^2 \mathrm{g} = 0$, we compute
\begin{align*}
& d ( \, \mathrm{g}^{-1} (d\mathrm{g}) \wedge \mathrm{g}^{-1} (d\mathrm{g}) \wedge \mathrm{g}^{-1} (d\mathrm{g})) \\
& = (\ul{-\mathrm{g}^{-1} \, (d\mathrm{g}) \, \mathrm{g}^{-1}})  \wedge d\mathrm{g} \wedge (\mathrm{g}^{-1} d\mathrm{g}) \wedge (\mathrm{g}^{-1} d\mathrm{g})
-  (\mathrm{g}^{-1} d\mathrm{g}) \wedge (\ul{-\mathrm{g}^{-1}  (d\mathrm{g}) \mathrm{g}^{-1}}) \wedge d\mathrm{g} \wedge (\mathrm{g}^{-1} d\mathrm{g}) \\
& \quad + (\mathrm{g}^{-1} d\mathrm{g}) \wedge (\mathrm{g}^{-1}  d\mathrm{g}) \wedge (\ul{- \mathrm{g}^{-1} (d\mathrm{g}) \mathrm{g}^{-1}}) \wedge d\mathrm{g} \\
& = - (\mathrm{g}^{-1} d\mathrm{g})^{\wedge 4},
\end{align*}
where the underlined parts come from \eqref{eq:d_g_inverse}. Thus we have
\begin{align}
\label{eq:closedform_proof1}
d \left( \mathrm{tr}_{{\rm mat}_n} (\mathrm{g}^{-1} d\mathrm{g})^{\wedge 3} \right) = - \mathrm{tr}_{{\rm mat}_n} (\mathrm{g}^{-1} d\mathrm{g})^{\wedge 4}.
\end{align}
We're trying to assert that this $4$-form is $0$. Inspired by the usual proof for this, we proceed as follows.

\vs

Using the skew-symmetry we have
\begin{align}
\label{eq:closedform_proof2}
\begin{array}{rl}
\mathrm{tr}_{{\rm mat}_n} (\mathrm{g}^{-1} d\mathrm{g})^{\wedge 3}
& = \mathrm{tr}_{{\rm mat}_n} (\ul{ \mathrm{g}^{-1} } \, \,\ul{ d\mathrm{g} \wedge \mathrm{g}^{-1} d\mathrm{g} \wedge \mathrm{g}^{-1} d\mathrm{g}}) \\
& = \mathrm{tr}_{{\rm mat}_n} (\ul{d\mathrm{g} \wedge \mathrm{g}^{-1} d\mathrm{g} \wedge \mathrm{g}^{-1} d\mathrm{g}} \,\, \ul{\mathrm{g}^{-1}})
= \mathrm{tr}_{{\rm mat}_n} (d\mathrm{g}\, \mathrm{g}^{-1})^{\wedge 3}.
\end{array}
\end{align}
Now again using \eqref{eq:d_g_inverse} and $d^2 g = 0$, we observe
\begin{align*}
d( (d\mathrm{g} \, \mathrm{g}^{-1})^{\wedge 3}) & = d( \, d\mathrm{g} \, \mathrm{g}^{-1} \wedge d\mathrm{g} \, \mathrm{g}^{-1} \wedge d\mathrm{g} \, \mathrm{g}^{-1} ) \\
& = - d\mathrm{g} \wedge (\ul{-\mathrm{g}^{-1} (d\mathrm{g})  \mathrm{g}^{-1}}) \wedge d\mathrm{g} \, \mathrm{g}^{-1} \wedge d\mathrm{g} \, \mathrm{g}^{-1}
+ d\mathrm{g} \, \mathrm{g}^{-1} \wedge d\mathrm{g} \wedge (\ul{ -\mathrm{g}^{-1} (d\mathrm{g}) \mathrm{g}^{-1}}) \wedge d\mathrm{g} \, \mathrm{g}^{-1} \\
& \quad
- d\mathrm{g} \, \mathrm{g}^{-1} \wedge d\mathrm{g} \, \mathrm{g}^{-1} \wedge d\mathrm{g} \wedge (\ul{ -\mathrm{g}^{-1} (d\mathrm{g}) \mathrm{g}^{-1} }) \\
& = (d\mathrm{g} \, \mathrm{g}^{-1})^{\wedge 4},
\end{align*}
and therefore
\begin{align}
\label{eq:closedform_proof3}
d \left( \mathrm{tr}_{{\rm mat}_n} (d\mathrm{g} \, \mathrm{g}^{-1} )^{\wedge 3} \right) = \mathrm{tr}_{{\rm mat}_n} (d\mathrm{g}\, \mathrm{g}^{-1})^{\wedge 4}.
\end{align}
Again using the skew-symmetry we have
\begin{align}
\label{eq:closedform_proof4}
\mathrm{tr}_{{\rm mat}_n} ( d\mathrm{g} \, \mathrm{g}^{-1} )^{\wedge 4}
= \mathrm{tr}_{{\rm mat}_n} (\mathrm{g}^{-1} d\mathrm{g})^{\wedge 4}.
\end{align}
Hence we get
\begin{align*}
\mathrm{tr}_{{\rm mat}_n} (\mathrm{g}^{-1} d\mathrm{g})^{\wedge 4}
\stackrel{\eqref{eq:closedform_proof4}}{=}
\mathrm{tr}_{{\rm mat}_n} (d\mathrm{g} \, \mathrm{g}^{-1})^{\wedge 4}
& \stackrel{\eqref{eq:closedform_proof3}}{=}
d \left( \mathrm{tr}_{{\rm mat}_n} (d\mathrm{g} \, \mathrm{g}^{-1} )^{\wedge 3} \right) \\
& \stackrel{\eqref{eq:closedform_proof2}}{=}
d \left( \mathrm{tr}_{{\rm mat}_n} (\mathrm{g}^{-1} d\mathrm{g})^{\wedge 3} \right)
\stackrel{\eqref{eq:closedform_proof1}}{=} - \mathrm{tr}_{{\rm mat}_n} (\mathrm{g}^{-1} d\mathrm{g})^{\wedge 4},
\end{align*}
and therefore can conclude that this $4$-form \eqref{eq:closedform_proof1} is $0$, as desired.
\end{proof}

Therefore $\eta_2$ can be thought of as representing an element $[\eta_2]$ of the third De Rham cohomology group $H^3_{\rm dR} (\mathrm{GL}_+(2,\mathbb{R}))$, which is canonically isomorphic to the third singular cohomology group $H^3(\mathrm{GL}_+(2,\mathbb{R});\mathbb{R})$ by De Rham's Theorem (see e.g. \cite[Appendix A]{M91}). We would like to identify the integral \eqref{eq:W_til_h_1_integral} as the evaluation of this third cohomology class on a third singular homology class. To do this carefully, we follow \cite{M91} for basic algebraic topology. 

\vs

Observe that the closed unit $3$-ball $\ol{B^3}$ is homeomorphic to the \emph{unit $3$-cube} $[0,1]^3 \subset \mathbb{R}^3$. Fix one such homeomorphism and call it
\begin{align}
\label{eq:sigma_3d_homeomorphism}
\sigma : [0,1]^3 \to \ol{B^3};
\end{align}
one can find a $\sigma$ that restricts to a $C^\infty$ diffeomorphism of the interiors. Let
\begin{align}
\label{eq:sigma_1}
\sigma_1 := J_{S^2}(B_\rho) \circ \sigma: [0,1]^3 \to \mathrm{GL}_+(2,\mathbb{R}).
\end{align}
So $\sigma_1$ represents a \emph{cubical singular $3$-chain} in $\mathrm{GL}_+(2,\mathbb{R})$ in the sense as in \cite[Chap.VII]{M91}, i.e. $\sigma_1 \in C_3(\mathrm{GL}_+(2,\mathbb{R}))$. In fact, $\sigma_1$ is a singular $3$-cycle, i.e. $\sigma_1$ is in the kernel of the `boundary map' $\partial_3 : C_3(\mathrm{GL}_+(2,\mathbb{R})) \to C_2(\mathrm{GL}_+(2,\mathbb{R}))$, for all the boundary points of $[0,1]^3$ are mapped by $\sigma_1$ to a single point in $\mathrm{GL}_+(2,\mathbb{R})$, namely the identity matrix, as we saw in \eqref{eq:J_S2_of_B1}, hence $\partial_3 \sigma_1$ is a degenerate $2$-chain. From \eqref{eq:J_S2_of_B1} one can also show that $\sigma_1$ is smooth. So we can write the integral \eqref{eq:W_til_h_1_integral} as $\int_{[0,1]^3} (\sigma_1)^* \eta_2$, which is precisely the value of the natural pairing between the De Rham coholomogy class $[\eta_2]$ in  $H^3_{\rm  dR}(\mathrm{GL}_+(2,\mathbb{R}))$ and the singular homology class $[\sigma_1]$ in $H_3(\mathrm{GL}_+(2,\mathbb{R}))$.

\vs

Meanwhile, the Gram-Schmidt process yields ${\rm GL}_+(n,\mathbb{R})\to {\rm SO}(n)$, which is a deformation retraction. Hence
\begin{align*}
  H_3(\mathrm{GL}_+(2,\mathbb{R})) \cong H_3({\rm SO}(2)) \cong H_3 (S^1) = 0.
\end{align*}

\vs

So $[\sigma_1]=0 \in H_3(\mathrm{GL}_+(2,\mathbb{R}))$, hence the integral \eqref{eq:W_til_h_1_integral} is zero, for it is the value of the pairing between $[\eta_2] \in H^3_{\rm dR}(\mathrm{GL}_+(2,\mathbb{R}))$ and $[\sigma_1] \in H_3(\mathrm{GL}_+(2,\mathbb{R}))$. Hence $W(\til{h}_1)=0$, as we wanted in \eqref{eq:to_establish}. This is the end of our proof of Prop.\ref{prop:W_zero_on_subgroup}.

\section{Central extension of $\mathrm{Vect}(S^1) \ltimes \mathrm{Vect}(S^1)$}

\subsection{Lie algebra central extensions, in general}
\label{subsec:Lie_algebra_central_extensions_in_general}

Let $\mathcal{G}$ be any Lie group, and let $\mathrm{Lie}(\mathcal{G})$ be the Lie algebra of $\mathcal{G}$, regarded as the right invariant vector fields on $\mathcal{G}$. Recall that, for our $\mathcal{G}$ we studied a central extension $\wh{\mathcal{G}}$ of $\mathcal{G}$ corresponding to the $\mathbb{R}$-valued group $2$-cocycle $\gamma : \mathcal{G} \times \mathcal{G} \to \mathbb{R}$. Any group $2$-cocycle $\gamma$ of $\mathcal{G}$ induces a map $\beta: \mathrm{Lie}(\mathcal{G}) \times \mathrm{Lie}(\mathcal{G}) \to \mathbb{R}$ via `differentiation':
\begin{align}
\label{eq:beta_definition}
\beta(V,W) := \left. \frac{\partial^2}{\partial t \partial s} \gamma(h_t, k_s) - \gamma(k_s, h_t) \right|_{t,s=0}, \qquad V,W\in \mathrm{Lie}(\mathcal{G}),
\end{align}
where $\{h_t\}_{t\in(-\epsilon,\epsilon)}$, $\{k_s\}_{s\in(-\epsilon,\epsilon)}$ are $C^1$ curves in $\mathcal{G}$ passing through the identity element, corresponding respectively to $V$, $W$. That is, $h_0=k_0=\mathrm{id}_\mathcal{G}$, and the `derivatives' of $h_t$ and $k_s$ at $t=0$ and $s=0$ are $V$ and $W$ respectively (one writes $\frac{d}{dt} h_t |_{t=0}=V$, $\frac{d}{ds} k_s |_{s=0} = W$). A more precise way to deal with these Lie algebra elements is to regard them as derivations on $C^\infty(\mathcal{G};\mathbb{R})$. Namely, $V$ acts (from the left) as derivation on $C^\infty(\mathcal{G};\mathbb{R})$ by 
$$
(V.F)(g) = \frac{d}{dt} F(h_tg)|_{t=0}.
$$ 
For $V,W \in \mathrm{Lie}(\mathcal{G})$, we define the Lie bracket $[V,W] \in \mathrm{Lie}(\mathcal{G})$ as the unique right invariant vector field on $\mathcal{G}$ acting as derivation on $C^\infty(\mathcal{G};\mathbb{R})$ as $[V,W].F = V.(W.F) - W.(V.F)$.

\vs

Then one can check that the map $\beta$ \eqref{eq:beta_definition} is an {\em $\mathbb{R}$-valued Lie algebra $2$-cocycle} $\beta$, i.e. a bilinear map $\mathrm{Lie}(\mathcal{G}) \times \mathrm{Lie}(\mathcal{G}) \to \mathbb{R}$ that is skew-symmetric
\begin{align}
\label{eq:general_beta_skew-symmetry}
\beta(V,W) = - \beta(W,V), \qquad \forall \, V,W\in \mathrm{Lie}(\mathcal{G}),
\end{align}
and satisfies the {\em Lie algebra $2$-cocycle property}:
\begin{align}
\label{eq:general_beta_2-cocycle_property}
\beta([V,W],U) + \beta([W,U],V) + \beta([U,V],W) = 0, \qquad \forall \, V,W,U\in \mathrm{Lie}(\mathcal{G}).
\end{align}

\vs

The central extension $\wh{\mathrm{Lie}(\mathcal{G})}$ of the Lie algebra $\mathrm{Lie}(\mathcal{G})$ is constructed as
$$
\wh{\mathrm{Lie}(\mathcal{G})} = \mathrm{Lie}(\mathcal{G}) \oplus \mathbb{R} = \{ (V, a) \, | \, V \in \mathrm{Lie}(\mathcal{G}), ~ a \in \mathbb{R} \}
$$
as a vector space, with the Lie bracket
\begin{align}
\label{eq:Lie_algebra_central_extension_definition}
[(V_1, a_1), \, (V_2, a_2)] := ([V_1,V_2], \, \beta(V_1,V_2)), \qquad V_1,V_2 \in \mathrm{Lie}(\mathcal{G}), \quad a_1,a_2 \in \mathbb{R}.
\end{align}
One observes that the above constructed $\wh{\mathrm{Lie}(\mathcal{G})}$ can be naturally identified with $\mathrm{Lie}(\wh{\mathcal{G}})$. Namely, let $(V,a) \in \wh{\mathrm{Lie}(\mathcal{G})}$. Let $\{h_t\}_{t\in(-\epsilon,\epsilon)}$ and $\{\alpha_t\}_{t\in(-\epsilon,\epsilon)}$ be $C^1$ curves passing through the identity respectively in $\mathcal{G}$ and in $\mathbb{R}$ corresponding to $V$ and $a$; so $\alpha_t = t a + o(t)$. Then $\{(h_t,\alpha_t)\}_{t\in(-\epsilon,\epsilon)}$ is a $C^1$ curve in $\wh{\mathcal{G}}$ passing through the identity, hence represents a tangent vector to $\wh{\mathcal{G}}$ at $\mathrm{id}_{\wh{\mathcal{G}}}$, yielding to a unique right invariant vector field on $\wh{\mathcal{G}}$, i.e. an element of $\mathrm{Lie}(\mathcal{G})$. One can then verify that thus constructed map $\wh{\mathrm{Lie}(\mathcal{G})} \to \mathrm{Lie}(\wh{\mathcal{G}})$ is a well-defined Lie algebra isomorphism. From now on, we will identify $\wh{\mathrm{Lie}(\mathcal{G})}$ and $\mathrm{Lie}(\wh{\mathcal{G}})$ via this isomorphism.

\vs

\subsection{Disc vector fields}
\label{subsec:disc_vector_fields}

In order to apply what we discussed in the previous subsection to our group $\mathcal{G}$, we must first investigate what we mean by vector fields on the circle and on the disc.

\begin{definition}
We denote by ${\rm Vect}(S^1)$ the set of all smooth real vector fields on $S^1 \approx \mathbb{R}/2\pi\mathbb{Z}$. In other words,
\begin{align}
{\rm Vect}(S^1) := \left\{ \left. v = v(\theta) \frac{\partial}{\partial \theta} \, \right| \, v(\theta) \in C^\infty(\mathbb{R}/2\pi\mathbb{Z}; \mathbb{R}) \right\}.
\end{align}
\end{definition}
\begin{lemma}
${\rm Vect}(S^1)$ is a real Lie algebra, with the Lie bracket being the commutator of vector fields on $S^1$, i.e. 
\begin{align}
\label{eq:Vect_S1_Lie_bracket}
\left[v(\theta)\frac{\partial}{\partial\theta}, ~ w(\theta)\frac{\partial}{\partial\theta}\right] = (v(\theta)w'(\theta) - v'(\theta)w(\theta))\frac{\partial}{\partial\theta},
\end{align}
where the prime ${}'$ means the derivative with respect to $\theta$. \qed
\end{lemma}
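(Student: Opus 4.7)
The statement is a standard verification, and the plan is to establish it by checking the Lie algebra axioms directly for the operator bracket on $\mathrm{Vect}(S^1)$. First I would derive formula \eqref{eq:Vect_S1_Lie_bracket} from first principles. Regarding each $v(\theta)\partial_\theta$ as a derivation acting on $f \in C^\infty(\mathbb{R}/2\pi\mathbb{Z};\mathbb{R})$ by $(v\partial_\theta).f = v(\theta) f'(\theta)$, I would compute $(v\partial_\theta)(w\partial_\theta).f - (w\partial_\theta)(v\partial_\theta).f$, observing that the second-derivative terms $vw f''$ cancel by symmetry and the remaining first-order terms collapse to $(v w' - v' w)f'$. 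This simultaneously confirms (i) that the commutator of two smooth vector fields on $S^1$ is again a derivation of the form $u(\theta)\partial_\theta$ with $u = vw' - v'w \in C^\infty(\mathbb{R}/2\pi\mathbb{Z};\mathbb{R})$, so $\mathrm{Vect}(S^1)$ is closed under the bracket, and (ii) that the bracket agrees with the formula stated in \eqref{eq:Vect_S1_Lie_bracket}.

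Next I would verify the three algebraic axioms. $\mathbb{R}$-bilinearity of $[\cdot,\cdot]$ is immediate from $\mathbb{R}$-linearity of $v\mapsto v'$ and of multiplication. Skew-symmetry follows by inspection: swapping $v$ and $w$ in $vw' - v'w$ negates it. For the Jacobi identity
\[
[[v\partial_\theta, w\partial_\theta], u\partial_\theta] + [[w\partial_\theta, u\partial_\theta], v\partial_\theta] + [[u\partial_\theta, v\partial_\theta], w\partial_\theta] = 0,
\]
I would either expand each term using \eqref{eq:Vect_S1_Lie_bracket} and observe that the nine pairwise products of functions and their derivatives cancel pairwise, or, more cleanly, invoke the general fact that the commutator bracket of derivations on any associative algebra automatically satisfies Jacobi: writing $D_v := v\partial_\theta$, one has $[[D_v, D_w], D_u].f = (D_v D_w D_u - D_w D_v D_u - D_u D_v D_w + D_u D_w D_v).f$, and summing cyclically gives zero identically, since every ordered product of three operators appears twice with opposite signs.

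There is no real obstacle here — the only care needed is the initial step where one must justify that $[v\partial_\theta, w\partial_\theta]$, \emph{a priori} a second-order differential operator, is in fact first-order, which is exactly the cancellation of the $f''$ coefficients noted above. I would present this as a one-paragraph proof, perhaps relegating the Jacobi identity to the remark that it holds for the commutator bracket on any associative algebra of operators.
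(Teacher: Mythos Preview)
Your proposal is correct and standard. Note that the paper provides no proof at all for this lemma --- it ends the statement with \qed, treating the result as a well-known fact about vector fields on a manifold. Your verification via derivations on $C^\infty(\mathbb{R}/2\pi\mathbb{Z};\mathbb{R})$, with the $f''$-cancellation showing first-order closure and Jacobi following from associativity of operator composition, is exactly the standard argument one would supply if asked to expand the \qed.
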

\begin{lemma}
\label{lem:Lie_G_is_Vect_S1}
The Lie algebra ${\rm Lie}({\bf G})$ of ${\bf G} = {\rm Diff}_+(S^1)$ coincides with ${\rm Vect}(S^1)$. \qed
\end{lemma}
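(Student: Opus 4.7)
The plan is to construct the natural isomorphism $\mathrm{Lie}(\mathbf{G}) \cong \mathrm{Vect}(S^1)$ in two stages: first identify the underlying vector spaces, then verify that the Lie brackets match.

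For the vector-space identification, I would regard $\mathrm{Lie}(\mathbf{G})$ as the tangent space $T_{\mathrm{id}_{S^1}} \mathbf{G}$. A smooth curve $\{h_t\}$ in $\mathbf{G}$ passing through $\mathrm{id}_{S^1}$ at $t=0$ (smooth with respect to the $C^\infty$-Fr\'echet topology on $\mathrm{Diff}_+(S^1)$; see \cite{B97}) yields the velocity $v(\theta) := \partial_t h_t(\theta)|_{t=0}$, which lies in $C^\infty(\mathbb{R}/2\pi\mathbb{Z};\mathbb{R})$ and hence defines a vector field $v(\theta)\partial_\theta \in \mathrm{Vect}(S^1)$. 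This produces a well-defined linear map $T_{\mathrm{id}_{S^1}}\mathbf{G} \to \mathrm{Vect}(S^1)$. Injectivity is immediate from the definition of velocity. For surjectivity, given any $v \in \mathrm{Vect}(S^1)$, the flow of the autonomous vector field $v\partial_\theta$ is a smooth one-parameter subgroup $h_t \in \mathbf{G}$ whose $t$-derivative at $0$ is $v$.

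For the Lie-bracket match, I use the right-invariant convention of \S\ref{subsec:Lie_algebra_central_extensions_in_general}. Let $V,W \in \mathrm{Lie}(\mathbf{G})$ correspond to $v\partial_\theta, w\partial_\theta$ via curves $\{h_t\}, \{k_s\}$. Applying the derivation $V$ to the evaluation-type functional $F_{\theta_0}(g):=g(\theta_0)$ (which can be replaced with a composition $\phi(g(\theta_0))$ for a smooth $\phi$ if one insists on a genuine $C^\infty$ functional on $\mathbf{G}$) and using the chain rule,
\begin{align*}
(V.F_{\theta_0})(g) = \tfrac{d}{dt}\big|_{0} h_t(g(\theta_0)) = v(g(\theta_0)),
\end{align*}
and therefore $(W.(V.F_{\theta_0}))(g) = v'(g(\theta_0))\,w(g(\theta_0))$, while symmetrically $(V.(W.F_{\theta_0}))(g) = w'(g(\theta_0))\,v(g(\theta_0))$. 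Subtracting and evaluating at $g=\mathrm{id}_{S^1}$ gives
\begin{align*}
([V,W].F_{\theta_0})(\mathrm{id}_{S^1}) = (vw'-v'w)(\theta_0),
\end{align*}
so by the same evaluation recipe $[V,W]$ corresponds to the vector field $(vw'-v'w)\partial_\theta$, matching \eqref{eq:Vect_S1_Lie_bracket} exactly.

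The main obstacle is purely foundational: one must invoke the infinite-dimensional Lie group structure on $\mathrm{Diff}_+(S^1)$ with tangent space $\mathrm{Vect}(S^1)$ at the identity, and justify that the derivation formalism of \S\ref{subsec:Lie_algebra_central_extensions_in_general} (stated in the paper for general Lie groups) applies to this Fr\'echet setting, including the handling of evaluation-type test functionals. These facts are standard in the theory of diffeomorphism groups (see the references in \cite{B97}) and can be quoted rather than reproved here.
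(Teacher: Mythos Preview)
Your proposal is correct and is the standard argument for this well-known fact. The paper itself provides no proof at all: the lemma is stated with an immediate \qed, treating the identification $\mathrm{Lie}(\mathrm{Diff}_+(S^1)) \cong \mathrm{Vect}(S^1)$ as a folklore result to be quoted (with \cite{B97} as the background reference for the smooth structure on diffeomorphism groups). Your sketch therefore goes beyond what the paper does, but it is accurate and compatible with the right-invariant convention set up in \S\ref{subsec:Lie_algebra_central_extensions_in_general}.
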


\begin{definition}
A \emph{generalized smooth real disc vector field} $V$ is an expression
\begin{align}
\label{eq:disc_vector_field_V}
  V = V_1\smallvectwo{x}{y} \frac{\partial}{\partial x} + V_2\smallvectwo{x}{y} \frac{\partial}{\partial y}, 
\end{align}
for some $V_1,V_2 \in C^\infty(D^2;\mathbb{R})$. Denote by ${\rm GVect}(D^2)$ the set of all generalized smooth real disc vector fields.
\end{definition}
\begin{lemma}
$\mathrm{GVect}(D^2)$ is a real Lie algebra, with the Lie bracket being the commutator of vector fields on $\mathbb{R}^2$. \qed
\end{lemma}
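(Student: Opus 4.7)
The plan is to verify the three Lie algebra axioms directly, exploiting the fact that $D^2$ is an open subset of $\mathbb{R}^2$ so every smoothness check is local and elementary. The lemma is essentially a textbook fact about vector fields on open domains in Euclidean space; I expect no substantive obstacle. The only point requiring genuine care is to check that the commutator of two first-order differential operators on $D^2$ is again a first-order operator with smooth coefficients, hence lies in $\mathrm{GVect}(D^2)$.

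First I would record the real vector space structure on $\mathrm{GVect}(D^2)$: under pointwise addition and scalar multiplication, the coefficient functions $V_1, V_2 \in C^\infty(D^2;\mathbb{R})$ form a real vector space, so $\mathrm{GVect}(D^2)$ is canonically isomorphic to $C^\infty(D^2;\mathbb{R})^{\oplus 2}$ and inherits the obvious $\mathbb{R}$-vector space structure via the coefficients in \eqref{eq:disc_vector_field_V}. Next, for $V = V_1 \partial_x + V_2 \partial_y$ and $W = W_1 \partial_x + W_2 \partial_y$ I would define
\[
[V,W] := V \circ W - W \circ V,
\]
interpreted as the commutator of two $\mathbb{R}$-linear derivations of the associative algebra $C^\infty(D^2;\mathbb{R})$. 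A short expansion, using equality of mixed partial derivatives on the open set $D^2$, shows that all second-order terms cancel, yielding
\[
[V,W] = \bigl(V(W_1) - W(V_1)\bigr)\frac{\partial}{\partial x} + \bigl(V(W_2) - W(V_2)\bigr)\frac{\partial}{\partial y},
\]
with $V(W_i) := V_1 \partial_x W_i + V_2 \partial_y W_i$ and similarly for $W(V_i)$. Each coefficient is a polynomial in smooth functions and their first partial derivatives, hence lies in $C^\infty(D^2;\mathbb{R})$, so $[V,W] \in \mathrm{GVect}(D^2)$. Bilinearity over $\mathbb{R}$ is immediate from the $\mathbb{R}$-linearity of composition and of $\partial_x, \partial_y$.

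Finally, skew-symmetry $[V,W] = -[W,V]$ is manifest from the commutator definition, and the Jacobi identity follows from the general algebraic fact that the commutator bracket in any associative algebra satisfies it: expanding
\[
[[V,W],U] + [[W,U],V] + [[U,V],W]
\]
produces twelve terms which cancel in pairs by associativity of composition of operators on $C^\infty(D^2;\mathbb{R})$. Since the assignment sending each $V \in \mathrm{GVect}(D^2)$ to its action as a derivation on $C^\infty(D^2;\mathbb{R})$ is injective, this identity transfers back to $\mathrm{GVect}(D^2)$, completing the verification. The only step that genuinely uses features of $D^2$ — rather than pure algebra — is the cancellation of second-order terms, which is the standard reason commutators of smooth vector fields are again smooth vector fields.
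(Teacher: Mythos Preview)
Your proof is correct and complete; this is exactly the standard verification that smooth vector fields on an open subset of $\mathbb{R}^n$ form a Lie algebra. The paper treats this lemma as immediate and gives no proof (the \qed\ appears directly after the statement), so your write-up simply supplies the routine details the authors chose to omit.
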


In order to study the asymptotic behavior of generalized smooth real disc vector fields, we write $V$ \eqref{eq:disc_vector_field_V} in terms of polar coordinates $r,\theta$ of $D^2$. From $x = r \cos\theta$, $y=r \sin\theta$ we have $dx = \cos\theta \, dr - r \sin\theta \, d\theta$, $dy = \sin\theta \, dr + r \cos\theta \, d\theta$ and
\begin{align}
\label{eq:vector_field_change_formula_for_polar}
\frac{\partial}{\partial x} = \cos\theta \frac{\partial}{\partial r} - r^{-1} \sin\theta \frac{\partial}{\partial \theta} \quad\mbox{and}\quad
\frac{\partial}{\partial y} = \sin\theta \frac{\partial}{\partial r} + r^{-1} \cos\theta \frac{\partial}{\partial \theta} \quad\mbox{on}\quad D^2\setminus\{0\},
\end{align}
and therefore
\begin{align}
\label{eq:V_in_polar}
V = (V_1 \cdot \cos\theta + V_2 \cdot \sin\theta) \frac{\partial}{\partial r} + r^{-1} ( -V_1\cdot \sin\theta + V_2 \cdot \cos\theta) \frac{\partial}{\partial \theta} \quad\mbox{on}\quad D^2\setminus\{0\},
\end{align}
where the arguments of $V_1$, $V_2$ here are $\smallvectwo{r\cos\theta}{r\sin\theta}$.

\begin{definition}
A \emph{genuine smooth real disc vector field} is a generalized smooth real disc vector field $V$ that satisfies the \emph{tangent condition}: $V_1,V_2$ \eqref{eq:disc_vector_field_V} extend continuously to $\ol{D^2}$ and satisfy
\begin{align}
\label{eq:tangent_condition}
V_1\smallvectwo{\cos\theta}{\sin\theta} \cdot \cos\theta + V_2\smallvectwo{\cos\theta}{\sin\theta} \cdot \sin\theta = 0, \qquad \forall \theta \in \mathbb{R}/2\pi\mathbb{Z}.
\end{align}
Denote by ${\rm Vect}(D^2) \subset {\rm GVect}(D^2)$ the set of all genuine smooth real disc vector fields.
\end{definition}
So genuine smooth real disc vector fields are what exponentiate to flows of $D^2$; they `preserve' the disc. For all discussion from now on, it is much more convenient to work solely with the polar coordinates. Write any $V\in {\rm GVect}(D^2)$ as
$$
V = V_3\smallvectwo{r}{\theta} \frac{\partial}{\partial r} + V_4\smallvectwo{r}{\theta} \frac{\partial}{\partial \theta} \quad\mbox{on}\quad D^2\setminus\{0\};
$$
thus
\begin{align}
\label{eq:V3_and_V4_in_V1_and_V2}
\left\{
\begin{array}{l}
V_3\smallvectwo{r}{\theta} = V_1\smallvectwo{r\cos\theta}{r\sin\theta} \cdot \cos\theta + V_2\smallvectwo{r\cos\theta}{r\sin\theta} \cdot \sin\theta, \\
V_4\smallvectwo{r}{\theta} = r^{-1}\left( -V_1\smallvectwo{r\cos\theta}{r\sin\theta} \cdot \sin\theta + V_2\smallvectwo{r\cos\theta}{r\sin\theta} \cdot \cos\theta \right)
\end{array} \right.
\end{align}
as we already saw in \eqref{eq:V_in_polar}. 
\begin{lemma}
For $V\in {\rm GVect}(D^2)$, the condition $V\in {\rm Vect}(D^2)$ is equivalent to $\lim_{r\nearrow 1} V_3\smallvectwo{r}{\theta} = 0$, $\forall \theta \in \mathbb{R}/2\pi\mathbb{Z}$, which we write as $V_3\smallvectwo{1}{\theta}=0$, $\forall \theta \in \mathbb{R}/2\pi\mathbb{Z}$, or as $V_3|_{r=1}\equiv 0$. \qed
\end{lemma}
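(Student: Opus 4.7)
The plan is to read off the equivalence directly from the change-of-coordinates formula \eqref{eq:V3_and_V4_in_V1_and_V2}, with essentially no further calculation. The first line of \eqref{eq:V3_and_V4_in_V1_and_V2} says
\begin{align*}
V_3\smallvectwo{r}{\theta} \, = \, V_1\smallvectwo{r\cos\theta}{r\sin\theta} \cdot \cos\theta \, + \, V_2\smallvectwo{r\cos\theta}{r\sin\theta} \cdot \sin\theta,
\end{align*}
and in this identity, the right-hand side evaluated (or taken as a limit) at $r=1$ is precisely the left-hand side of the tangent condition \eqref{eq:tangent_condition}. Hence, whenever $V_1,V_2$ admit a continuous extension to $\ol{D^2}$, we have the literal identity
\begin{align*}
V_3\smallvectwo{1}{\theta} \, = \, V_1\smallvectwo{\cos\theta}{\sin\theta} \cdot \cos\theta \, + \, V_2\smallvectwo{\cos\theta}{\sin\theta} \cdot \sin\theta, \qquad \forall \theta\in \mathbb{R}/2\pi\mathbb{Z}.
\end{align*}

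For the forward direction, suppose $V\in {\rm Vect}(D^2)$. By definition, $V_1,V_2$ extend continuously to $\ol{D^2}$ and the tangent condition \eqref{eq:tangent_condition} holds. The continuous extension of $V_1,V_2$ forces the limit $\lim_{r\nearrow 1} V_3\smallvectwo{r}{\theta}$ to exist for every $\theta$, and the displayed identity together with \eqref{eq:tangent_condition} yields $V_3|_{r=1}\equiv 0$.

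For the converse, suppose $V\in {\rm GVect}(D^2)$ and $V_3|_{r=1}\equiv 0$. Writing out the condition that the limit $\lim_{r\nearrow 1} V_3\smallvectwo{r}{\theta}$ exists and vanishes, the same identity above turns into \eqref{eq:tangent_condition} for the (continuously extended) boundary values of $V_1,V_2$; so $V$ is a genuine smooth disc vector field, i.e. $V\in {\rm Vect}(D^2)$.

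There is no real obstacle: the argument is a one-line unfolding of the polar change-of-coordinates formula already recorded in \eqref{eq:V3_and_V4_in_V1_and_V2}. The only mild subtlety is bookkeeping of the boundary regularity convention implicit in ${\rm GVect}(D^2)$, namely that the limits $\lim_{r\nearrow 1} V_i\smallvectwo{r\cos\theta}{r\sin\theta}$ exist so that the quantity $V_3|_{r=1}$ is meaningful in the first place; but this is part of what it means to evaluate $V_3$ at $r=1$ and requires no separate argument.
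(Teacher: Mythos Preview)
Your proposal is correct and matches the paper's treatment: the paper gives no proof at all (the lemma ends with \qed), evidently regarding it as an immediate consequence of the polar formula \eqref{eq:V3_and_V4_in_V1_and_V2}, which is exactly what you unfold. Your flagging of the boundary-regularity bookkeeping is apt---strictly speaking, the existence of $\lim_{r\nearrow 1}V_3$ alone does not force $V_1,V_2$ to extend continuously to $\ol{D^2}$ (the tangential combination could vanish while the individual components oscillate), so the converse as literally stated leans on the convention that ``$V_3|_{r=1}\equiv 0$'' already presupposes the requisite boundary extension; the paper is silent on this point as well.
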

Suppose $V\in {\rm Vect}(D^2)$ is the tangent vector to a smooth curve in ${\rm Diff}_{+{\rm ar}}(\ol{D^2})$ (or in $\mathcal{G} = {\rm Diff}_{+{\rm ar}}(\ol{D^2})_0$) passing through the identity. We then can see that $V_3\equiv 0$ and $\partial_r V_4 \equiv 0$ on some neighborhood of $S^1$ in $\ol{D^2}$. When $V$ is tangent to a curve in ${\rm Diff}_{+{\rm ar}}(\ol{D^2})_{\rm bt}$ (or in $\mathcal{H} = \mathcal{G}_{\rm bt} = {\rm Diff}_{\rm c}(D^2)_0$), we can see $V_3\equiv 0 \equiv V_4$ on some neighborhood of $S^1$ in $\ol{D^2}$. We would like to generalize these notions to elements of ${\rm GVect}(D^2)$.
\begin{definition}
A generalized smooth real disc vector field $V$ is said to be \emph{asymptotically radial} if $V_1,V_2$ \eqref{eq:disc_vector_field_V} extend continuously to $\ol{D^2}$ and there exists $\epsilon \in (0,1)$ such that
\begin{align}
\label{eq:ar_condition_for_V}
\partial_r V_3 \equiv 0 \equiv \partial_r V_4 \quad\mbox{on some neighborhood of $S^1$ in $\ol{D^2}$.}
\end{align}
We call such $V$ \emph{asymptotically zero} if
\begin{align}
\label{eq:az_condition_for_V}
V_3 \equiv 0 \equiv V_4 \quad\mbox{on some neighborhood of $S^1$ in $\ol{D^2}$.}
\end{align}

Denote by ${\rm GVect}_{\rm ar}(D^2)$ (resp. ${\rm GVect}_{\rm az}(D^2)$) the set of all asymptotically radial (resp. asymptotically zero) generalized smooth real disc vector fields, and by ${\rm Vect}_{\rm ar}(D^2)$ (resp. ${\rm Vect}_{\rm az}(D^2)$) the set of all asymptotically radial (resp. asymptotically zero) genuine smooth real disc vector fields.
\end{definition}

\begin{lemma}
${\rm GVect}_{\rm az}(D^2)$ coincides with ${\rm Vect}_{\rm az}(D^2)$. \qed
\end{lemma}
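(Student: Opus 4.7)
Both inclusions are essentially unravelings of the definitions, so the proof reduces to a clean bookkeeping argument and there is no real technical obstacle to overcome.

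The forward inclusion $\mathrm{Vect}_{\rm az}(D^2) \subset \mathrm{GVect}_{\rm az}(D^2)$ is immediate from the containment $\mathrm{Vect}(D^2) \subset \mathrm{GVect}(D^2)$ together with the observation that the asymptotically-zero condition \eqref{eq:az_condition_for_V} is phrased identically in the two settings.

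For the reverse inclusion, the only nontrivial point is to verify that membership in $\mathrm{GVect}_{\rm az}(D^2)$ already forces the tangent condition \eqref{eq:tangent_condition}, thereby promoting a generalized disc vector field to a genuine one. Given $V \in \mathrm{GVect}_{\rm az}(D^2)$, I would first note that by hypothesis $V_1, V_2$ extend continuously to $\ol{D^2}$ and there exists a neighborhood of $S^1$ in $\ol{D^2}$ on which $V_3 \equiv 0 \equiv V_4$; in particular $V_3\smallvectwo{r}{\theta} = 0$ for all $\theta \in \mathbb{R}/2\pi\mathbb{Z}$ and all $r$ sufficiently close to $1$, so $V_3|_{r=1} \equiv 0$. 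By the equivalence of $V_3|_{r=1}\equiv 0$ with the tangent condition via \eqref{eq:V3_and_V4_in_V1_and_V2}, this places $V$ in $\mathrm{Vect}(D^2)$, and the asymptotically-zero hypothesis then upgrades this to $V \in \mathrm{Vect}_{\rm az}(D^2)$.

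In summary, the two inclusions require only a one-line check each; the lemma essentially records the fact that one need not separately impose tangency when working with asymptotically-zero vector fields, since asymptotic vanishing near $S^1$ automatically implies it.
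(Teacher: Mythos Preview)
Your proposal is correct and matches exactly what the paper intends: the lemma is stated with a \qed and no proof, indicating it is an immediate definitional check, and your unpacking of the two inclusions is precisely the one-line argument the reader is expected to supply.
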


\begin{lemma}
$\mathrm{Vect}(D^2)$, $\mathrm{GVect}_{\rm ar}(D^2)$, $\mathrm{Vect}_{\rm ar}(D^2)$, $\mathrm{GVect}_{\rm az}(D^2)$, $\mathrm{Vect}_{\rm az}(D^2)$ are real Lie subalgebras of $\mathrm{GVect}(D^2)$. 
\end{lemma}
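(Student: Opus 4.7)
The plan is straightforward: each of the five subsets is manifestly an $\mathbb{R}$-linear subspace of $\mathrm{GVect}(D^2)$, so I only need to verify closure under the Lie bracket. Because every defining condition is phrased in terms of the polar-coordinate components $(V_3,V_4)$ of $V = V_3 \partial_r + V_4 \partial_\theta$ on $D^2\setminus\{0\}$, and because the Lie bracket of vector fields is intrinsic, I would compute $[V,W]$ directly in polar coordinates on the annular region near $S^1$ (where all the conditions live), using the standard commutator formula
\begin{align*}
[V,W] &= \bigl(V_3 \partial_r W_3 + V_4 \partial_\theta W_3 - W_3 \partial_r V_3 - W_4 \partial_\theta V_3\bigr)\partial_r \\
&\quad + \bigl(V_3 \partial_r W_4 + V_4 \partial_\theta W_4 - W_3 \partial_r V_4 - W_4 \partial_\theta V_4\bigr)\partial_\theta.
\end{align*}
The smoothness of $[V,W]$ through the origin is automatic because $V$ and $W$ are already smooth there as elements of $\mathrm{GVect}(D^2)$.

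For $\mathrm{Vect}(D^2)$, the tangent condition $V_3|_{r=1} = W_3|_{r=1} = 0$ also yields $\partial_\theta V_3|_{r=1} = \partial_\theta W_3|_{r=1} = 0$; substituting $r = 1$ into the radial component of the bracket then kills every term, so $[V,W]_3|_{r=1}=0$. For $\mathrm{GVect}_{\rm ar}(D^2)$, on the intersection of the two annular neighborhoods of $S^1$ on which $\partial_r V_j$ and $\partial_r W_j$ vanish, all four components $V_3, V_4, W_3, W_4$ are functions of $\theta$ alone; consequently every term in the bracket containing a $\partial_r$ drops out, and the surviving terms are again functions of $\theta$ alone, so $\partial_r [V,W]_3 = \partial_r [V,W]_4 = 0$ on that neighborhood. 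Then $\mathrm{Vect}_{\rm ar}(D^2) = \mathrm{Vect}(D^2) \cap \mathrm{GVect}_{\rm ar}(D^2)$ is automatically a Lie subalgebra as the intersection of the two previous ones.

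For $\mathrm{GVect}_{\rm az}(D^2) = \mathrm{Vect}_{\rm az}(D^2)$ (the equality supplied by the preceding lemma) closure is immediate: if $V,W$ both vanish identically on a common annular neighborhood of $S^1$, then so do all their partial derivatives appearing in the bracket formula, and hence $[V,W]$ vanishes there as well. I do not expect any real obstacle; the only mildly delicate point is that the polar change-of-variable \eqref{eq:vector_field_change_formula_for_polar} involves $r^{-1}$, but since we only ever evaluate the bracket on annular neighborhoods of $S^1$ (bounded away from $r=0$) and separately rely on the ambient smoothness of $V$ and $W$ at the origin, the factor $r^{-1}$ never causes any trouble.
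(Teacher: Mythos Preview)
Your proposal is correct and follows essentially the same approach as the paper: compute the bracket in polar coordinates via the commutator formula, then read off that each defining boundary condition is preserved. Your extra observation that $\mathrm{Vect}_{\rm ar}(D^2) = \mathrm{Vect}(D^2) \cap \mathrm{GVect}_{\rm ar}(D^2)$ handles that case as an intersection of subalgebras is a minor streamlining not made explicit in the paper, but the substance is identical.
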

{\it Proof.} We must check that each subset of $\mathrm{GVect}(D^2)$ in the statement of the lemma is closed under the Lie bracket; to see that each subset is a real vector space is easy. For this purpose and also for later use, we investigate the Lie bracket of disc vector fields, in polar coordinates. Let $V^{(a)}, V^{(b)} \in \mathrm{GVect}(D^2)$, and let 
$$
V^{(c)} := [V^{(a)}, V^{(b)}] \in \mathrm{GVect}(D^2).
$$
From the usual vector field commutator formula we get
\begin{align}
\label{eq:V_c_polar_components}
V^{(c)}_j = V^{(a)}_3 \cdot (\partial_r V^{(b)}_j) + V^{(a)}_4 \cdot (\partial_\theta V^{(b)}_j) - V^{(b)}_3 \cdot (\partial_r V^{(a)}_j) - V^{(b)}_4 \cdot (\partial_\theta V^{(a)}_j), \quad j=3,4,
\end{align}
where $\partial_r$ and $\partial_\theta$ stand for the corresponding partial derivatives. 

\vs

First, consider the case $\mathrm{Vect}(D^2)$. Let $V^{(a)},V^{(b)}\in \mathrm{Vect}(D^2)$ and let $V^{(c)} = [V^{(a)},V^{(b)}]$. Then from \eqref{eq:V_c_polar_components} and $V^{(a)}_3|_{r=1}=V^{(b)}_3|_{r=1}=0$ we can deduce $V^{(c)}_3|_{r=1}=0$, meaning $V^{(c)}\in \mathrm{Vect}(D^2)$, proving that $\mathrm{Vect}(D^2)$ is closed under the Lie bracket.

\vs

Let us now check the case $\mathrm{GVect}_{\rm ar}(D^2)$. So, assume $V^{(a)},V^{(b)} \in \mathrm{GVect}_{\rm ar}(D^2)$, and let $V^{(c)}=[V^{(a)}, V^{(b)}]$. From the asymptotically radial conditions on $V^{(a)},V^{(b)}$, on some neighborhood of $S^1$ in $\ol{D^2}$ we have $V^{(c)}_j = V^{(a)}_4 \, (\partial_\theta V^{(b)}_j) - V^{(b)}_4 \, (\partial_\theta V^{(a)}_j)$, $j=3,4$. One then easily checks that $\partial_r V^{(c)}_j$ too is constantly zero on this same neighborhood, $j=3,4$. Hence $V^{(c)} \in \mathrm{GVect}_{\rm ar}(D^2)$, showing that $\mathrm{GVect}_{\rm ar}(D^2)$ is indeed closed under the Lie bracket.

\vs

We omit the proof for the asymptotically zero cases. \qed

\vs

The following two lemmas are straightforward observations.
\begin{lemma}
\label{lem:az_is_ideal}
$\mathrm{Vect}_{\rm az}(D^2)$ is an ideal in $\mathrm{GVect}_{\rm ar}(D^2)$, hence also in $\mathrm{Vect}_{\rm ar}(D^2)$. \qed
\end{lemma}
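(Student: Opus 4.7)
The plan is to apply directly the polar-coordinate commutator formula \eqref{eq:V_c_polar_components}, which the preceding lemma already established for arbitrary $V^{(a)},V^{(b)}\in \mathrm{GVect}(D^2)$. Given $V^{(a)} \in \mathrm{GVect}_{\rm ar}(D^2)$ and $V^{(b)} \in \mathrm{Vect}_{\rm az}(D^2)$, set $V^{(c)} := [V^{(a)},V^{(b)}]$. Since the commutator of two smooth vector fields on $D^2$ is smooth, we have $V^{(c)} \in \mathrm{GVect}(D^2)$ at once, so the task reduces to verifying the asymptotically zero condition \eqref{eq:az_condition_for_V} for $V^{(c)}$.

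The next step is to choose a common annular neighborhood of $S^1$ in $\ol{D^2}$ on which both conditions are operative: pick $\epsilon_a, \epsilon_b \in (0,1)$ with $\partial_r V^{(a)}_j \equiv 0$ for $r>1-\epsilon_a$ and $V^{(b)}_j \equiv 0$ for $r>1-\epsilon_b$ (for $j=3,4$), and set $\epsilon := \min(\epsilon_a,\epsilon_b)$, $U := \{(r,\theta) : 1-\epsilon < r \le 1\}$. On $U$, since $V^{(b)}_3$ and $V^{(b)}_4$ vanish identically, so do their partial derivatives $\partial_r V^{(b)}_j$ and $\partial_\theta V^{(b)}_j$.

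Plugging into \eqref{eq:V_c_polar_components}, each of the four terms on the right-hand side carries as a factor either $V^{(b)}_3$, $V^{(b)}_4$, $\partial_r V^{(b)}_j$, or $\partial_\theta V^{(b)}_j$, and all of these vanish on $U$. Hence $V^{(c)}_j \equiv 0$ on $U$ for $j=3,4$, i.e. $V^{(c)} \in \mathrm{Vect}_{\rm az}(D^2)$. Combined with the skew-symmetry of the Lie bracket, this shows that $\mathrm{Vect}_{\rm az}(D^2)$ is a two-sided ideal in $\mathrm{GVect}_{\rm ar}(D^2)$; the inclusion $\mathrm{Vect}_{\rm ar}(D^2) \subset \mathrm{GVect}_{\rm ar}(D^2)$ then yields the ``hence also'' clause automatically.

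There is essentially no analytic obstacle here; the calculation is purely local near $S^1$ and the asymptotically zero hypothesis on $V^{(b)}$ is so strong that the asymptotically radial hypothesis on $V^{(a)}$ is not even needed in the argument. The only mild bookkeeping issue is that \eqref{eq:V_c_polar_components} is written in polar coordinates while the definition of $\mathrm{GVect}(D^2)$ uses Cartesian components $V_1, V_2$; but the dictionary \eqref{eq:V3_and_V4_in_V1_and_V2} together with the fact that the change-of-variables \eqref{eq:vector_field_change_formula_for_polar} is a smooth isomorphism on $D^2 \setminus \{0\}$ means we may work in whichever frame is convenient near $S^1$, as the proof has done.
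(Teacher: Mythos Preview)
Your proof is correct and follows exactly the approach the paper intends: the paper marks this lemma with \qed\ and, at the end of the preceding lemma's proof, writes ``We omit the proof for the asymptotically zero cases,'' implicitly pointing to the same use of the commutator formula \eqref{eq:V_c_polar_components} that you carry out in detail. Your observation that the asymptotically radial hypothesis on $V^{(a)}$ is not actually needed is also accurate.
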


\begin{lemma}
The Lie algebra $\mathrm{Lie}(\mathcal{G})$ of our disc diffeomorphism group $\mathcal{G} = {\rm Diff}_{+{\rm ar}}(\ol{D^2})_0$ naturally coincides with $\mathrm{Vect}_{\rm ar}(D^2)$. The Lie algebra $\mathrm{Lie}(\mathcal{H})$ of the boundary trivial subgroup $\mathcal{H} = \mathcal{G}_{\rm bt} = {\rm Diff}_{\rm c}(D^2)_0$ naturally coincides with $\mathrm{Vect}_{\rm az}(D^2)$. \qed
\end{lemma}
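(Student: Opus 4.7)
My plan is to realize an element of $\mathrm{Lie}(\mathcal{G})$ as a tangent vector at $\mathrm{id}_{\ol{D^2}}$ represented by a smooth one-parameter family $\{g_t\}_{t\in(-\epsilon,\epsilon)}\subset\mathcal{G}$ with $g_0=\mathrm{id}_{\ol{D^2}}$, and to associate to it the vector field $V$ on $\ol{D^2}$ defined by $V(x):=\frac{d}{dt}g_t(x)|_{t=0}$. The task then reduces to verifying both inclusions for the identification $\mathrm{Lie}(\mathcal{G})=\mathrm{Vect}_{\rm ar}(D^2)$, and then analogously for $\mathrm{Lie}(\mathcal{H})=\mathrm{Vect}_{\rm az}(D^2)$.

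For the forward inclusion $\mathrm{Lie}(\mathcal{G})\subset \mathrm{Vect}_{\rm ar}(D^2)$: each $g_t\in\mathcal{G}$ is asymptotically radial, so there exist $\epsilon_t\in(0,1)$ and $f_t\in\mathrm{Diff}_+(S^1)$ with $g_t(re^{{\rm i}\theta})=re^{{\rm i}f_t(\theta)}$ for $r\in(1-\epsilon_t,1]$. Smoothness of $t\mapsto g_t$ in the appropriate Fr\'echet $C^\infty$-topology allows me to choose, for $|t|$ sufficiently small, a common $\epsilon>0$ on which all $g_t$ simultaneously take this form, and to view $t\mapsto f_t$ as a smooth curve in $\mathrm{Diff}_+(S^1)$ through $\mathrm{id}_{S^1}$. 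Differentiating at $t=0$ and using the polar conversion \eqref{eq:V3_and_V4_in_V1_and_V2}, I obtain $V_3\equiv 0$ and $V_4\smallvectwo{r}{\theta}=\frac{d}{dt}f_t(\theta)|_{t=0}$ on the annulus $\{1-\epsilon<r\le 1\}$; in particular $\partial_r V_3\equiv 0 \equiv \partial_r V_4$ there, so $V\in\mathrm{Vect}_{\rm ar}(D^2)$.

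For the reverse inclusion I use the flow. Given $V\in\mathrm{Vect}_{\rm ar}(D^2)$, the tangent condition $V_3|_{r=1}=0$ together with $\partial_rV_3\equiv 0$ on some annular neighborhood of $S^1$ forces $V_3\equiv 0$ there, so on that annulus $V=V_4(\theta)\frac{\partial}{\partial\theta}$ depends only on $\theta$. Hence the flow $\phi_t^V$ preserves the radial coordinate on this annulus and acts as the flow $\phi_t^{V_4}$ of $V_4(\theta)\frac{d}{d\theta}$ on the angular coordinate; in particular the restriction $\phi_t^V|_{S^1}=\phi_t^{V_4}$ lies in $\mathrm{Diff}_+(S^1)$. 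Since $V$ is smooth on $\ol{D^2}$ and tangent to $S^1$, the flow $\phi_t^V$ exists for all $t$ as an element of $\mathrm{Diff}_+(\ol{D^2})$, depends smoothly on $t$ with $\phi_0^V=\mathrm{id}_{\ol{D^2}}$, and is asymptotically radial by construction. Therefore $\{\phi_t^V\}$ is a smooth curve in $\mathcal{G}=\mathrm{Diff}_{+{\rm ar}}(\ol{D^2})_0$ whose tangent at $t=0$ is exactly $V$.

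The $\mathcal{H}$ statement is strictly analogous: if $V\in\mathrm{Vect}_{\rm az}(D^2)$, then $V\equiv 0$ on a neighborhood $U$ of $S^1$ in $\ol{D^2}$, so the flow $\phi_t^V$ is identity on $U$ for all $t$, hence compactly supported in $D^2$, giving a smooth curve in $\mathcal{H}=\mathrm{Diff}_{\rm c}(D^2)_0$; conversely, for a smooth curve $\{h_t\}$ in $\mathcal{H}$ one extracts a common compact $K\subset D^2$ containing $\mathrm{supp}(h_t)$ for all small $|t|$, and the derivative $V$ therefore vanishes off $K$, i.e. on a neighborhood of $S^1$. The matching of Lie brackets is then automatic from the standard identity $[\frac{d}{dt}\phi_t^V|_0,\frac{d}{ds}\phi_s^W|_0]=\frac{d^2}{dtds}(\phi_{-s}^W\phi_{-t}^V\phi_s^W\phi_t^V)|_{t=s=0}$ computed both in the Lie-algebra-of-a-Lie-group sense of \S\ref{subsec:Lie_algebra_central_extensions_in_general} and in the vector-field-commutator sense of \eqref{eq:V_c_polar_components}. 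The principal obstacle is justifying the uniform-neighborhood step in each direction: for the $\mathcal{G}$ case I need that an asymptotically radial condition holding for each individual $g_t$ with possibly shrinking $\epsilon_t$ can be replaced by a single $\epsilon$ valid for all small $|t|$, and the analogous uniform-compact-support step for $\mathcal{H}$. Both claims are intuitively clear but require committing to a specific smooth/Fr\'echet structure on $\mathrm{Diff}_{+{\rm ar}}(\ol{D^2})_0$, since convergence of $g_t\to\mathrm{id}_{\ol{D^2}}$ in $C^\infty_{\rm loc}$ alone would not suffice to control the boundary region uniformly in $t$.
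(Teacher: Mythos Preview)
Your argument is correct and is precisely the natural one; in fact the paper gives no proof at all for this lemma (it ends with \qed), treating both identifications as immediate from the discussion preceding the statement. Your write-up is therefore more detailed than the paper's, and you have rightly isolated the one genuinely delicate point---the need for a uniform annular neighborhood valid for all small $|t|$---which the paper simply does not address.
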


From $\mathcal{H} \lhd \mathcal{G}$ with $\mathbf{G} \cong \mathcal{G} / \mathcal{H}$, we see that $\mathrm{Lie}(\mathcal{H})$ is an ideal of $\mathrm{Lie}(\mathcal{G})$, with
$$
\mathrm{Lie}(\mathbf{G}) \cong \mathrm{Lie}(\mathcal{G})/\mathrm{Lie}(\mathcal{H}).
$$

\begin{lemma}
\label{lem:Vect_S1_as_quotient}
One has the natural isomorphism of Lie algebras
\begin{align}
\label{eq:Vects_as_quotient}
  \mathrm{Vect}_{\rm ar}(D^2)/\mathrm{Vect}_{\rm az}(D^2)\cong \mathrm{Vect}(S^1),
\end{align}
induced by the `restriction' map
\begin{align}
\label{eq:vector_field_restriction}
\mathrm{Vect}_{\rm ar}(D^2) \to \mathrm{Vect}(S^1) ~ : ~ V \mapsto V|_{S^1} := V_4\smallvectwo{1}{\theta} \, \frac{\partial}{\partial \theta}. \qed
\end{align}
\end{lemma}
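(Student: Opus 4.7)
The plan is to verify that the restriction map $\Phi:\mathrm{Vect}_{\rm ar}(D^2)\to\mathrm{Vect}(S^1)$, $V\mapsto V_4\smallvectwo{1}{\theta}\frac{\partial}{\partial\theta}$, is a surjective Lie algebra homomorphism with kernel exactly $\mathrm{Vect}_{\rm az}(D^2)$; the first isomorphism theorem then yields \eqref{eq:Vects_as_quotient}. A key preliminary observation is that for $V\in\mathrm{Vect}_{\rm ar}(D^2)$ the conditions ``genuine'' and ``asymptotically radial'' together force $V_3\equiv 0$ on a whole annular neighborhood of $S^1$: indeed $V_3|_{r=1}=0$ (tangent condition) combined with $\partial_r V_3\equiv 0$ near $S^1$ (from \eqref{eq:ar_condition_for_V}) gives $V_3$ identically zero there. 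Similarly $\partial_r V_4\equiv 0$ near $S^1$ says $V_4$ depends only on $\theta$ there, so $V_4\smallvectwo{1}{\theta}:=\lim_{r\nearrow 1}V_4\smallvectwo{r}{\theta}$ is a well-defined smooth function on $S^1$, and $\Phi$ is well-defined.

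For the Lie bracket, I would apply \eqref{eq:V_c_polar_components} to $V^{(c)}=[V^{(a)},V^{(b)}]$ with $V^{(a)},V^{(b)}\in\mathrm{Vect}_{\rm ar}(D^2)$. On the annular neighborhood of $S^1$ where $V^{(a)}_3=V^{(b)}_3=0$ and $V^{(a)}_4, V^{(b)}_4$ are independent of $r$, the formula collapses to
\begin{align*}
V^{(c)}_4\smallvectwo{1}{\theta}
&= V^{(a)}_4\smallvectwo{1}{\theta}\,\partial_\theta V^{(b)}_4\smallvectwo{1}{\theta}
 - V^{(b)}_4\smallvectwo{1}{\theta}\,\partial_\theta V^{(a)}_4\smallvectwo{1}{\theta},
\end{align*}
which matches precisely the $\mathrm{Vect}(S^1)$ commutator in \eqref{eq:Vect_S1_Lie_bracket}. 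Hence $\Phi$ is a Lie algebra homomorphism.

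The kernel computation is then immediate: $\Phi(V)=0$ iff $V_4\smallvectwo{1}{\theta}\equiv 0$, and since $V_4$ is $r$-independent near $S^1$ this forces $V_4\equiv 0$ near $S^1$; combined with $V_3\equiv 0$ near $S^1$ established above, this gives exactly $V\in\mathrm{Vect}_{\rm az}(D^2)$. The reverse inclusion is built into the definition of $\mathrm{Vect}_{\rm az}(D^2)$.

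For surjectivity, given $v(\theta)\partial_\theta\in\mathrm{Vect}(S^1)$, I would produce an explicit lift by choosing a cutoff $\chi:=\xi_{\epsilon,\epsilon}$ from Lem.\ref{lem:xi} (so $\chi\equiv 0$ near $r=0$ and $\chi\equiv 1$ near $r=1$) and setting $V:=\chi(r)v(\theta)\frac{\partial}{\partial\theta}$. The subtle point — and the main obstacle I anticipate — is smoothness at the origin, since $v(\theta)\frac{\partial}{\partial\theta}$ by itself does not extend smoothly across $r=0$; this is handled cleanly by the fact that $\chi$ vanishes identically on a neighborhood of $r=0$, so $V$ is literally the zero vector field there, and its Cartesian components $V_1,V_2$ are smooth throughout $D^2$. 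Away from the origin smoothness is obvious. Since $\chi\equiv 1$ near $S^1$, the tangent condition ($V_3\equiv 0$), the asymptotically radial conditions $\partial_r V_3=\partial_r V_4\equiv 0$, and $V_4\smallvectwo{1}{\theta}=v(\theta)$ all hold, giving $V\in\mathrm{Vect}_{\rm ar}(D^2)$ with $\Phi(V)=v\partial_\theta$. This completes the plan.
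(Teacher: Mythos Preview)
Your proposal is correct and supplies precisely the straightforward verification the paper intends: the statement in the paper carries only a \qed\ with no written proof, so the first-isomorphism-theorem argument you give (well-definedness, Lie homomorphism via \eqref{eq:V_c_polar_components}, kernel identification, and surjectivity via a cutoff lift as in Lem.~\ref{lem:xi}) is exactly what is being left to the reader.
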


The Lie algebra version of our construction provides a $1$-dimensional central extension of the Lie algebra $\mathrm{Vect}_{\rm ar}(D^2) = \mathrm{Lie}(\mathcal{G})$, whose factor by $\mathrm{Vect}_{\rm az}(D^2) = \mathrm{Lie}(\mathcal{H})$ would give a $1$-dimensional central extension of the Lie algebra $\mathrm{Vect}(S^1) = \mathrm{Lie}(\mathbf{G})$, whose appropriate complexification is what is referred to as the \emph{Virasoro algebra}. We take a step further at this moment. Namely, in the left-hand-side of \eqref{eq:Vects_as_quotient} we replace $\mathrm{Vect}_{\rm ar}(D^2)$ by $\mathrm{GVect}_{\rm ar}(D^2)$. Thanks to Lem.\ref{lem:az_is_ideal}, the quotient $\mathrm{GVect}_{\rm az}(D^2) = \mathrm{Vect}_{\rm az}(D^2)$ is a Lie algebra; we shall prove that this quotient Lie algebra is isomorphic to the semidirect product of two copies of $\mathrm{Vect}(S^1)$, which in \S\ref{subsec:central_extension_of_two_copies} will eventually be centrally extended via a natural generalization of our construction in the present paper.

\begin{definition}[semidirect product of Lie algebras]
\label{def:semidirect_product_of_Lie_algebras}
Let $\frak{g}$ and $\frak{h}$ be Lie algebras with brackets $[ \, , \, ]_\frak{g}$ and $[ \, , \, ]_\frak{h}$, respectively, and suppose that there is a Lie algebra map $\pi : \frak{g} \to \mathrm{Der}(\frak{h})$, where $\mathrm{Der}(\frak{h})$ is the Lie algebra of all \emph{derivations} of $\frak{h}$; this means
$$
\pi(v) ([w_1,w_2]_\frak{h}) = [\pi(v) (w_1), w_2]_\frak{h} + [w_1, \pi(v)(w_2)]_\frak{h}, \qquad \forall v\in \frak{g}, \quad \forall w_1,w_2 \in \frak{h}.
$$
Then the \emph{semidirect product} $\frak{g} \ltimes \frak{h}$ is defined as the Lie algebra whose underlying vector space is $\frak{g} \times \frak{h}$, with the Lie bracket $[ \, , \,]$ defined as 
$$
[(v_1,w_1), (v_2,w_2)] := ([v_1,v_2]_\frak{g}, \, [w_1,w_2]_\frak{h} + \pi(v_1)(w_2) - \pi(v_2)(w_1)), \qquad \forall v_1,v_2 \in \frak{g}, \quad \forall w_1,w_2 \in \frak{h}.
$$
\end{definition}
\begin{remark}
Sometimes this is called the `semidirect sum'.
\end{remark}
\begin{lemma}
$\frak{g} \ltimes \frak{h}$ is a well-defined Lie algebra.\qed
\end{lemma}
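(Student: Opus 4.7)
The plan is to verify in turn the three defining properties of a Lie bracket: bilinearity, skew-symmetry, and the Jacobi identity. Bilinearity is immediate, since $[\,,\,]_\frak{g}$ and $[\,,\,]_\frak{h}$ are bilinear, the map $\pi : \frak{g} \to \mathrm{Der}(\frak{h})$ is linear (as a Lie algebra homomorphism), and each $\pi(v) : \frak{h} \to \frak{h}$ is linear (being a derivation). Skew-symmetry follows from the skew-symmetries of $[\,,\,]_\frak{g}$ and $[\,,\,]_\frak{h}$ together with the explicit symmetry $\pi(v_1)(w_2) - \pi(v_2)(w_1) \mapsto -(\pi(v_2)(w_1) - \pi(v_1)(w_2))$ under the swap $(v_1,w_1)\leftrightarrow(v_2,w_2)$.

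The main step is the Jacobi identity, which I would expand term by term for $(v_i,w_i)$, $i=1,2,3$, and then split according to the two components. The first component reduces to
$$
[[v_1,v_2]_\frak{g},v_3]_\frak{g} + [[v_2,v_3]_\frak{g},v_1]_\frak{g} + [[v_3,v_1]_\frak{g},v_2]_\frak{g} = 0,
$$
which is the Jacobi identity in $\frak{g}$. The second component will produce three families of terms, to be handled separately.

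First, the purely $\frak{h}$-valued terms $[[w_1,w_2]_\frak{h},w_3]_\frak{h}$ and its two cyclic permutations cancel by the Jacobi identity in $\frak{h}$. Second, the mixed terms of the form $[\pi(v_i)(w_j),w_k]_\frak{h}$ together with the terms $\pi(v_i)([w_j,w_k]_\frak{h})$ coming from the outer bracket cancel, by using the derivation property $\pi(v_i)([w_j,w_k]_\frak{h}) = [\pi(v_i)(w_j),w_k]_\frak{h} + [w_j,\pi(v_i)(w_k)]_\frak{h}$ and carefully tracking cyclic permutations of the indices. Third, the remaining terms of the form $\pi([v_i,v_j]_\frak{g})(w_k)$ and $\pi(v_i)(\pi(v_j)(w_k))$ cancel by the Lie algebra homomorphism property $\pi([v_i,v_j]_\frak{g}) = [\pi(v_i),\pi(v_j)] = \pi(v_i)\pi(v_j) - \pi(v_j)\pi(v_i)$ applied to $w_k$, again with cyclic bookkeeping.

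The main obstacle is purely notational: keeping the bookkeeping of cyclic permutations and signs straight so that the three families of second-component terms separate cleanly and cancel as advertised. There is no conceptual difficulty; each cancellation uses exactly one of the three hypotheses on $\frak{g}$, $\frak{h}$, and $\pi$ (Jacobi on $\frak{g}$, Jacobi on $\frak{h}$, derivation property of each $\pi(v)$, and $\pi$ being a Lie algebra homomorphism), and these hypotheses are exactly what is built into Definition~\ref{def:semidirect_product_of_Lie_algebras}.
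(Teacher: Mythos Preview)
Your proposal is correct. The paper does not actually give a proof of this lemma---it is stated with an immediate \qed, treating it as a standard fact---so your verification of bilinearity, skew-symmetry, and the Jacobi identity via the three separate cancellations (Jacobi in $\frak{g}$, Jacobi in $\frak{h}$ plus the derivation property, and $\pi$ being a Lie algebra homomorphism) is exactly the routine check the authors left to the reader.
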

In particular, when $\frak{h}$ is an abelian Lie algebra, $\mathrm{Der}(\frak{h})$ coincides with the Lie algebra of all vector space endomorphisms of $\frak{h}$.

\begin{proposition}
\label{prop:semidirect_Vect_S1}
One has a canonical Lie algebra isomorphism
\begin{align*}
\mathrm{GVect}_{\rm ar}(D^2)/\mathrm{Vect}_{\rm az}(D^2) \cong \mathrm{Vect}(S^1) \ltimes (\mathrm{Vect}(S^1))_{\rm ab},
\end{align*}
where $(\mathrm{Vect}(S^1))_{\rm ab}$ denotes the abelian Lie algebra with the underlying vector space being $\mathrm{Vect}(S^1)$, with the action of $\mathrm{Vect}(S^1)$ on $(\mathrm{Vect}(S^1))_{\rm ab}$ given as follows:
\begin{align}
\label{eq:Vect_S1_action_on_itself}
  \mathrm{Vect}(S^1) \,\, \rotatebox[origin=c]{-90}{$\circlearrowright$}\ (\mathrm{Vect}(S^1))_{\rm ab} ~ : ~ \left(v(\theta)\frac{\partial}{\partial \theta}\right).\left(w(\theta)\frac{\partial}{\partial \theta}\right) := v(\theta) w'(\theta) \frac{\partial}{\partial \theta},
\end{align}
for each $v(\theta)\frac{\partial}{\partial \theta} \in \mathrm{Vect}(S^1)$ and $w(\theta)\frac{\partial}{\partial \theta} \in (\mathrm{Vect}(S^1))_{\rm ab}$, where $w'(\theta) = \frac{\partial w(\theta)}{\partial \theta}$.
\end{proposition}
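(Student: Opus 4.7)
The plan is to exhibit the isomorphism as the quotient of a surjective Lie algebra homomorphism $\Phi : \mathrm{GVect}_{\rm ar}(D^2) \to \mathrm{Vect}(S^1) \ltimes (\mathrm{Vect}(S^1))_{\rm ab}$ with kernel $\mathrm{Vect}_{\rm az}(D^2)$, after which the First Isomorphism Theorem yields the claim. The asymptotic radiality condition \eqref{eq:ar_condition_for_V} makes $V_3, V_4$ constant in $r$ on a neighborhood of $S^1$, so the boundary values $V_3|_{r=1}, V_4|_{r=1} \in C^\infty(S^1;\mathbb{R})$ are well-defined; I would set
$$
\Phi(V) := \left( V_4|_{r=1}\, \tfrac{\partial}{\partial\theta},\; V_3|_{r=1}\, \tfrac{\partial}{\partial\theta}\right),
$$
assigning the tangential boundary datum to the $\mathrm{Vect}(S^1)$-factor (in agreement with Lem.\ref{lem:Vect_S1_as_quotient}) and the radial boundary datum to the abelian factor. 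The kernel calculation is immediate: $\Phi(V)=0$ together with $\partial_r V_3 \equiv 0 \equiv \partial_r V_4$ near $S^1$ forces $V_3$ and $V_4$ to vanish on a neighborhood of $S^1$, i.e.\ $V \in \mathrm{Vect}_{\rm az}(D^2)$, and the converse is trivial.

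The computational heart of the proof is checking that $\Phi$ is a Lie algebra map. I would apply \eqref{eq:V_c_polar_components} to $V^{(a)}, V^{(b)} \in \mathrm{GVect}_{\rm ar}(D^2)$ and evaluate at $r=1$: the four terms involving $\partial_r V^{(a)}_j$ and $\partial_r V^{(b)}_j$ vanish by \eqref{eq:ar_condition_for_V}, so writing $v_\star := V^{(\star)}_4|_{r=1}$ and $w_\star := V^{(\star)}_3|_{r=1}$ for $\star \in \{a,b\}$, what remains is
\begin{align*}
V^{(c)}_4|_{r=1} &= v_a v_b' - v_b v_a', \\
V^{(c)}_3|_{r=1} &= v_a w_b' - v_b w_a',
\end{align*}
with the prime denoting $\partial_\theta$. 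The first line matches the $\mathrm{Vect}(S^1)$-bracket \eqref{eq:Vect_S1_Lie_bracket}, and the second is precisely the correction term $\pi(v_a\partial_\theta)(w_b\partial_\theta) - \pi(v_b\partial_\theta)(w_a\partial_\theta)$ from Def.\ref{def:semidirect_product_of_Lie_algebras} applied to the action \eqref{eq:Vect_S1_action_on_itself}; thus $\Phi([V^{(a)},V^{(b)}]) = [\Phi(V^{(a)}), \Phi(V^{(b)})]$.

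For surjectivity, given $(v(\theta)\partial_\theta, w(\theta)\partial_\theta)$ in the target, I would pick any $\epsilon \in (0,1/2)$ and a cutoff $\xi := \xi_{\epsilon,\epsilon}$ from Lem.\ref{lem:xi}, and set $V_3\smallvectwo{r}{\theta} := \xi(r) w(\theta)$ and $V_4\smallvectwo{r}{\theta} := \xi(r) v(\theta)$ on $D^2\setminus\{0\}$, then invert \eqref{eq:V3_and_V4_in_V1_and_V2} to obtain the Cartesian components $V_1 = V_3\cos\theta - rV_4\sin\theta$ and $V_2 = V_3\sin\theta + rV_4\cos\theta$. The main subtlety is that polar component data does not in general extend to a smooth Cartesian vector field at the origin, but since $\xi$ vanishes on $[0,\epsilon]$ the resulting $V_1, V_2$ are identically zero on a neighborhood of $0$ and hence are smooth on all of $D^2$; near $S^1$, where $\xi\equiv 1$, one has $\partial_r V_3 \equiv 0 \equiv \partial_r V_4$ automatically, so $V \in \mathrm{GVect}_{\rm ar}(D^2)$ with $\Phi(V) = (v\partial_\theta, w\partial_\theta)$. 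This produces the desired surjective Lie algebra homomorphism with the prescribed kernel.
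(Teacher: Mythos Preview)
Your proposal is correct and takes essentially the same approach as the paper: both define the map $V \mapsto (V_4|_{r=1}\,\partial_\theta,\, V_3|_{r=1}\,\partial_\theta)$, verify the Lie algebra homomorphism property via the polar bracket formula \eqref{eq:V_c_polar_components} together with the asymptotic radiality condition \eqref{eq:ar_condition_for_V}, and establish surjectivity using a cutoff from Lem.\ref{lem:xi}. Your organization via the First Isomorphism Theorem is slightly more streamlined than the paper's, which first builds a non-canonical vector space splitting $\mathrm{GVect}_{\rm ar}(D^2)\cong \mathrm{Vect}_{\rm ar}(D^2)\times(\mathrm{Vect}(S^1))_{\rm ab}$ and then checks it descends canonically on the quotient, but the substantive computations are identical.
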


{\it Proof of Prop.\ref{prop:semidirect_Vect_S1}.} Let us first construct a non-canonical vector space isomorphism
\begin{align}
\label{eq:non-canonical_vector_space_isomorphism}
\mathrm{GVect}_{\rm ar}(D^2) \stackrel{\sim}{\longrightarrow} \mathrm{Vect}_{\rm ar}(D^2) \times (\mathrm{Vect}(S^1))_{\rm ab}.
\end{align}
Let $\xi:[0,1]\to [0,1]$ be a function as in Lem.\ref{lem:xi}; we choose one and fix it. For each $f \in C^\infty(\mathbb{R}/2\pi\mathbb{Z};\mathbb{R})$ define
\begin{align*}
  W_f := (W_f)_1\smallvectwo{x}{y} \frac{\partial}{\partial x} + (W_f)_2\smallvectwo{x}{y} \frac{\partial}{\partial y}
\end{align*}
where
$$
(W_f)_1\smallvectwo{r\cos\theta}{r\sin\theta} := \xi(r) \cdot (\cos\theta) \cdot f(\theta), \quad (W_f)_2\smallvectwo{r\cos\theta}{r\sin\theta} := \xi(r) \cdot (\sin\theta)\cdot f(\theta), \quad \left\{ \begin{array}{l} \forall r \in [0,1), \\ \forall \theta\in \mathbb{R}/2\pi\mathbb{Z}.\end{array} \right.
$$
We find that $W_f \in \mathrm{GVect}_{\rm ar}(D^2)$. The polar components are easier to understand:
\begin{align}
\label{eq:W_f_polar_components}
(W_f)_3\smallvectwo{r}{\theta} = \xi(r) f(\theta), \qquad (W_f)_4\smallvectwo{r}{\theta}=0, \qquad \left\{ \begin{array}{l} \forall r \in (0,1), \\ \forall \theta\in \mathbb{R}/2\pi\mathbb{Z}.\end{array} \right. 
\end{align}

\vs

For each $V \in \mathrm{GVect}_{\rm ar}(D^2)$, define a $C^\infty$ function $f_V: \mathbb{R}/2\pi\mathbb{Z} \to \mathbb{R}$ as
\begin{align}
\label{eq:f_V}
f_V(\theta) := V_3|_{r=1}(\theta) = V_3\smallvectwo{1}{\theta} = V_1\smallvectwo{\cos\theta}{\sin\theta} \cdot \cos\theta + V_2\smallvectwo{\cos\theta}{\sin\theta} \cdot \sin\theta, \qquad \forall \theta \in \mathbb{R}/2\pi\mathbb{Z}.
\end{align}
Denote
\begin{align}
\label{eq:V_ar}
  V_{\rm ar} := V - W_{f_V}, \qquad \forall V \in \mathrm{GVect}_{\rm ar}(D^2).
\end{align}
It is a straightforward exercise to verify that $V_{\rm ar} \in \mathrm{Vect}_{\rm ar}(D^2)$. We claim that
$$
V \mapsto \left( V_{\rm ar}, ~ f_V(\theta)\frac{\partial}{\partial \theta} \right)
$$
is a sought-for isomorphism \eqref{eq:non-canonical_vector_space_isomorphism}. Indeed, the $\mathbb{R}$-linearity, the injectivity, and the surjectivity, are obvious. 

\vs

Adding an element of $\mathrm{Vect}_{\rm az}(D^2)$ to $V \in \mathrm{GVect}_{\rm ar}(D^2)$ does not alter $f_V$, and results in adding that element to $V_{\rm ar}$. Hence the above map induces a well-defined vector space isomorphism
\begin{align}
\label{eq:canonical_vector_space_isomorphism}
\mathrm{GVect}_{\rm ar}(D^2)/\mathrm{Vect}_{\rm az}(D^2) \stackrel{\sim}{\longrightarrow} (\mathrm{Vect}_{\rm ar}(D^2)/\mathrm{Vect}_{\rm az}(D^2)) \ltimes (\mathrm{Vect}(S^1))_{\rm ab}.
\end{align}
Lem.\ref{lem:Vect_S1_as_quotient} says that $\mathrm{Vect}_{\rm ar}(D^2)/\mathrm{Vect}_{\rm az}(D^2)$ can be identified with $\mathrm{Vect}(S^1)$ as Lie algebras, with the class in $\mathrm{Vect}_{\rm ar}(D^2)/\mathrm{Vect}_{\rm az}(D^2)$ for $V_{\rm ar} \in \mathrm{Vect}_{\rm ar}(D^2)$ corresponding to $V_{\rm ar}|_{S^1} = (V_{\rm ar})_4\smallvectwo{1}{\theta} \frac{\partial}{\partial \theta} \in \mathrm{Vect}(S^1)$. From this, we define the Lie algebra structure on the right-hand-side of \eqref{eq:canonical_vector_space_isomorphism} using Def.\ref{def:semidirect_product_of_Lie_algebras} and the action \eqref{eq:Vect_S1_action_on_itself}. From \eqref{eq:W_f_polar_components} and \eqref{eq:V_ar}, one can observe that for each $V\in \mathrm{GVect}_{\rm ar}(D^2)$ we have
\begin{align}
\label{eq:V_ar_polar_components}
(V_{\rm ar})_3\smallvectwo{r}{\theta} = V_3\smallvectwo{r}{\theta} - \xi(r) \, f_V(\theta), \qquad (V_{\rm ar})_4\smallvectwo{r}{\theta} = V_4\smallvectwo{r}{\theta},\qquad \left\{ \begin{array}{l} \forall r \in (0,1), \\ \forall \theta\in \mathbb{R}/2\pi\mathbb{Z}.\end{array} \right. 
\end{align}
In particular, $V_{\rm ar}|_{S^1} = V_4\smallvectwo{1}{\theta}\frac{\partial}{\partial\theta} = V|_{S^1}$; let us write the map \eqref{eq:canonical_vector_space_isomorphism} more clearly:
\begin{align}
\label{eq:map_more_clearly}
{\renewcommand{\arraystretch}{1.4} \begin{array}{l}
\mbox{class of $V\in \mathrm{GVect}_{\rm ar}(D^2) ~=$}~~~ V + \mathrm{Vect}_{\rm az}(D^2)  \\
\displaystyle \quad \longmapsto ~ \left( V|_{S^1}, \frac{\partial}{\partial \theta}, ~ f_V(\theta) \frac{\partial}{\partial \theta} \right) = \left( V_4\smallvectwo{1}{\theta} \frac{\partial}{\partial \theta}, ~ V_3\smallvectwo{1}{\theta} \frac{\partial}{\partial \theta} \right).
\end{array}}
\end{align}
So, we now see that this map is canonical, for adding an element of $\mathrm{Vect}_{\rm az}(D^2)$ to $V\in \mathrm{GVect}_{\rm ar}(D^2)$ does not alter the functions $V_4\smallvectwo{1}{\theta}, f_V(\theta) \in C^\infty(\mathbb{R}/2\pi\mathbb{Z};\mathbb{R})$, which are completely determined by $V$, without any external choice such as the auxiliary function $\xi$. It only remains to show that this map is a Lie algebra homomorphism.

\vs

Let $V^{(a)}, V^{(b)} \in \mathrm{GVect}_{\rm ar}(D^2)$ and let
$$
V^{(c)} := [V^{(a)}, V^{(b)}] \in \mathrm{GVect}_{\rm ar}(D^2),
$$
each of the three elements $V^{(a)},V^{(b)},V^{(c)}$ being considered as representing the corresponding class in $\mathrm{GVect}_{\rm ar}(D^2)/\mathrm{Vect}_{\rm az}(D^2)$. The formula \eqref{eq:V_c_polar_components} for $j=4$, restricted to $r=1$, tells us
\begin{align*}
V^{(c)}_4|_{r=1} & = V^{(a)}_3|_{r=1} \, \underbrace{(\partial_r V^{(b)}_4)|_{r=1}} + V^{(a)}_4|_{r=1} \, (\partial_\theta (V^{(b)}_4|_{r=1})) \\
& \quad  - V^{(b)}_3|_{r=1} \, \underbrace{(\partial_r V^{(a)}_4)|_{r=1}} - V^{(b)}_4|_{r=1} \, (\partial_\theta (V^{(a)}_4|_{r=1})),
\end{align*}
where the two underbraced parts are zero because of the condition \eqref{eq:ar_condition_for_V} for $V^{(a)}$ and $V^{(b)}$. Recalling the definition of the `restriction' $V|_{S^1} = V|_{r=1}(\theta) \frac{\partial}{\partial \theta}$ and, we thus obtain
\begin{align}
\label{eq:Lie_algebra_homomorphism_eq1}
V^{(c)}|_{S^1} = \left[ V^{(a)}|_{S^1}, ~ V^{(b)}|_{S^1} \right],
\end{align}
where the Lie bracket of the right-hand-side is that of $\mathrm{Vect}(S^1)$ as written in \eqref{eq:Vect_S1_Lie_bracket}.

\vs

In the meantime, observe
\begin{align*}
  f_{V^{(c)}} \stackrel{\eqref{eq:f_V}}{=} V^{(c)}_3 |_{r=1} & \stackrel{\eqref{eq:V_c_polar_components}}{=} V^{(a)}_4|_{r=1} \cdot (\partial_\theta (V^{(b)}_3|_{r=1})) - V^{(b)}_4|_{r=1} \cdot(\partial_\theta(V^{(a)}_3|_{r=1})) \\
& \stackrel{\eqref{eq:f_V}}{=} V^{(a)}_4|_{r=1} \cdot f'_{V^{(b)}} - V^{(b)}_4|_{r=1} \cdot f'_{V^{(a)}}.
\end{align*}
Using the action described in \eqref{eq:Vect_S1_action_on_itself}, one can write
\begin{align}
\label{eq:Lie_algebra_homomorphism_eq2}
f_{V^{(c)}}(\theta) \frac{\partial}{\partial \theta} = \left( V^{(a)}_4\smallvectwo{1}{\theta} \frac{\partial}{\partial\theta} \right).\left( f_{V^{(b)}}(\theta) \frac{\partial}{\partial \theta} \right) - \left( V^{(b)}_4\smallvectwo{1}{\theta} \frac{\partial}{\partial\theta} \right).\left( f_{V^{(a)}}(\theta) \frac{\partial}{\partial \theta} \right).
\end{align}
So, in view of Def.\ref{def:semidirect_product_of_Lie_algebras}, the equalities \eqref{eq:Lie_algebra_homomorphism_eq1} and \eqref{eq:Lie_algebra_homomorphism_eq2} tell us that the map \eqref{eq:map_more_clearly} is indeed a Lie algebra homomorphism, finishing the proof. \qed

\vs

An interesting remark is that the action \eqref{eq:Vect_S1_action_on_itself} is induced by only a `half' of the usual adjoint action of $\mathrm{Vect}(S^1)$ on itself, instead of the full adjoint action. However, one can still check that \eqref{eq:Vect_S1_action_on_itself} is a well-defined Lie algebra action.

\subsection{Lie algebra $2$-cocycle of $\mathrm{Vect}(S^1) \ltimes (\mathrm{Vect}(S^1))_{\rm ab}$}
\label{subsec:central_extension_of_two_copies}

We shall first apply the construction of \S\ref{subsec:Lie_algebra_central_extensions_in_general} to our group $\mathcal{G}$, to obtain a Lie algebra $2$-cocycle of $\mathrm{Lie}(\mathcal{G})$. In order to `differentiate' our group $2$-cocycle $\gamma$ of the group $\mathcal{G} = {\rm Diff}_{+{\rm ar}}(\ol{D^2})_0$, we need the following.
\begin{lemma}[variation of $\theta$]
\label{lem:variation_of_theta}
As in Def.\ref{def:G_M_theta_M}, let $M$ be an $n$-dimensional manifold with boundary embedded in $\mathbb{R}^n$, $\mathscr{G} := {\rm Diff}_{+*}(M)\subset {\rm Diff}_+(M)$ (with $*$ representing a certain analytic condition), and $\theta_M(g) := J_M(g)^{-1} dJ_M(g)$ for each $g\in \mathscr{G}$ where $J_M(g)$ is the Jacobian matrix of $g$. Let $V\in \mathrm{Lie}(\mathscr{G}) \subset \mathrm{Vect}(M)$, and let $\{h_s\}_{s\in (-\epsilon,\epsilon)}$ be a smooth curve in $\mathscr{G}$ at the identity corresponding to $V$. For $g\in \mathscr{G}$ we have the following variational formula for $\theta_M$ at $g$ in the direction $V$:
\begin{align}
\label{eq:variation_theta1}
\left. \frac{d}{ds} \theta_M(g\circ h_s) \right|_{s=0} = 
- J_M(g)^{-1} \, J_M(J_M(g) V) \, \theta_M(g)
+ J_M(g)^{-1} dJ_M(J_M(g) V),
\end{align}
where $V$ is thought of as an $\mathbb{R}^n$-column-vector-valued function on $M$. In particular,
\begin{align}
\label{eq:variation_theta2}
\left. \frac{d}{ds} \theta_M(h_s) \right|_{s=0} = d J_M(V).
\end{align}
\end{lemma}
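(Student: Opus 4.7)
The plan is to prove \eqref{eq:variation_theta1} by a direct Taylor expansion in $s$, and then derive \eqref{eq:variation_theta2} as the special case $g = \mathrm{id}_M$. Writing $h_s({\bf x}) = {\bf x} + s\, V({\bf x}) + o(s)$, one obtains $J_M(h_s) = I + s\, J_M(V) + o(s)$ and, expanding the matrix inverse, $J_M(h_s)^{-1} = I - s\, J_M(V) + o(s)$. The chain rule \eqref{eq:chain_rule} gives
$$
J_M(g \circ h_s)({\bf x}) = J_M(g)(h_s({\bf x})) \cdot J_M(h_s)({\bf x}),
$$
and a first-order Taylor expansion of $J_M(g)$ about ${\bf x}$ in the direction $V({\bf x})$ yields $J_M(g)(h_s({\bf x})) = J_M(g)({\bf x}) + s\, [dJ_M(g)(V)]({\bf x}) + o(s)$, where $dJ_M(g)(V)$ denotes the interior product of the matrix-valued $1$-form $dJ_M(g)$ with the vector field $V$. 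Multiplying the two expansions, one finds
$$
\left.\frac{d}{ds}\right|_{s=0} J_M(g \circ h_s) \;=\; J_M(g)\, J_M(V) \,+\, dJ_M(g)(V).
$$

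The main algebraic step will be to recognize the right-hand side as $J_M(J_M(g) V)$, the Jacobian of the $\mathbb{R}^n$-valued function ${\bf x} \mapsto J_M(g)({\bf x}) V({\bf x})$. This follows by a short Leibniz-rule computation: writing $(J_M(g) V)_i = \sum_k (\partial_k g_i)\, V_k$, one has $\partial_j (J_M(g) V)_i = \sum_k (\partial_j \partial_k g_i)\, V_k + \sum_k (\partial_k g_i)\,(\partial_j V_k)$; by equality of mixed partials the first sum equals $[dJ_M(g)(V)]_{ij}$, while the second equals $[J_M(g)\, J_M(V)]_{ij}$. Setting $W := J_M(g) V$, this identifies the derivative cleanly as $\frac{d}{ds}|_{s=0} J_M(g \circ h_s) = J_M(W)$.

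The remainder of the argument is formal. By Lem.\ref{lem:d_inverse} one gets $\frac{d}{ds}|_{s=0} J_M(g \circ h_s)^{-1} = -J_M(g)^{-1} J_M(W)\, J_M(g)^{-1}$, and since the exterior derivative (in ${\bf x}$) commutes with $\frac{d}{ds}$ one has $\frac{d}{ds}|_{s=0} dJ_M(g \circ h_s) = dJ_M(W)$. Applying the Leibniz rule to $\theta_M(g \circ h_s) = J_M(g \circ h_s)^{-1}\, dJ_M(g \circ h_s)$ at $s = 0$, and using $J_M(g \circ h_0) = J_M(g)$, assembles
$$
\left.\frac{d}{ds}\right|_{s=0} \theta_M(g \circ h_s) = -J_M(g)^{-1} J_M(W)\, \theta_M(g) + J_M(g)^{-1}\, dJ_M(W),
$$
which is \eqref{eq:variation_theta1}. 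Specializing to $g = \mathrm{id}_M$ then gives $J_M(g) = I$, $W = V$, and $\theta_M(g) = 0$, so \eqref{eq:variation_theta2} drops out at once. The only nontrivial point is the combinatorial identity $J_M(J_M(g) V) = dJ_M(g)(V) + J_M(g)\, J_M(V)$; since this is just the product rule combined with symmetry of mixed partials, no real obstacle is anticipated.
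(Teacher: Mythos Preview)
Your proposal is correct and follows essentially the same Taylor-expansion strategy as the paper. The only difference is the order of operations: the paper first expands the composition itself as $g\circ h_s = g + s\, J_M(g)V + o(s)$ and then applies the $\mathbb{R}$-linear operator $J_M$ to read off $J_M(g\circ h_s) = J_M(g) + s\, J_M(J_M(g)V) + o(s)$ directly, whereas you apply the chain rule first and are then obliged to establish the Leibniz identity $J_M(J_M(g)V) = dJ_M(g)(V) + J_M(g)\,J_M(V)$ to recognize the result. Your route is a touch longer but has the merit of making that identity explicit; the paper's route sidesteps it entirely.
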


\begin{proof}
First, write
\begin{align}
\label{eq:h_t_in_Diff_M}
h_s = \mathrm{id}_M + s V + o(s),
\end{align}
using the pointwise addition, where the addition is taken with respect to the vector space structure of the ambient $\mathbb{R}^n$. 
Then
\begin{align*}
g\circ h_s = g + s \, J_M(g) V + o(s).
\end{align*} 
We can extend the definition of $J_M$ (Def. \ref{def:G_M_theta_M}) to any $\mathbb{R}^n$-valued differentiable functions on $M$. One observes from its formula \eqref{eq:J_M} that such assignment $J_M$ is $\mathbb{R}$-linear, hence
$$
J_M(g\circ h_s) = J_M(g) + s \, J_M (J_M(g) V) + o(s).
$$
One can then easily check
$$
J_M(g\circ h_s)^{-1} = J_M(g)^{-1} - s \, J_M(g)^{-1} \, J_M (J_M(g) V) \, J_M(g)^{-1} + o(s),
$$
from which it follows that
\begin{align*}
\theta_M(g\circ h_s) & = J_M(g\circ h_s)^{-1} \, dJ_M(g\circ h_s) \\
& = \left( J_M(g)^{-1} - s \, J_M(g)^{-1} \, J_M (J_M(g) V) \, J_M(g)^{-1} + o(s) \right) \\
& \quad \cdot (dJ_M(g) + s \, d J_M(J_M(g) V) + o(s) ) \\
& = \theta_M(g) + s \left(
- J_M(g)^{-1} \, J_M(J_M(g) V) \, \theta_M(g)
+ J_M(g)^{-1} dJ_M(J_M(g) V)
\right) + o(s),
\end{align*}
as desired. 

\vs

When $g=\mathrm{id}_M$ we get $J_M(g) = I_n$ (the constant function with value being the identity matrix) and so $\theta_M(g) = 0$, and therefore we get
\begin{align}
\nonumber
\theta_M(h_s) = s \,  dJ_M(V)  + o(s).
\end{align}

\end{proof}

The following is a straightforward exercise using Lem.\ref{lem:variation_of_theta}, and the skew-symmetry:
\begin{lemma}
The Lie algebra $2$-cocycle $\beta$ of $\mathrm{Lie}(\mathcal{G}) = \mathrm{Vect}_{\rm ar}(D^2)$ corresponding via \eqref{eq:beta_definition} to the group $2$-cocycle $\gamma$ (Def.\ref{def:our_gamma}) of $\mathcal{G} = {\rm Diff}_{+{\rm ar}}(\ol{D^2})_0$ (Def.\ref{def:our_model_of_the_disc_diffeomorphism_group}) is given by
\begin{align}
\label{eq:our_beta_formula}
  \beta(V,W) = - 6c_0 \int_{D^2} \mathrm{tr}_{\mathrm{mat}_2} (dJ_{D^2}(V) \wedge dJ_{D^2}(W)),
\end{align}
for each $V,W \in \mathrm{Lie}(\mathcal{G}) = \mathrm{Vect}_{\rm ar}(D^2)$. \qed
\end{lemma}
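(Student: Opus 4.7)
The plan is to insert the explicit formula \eqref{eq:gamma_definition} for $\gamma$ into the definition \eqref{eq:beta_definition} of $\beta$ and evaluate the resulting mixed partial derivative using the variational formula \eqref{eq:variation_theta2} of Lem.\ref{lem:variation_of_theta}. No geometric input beyond this lemma is needed: everything reduces to Taylor expansion, bilinearity, and the skew-symmetry of the matrix-trace-wedge pairing.

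First, I would Taylor expand each of the two $\mathfrak{gl}(2,\mathbb{R})$-valued $1$-forms appearing in $\gamma(h_t,k_s) = 3c_0 \int_{D^2} \mathrm{tr}_{\mathrm{mat}_2}(\theta_{D^2}(h_t)\wedge \theta_{D^2}(k_s^{-1}))$. Formula \eqref{eq:variation_theta2} gives $\theta_{D^2}(h_t) = t\,dJ_{D^2}(V) + o(t)$. For $\theta_{D^2}(k_s^{-1})$ I note that $\{k_s^{-1}\}_{s\in(-\epsilon,\epsilon)}$ is itself a smooth curve in $\mathcal{G}$ through the identity with tangent vector $-W$, so the same lemma together with the $\mathbb{R}$-linearity of the assignment $V \mapsto J_{D^2}(V)$ (noted inside the proof of Lem.\ref{lem:variation_of_theta}) yields $\theta_{D^2}(k_s^{-1}) = -s\,dJ_{D^2}(W) + o(s)$. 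Taking the wedge and the matrix trace, the leading $ts$-coefficient of $\gamma(h_t,k_s)$ is $-3c_0 \int_{D^2}\mathrm{tr}_{\mathrm{mat}_2}(dJ_{D^2}(V)\wedge dJ_{D^2}(W))$, which is therefore the value of $\frac{\partial^2}{\partial t \partial s}\gamma(h_t,k_s)|_{t,s=0}$.

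Swapping $(V,h_t) \leftrightarrow (W,k_s)$ in the same computation gives $\frac{\partial^2}{\partial t \partial s}\gamma(k_s,h_t)|_{t,s=0} = -3c_0 \int_{D^2}\mathrm{tr}_{\mathrm{mat}_2}(dJ_{D^2}(W)\wedge dJ_{D^2}(V))$. I would then invoke the skew-symmetry $\mathrm{tr}_{\mathrm{mat}_2}(\alpha\wedge\beta) = -\mathrm{tr}_{\mathrm{mat}_2}(\beta\wedge\alpha)$ for $\mathfrak{gl}(2,\mathbb{R})$-valued $1$-forms $\alpha,\beta$, which follows at once from graded anticommutativity of the wedge product combined with the cyclicity of the matrix trace, and which is already used implicitly at the end of the proof of Prop.\ref{prop:gamma_M_2-cocycle}. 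This rewrites the second mixed partial as $+3c_0 \int_{D^2}\mathrm{tr}_{\mathrm{mat}_2}(dJ_{D^2}(V)\wedge dJ_{D^2}(W))$, and subtracting yields the stated formula with coefficient $-6c_0$.

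The only real pitfall is careful bookkeeping of two independent sign sources: the minus from differentiating $k_s^{-1}$ instead of $k_s$, and the minus from swapping the order under $\mathrm{tr}_{\mathrm{mat}_2}$ of a wedge of two $1$-forms. Once these are accounted for, the computation is mechanical and the skew-symmetry \eqref{eq:general_beta_skew-symmetry} of $\beta$ can be read off as a consistency check (both the prefactor $-6c_0$ and the integrand $\mathrm{tr}_{\mathrm{mat}_2}(dJ_{D^2}(V)\wedge dJ_{D^2}(W))$ are antisymmetric in $(V,W)$, as required).
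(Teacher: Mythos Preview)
Your proposal is correct and follows exactly the approach the paper indicates: the paper's ``proof'' of this lemma is just the remark that it is a straightforward exercise using Lem.\ref{lem:variation_of_theta} and skew-symmetry, which is precisely what you carry out. One tiny wording quibble: in your final consistency check the prefactor $-6c_0$ is simply a constant, not ``antisymmetric in $(V,W)$''; the skew-symmetry of $\beta$ comes entirely from that of the integrand.
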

In particular, if we take \eqref{eq:our_beta_formula} as the definition of a map $\beta : \mathrm{Vect}_{\rm ar}(D^2) \times \mathrm{Vect}_{\rm ar}(D^2) \to \mathbb{R}$, it satisfies the skew-symmetry \eqref{eq:general_beta_skew-symmetry} and the Lie algebra $2$-cocycle property \eqref{eq:general_beta_2-cocycle_property}. Denote by $\wh{\mathrm{Vect}_{\rm ar}}(D^2) = \wh{\mathrm{Lie}(\mathcal{G})}$ be the corresponding Lie algebra central extension.

\vs

We extend $\beta$ to $\mathrm{GVect}_{\rm ar}(D^2)$ by the same formula \eqref{eq:our_beta_formula}, and denote it by $\mathrm{G}\beta$.
\begin{lemma}
\label{lem:Gbeta_is_well-defined}
For every $V,W\in\mathrm{GVect}_{\rm ar}(D^2)$, define $(\mathrm{G}\beta)(V,W)$ as the right-hand-side of \eqref{eq:our_beta_formula}. This yields a well-defined $\mathbb{R}$-bilinear skew-symmetric function $\mathrm{G}\beta : \mathrm{GVect}_{\rm ar}(D^2) \times \mathrm{GVect}_{\rm ar}(D^2) \to \mathbb{R}$.
\end{lemma}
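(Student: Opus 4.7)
The plan is to check, in order: $\mathbb{R}$-bilinearity, skew-symmetry, and (the heart of the matter) absolute convergence of the defining integral for each pair $V, W \in \mathrm{GVect}_{\rm ar}(D^2)$.

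Bilinearity is immediate: the assignment $V \mapsto J_{D^2}(V)$ is $\mathbb{R}$-linear in the component functions $V_1, V_2$ from its definition \eqref{eq:J_M}, hence so is $V \mapsto dJ_{D^2}(V)$, and the trace and wedge product are $\mathbb{R}$-bilinear. Skew-symmetry reduces to the general identity $\mathrm{tr}_{\mathrm{mat}_n}(A \wedge B) = -\mathrm{tr}_{\mathrm{mat}_n}(B \wedge A)$ for any two $\mathfrak{gl}(n,\mathbb{R})$-valued $1$-forms $A=(\alpha_{ij})$, $B=(\beta_{ij})$, which follows by expanding $\mathrm{tr}_{\mathrm{mat}_n}(A \wedge B) = \sum_{i,j} \alpha_{ij} \wedge \beta_{ji}$ and using $\alpha \wedge \beta = -\beta\wedge\alpha$ for scalar $1$-forms; applied pointwise, this gives $(\mathrm{G}\beta)(W,V) = -(\mathrm{G}\beta)(V,W)$.

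For convergence, the integrand is a smooth scalar $2$-form on $D^2$, so the only potential obstruction is near the boundary $S^1$. By the asymptotically radial hypothesis \eqref{eq:ar_condition_for_V}, there exists $\epsilon \in (0,1)$ such that on the annulus $A_\epsilon := \{1-\epsilon < r < 1\}$ the polar components $V_3, V_4$ depend only on $\theta$; inverting \eqref{eq:V3_and_V4_in_V1_and_V2} one obtains
$$
V_1 = V_3(\theta)\cos\theta - r\, V_4(\theta)\sin\theta, \qquad V_2 = V_3(\theta)\sin\theta + r\, V_4(\theta)\cos\theta \quad\mbox{on $A_\epsilon$,}
$$
so all partial derivatives of $V_1, V_2$ in $(r,\theta)$ to any order are bounded on $A_\epsilon$. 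Since the change-of-frame operators $\partial_x = \cos\theta\,\partial_r - r^{-1}\sin\theta\,\partial_\theta$ and $\partial_y = \sin\theta\,\partial_r + r^{-1}\cos\theta\,\partial_\theta$ from \eqref{eq:vector_field_change_formula_for_polar} have smooth bounded coefficients on $A_\epsilon$ (as $r \geq 1-\epsilon > 0$), the entries of $J_{D^2}(V)$ and of $dJ_{D^2}(V)$ are bounded on $A_\epsilon$, and likewise for $W$. Thus $\mathrm{tr}_{\mathrm{mat}_2}(dJ_{D^2}(V) \wedge dJ_{D^2}(W))$ is a bounded smooth $2$-form on $A_\epsilon$, which has finite area; on the complement $\ol{D^2}\setminus A_\epsilon$ it is smooth on a compact set. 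Absolute convergence of the integral follows.

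The only nontrivial step is the uniform boundedness of $J_{D^2}(V)$ and $dJ_{D^2}(V)$ on a collar of $S^1$, which is a direct consequence of the asymptotically radial condition and requires no delicate estimate; the same argument clearly applies to $\mathrm{Vect}_{\rm ar}(D^2) \subset \mathrm{GVect}_{\rm ar}(D^2)$, so $\mathrm{G}\beta$ extends the previously defined $\beta$ on $\mathrm{Vect}_{\rm ar}(D^2)$.
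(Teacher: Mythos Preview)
Your proof is correct and follows essentially the same approach as the paper: both establish convergence by using the asymptotically radial condition to control the integrand near $S^1$. The only cosmetic difference is that the paper phrases this as a smooth extension of $V_3,V_4$ (and hence of $J_{D^2}(V)$) past the boundary to an open neighborhood of $\ol{D^2}$, whereas you work directly on the collar annulus and invoke boundedness of the polar-to-Cartesian change of frame; the content is the same.
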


\begin{proof}
Note that if we write $V \in \mathrm{GVect}_{\rm ar}(D^2)$ as in \eqref{eq:disc_vector_field_V}, then the symbol $J_{D^2}(V)$ means
\begin{align}
\label{eq:J_D2_V}
J_{D^2}(V) = \mattwo{\partial_x V_1}{\partial_y V_1}{\partial_x V_2}{\partial_y V_2}.
\end{align}
As an example, take $\partial_x V_1$. From \eqref{eq:V3_and_V4_in_V1_and_V2} we have
\begin{align}
\label{eq:V1_V2_in_V3_V4}
\left\{ 
{\renewcommand{\arraystretch}{1.4}
\begin{array}{l}
V_1\smallvectwo{x}{y} = V_3\smallvectwo{r}{\theta} \cdot \cos\theta - V_4 \smallvectwo{r}{\theta} \cdot r \sin\theta \\
V_2 \smallvectwo{x}{y} = V_3\smallvectwo{r}{\theta} \cdot \sin\theta + V_4\smallvectwo{r}{\theta} \cdot r \cos\theta
\end{array}}
\right.
\qquad \mbox{on $D^2\setminus\{0\}$},
\end{align}
and from \eqref{eq:vector_field_change_formula_for_polar} we have
\begin{align}
\label{eq:partial_x_y_of_V_1_2_in_polar_partials}
\left\{ 
{\renewcommand{\arraystretch}{1.4}
\begin{array}{l}
\partial_x V_j = \cos\theta \, \partial_r V_j - r^{-1} \sin\theta \, \partial_\theta V_j, \\
\partial_y V_j = \sin\theta \, \partial_r V_j + r^{-1} \cos\theta \, \partial_\theta V_j,
\end{array}}
\right.
\qquad \mbox{on $D^2\setminus\{0\}$, \qquad for $j=1,2$}.
\end{align}
Meanwhile, the asymptotically radial condition \eqref{eq:ar_condition_for_V} of $V\in \mathrm{GVect}_{\rm ar}(D^2)$ lets us smoothly extend $V_3$ and $V_4$ to a neighborhood of $\ol{D^2}$ in the plane, e.g. by letting $V_3\smallvectwo{r}{\theta} := V_3\smallvectwo{1}{\theta}$ and $V_4\smallvectwo{r}{\theta} := V_4\smallvectwo{1}{\theta}$ for all $r \in [1,1+\epsilon)$ and all $\theta \in \mathbb{R}/2\pi\mathbb{Z}$. Thus each entry of $J_{D^2}(V)$ extends to a smooth function on this neighborhood of $\ol{D^2}$. Hence the integrand of the integral in \eqref{eq:our_beta_formula} is a differential $2$-form on $D^2$ whose coefficient functions smoothly extend to a neighborhood of $\ol{D^2}$ in the plane, hence are bounded functions; thus the integral converges.

\vs

The skew-symmetry is immediate, and the $\mathbb{R}$-bilinearity is obvious by inspection.
\end{proof}

The following computation will find itself crucial.

\begin{lemma}[computation of $\mathrm{G}\beta$]
For each $V, W\in \mathrm{GVect}_{\rm ar}(D^2)$, define the functions $v_3(\theta),v_4(\theta),w_3(\theta),w_4(\theta) \in C^\infty(\mathbb{R}/2\pi\mathbb{Z};\mathbb{R})$ as
\begin{align}
\label{eq:v3_v4_w3_w4}
  v_j(\theta) := V_j|_{r=1}(\theta) = V_j\smallvectwo{1}{\theta}, \quad
  w_j(\theta) := W_j|_{r=1}(\theta) = W_j\smallvectwo{1}{\theta}, \qquad
\mbox{for each ~ $j=3,4$}.
\end{align}
Then
\begin{align}
\label{eq:Gbeta_computation}
\left\{ {\renewcommand{\arraystretch}{1.6} \begin{array}{rcl}
(\mathrm{G}\beta)(V,W) & = & \displaystyle -6 c_0 \int_0^{2\pi} \mathbf{I}_{V,W}(\theta) \, d\theta, \qquad\mbox{where  } \\
 \mathbf{I}_{V,W} & = & (v_4' w_4'' - 2v_4 w_4') + (3v_3 w_3' - v_3' w_4' + v_4' w_3') ~ \in ~ C^\infty(\mathbb{R}/2\pi\mathbb{Z};\mathbb{R}),
\end{array}} \right.
\end{align}
where the prime ${}'$ means the derivative with respect to the $\theta$ variable.
\end{lemma}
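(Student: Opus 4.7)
The plan is to apply Stokes' theorem to reduce the $D^2$-integral defining $(\mathrm{G}\beta)(V,W)$ to a boundary integral over $S^1$, and then express the resulting integrand in terms of the polar boundary data $v_3,v_4,w_3,w_4$.

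The key observation for the first step is that the integrand is exact. Since $J_{D^2}(V)$ is a matrix-valued $0$-form and $dJ_{D^2}(W)$ a matrix-valued $1$-form, the graded Leibniz rule together with $d^2=0$ gives $d(J_{D^2}(V)\cdot dJ_{D^2}(W)) = dJ_{D^2}(V)\wedge dJ_{D^2}(W)$, and taking trace (which commutes with $d$) yields
\[
\mathrm{tr}_{\mathrm{mat}_2}(dJ_{D^2}(V)\wedge dJ_{D^2}(W)) \,=\, d\,\mathrm{tr}_{\mathrm{mat}_2}(J_{D^2}(V)\cdot dJ_{D^2}(W)).
\]
By the asymptotically radial condition (as in the proof of Lem.\ref{lem:Gbeta_is_well-defined}), the entries of $J_{D^2}(V)$ and $J_{D^2}(W)$ extend smoothly to $\ol{D^2}$, so Stokes' theorem applies and
\[
(\mathrm{G}\beta)(V,W) \,=\, -6c_0 \int_{S^1} \mathrm{tr}_{\mathrm{mat}_2}(J_{D^2}(V)\cdot dJ_{D^2}(W)).
\]

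Next, the $1$-form $\mathrm{tr}_{\mathrm{mat}_2}(J(V)\cdot dJ(W)) = \sum_{i,k}(\partial_{x_k} V_i)\cdot d(\partial_{x_i} W_k)$ must be pulled back to $S^1$ via $(x,y)=(\cos\theta,\sin\theta)$. A short computation from \eqref{eq:vector_field_change_formula_for_polar} gives the identity $-\sin\theta\,\partial_x + \cos\theta\,\partial_y = r^{-1}\partial_\theta$, so that the pullback of $df$ to $S^1$ is simply $(\partial_\theta f)|_{r=1}\,d\theta$. Thus the integrand along $S^1$ becomes
\[
\Bigl( \sum_{i,k} (\partial_{x_k} V_i) \cdot \partial_\theta(\partial_{x_i} W_k) \Bigr)\Big|_{r=1}\, d\theta.
\]
Now \eqref{eq:V1_V2_in_V3_V4} and \eqref{eq:partial_x_y_of_V_1_2_in_polar_partials} express each $\partial_{x_k}V_i|_{r=1}$ as a trigonometric combination of the boundary polar data $V_3|_{r=1}=v_3$, $V_4|_{r=1}=v_4$, together with $(\partial_r V_j)|_{r=1}$ and $(\partial_\theta V_j)|_{r=1}=v_j'$ for $j=3,4$. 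The asymptotically radial condition \eqref{eq:ar_condition_for_V} forces $(\partial_r V_j)|_{r=1} = 0$ (and hence $(\partial_r \partial_\theta V_j)|_{r=1} = 0$) for $j=3,4$, and analogously for $W$; this kills all terms containing radial derivatives at the boundary.

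After substitution, the trigonometric factors collapse via $\cos^2\theta + \sin^2\theta = 1$ (plus cross terms that vanish under the reduction), yielding a polynomial in $v_3,v_4,w_3,w_4$ and their $\theta$-derivatives. Since integration by parts over the circle has no boundary contribution, derivatives may be redistributed freely; matching the precise form of $\mathbf{I}_{V,W}$ -- which contains the second $\theta$-derivative $w_4''$ -- requires one such redistribution, producing $-\mathbf{I}_{V,W}(\theta)$ up to total $\theta$-derivatives. The main obstacle is purely the bookkeeping: four Cartesian second-order partials for each of $V$ and $W$, each a sum of several polar monomials, must combine and cancel into the compact expression \eqref{eq:Gbeta_computation}. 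The essential inputs for this collapse are the vanishing of the radial derivatives at the boundary, the trigonometric identities, and integration by parts on $S^1$; no further geometric input is needed.
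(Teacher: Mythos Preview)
Your approach is essentially identical to the paper's: exactness of the integrand, Stokes to reduce to $S^1$, expressing the Cartesian partials $(\partial_{x_k}V_i)|_{r=1}$ in terms of $v_3,v_4,v_3',v_4'$ using \eqref{eq:V1_V2_in_V3_V4}, \eqref{eq:partial_x_y_of_V_1_2_in_polar_partials} and the vanishing $(\partial_r V_j)|_{r=1}=0$, then collapsing the trigonometry and finishing with integration by parts on the circle. The paper carries out the ``bookkeeping'' you allude to in full, tabulating all twenty monomial coefficients and obtaining an intermediate expression $\widetilde{\mathbf I}_{V,W}$ that differs from $\mathbf I_{V,W}$ by total $\theta$-derivatives; your sketch is correct but (minor point) the integrand you obtain should be $+\mathbf I_{V,W}$ up to exact terms, not $-\mathbf I_{V,W}$, since the $-6c_0$ is already outside.
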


\begin{proof}
As in the proof of Lem.\ref{lem:Gbeta_is_well-defined}, one can smoothly extend $J_{D^2}(V)$ and $J_{D^2}(W)$ to an open neighborhood $N$ of $\ol{D^2}$ in the plane $\mathbb{C} \approx \mathbb{R}^2$. Observe first that
\begin{align*}
  \mathrm{tr}_{\mathrm{mat}_2} (dJ_{D^2}(V) \wedge dJ_{D^2}(W))
= \mathrm{tr}_{\mathrm{mat}_2} d( J_{D^2}(V) \, dJ_{D^2}(W)) = d\left( \mathrm{tr}_{\mathrm{mat}_2}(J_{D^2}(V)\, dJ_{D^2}(W))\right)
\end{align*}
holds as smooth differential $2$-forms on $N$, where $\mathrm{tr}_{\mathrm{mat}_2}(J_{D^2}(V)\, dJ_{D^2}(W))$ is a smooth differential $1$-form on $N$. So Stokes' Theorem applies, yielding
$$
\beta(V,W) = - 6c_0 \int_{S^1} \iota_{S^1\to N}^* \left( \mathrm{tr}_{\mathrm{mat}_2}(J_{D^2}(V)\, dJ_{D^2}(W)) \right),
$$
where $\iota_{S^1\to N} : S^1 \hookrightarrow N$ is the usual embedding, as described in Def.\ref{def:relevant_2-dimensional_manifolds}. So it remains to  compute $\iota_{S^1\to N}^* \left( \mathrm{tr}_{\mathrm{mat}_2}(J_{D^2}(V)\, dJ_{D^2}(W)) \right)$; one way to do so is to first write the $1$-form $\mathrm{tr}_{\mathrm{mat}_2}(J_{D^2}(V)\, dJ_{D^2}(W))$ on $N$ as a $C^\infty(N;\mathbb{R})$-linear combination of $dr$ and $d\theta$, and then take only the $d\theta$ term, with its coefficient function restricted to $S^1 \subset N$.

\vs

From \eqref{eq:J_D2_V} we have
\begin{align*}
& \mathrm{tr}_{\mathrm{mat}_2} (J_{D^2}(V) \, dJ_{D^2}(W)) \\
& = \mathrm{tr}_{\mathrm{mat}_2} \mattwo{\partial_x V_1}{\partial_y V_1}{\partial_x V_2}{\partial_y V_2} \left( \mattwo{\partial_r (\partial_x W_1)}{\partial_r(\partial_y W_1)}{\partial_r(\partial_x W_2)}{\partial_r(\partial_y W_2)} dr + \mattwo{\partial_\theta(\partial_x W_1)}{\partial_\theta(\partial_y W_1)}{\partial_\theta(\partial_x W_2)}{\partial_\theta(\partial_y W_2)} d\theta \right) \\
& = (\sim) \, dr + \left( \, (\partial_x V_1) \, \partial_\theta(\partial_x W_1) + (\partial_y V_1) \, \partial_\theta(\partial_x W_2) + (\partial_x V_2) \, \partial_\theta(\partial_y W_1) + (\partial_y V_2) \, \partial_\theta(\partial_y W_2) \right) d\theta
\end{align*}
on $N$, for some expression $(\sim)$. Therefore
\begin{align}
\nonumber
  & \iota_{S^1\to N}^* \left( \mathrm{tr}_{\mathrm{mat}_2}(J_{D^2}(V)\, dJ_{D^2}(W)) \right) \\
\label{eq:V_W_integrand_on_circle_to_compute}
& = \underbrace{ \left(
\begin{array}{l}
(\partial_x V_1)|_{r=1} \, \partial_\theta \left( (\partial_x W_1)|_{r=1} \right)  + (\partial_y V_1)|_{r=1}\, \partial_\theta \left( (\partial_x W_2) |_{r=1} \right)  \\
+ (\partial_x V_2)|_{r=1} \, \partial_\theta \left( (\partial_y W_1)|_{r=1}\right) + (\partial_y V_2)|_{r=1} \, \partial_\theta \left( (\partial_y W_2)|_{r=1} \right)
\end{array}
\right) } d\theta.
\end{align}
Denote the underbraced coefficient function by $\til{\mathbf{I}}_{V,W}(\theta) \in C^\infty(\mathbb{R}/2\pi\mathbb{Z};\mathbb{R})$, so that
$$
(\mathrm{G}\beta)(V,W) = - 6c_0 \int_0^{2\pi} \til{\mathbf{I}}_{V,W}(\theta) \, d\theta.
$$
For convenience, write
\begin{align*}
  c(\theta) := \cos\theta, \qquad s(\theta) = \sin\theta,
\end{align*}
Whenever clear, we will omit the argument $\theta$ of functions in $\theta$. From \eqref{eq:V1_V2_in_V3_V4}, we get
\begin{align}
\label{eq:V1_V2_and_rho_1}
  V_1|_{r=1} = V_3|_{r=1} \cdot \cos\theta - V_4|_{r=1} \cdot \sin\theta = c \, v_3 - s \, v_4, \qquad
  V_2|_{r=1} = s \, v_3 + c \, v_4,
\end{align}
as well as
\begin{align*}
  (\partial_r V_1)|_{r=1} & = (\partial_r V_3)|_{r=1} \cdot c - (\partial_r V_4)|_{r=1} \cdot  s - (V_4|_{r=1}) \cdot s = - s \, v_4, \\
  (\partial_r V_2)|_{r=1} & = (\partial_r V_3)|_{r=1} \cdot s + (\partial_r V_4)|_{r=1} \cdot c + (V_4|_{r=1}) \cdot c = c \, v_4,
\end{align*}
where we used $(\partial_r V_3)|_{r=1} = 0 = (\partial_r V_4)|_{r=1}$ which we get from the condition \eqref{eq:ar_condition_for_V} of $V$. Applying $\partial_\theta$ to \eqref{eq:V1_V2_and_rho_1}, we get
\begin{align*}
  \partial_\theta (V_1|_{r=1}) = - s \, v_3 + c \, v_3' - c \, v_4 - s \, v_4', \qquad
  \partial_\theta (V_2|_{r=1}) = c \, v_3 + s \, v_3' - s \, v_4 + c \, v_4',
\end{align*}
where the prime ${}'$ stands for the $\partial_\theta$-partial derivative. Observe now
\begin{align*}
(\partial_x V_1)|_{r=1} & \stackrel{\eqref{eq:partial_x_y_of_V_1_2_in_polar_partials}}{=} c\, (\partial_r V_1)|_{r=1} - s \, \partial_\theta (V_1|_{r=1})
= - cs \, v_4 - s\, (- s \, v_3 + c \, v_3' - c \, v_4 - s \, v_4') \\
& = s^2 \, v_3 - c s \, v_3' + s^2 \, v_4', \\
(\partial_x V_2)|_{r=1} & \stackrel{\eqref{eq:partial_x_y_of_V_1_2_in_polar_partials}}{=} c\, (\partial_r V_2)|_{r=1} - s \, \partial_\theta (V_2|_{r=1}) = c^2 \, v_4 - s (c \, v_3 + s \, v_3' - s \, v_4 + c \, v_4') \\
& = -cs \, v_3 - s^2 \, v_3' + v_4 - cs \, v_4', \\
(\partial_y V_1)|_{r=1} & \stackrel{\eqref{eq:partial_x_y_of_V_1_2_in_polar_partials}}{=} s\, (\partial_r V_1)|_{r=1} + c \, \partial_\theta(V_1|_{r=1}) = -s^2 \, v_4 + c (- s \, v_3 + c \, v_3' - c \, v_4 - s \, v_4') \\
& = - cs \, v_3 + c^2 \, v_3' - v_4 - cs \, v_4', \\
(\partial_y V_2)|_{r=1} & \stackrel{\eqref{eq:partial_x_y_of_V_1_2_in_polar_partials}}{=} s\, (\partial_r V_2)|_{r=1} + c \, \partial_\theta(V_2|_{r=1}) = cs\, v_4 + c (c \, v_3 + s \, v_3' - s \, v_4 + c \, v_4') \\
& = c^2 \, v_3 + cs\, v_3' + c^2 \, v_4',
\end{align*}
where we used $c^2 + s^2 =1$. Likewise for $W$; taking the $\partial_\theta$-partial derivatives, using
$$
(s^2)' = 2cs, \qquad (c^2)' = - 2cs, \qquad (cs)' = c^2 - s^2
$$
we get
\begin{align*}
  \partial_\theta ( (\partial_x W_1)|_{r=1}) & = 2cs \, w_3 + (-c^2+2s^2) \, w_3' - cs\, w_3'' + 2cs \, w_4' + s^2 \, w_4'', \\
  \partial_\theta ( (\partial_x W_2)|_{r=1}) & = (-c^2+s^2) \, w_3 - 3 cs \, w_3' - s^2 \, w_3'' + (1-c^2+s^2) \, w_4' - cs \, w_4'', \\
  \partial_\theta ( (\partial_y W_1)|_{r=1}) & = (-c^2+s^2) \, w_3 - 3 cs \, w_3' + c^2 \, w_3'' + (-1-c^2+s^2) \, w_4' - cs \, w_4'', \\
  \partial_\theta ( (\partial_y W_2)|_{r=1}) & = -2cs\, w_3 + (2c^2-s^2) \, w_3' + cs \, w_3'' - 2cs \, w_4' + c^2 \, w_4''.
\end{align*}
It is now a straightforward task to compute $\til{\mathbf{I}}_{V,W}$, i.e. the underbraced part in \eqref{eq:V_W_integrand_on_circle_to_compute}. We collect the coefficient of each monomial in $v_j,v_j',w_j,w_j',w_j''$ (for $j=3,4$): (note there is no $w_4$)
\begin{align*}
  v_3 w_3 & : s^2(2cs) -cs(-c^2+s^2) -cs(-c^2+s^2) + c^2 (-2cs)
= 0, \\
  v_3 w_3' & : s^2(-c^2+2s^2)-cs(-3cs) -cs(-3cs) + c^2(2c^2-s^2)
= 2(s^2+c^2)^2 = 2, \\
  v_3 w_3'' & : s^2(-cs) - cs(-s^2) - cs(c^2) +c^2(cs) = 0, \\
  v_3 w_4' & : s^2(2cs) - cs(1-c^2+s^2) - cs(-1-c^2+s^2)+c^2(-2cs)
= 0, \\
  v_3 w_4'' & : s^2(s^2) - cs(-cs) - cs(-cs) + c^2(c^2) = (c^2+s^2)^2 = 1, \\
  v_3' w_3 & : -cs(2cs) +c^2(-c^2+s^2) - s^2(-c^2+s^2) + cs(-2cs)
= -(c^2+s^2)^2 = -1, \\
  v_3' w_3' & : -cs(-c^2+2s^2) + c^2(-3cs) - s^2(-3cs) + cs(2c^2-s^2)
= 0, \\
  v_3' w_3'' & : -cs(-cs) + c^2(-s^2) - s^2(c^2) + cs(cs) = 0, \\
  v_3' w_4' & : -cs(2cs) + c^2(1-c^2+s^2) - s^2(-1-c^2+s^2) + cs(-2cs)
= (c^2+s^2) - (c^2+s^2)^2 = 0, \\
  v_3' w_4'' & : -cs(s^2) + c^2(-cs) - s^2(-cs) + cs(c^2) =0, \\
  v_4 w_3 & : -(-c^2+s^2) + (-c^2+s^2)=0, \\
  v_4 w_3' & : - (-3cs) + (-3cs) = 0, \\
  v_4 w_3'' & : - (-s^2) + (c^2) = 1, \\
  v_4 w_4' & : - (1-c^2+s^2) + (-1-c^2+s^2) = -2, \\
  v_4 w_4'' & : - (-cs) + (-cs) = 0, \\
  v_4' w_3 & : s^2(2cs) - cs(-c^2+s^2) - cs(-c^2+s^2) + c^2(-2cs) = 0, \\
  v_4' w_3' & : s^2(-c^2+2s^2) - cs(-3cs) - cs(-3cs) + c^2(2c^2-s^2)
= 2(c^2+s^2)^2 = 2, \\
  v_4' w_3'' & : s^2(-cs) - cs(-s^2) - cs(c^2) + c^2(cs) = 0, \\
  v_4' w_4' & : s^2(2cs) - cs(1-c^2+s^2) - cs(-1-c^2+s^2) + c^2(-2cs) =0,\\
  v_4' w_4'' & : s^2(s^2) - cs(-cs) - cs(-cs) + c^2(c^2) = (c^2+s^2)^2 = 1.
\end{align*}
So we have
$$
\til{\mathbf{I}}_{V,W}
= (v_4' w_4'' - 2 v_4 w_4') + (2v_3 w_3' + v_3 w_4'' - v_3' w_3 + v_4 w_3'' + 2 v_4' w_3'),
$$
which is slightly different from the sought-for $\mathbf{I}_{V,W}(\theta)$. To finish the proof, one uses integration by parts when integrating the $1$-form $\til{\mathbf{I}}_{V,W}(\theta) \, d\theta$ over $S^1 \approx \mathbb{R}/2\pi\mathbb{Z}$, e.g. $\int_0^{2\pi} v_3 w_4'' d\theta = - \int_0^{2\pi} v_3' w_4' d\theta$; this is possible because derivatives of any order of $v_3,v_4,w_3,w_4$ are well-defined $\mathbb{R}$-valued continuous functions on $\mathbb{R}/2\pi\mathbb{Z}$, i.e. $2\pi$-periodic.  
\end{proof}

\begin{corollary}
\label{cor:Gbeta_zero_when_one_is_restricted}
One has
$$
(\mathrm{G}\beta)(V,W) = 0, \qquad \forall V \in \mathrm{GVect}_{\rm ar}(D^2), \quad \forall W \in \mathrm{Vect}_{\rm az}(D^2).
$$
\end{corollary}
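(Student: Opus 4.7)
The plan is to apply the explicit boundary formula for $\mathrm{G}\beta$ derived in the preceding lemma and observe that it depends on $W$ only through the $\theta$-functions $w_3(\theta) = W_3|_{r=1}(\theta)$ and $w_4(\theta) = W_4|_{r=1}(\theta)$ (together with their $\theta$-derivatives). Since $W \in \mathrm{Vect}_{\rm az}(D^2)$ satisfies $W_3 \equiv 0 \equiv W_4$ on some neighborhood of $S^1$ in $\ol{D^2}$ by \eqref{eq:az_condition_for_V}, the restrictions $w_3$ and $w_4$ are identically zero on $\mathbb{R}/2\pi\mathbb{Z}$, hence so are $w_3'$, $w_4'$, and $w_4''$.

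Then I would simply inspect the integrand
$$
\mathbf{I}_{V,W} = (v_4' w_4'' - 2 v_4 w_4') + (3 v_3 w_3' - v_3' w_4' + v_4' w_3')
$$
from \eqref{eq:Gbeta_computation} and note that every one of the five monomials contains at least one factor from the set $\{w_3', w_4', w_4''\}$. Therefore $\mathbf{I}_{V,W} \equiv 0$ as a function on $\mathbb{R}/2\pi\mathbb{Z}$, and consequently $(\mathrm{G}\beta)(V,W) = -6c_0 \int_0^{2\pi} \mathbf{I}_{V,W}(\theta)\, d\theta = 0$, as required.

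There is no genuine obstacle: the whole work was already done in establishing formula \eqref{eq:Gbeta_computation}, which reduces $\mathrm{G}\beta$ to a purely boundary expression in the $\theta$-traces of the polar components. The present corollary is then a one-line consequence. (A reassuring sanity check is that this is consistent with $\mathrm{Vect}_{\rm az}(D^2) = \mathrm{Lie}(\mathcal{H})$ and with the fact that the $2$-cocycle $\beta$ should, upon restriction to the ideal, match the coboundary of a $1$-cochain supported at the boundary — and the boundary data of $W$ is trivial here.)
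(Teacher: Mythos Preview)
Your proposal is correct and matches the paper's proof essentially verbatim: the paper also simply observes that $W\in \mathrm{Vect}_{\rm az}(D^2)$ forces $w_3=w_4=0$ via \eqref{eq:az_condition_for_V} and \eqref{eq:v3_v4_w3_w4}, making $\mathbf{I}_{V,W}$ in \eqref{eq:Gbeta_computation} vanish. There is nothing to add.
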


\begin{proof}
From \eqref{eq:az_condition_for_V} and \eqref{eq:v3_v4_w3_w4}, we see that $W\in \mathrm{Vect}_{\rm az}(D^2)$ means $w_3 = w_4 = 0$.
\end{proof}

By the $\mathbb{R}$-bilinearity and the skew-symmetry, this implies:
\begin{corollary}
\label{cor:Gbeta_descends}
$\mathrm{G}\beta$ descends to 
$$
\ol{\mathrm{G}\beta} ~ : ~ \mathrm{GVect}_{\rm ar}(D^2)/\mathrm{Vect}_{\rm az}(D^2) \times \mathrm{GVect}_{\rm ar}(D^2)/\mathrm{Vect}_{\rm az}(D^2) ~ \to ~ \mathbb{R},
$$
defined by
$$
(\ol{\mathrm{G}\beta})\left( V + \mathrm{Vect}_{\rm az}(D^2), ~ W + \mathrm{Vect}_{\rm az}(D^2) \right) := (\mathrm{G}\beta)(V,W), \qquad \forall V,W \in \mathrm{GVect}_{\rm ar}(D^2). \qed
$$
\end{corollary}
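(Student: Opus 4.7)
The proof will be essentially a direct bookkeeping exercise applying the three ingredients already established: $\mathbb{R}$-bilinearity of $\mathrm{G}\beta$ (Lem.\ref{lem:Gbeta_is_well-defined}), skew-symmetry of $\mathrm{G}\beta$ (same lemma), and the vanishing property from Cor.\ref{cor:Gbeta_zero_when_one_is_restricted}. The goal is to show that $(\mathrm{G}\beta)(V,W)$ depends only on the cosets $V + \mathrm{Vect}_{\rm az}(D^2)$ and $W + \mathrm{Vect}_{\rm az}(D^2)$, which is the standard criterion for a bilinear form to descend to the quotient.

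First, I would pick arbitrary representatives $V, V' \in \mathrm{GVect}_{\rm ar}(D^2)$ of the same coset, so that $V' - V =: Z_1 \in \mathrm{Vect}_{\rm az}(D^2)$, and similarly $W, W' \in \mathrm{GVect}_{\rm ar}(D^2)$ with $W' - W =: Z_2 \in \mathrm{Vect}_{\rm az}(D^2)$. Using the $\mathbb{R}$-bilinearity of $\mathrm{G}\beta$ from Lem.\ref{lem:Gbeta_is_well-defined}, I would expand
\[
(\mathrm{G}\beta)(V', W') = (\mathrm{G}\beta)(V, W) + (\mathrm{G}\beta)(V, Z_2) + (\mathrm{G}\beta)(Z_1, W) + (\mathrm{G}\beta)(Z_1, Z_2).
\]
The second term vanishes directly by Cor.\ref{cor:Gbeta_zero_when_one_is_restricted} since $Z_2 \in \mathrm{Vect}_{\rm az}(D^2)$. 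The third term $(\mathrm{G}\beta)(Z_1, W)$ equals $-(\mathrm{G}\beta)(W, Z_1)$ by the skew-symmetry part of Lem.\ref{lem:Gbeta_is_well-defined}, which again vanishes by Cor.\ref{cor:Gbeta_zero_when_one_is_restricted}. The fourth term vanishes for the same reason. Hence $(\mathrm{G}\beta)(V', W') = (\mathrm{G}\beta)(V, W)$, showing the value only depends on the cosets.

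This means the definition
\[
(\ol{\mathrm{G}\beta})\bigl( V + \mathrm{Vect}_{\rm az}(D^2),\ W + \mathrm{Vect}_{\rm az}(D^2) \bigr) := (\mathrm{G}\beta)(V,W)
\]
is unambiguous. The resulting map $\ol{\mathrm{G}\beta}$ automatically inherits $\mathbb{R}$-bilinearity and skew-symmetry from those of $\mathrm{G}\beta$, since the quotient map $\mathrm{GVect}_{\rm ar}(D^2) \to \mathrm{GVect}_{\rm ar}(D^2)/\mathrm{Vect}_{\rm az}(D^2)$ is $\mathbb{R}$-linear and surjective.

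There is really no obstacle to speak of in this argument — the heavy lifting has already been done in the explicit computation leading to \eqref{eq:Gbeta_computation} and the subsequent Cor.\ref{cor:Gbeta_zero_when_one_is_restricted}, which reduces the boundary values of the integrand to the functions $v_3, v_4, w_3, w_4$ on $S^1$ and shows that vanishing of $w_3, w_4$ forces $\mathbf{I}_{V,W} \equiv 0$. Thus the proof of Cor.\ref{cor:Gbeta_descends} is a two-line application of bilinearity and skew-symmetry to the vanishing result already in hand.
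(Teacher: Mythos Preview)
Your proposal is correct and follows exactly the paper's approach: the paper simply states ``By the $\mathbb{R}$-bilinearity and the skew-symmetry, this implies'' before the corollary and gives no further proof. Your write-up just spells out the standard two-line descent argument that this sentence abbreviates.
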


Consider the restriction of $\ol{\mathrm{G}\beta}$ to $\mathrm{Vect}_{\rm ar}(D^2)/\mathrm{Vect}_{\rm az}(D^2) \times \mathrm{Vect}_{\rm ar}(D^2)/\mathrm{Vect}_{\rm az}(D^2)$:
$$
(\ol{\mathrm{G}\beta}) \left( V + \mathrm{Vect}_{\rm az}(D^2), ~ W + \mathrm{Vect}_{\rm az}(D^2) \right) = - 6c_0 \int_0^{2\pi} v_4'(\theta) \, w_4''(\theta) \, d\theta, \quad
\forall V,W \in \mathrm{Vect}_{\rm ar}(D^2).
$$
One finds that this is a Lie algebra $2$-cocycle of $\mathrm{Vect}_{\rm ar}(D^2)/\mathrm{Vect}_{\rm az}(D^2) \stackrel{\eqref{eq:Vects_as_quotient}}{\cong} \mathrm{Vect}(S^1)$, being cohomologous to $-144\pi {\rm i} c_0$ times the famous \emph{Gelfand-Fuchs cocycle} $\mathbf{GF} : \mathrm{Vect}(S^1) \times \mathrm{Vect}(S^1) \to \mathbb{R}$, which is given by
\begin{align}
\label{eq:GF}
{\bf GF}\left(v(\theta) \frac{\partial}{\partial \theta}, \, w(\theta)\frac{\partial}{\partial\theta} \right) = \frac{1}{24\pi {\rm i}} \int_0^{2\pi} v'(\theta) \, w''(\theta) \, d\theta,
\end{align}
where the prime ${}'$ means the derivative with respect to the angle variable $\theta$.

\vs

We finally show:

\begin{proposition}
\label{prop:Gbeta_is_Lie_algebra_2-cocycle}
$\mathrm{G}\beta$ is a Lie algebra $2$-cocycle of $\mathrm{GVect}_{\rm ar}(D^2)$.
\end{proposition}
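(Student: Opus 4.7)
The plan is to reduce the $2$-cocycle identity to a manageable computation on a quotient Lie algebra and then verify it case by case. By Cor.\ref{cor:Gbeta_zero_when_one_is_restricted}, $\mathrm{G}\beta$ vanishes whenever one of its arguments is in $\mathrm{Vect}_{\rm az}(D^2)$, which is an ideal in $\mathrm{GVect}_{\rm ar}(D^2)$ by Lem.\ref{lem:az_is_ideal}. Since the Lie bracket of an ideal element with anything stays in the ideal, the cyclic sum
$$
\mathrm{G}\beta([V,W],U) + \mathrm{G}\beta([W,U],V) + \mathrm{G}\beta([U,V],W)
$$
is unchanged when any of $V,W,U$ is shifted by an element of $\mathrm{Vect}_{\rm az}(D^2)$. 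Thus the cocycle identity for $\mathrm{G}\beta$ on $\mathrm{GVect}_{\rm ar}(D^2)$ is equivalent to the same identity for the descended form $\ol{\mathrm{G}\beta}$ on the quotient $\mathrm{GVect}_{\rm ar}(D^2)/\mathrm{Vect}_{\rm az}(D^2) \cong \mathrm{Vect}(S^1) \ltimes (\mathrm{Vect}(S^1))_{\rm ab}$ (Prop.\ref{prop:semidirect_Vect_S1}).

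By multilinearity and skew-symmetry, it suffices to verify the identity in four cases according to how many of $V,W,U$ lie in $\mathrm{Vect}(S^1)$ versus $(\mathrm{Vect}(S^1))_{\rm ab}$. The all-abelian case is trivial since all brackets in the semidirect product vanish there. In the one-in-$\mathrm{Vect}(S^1)$ case, writing $V=(v_4,0)$, $W=(0,w_3)$, $U=(0,u_3)$, the semidirect product gives $[W,U]=0$, and the two remaining terms reduce, via the restricted formula $\ol{\mathrm{G}\beta}((0,a),(0,b)) = -18 c_0 \int_0^{2\pi} a b'\,d\theta$, to the cancelling pair $\pm 18 c_0 \int v_4 w_3' u_3'\,d\theta$. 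In the all-in-$\mathrm{Vect}(S^1)$ case, the restriction of $\ol{\mathrm{G}\beta}$ to $\mathrm{Vect}(S^1)\times\mathrm{Vect}(S^1)$ is the descent of $\beta$ from $\mathrm{Vect}_{\rm ar}(D^2)$ (Lem.\ref{lem:Vect_S1_as_quotient}), and $\beta$ is already a Lie algebra $2$-cocycle by the general differentiation procedure of \S\ref{subsec:Lie_algebra_central_extensions_in_general} applied to the group $2$-cocycle $\gamma$ (Prop.\ref{prop:gamma_M_2-cocycle}).

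The substantive case is two-in-$\mathrm{Vect}(S^1)$, one-abelian: take $V=(v_4,0)$, $W=(w_4,0)$, $U=(0,u_3)$, for which the semidirect product brackets compute to $[V,W]=(v_4 w_4' - v_4' w_4,\,0)$, $[W,U]=(0,\,w_4 u_3')$, and $[U,V]=(0,\,-v_4 u_3')$. Substituting into the explicit formula \eqref{eq:Gbeta_computation} and expanding, the symmetric $v_4' w_4' u_3'$ pieces cancel between the three contributions, and the remaining integrand reduces to
$$
u_3'\,(v_4 w_4'' - v_4'' w_4) + u_3''\,(v_4 w_4' - v_4' w_4) \;=\; \partial_\theta\bigl[ u_3'(v_4 w_4' - v_4' w_4) \bigr],
$$
which vanishes upon integration over $\mathbb{R}/2\pi\mathbb{Z}$ by periodicity. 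Recognizing the cyclic sum as this total $\theta$-derivative is the main technical obstacle: it relies on the identity $(v_4 w_4' - v_4' w_4)' = v_4 w_4'' - v_4'' w_4$, which encodes precisely the compatibility between the $\mathrm{Vect}(S^1)$-bracket and the action \eqref{eq:Vect_S1_action_on_itself} of $\mathrm{Vect}(S^1)$ on $(\mathrm{Vect}(S^1))_{\rm ab}$ inherent in the semidirect product structure.
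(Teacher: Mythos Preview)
Your proof is correct. Both your argument and the paper's reduce to boundary data via formula \eqref{eq:Gbeta_computation}, handle the pure $\mathrm{Vect}(S^1)$ part by citing the known cocycle property, and verify the cross terms explicitly. The organizational difference is that you first pass to the quotient $\mathrm{Vect}(S^1)\ltimes(\mathrm{Vect}(S^1))_{\rm ab}$ and then case-split according to how many arguments lie in each semidirect factor, whereas the paper keeps general $V^{(0)},V^{(1)},V^{(2)}\in\mathrm{GVect}_{\rm ar}(D^2)$, uses Prop.\ref{prop:semidirect_Vect_S1} to express the bracket components, rewrites the cross part of $\mathbf{I}_{V,W}$ (via integration by parts) as $\wh{\mathbf{I}}_{V,W}=3v_3 w_3'+v_3 w_4''+v_4' w_3'$, and shows that the six terms arising in the cyclic sum of $\wh{\mathbf{I}}_{[V^{(j)},V^{(j+1)}],V^{(j+2)}}$ cancel in three pairs. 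Your case-by-case approach makes the role of the semidirect product structure more transparent---in particular, your total-derivative observation $\partial_\theta[u_3'(v_4 w_4'-v_4' w_4)]$ in the two-plus-one case is more illuminating than the paper's labelled cancellations---while the paper's unified expansion avoids the bookkeeping of the case split.
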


\begin{proof}
Let $V^{(0)},V^{(1)},V^{(2)} \in \mathrm{GVect}_{\rm ar}(D^2)$. For each $j \in \mathbb{Z}/3\mathbb{Z} = \{0,1,2\}$, denote by $V^{(j \,j+1)}:=[V^{(j)},V^{(j+1)}]$. Like in \eqref{eq:v3_v4_w3_w4}, we define the following elements in $C^\infty(\mathbb{R}/2\pi\mathbb{Z};\mathbb{R})$:
$$
v^{(j)}_k := V^{(j)}_k|_{r=1}, \qquad
v^{(j \, j+1)}_k := V^{(j\, j+1)}_k|_{r=1}, \qquad
\forall j\in \mathbb{Z}/3\mathbb{Z}, \quad \forall k=3,4.
$$
To express $v^{(j \, j+1)}_k$ we use Prop.\ref{prop:semidirect_Vect_S1}, which tells us
$$
v^{(j \, j+1)}_4 = v^{(j)}_4 \, (v^{(j+1)}_4)' - (v^{(j)}_4)' \, v^{(j+1)}_4, \qquad
v^{(j\, j+1)}_3 = v^{(j)}_4 \, (v^{(j+1)}_3)' - v^{(j+1)}_4 \, (v^{(j)}_3)',
$$
for each $j\in \mathbb{Z}/3\mathbb{Z}$. 

\vs

Using integration by parts, one can modify $\mathbf{I}_{V,W}$ in \eqref{eq:Gbeta_computation} to
$$
(v_4'w_4'' - 2v_4 w_4') + \wh{\mathbf{I}}_{V,W}, \qquad\mbox{where} \qquad \wh{\mathbf{I}}_{V,W} := 3v_3 w_3' + v_3 w_4'' + v_4' w_3'.
$$
Observe that
\begin{align*}
& \wh{\mathbf{I}}_{[V^{(j)},V^{(j+1)}], \, V^{(j+2)}} = \wh{\mathbf{I}}_{V^{(j\, j+1)},V^{(j+2)}} \\
& = 
3 v^{(j\, j+1)}_3 \, (v^{(j+2)}_3)' + v^{(j\, j+1)}_3 \, (v^{(j+2)}_4)'' + (v^{(j\, j+1)}_4)' \, (v^{(j+2)}_3)' \\
& = 3 v^{(j)}_4 \, (v^{(j+1)}_3)' \, (v^{(j+2)}_3)' - 3 v^{(j+1)}_4 \, (v^{(j)}_3)' \, (v^{(j+2)}_3)' 
+ v^{(j)}_4 \, (v^{(j+1)}_3)' \, (v^{(j+2)}_4)'' - v^{(j+1)}_4 \, (v^{(j)}_3)' \, (v^{(j+2)}_4)'' \\
& \quad + \cancel{ (v^{(j)}_4)' \, (v^{(j+1)}_4)' \, (v^{(j+2)}_3)'} + v^{(j)}_4 \, (v^{(j+1)}_4)'' \, (v^{(j+2)}_3)' - (v^{(j)}_4)'' \, v^{(j+1)}_4 \, (v^{(j+2)}_3)' - \cancel{ (v^{(j)}_4)' \, (v^{(j+1)}_4)' \, (v^{(j+2)}_3)' } \\
& = \underbrace{ 3 v^{(j)}_4 \, (v^{(j+1)}_3)' \, (v^{(j+2)}_3)'}_{\cone} 
- \underbrace{ 3 v^{(j+1)}_4 \, (v^{(j+2)}_3)'\, (v^{(j)}_3)' }_{\ctwo} 
+ \underbrace{ v^{(j)}_4 \, (v^{(j+2)}_4)'' \, (v^{(j+1)}_3)' }_{\cthree}
- \underbrace{ v^{(j+1)}_4 \, (v^{(j+2)}_4)'' \, (v^{(j)}_3)' }_{\cfour} \\
& \quad  + \underbrace{ v^{(j)}_4 \, (v^{(j+1)}_4)'' \, (v^{(j+2)}_3)' }_{\cfive} 
- \underbrace{ v^{(j+1)}_4 \, (v^{(j)}_4)'' \, (v^{(j+2)}_3)' }_{\csix}.
\end{align*}
We shall take the cyclic sum
\begin{align}
  \label{eq:cyclic_sum_wh_bf_I}
  \sum_{j\in \mathbb{Z}/3\mathbb{Z}} \wh{\mathbf{I}}_{[V^{(j)},V^{(j+1)}],\, V^{(j+2)}}.
\end{align}
One can see that the cyclic sum of $\cone$ cancels with that of $\ctwo$, the cyclic sum of $\cthree$ cancels with that of $\csix$, and the cyclic sum of $\cfour$ cancels with that of $\cfive$. So the sum \eqref{eq:cyclic_sum_wh_bf_I} is zero.

\vs

Note that we did not deal with the part $v_4' w_4'' - 2v_4 w_4'$ of $\mathbf{I}_{V,W}$ \eqref{eq:Gbeta_computation}. One can prove, by a similar computation as just done, that the corresponding cyclic sum for this part also vanishes; we skipped this because it is well known that $(v_4(\theta) \frac{\partial}{\partial \theta},w_4(\theta) \frac{\partial}{\partial\theta}) \mapsto \int_0^{2\pi} (v_4' w_4'' - 2v_4 w_4')d\theta$ is a Lie algebra $2$-cocycle of $\mathrm{Vect}(S^1)$. Hence
$
\sum_{j\in \mathbb{Z}/3\mathbb{Z}} \mathbf{I}_{[V^{(j)},V^{(j+1)}],\, V^{(j+2)}} = 0,
$
so 
$$
\sum_{j\in \mathbb{Z}/3\mathbb{Z}} (\mathrm{G}\beta)([V^{(j)},V^{(j+1)}],\, V^{(j+2)}) = 0. 
$$
\end{proof}

\vs

Denote by $\wh{\mathrm{GVect}_{\rm ar}}(D^2)$ be the corresponding Lie algebra ($1$-dimensional) central extension of $\mathrm{GVect}_{\rm ar}(D^2)$. Corollaries \ref{cor:Gbeta_zero_when_one_is_restricted} and \ref{cor:Gbeta_descends} lets us `factor out' $\mathrm{Vect}_{\rm az}(D^2)$ from this central extension. 

\vs

So, in view of Prop.\ref{prop:semidirect_Vect_S1}, the factor $\wh{\mathrm{GVect}_{\rm ar}}(D^2)/\mathrm{Vect}_{\rm az}(D^2)$ effectively realizes a central extension of $\mathrm{Vect}(S^1) \ltimes (\mathrm{Vect}(S^1))_{\rm ab}$ by $\mathbb{R}$. We note that this extended central $\mathbb{R}$ is spread into the two factors $\mathrm{Vect}(S^1)$ and $(\mathrm{Vect}(S^1))_{\rm ab}$, instead of being associated to only one factor. So the central extension we obtained is neither
$$
\mathrm{Vect}(S^1)^\wedge \ltimes (\mathrm{Vect}(S^1))_{\rm ab} \quad\mbox{nor}\quad
\mathrm{Vect}(S^1) \ltimes (\mathrm{Vect}(S^1))_{\rm ab}^\wedge,
$$
where $\mathrm{Vect}(S^1)^\wedge = \wh{\mathrm{Lie}(\mathbf{G})} = \mathrm{Lie}(\wh{\mathbf{G}})$, a central extension of $\mathrm{Vect}(S^1)$ associated to a $2$-cocycle cohomologous to certain multiple of the Gelfand-Fuchs cocycle, is identified with the Lie algebra of the Virasoro-Bott group $\wh{\mathbf{G}}$ constructed in the present paper, i.e. can be considered as the `real' \emph{Virasoro algebra}, while $(\mathrm{Vect}(S^1))_{\rm ab}^\wedge$ is a central extension of the abelian Lie algebra $(\mathrm{Vect}(S^1))_{\rm ab}$ associated to a $2$-cocycle given by some multiple of
\begin{align}
\label{eq:Heisenberg_bracket}
(v(\theta) \frac{\partial}{\partial \theta}, \, w(\theta) \frac{\partial}{\partial \theta}) \mapsto \int_0^{2\pi} v'(\theta) w(\theta) d\theta,
\end{align}
hence can be thought of as a `real' \emph{Heisenberg algebra}, giving us a motivation to denote it by
$$
\mathrm{Heis}(S^1) := (\mathrm{Vect}(S^1))_{\rm ab}^\wedge.
$$

\vs

\subsection{Identification of the generalized Lie algebra of vector fields}

Recall that $\mathrm{G}\beta$ is an $\mathbb{R}$-valued Lie algebra $2$-cocycle of the (infinite dimensional) real Lie algebra $\mathrm{GVect}_{\rm ar}(D^2)$, where each element $V$ of $\mathrm{GVect}_{\rm ar}(D^2)$ is of the form $V = V_1\smallvectwo{x}{y} \frac{\partial}{\partial x} + V_2\smallvectwo{x}{y} \frac{\partial}{\partial y}$ for $V_1,V_2 \in C^\infty(D^2;\mathbb{R})$, or equivalently $V = V_3\smallvectwo{r}{\theta} \frac{\partial}{\partial r} + V_4\smallvectwo{r}{\theta} \frac{\partial}{\partial \theta}$ for $V_3,V_4 \in C^\infty( (0,1) \times \mathbb{R}/2\pi\mathbb{Z}; \mathbb{C})$ related to $V_1,V_2$ by the formula \eqref{eq:V3_and_V4_in_V1_and_V2}, satisfying the `asymptotically radial' condition \eqref{eq:ar_condition_for_V}, namely $\partial_r V_3 \equiv 0 \equiv \partial_r V_4$ on some neighborhood of $S^1$ in $\ol{D^2}$. We now allow $V_1$ and $V_2$ to be complex-valued, still requiring the same asymptotically radial condition.
\begin{definition}
A \emph{complexified generalized smooth real disc vector field $V$} is an expression \eqref{eq:disc_vector_field_V} with $V_1,V_2 \in C^\infty(D^2;\mathbb{C})$. Denote by $\mathrm{GVect}(D^2)_\mathbb{C}$ be the set of all complexified generalized smooth real disc vector fields. For each $V$, define $V_3,V_4 \in C^\infty( (0,1) \times \mathbb{R}/2\pi\mathbb{Z}; \mathbb{C})$ using the same formula \eqref{eq:V3_and_V4_in_V1_and_V2} as before. Define the notions of asymptotically radial and asymptotically zero by using the same conditions \eqref{eq:ar_condition_for_V} and \eqref{eq:az_condition_for_V} as before, and denote the collection of such elements of $\mathrm{GVect}(D^2)_\mathbb{C}$ by $\mathrm{GVect}_{\rm ar}(D^2)_\mathbb{C}$ and $\mathrm{GVect}_{\rm az}(D^2)$ respectively.

\vs

We say that a complexified generalized smooth real disc vector field $V$ \eqref{eq:disc_vector_field_V} is \emph{genuine} if the component functions $V_1$ and $V_2$ satisfy the condition \eqref{eq:tangent_condition}. Denote the set of all genuine elements of $\mathrm{GVect}(D^2)_\mathbb{C}$ by $\mathrm{Vect}(D^2)_\mathbb{C}$. The set of all elements of $\mathrm{Vect}(D^2)_\mathbb{C}$ that are asymptotically radial (resp. asymptotically zero) is denoted by $\mathrm{Vect}_{\rm ar}(D^2)_\mathbb{C}$ (resp. by $\mathrm{Vect}_{\rm az}(D^2)_\mathbb{C})$.

\vs

Endow Lie bracket structures to these sets by the formula \eqref{eq:V_c_polar_components}.
\end{definition}

\begin{definition}
Define $\mathrm{Vect}(S^1)_\mathbb{C}$ as the set of all complexified smooth real vector fields $v = v(\theta) \frac{\partial}{\partial \theta}$ with $v(\theta) \in C^\infty(\mathbb{R}/2\pi\mathbb{Z};\mathbb{C})$, endowed with the Lie bracket \eqref{eq:Vect_S1_Lie_bracket}.
\end{definition}

\begin{lemma}
The sets just defined are well-defined (infinite dimensional) Lie algebras over $\mathbb{C}$. The set $\mathrm{Vect}_{\rm az}(D^2)_\mathbb{C}$ is an ideal in $\mathrm{GVect}_{\rm ar}(D^2)_\mathbb{C}$, hence also in $\mathrm{Vect}_{\rm az}(D^2)_\mathbb{C}$. One has the natural isomorphism of complex Lie algebras
\begin{align}
\label{qe:Gar_mod_Vaz_isomorphism_complex}
\mathrm{Vect}_{\rm ar}(D^2)_\mathbb{C} / \mathrm{Vect}_{\rm az}(D^2)_\mathbb{C} \cong \mathrm{Vect}(S^1)_\mathbb{C}
\end{align}
given by the restriction map whose formula is in \eqref{eq:vector_field_restriction}. \qed
\end{lemma}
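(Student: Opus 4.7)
The plan is to simply complexify the real statements that have already been established in \S\ref{subsec:disc_vector_fields} and Lem.\ref{lem:Vect_S1_as_quotient}, observing that every argument carries over verbatim to $\mathbb{C}$-valued coefficients. Each of the complex sets in question is obtained from its real counterpart by replacing $C^\infty(D^2;\mathbb{R})$ with $C^\infty(D^2;\mathbb{C})$, or $C^\infty(\mathbb{R}/2\pi\mathbb{Z};\mathbb{R})$ with $C^\infty(\mathbb{R}/2\pi\mathbb{Z};\mathbb{C})$; concretely, one has canonical $\mathbb{C}$-vector-space identifications
\[
  \mathrm{GVect}(D^2)_\mathbb{C} \cong \mathrm{GVect}(D^2) \otimes_\mathbb{R} \mathbb{C},
  \qquad
  \mathrm{Vect}(S^1)_\mathbb{C} \cong \mathrm{Vect}(S^1) \otimes_\mathbb{R} \mathbb{C},
\]
and similarly for all the other spaces. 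The defining conditions \eqref{eq:ar_condition_for_V}, \eqref{eq:az_condition_for_V}, \eqref{eq:tangent_condition} are $\mathbb{R}$-linear, hence define $\mathbb{C}$-vector subspaces after complexification.

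First I would check that the bracket formula \eqref{eq:V_c_polar_components} is bilinear over $\mathbb{C}$ (it is, since it only involves multiplication by smooth scalar functions and differentiation), so the $\mathbb{C}$-linear extension of the real Lie bracket on $\mathrm{GVect}(D^2)$ gives a well-defined $\mathbb{C}$-bilinear skew-symmetric map on $\mathrm{GVect}(D^2)_\mathbb{C}$ satisfying Jacobi (Jacobi is a linear identity in each argument, so it is inherited from the real case). The same arguments used in \S\ref{subsec:disc_vector_fields} to show that $\mathrm{Vect}(D^2)$, $\mathrm{GVect}_{\rm ar}(D^2)$, $\mathrm{Vect}_{\rm ar}(D^2)$, $\mathrm{Vect}_{\rm az}(D^2)$ are Lie subalgebras then apply word-for-word to the complexified versions: the only input is that the conditions on $V_3, V_4$ are preserved under the operations appearing in \eqref{eq:V_c_polar_components}, and this is a purely algebraic/analytic property that is insensitive to whether the coefficient field is $\mathbb{R}$ or $\mathbb{C}$.

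Next I would verify ideal-ness: given $V \in \mathrm{GVect}_{\rm ar}(D^2)_\mathbb{C}$ and $W \in \mathrm{Vect}_{\rm az}(D^2)_\mathbb{C}$, each can be written as $V = V^{(\mathrm{re})} + \mathrm{i} V^{(\mathrm{im})}$ and $W = W^{(\mathrm{re})} + \mathrm{i} W^{(\mathrm{im})}$ with the real and imaginary parts belonging to $\mathrm{GVect}_{\rm ar}(D^2)$ and $\mathrm{Vect}_{\rm az}(D^2)$ respectively. Expanding $[V,W]$ by $\mathbb{C}$-bilinearity produces a $\mathbb{C}$-linear combination of four brackets in the real case, each of which lies in $\mathrm{Vect}_{\rm az}(D^2)$ by Lem.\ref{lem:az_is_ideal}; hence $[V,W] \in \mathrm{Vect}_{\rm az}(D^2)_\mathbb{C}$. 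This proves the ideal claim.

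Finally, for the quotient isomorphism, I would define the restriction map
\[
  \mathrm{Vect}_{\rm ar}(D^2)_\mathbb{C} \to \mathrm{Vect}(S^1)_\mathbb{C},
  \qquad V \mapsto V_4\smallvectwo{1}{\theta} \frac{\partial}{\partial\theta},
\]
as in \eqref{eq:vector_field_restriction}, but now with $V_4|_{r=1} \in C^\infty(\mathbb{R}/2\pi\mathbb{Z};\mathbb{C})$. This is manifestly $\mathbb{C}$-linear, and is a Lie algebra homomorphism by the exact same calculation as in the real case (decomposing into real and imaginary parts and invoking Lem.\ref{lem:Vect_S1_as_quotient}). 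Its kernel consists of those $V \in \mathrm{Vect}_{\rm ar}(D^2)_\mathbb{C}$ with $V_4|_{r=1} \equiv 0$, which, combined with the genuineness condition $V_3|_{r=1} \equiv 0$ and a cutoff of the form used in the real-case proof of Lem.\ref{lem:Vect_S1_as_quotient}, shows the kernel equals $\mathrm{Vect}_{\rm az}(D^2)_\mathbb{C}$. Surjectivity is immediate from the real-case surjectivity applied separately to the real and imaginary parts of any target in $\mathrm{Vect}(S^1)_\mathbb{C}$. Since every step is a routine complexification of arguments already done, there is no main obstacle; the only point requiring minor care is book-keeping the $\mathbb{C}$-bilinearity versus the $\mathbb{R}$-bilinearity of the original maps.
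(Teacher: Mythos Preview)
The paper gives no proof for this lemma (it is simply marked \qed), so your complexification argument is precisely what the authors intend the reader to supply, and it is correct. One small remark: for the kernel computation you do not actually need a cutoff function --- the asymptotically-radial condition $\partial_r V_3 \equiv 0 \equiv \partial_r V_4$ near $S^1$, together with $V_3|_{r=1}=0$ (genuineness) and $V_4|_{r=1}=0$ (kernel condition), already forces $V_3\equiv 0\equiv V_4$ on a neighbourhood of $S^1$, i.e.\ $V\in \mathrm{Vect}_{\rm az}(D^2)_\mathbb{C}$; the cutoff is only needed for surjectivity.
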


\begin{proposition}
One has a canonical isomorphism of complex Lie algebras
\begin{align*}
  \mathrm{GVect}_{\rm ar}(D^2)_\mathbb{C} / \mathrm{Vect}_{\rm az}(D^2)_\mathbb{C} \cong \mathrm{Vect}(S^1)_\mathbb{C} \ltimes (\mathrm{Vect}(S^1)_\mathbb{C})_{\rm ab},
\end{align*}
where $(\mathrm{Vect}(S^1))_{\rm ab}$ denotes the abelian complex Lie algebra with the underlying $\mathbb{C}$-vector space being $\mathrm{Vect}(S^1)_\mathbb{C}$, with the action of $\mathrm{Vect}(S^1)_\mathbb{C}$ on $((\mathrm{Vect}(S^1)_\mathbb{C})_{\rm ab}$ given by the same formula as in \eqref{eq:Vect_S1_action_on_itself}. \qed
\end{proposition}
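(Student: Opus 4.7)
The plan is to repeat the proof of Prop.\ref{prop:semidirect_Vect_S1} essentially verbatim, with $\mathbb{R}$-valued component functions replaced by $\mathbb{C}$-valued ones throughout. Every ingredient used there --- the real cutoff function $\xi$ from Lem.\ref{lem:xi}, the auxiliary field $W_f$, the restriction $f_V(\theta) := V_3\smallvectwo{1}{\theta}$, and the polar commutator formula \eqref{eq:V_c_polar_components} --- is $\mathbb{R}$-linear in the component functions, so it extends $\mathbb{C}$-linearly to complex-valued components with no modification. Neither positivity, reality, nor complex conjugation was used anywhere in the original argument, so the passage from $\mathbb{R}$ to $\mathbb{C}$ is purely formal.

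Concretely, I would first fix the same real cutoff $\xi = \xi_{\epsilon,\epsilon}$ and, for each $f \in C^\infty(\mathbb{R}/2\pi\mathbb{Z};\mathbb{C})$, define $W_f \in \mathrm{GVect}_{\rm ar}(D^2)_\mathbb{C}$ by the same formula as in the real case; the resulting component functions lie in $C^\infty(D^2;\mathbb{C})$. For $V \in \mathrm{GVect}_{\rm ar}(D^2)_\mathbb{C}$, set $f_V(\theta) := V_3\smallvectwo{1}{\theta} \in C^\infty(\mathbb{R}/2\pi\mathbb{Z};\mathbb{C})$ and $V_{\rm ar} := V - W_{f_V}$. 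As in \eqref{eq:V_ar_polar_components} one still finds $(V_{\rm ar})_3|_{r=1} = 0$, hence $V_{\rm ar} \in \mathrm{Vect}_{\rm ar}(D^2)_\mathbb{C}$, and $V \mapsto (V_{\rm ar},\, f_V(\theta)\tfrac{\partial}{\partial\theta})$ is a $\mathbb{C}$-linear bijection $\mathrm{GVect}_{\rm ar}(D^2)_\mathbb{C} \to \mathrm{Vect}_{\rm ar}(D^2)_\mathbb{C} \times (\mathrm{Vect}(S^1)_\mathbb{C})_{\rm ab}$. Since $f_V$ and $V_4|_{r=1}$ depend only on the boundary data of $V$, adding an element of $\mathrm{Vect}_{\rm az}(D^2)_\mathbb{C}$ alters neither; the map therefore descends to a canonical $\mathbb{C}$-linear isomorphism
\[
\mathrm{GVect}_{\rm ar}(D^2)_\mathbb{C}/\mathrm{Vect}_{\rm az}(D^2)_\mathbb{C} \stackrel{\sim}{\longrightarrow} \mathrm{Vect}(S^1)_\mathbb{C} \ltimes (\mathrm{Vect}(S^1)_\mathbb{C})_{\rm ab},
\]
sending the class of $V$ to $\bigl(V_4\smallvectwo{1}{\theta}\tfrac{\partial}{\partial\theta},\; V_3\smallvectwo{1}{\theta}\tfrac{\partial}{\partial\theta}\bigr)$, after identifying $\mathrm{Vect}_{\rm ar}(D^2)_\mathbb{C}/\mathrm{Vect}_{\rm az}(D^2)_\mathbb{C} \cong \mathrm{Vect}(S^1)_\mathbb{C}$ via \eqref{qe:Gar_mod_Vaz_isomorphism_complex}.

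To verify that this map respects brackets, I would invoke the polar commutator formula \eqref{eq:V_c_polar_components}, which is $\mathbb{C}$-linear in the component functions and therefore holds verbatim over $\mathbb{C}$. Writing $V^{(c)} = [V^{(a)}, V^{(b)}]$ and restricting \eqref{eq:V_c_polar_components} to $r = 1$, the asymptotically-radial hypothesis \eqref{eq:ar_condition_for_V} kills the $\partial_r V^{(a)}_4$ and $\partial_r V^{(b)}_4$ terms, reproducing \eqref{eq:Lie_algebra_homomorphism_eq1}: $V^{(c)}_4|_{r=1}\tfrac{\partial}{\partial\theta} = [V^{(a)}_4|_{r=1}\tfrac{\partial}{\partial\theta},\, V^{(b)}_4|_{r=1}\tfrac{\partial}{\partial\theta}]$ in $\mathrm{Vect}(S^1)_\mathbb{C}$. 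The $j = 3$ instance of \eqref{eq:V_c_polar_components}, restricted to $r=1$, analogously reproduces \eqref{eq:Lie_algebra_homomorphism_eq2}, yielding the action \eqref{eq:Vect_S1_action_on_itself} on the $f_V$'s. Taken together, these two identities are precisely the semidirect-product bracket of Def.\ref{def:semidirect_product_of_Lie_algebras}, so the displayed map is a Lie algebra isomorphism.

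The only genuine obstacle is bookkeeping --- confirming that the complexifications of all intermediate Lie algebras match up compatibly --- but because the entire real-case argument is manifestly $\mathbb{C}$-linear in the component functions, no new idea is required beyond copying the proof of Prop.\ref{prop:semidirect_Vect_S1} with $\mathbb{C}$ in place of $\mathbb{R}$.
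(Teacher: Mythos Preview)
Your proposal is correct and matches the paper's intent: the paper simply marks this proposition with a \qed, indicating that it follows immediately from Prop.\ref{prop:semidirect_Vect_S1} by replacing $\mathbb{R}$-valued component functions with $\mathbb{C}$-valued ones, exactly as you have spelled out. Your observation that the entire argument is $\mathbb{C}$-linear in the component functions and uses no reality or positivity is precisely why the paper omits the proof.
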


\begin{proposition}
For each $V,W\in \mathrm{GVect}_{\rm ar}(D^2)_\mathbb{C}$ define the number $(\mathrm{G}\beta_\mathbb{C})(V,W) \in \mathbb{C}$ as the right-hand-side of \eqref{eq:our_beta_formula}, with the help of \eqref{eq:J_D2_V}. Then $\mathrm{G}\beta_\mathbb{C}$ is a (complex) Lie algebra $2$-cocycle of $\mathrm{GVect}_{\rm ar}(D^2)_\mathbb{C}$. \qed
\end{proposition}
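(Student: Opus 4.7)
The definition of $\mathrm{G}\beta_\mathbb{C}$ by the right-hand side of \eqref{eq:our_beta_formula}, with $J_{D^2}(V)$ now a $\mathfrak{gl}(2,\mathbb{C})$-valued function given by the same formula \eqref{eq:J_D2_V}, makes sense for $V,W\in \mathrm{GVect}_{\rm ar}(D^2)_\mathbb{C}$. The convergence of the integral is guaranteed exactly as in Lem.\ref{lem:Gbeta_is_well-defined}: the asymptotically radial condition \eqref{eq:ar_condition_for_V} lets one smoothly extend $V_3,V_4,W_3,W_4$ across $S^1$ to a neighborhood of $\ol{D^2}$ in $\mathbb{R}^2$, and the formal manipulations \eqref{eq:V1_V2_in_V3_V4}--\eqref{eq:partial_x_y_of_V_1_2_in_polar_partials} producing the entries of $J_{D^2}(V)$ from $V_3,V_4$ are insensitive to whether the component functions are real- or complex-valued. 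The $\mathbb{C}$-bilinearity of $\mathrm{G}\beta_\mathbb{C}$ is immediate from the formula, and the skew-symmetry follows, as in the real case, from $\mathrm{tr}_{\mathrm{mat}_2}(A\wedge B) = -\mathrm{tr}_{\mathrm{mat}_2}(B\wedge A)$ for $\mathfrak{gl}(2,\mathbb{C})$-valued $1$-forms $A,B$.

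The substantive point is the Lie algebra $2$-cocycle identity
\[
F(V,W,U) \;:=\; (\mathrm{G}\beta_\mathbb{C})([V,W],U) + (\mathrm{G}\beta_\mathbb{C})([W,U],V) + (\mathrm{G}\beta_\mathbb{C})([U,V],W) \;=\; 0, \quad V,W,U\in\mathrm{GVect}_{\rm ar}(D^2)_\mathbb{C}.
\]
My plan is to deduce this from the already-established real case (Prop.\ref{prop:Gbeta_is_Lie_algebra_2-cocycle}) by a complexification argument. The Lie bracket on $\mathrm{GVect}_{\rm ar}(D^2)_\mathbb{C}$ is $\mathbb{C}$-bilinear and $\mathrm{G}\beta_\mathbb{C}$ is $\mathbb{C}$-bilinear, so $F$ is $\mathbb{C}$-trilinear in $(V,W,U)$. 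Moreover $\mathrm{GVect}_{\rm ar}(D^2)_\mathbb{C} = \mathrm{GVect}_{\rm ar}(D^2) \oplus i\, \mathrm{GVect}_{\rm ar}(D^2)$ as $\mathbb{R}$-vector spaces, the real subalgebra $\mathrm{GVect}_{\rm ar}(D^2)$ is closed under the bracket, and $\mathrm{G}\beta_\mathbb{C}$ restricts to $\mathrm{G}\beta$ on it. Since $F$ vanishes on all triples in $\mathrm{GVect}_{\rm ar}(D^2)^3$ by Prop.\ref{prop:Gbeta_is_Lie_algebra_2-cocycle}, its $\mathbb{C}$-trilinearity forces it to vanish on all of $\mathrm{GVect}_{\rm ar}(D^2)_\mathbb{C}^3$, as required.

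No new obstacle is expected; the main thing to be careful about is that the extension from $\mathrm{G}\beta$ to $\mathrm{G}\beta_\mathbb{C}$ is $\mathbb{C}$-bilinear (not Hermitian), so that the cocycle identity really is $\mathbb{C}$-trilinear and can be propagated from the real subalgebra to its complexification. An equivalent hands-on alternative, should a reader prefer one, is to observe that the entire computation in the proof of Prop.\ref{prop:Gbeta_is_Lie_algebra_2-cocycle}---the reduction to $\wh{\mathbf{I}}_{V,W}$, the cyclic cancellations labelled \cone--\ctwo, \cthree--\csix, \cfour--\cfive, and the invocation of the Gelfand--Fuchs cocycle identity for the $v_4'w_4''-2v_4w_4'$ part---consists of pointwise algebraic identities in $v_j,w_j,\ldots$ and their $\theta$-derivatives together with integration by parts over $\mathbb{R}/2\pi\mathbb{Z}$; all these operations go through verbatim for $\mathbb{C}$-valued component functions.
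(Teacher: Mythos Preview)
Your proposal is correct and matches the paper's intent: the proposition in the paper is stated with a bare \qed\ and no argument, so the authors regard it as an immediate consequence of the real case; your complexification argument (and the equivalent observation that every step in the proof of Prop.\ref{prop:Gbeta_is_Lie_algebra_2-cocycle} is a polynomial identity in the $\theta$-derivatives of the component functions plus integration by parts) makes this explicit and is exactly what is being silently invoked.
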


\begin{lemma}
One has
$$
(\mathrm{G}\beta_\mathbb{C})(V,W)= 0, \qquad \forall V\in \mathrm{GVect}_{\rm ar}(D^2)_\mathbb{C}, \quad \forall W \in \mathrm{Vect}_{\rm az}(D^2)_\mathbb{C}.
$$
So $\mathrm{G}\beta_\mathbb{C}$ descends to a complex Lie algebra $2$-cocycle on $\mathrm{GVect}_{\rm ar}(D^2)_\mathbb{C}$
$$
\ol{\mathrm{G}\beta_\mathbb{C}} : \mathrm{GVect}_{\rm ar}(D^2)_\mathbb{C} / \mathrm{Vect}_{\rm az}(D^2)_\mathbb{C} \times \mathrm{GVect}_{\rm ar}(D^2)_\mathbb{C} / \mathrm{Vect}_{\rm az}(D^2)_\mathbb{C} \to \mathbb{C},
$$
defined by
$$
(\ol{\mathrm{G}\beta_\mathbb{C}})(V+\mathrm{Vect}_{\rm az}(D^2)_\mathbb{C}, \, W + \mathrm{Vect}_{\rm az}(D^2)_\mathbb{C}) := (\mathrm{G}\beta_\mathbb{C})(V,W), ~~ \forall V,W\in \mathrm{GVect}_{\rm ar}(D^2)_\mathbb{C}.
$$
The restriction of $\ol{\mathrm{G}\beta_\mathbb{C}}$ to $\mathrm{Vect}_{\rm ar}(D^2)_\mathbb{C} / \mathrm{Vect}_{\rm az}(D^2)_\mathbb{C} \times \mathrm{Vect}_{\rm ar}(D^2)_\mathbb{C} / \mathrm{Vect}_{\rm az}(D^2)_\mathbb{C}$ is
$$
(\ol{\mathrm{G}\beta_\mathbb{C}})(V+\mathrm{Vect}_{\rm az}(D^2)_\mathbb{C}, \, W+\mathrm{Vect}_{\rm az}(D^2)_\mathbb{C}) = - 6c_0 \int_0^{2\pi} v_4'(\theta) \, w_4''(\theta) d\theta
$$
for each $V,W\in \mathrm{Vect}_{\rm ar}(D^2)_\mathbb{C}$, where $v_4(\theta)$ and $w_4(\theta)$ are defined by \eqref{eq:v3_v4_w3_w4}. \qed
\end{lemma}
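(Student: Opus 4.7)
The plan is to deduce every assertion of the lemma from its real-coefficient counterparts that already appear in the excerpt (Cor.\ref{cor:Gbeta_zero_when_one_is_restricted}, Cor.\ref{cor:Gbeta_descends}, and the displayed restriction formula preceding the lemma), using only the observation that $\mathrm{G}\beta_\mathbb{C}$ is defined by the same integral formula as $\mathrm{G}\beta$ and is therefore manifestly $\mathbb{C}$-bilinear in its two arguments.

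Concretely, I would first note that any $V \in \mathrm{GVect}_{\rm ar}(D^2)_\mathbb{C}$ decomposes uniquely as $V = V^{\rm re} + i V^{\rm im}$ with $V^{\rm re}, V^{\rm im} \in \mathrm{GVect}_{\rm ar}(D^2)$, because the asymptotically-radial condition \eqref{eq:ar_condition_for_V} and the change-of-coordinate formulas \eqref{eq:V3_and_V4_in_V1_and_V2} are $\mathbb{R}$-linear in the component functions. Likewise, $W \in \mathrm{Vect}_{\rm az}(D^2)_\mathbb{C}$ splits as $W^{\rm re} + i W^{\rm im}$ with each piece in $\mathrm{Vect}_{\rm az}(D^2)$. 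Expanding $(\mathrm{G}\beta_\mathbb{C})(V,W)$ by $\mathbb{C}$-bilinearity into four real-valued terms of the form $(\mathrm{G}\beta)(V^{?},W^{?})$, Cor.\ref{cor:Gbeta_zero_when_one_is_restricted} annihilates each term, giving the first assertion. The descent statement then follows from the first assertion combined with skew-symmetry of $\mathrm{G}\beta_\mathbb{C}$ (immediate from the integrand being $\mathrm{tr}$ of a wedge of two $1$-forms of the same type): adding elements of $\mathrm{Vect}_{\rm az}(D^2)_\mathbb{C}$ to either slot contributes only vanishing terms, so the pairing is well-defined on the quotient.

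For the restriction formula on $\mathrm{Vect}_{\rm ar}(D^2)_\mathbb{C}/\mathrm{Vect}_{\rm az}(D^2)_\mathbb{C}$, I would observe that the derivation yielding \eqref{eq:Gbeta_computation} is purely symbolic—$\mathbb{R}$-bilinear in the component functions of $V,W$ and involving no complex conjugation—so it extends verbatim to give
$$
(\mathrm{G}\beta_\mathbb{C})(V,W) = -6c_0 \int_0^{2\pi} \mathbf{I}_{V,W}(\theta)\,d\theta
$$
for $V,W \in \mathrm{GVect}_{\rm ar}(D^2)_\mathbb{C}$, with $v_3, v_4, w_3, w_4 \in C^\infty(\mathbb{R}/2\pi\mathbb{Z};\mathbb{C})$ defined by \eqref{eq:v3_v4_w3_w4}. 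For $V,W \in \mathrm{Vect}_{\rm ar}(D^2)_\mathbb{C}$ the tangent condition \eqref{eq:tangent_condition} forces $v_3 \equiv w_3 \equiv 0$ on $S^1$, so only the first parenthesis of $\mathbf{I}_{V,W}$ survives; the same integration-by-parts cleanup as in the real case (integrating $v_4 w_4'$ out against $v_4' w_4''$) collapses the integrand to $v_4'(\theta) w_4''(\theta)$, producing the stated formula.

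The only obstacle—entirely a matter of bookkeeping—is confirming that each ingredient in the real case (the convergence of the integral from Lem.\ref{lem:Gbeta_is_well-defined}, the symbolic computation of $\widetilde{\mathbf{I}}_{V,W}$ via \eqref{eq:V_W_integrand_on_circle_to_compute}, and the integration-by-parts reduction to $\mathbf{I}_{V,W}$) uses only $\mathbb{R}$-bilinear operations on component functions and is therefore insensitive to whether those functions take real or complex values. Once this point is noted, all three assertions of the lemma follow as formal consequences of the real-case results, with no fresh analytic or algebraic content required.
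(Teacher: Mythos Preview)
Your approach is correct and is exactly what the paper intends: the lemma carries a bare \qed\ in the paper, so the implicit argument is precisely the complexification-by-$\mathbb{C}$-bilinearity that you spell out, reducing each assertion to its real counterpart (Cor.\ref{cor:Gbeta_zero_when_one_is_restricted}, Cor.\ref{cor:Gbeta_descends}, and eq.~\eqref{eq:Gbeta_computation}).

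One genuine slip, however: your final integration-by-parts claim is wrong. With $v_3=w_3=0$ the integrand $\mathbf{I}_{V,W}$ becomes $v_4' w_4'' - 2v_4 w_4'$, and the term $-2v_4 w_4'$ does \emph{not} collapse into $v_4' w_4''$ by any integration by parts (one gets $\int v_4 w_4' = -\int v_4' w_4$, which is unrelated to $\int v_4' w_4''$). In fact the lemma's displayed restriction formula is itself imprecise in the paper: it omits the $-2v_4 w_4'$ contribution, which is a Lie-algebra coboundary (the coboundary of $v\,\partial_\theta \mapsto \int v\,d\theta$) and hence irrelevant to the central extension, but is not literally zero. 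The paper silently reinstates this term in the proof of Lem.\ref{lem:our_Lie_algebra_presentation}, where the central charge $n^3-2n$ (rather than $n^3$) for $[\wh{L}_n,\wh{L}_m]$ comes precisely from integrating $v_4' w_4'' - 2v_4 w_4'$. So your argument is fine up to that last line; just drop the claim that the $-2v_4 w_4'$ term integrates away, and note instead that the stated formula holds modulo a coboundary.
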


\begin{definition}
Denote by $\wh{\mathrm{GVect}_{\rm ar}}(D^2)_\mathbb{C}$ the central extension of $\mathrm{GVect}_{\rm ar}(D^2)_\mathbb{C}$ by $\mathbb{C}$ corresponding to the $2$-cocycle $\mathrm{G}\beta_\mathbb{C}$. Denote by $(\mathrm{Vect}(S^1)_\mathbb{C})^\wedge$ the central extension of $\mathrm{Vect}(S^1)_\mathbb{C}$ by $\mathbb{C}$ corresponding to the $2$-cocycle induced by a restriction of $\ol{\mathrm{G}\beta_\mathbb{C}}$.
\end{definition}
\begin{remark}
This complex Lie algebra extension $(\mathrm{Vect}(S^1)_\mathbb{C})^\wedge$ is (`equivalent to') what is usually referred to as the \emph{Virasoro algebra}.
\end{remark}

Consider the central extension by $\mathbb{C}$ of the complex Lie algebra $\mathrm{GVect}_{\rm ar}(D^2)_\mathbb{C}/\mathrm{Vect}_{\rm az}(D^2)_\mathbb{C} \stackrel{\eqref{qe:Gar_mod_Vaz_isomorphism_complex}}{\cong} \mathrm{Vect}(S^1)_\mathbb{C} \ltimes (\mathrm{Vect}(S^1)_\mathbb{C})_{\rm ab}$ corresponding to the $2$-cocycle $\ol{\mathrm{G}\beta_\mathbb{C}}$. As in the real case, this central extension is neither 
$$
(\mathrm{Vect}(S^1)_\mathbb{C})^\wedge \ltimes (\mathrm{Vect}(S^1)_\mathbb{C})_{\rm ab} \quad\mbox{nor}\quad
\mathrm{Vect}(S^1)_\mathbb{C} \ltimes \mathrm{Heis}(S^1)_\mathbb{C},
$$
where $\mathrm{Heis}(S^1)_\mathbb{C}$ is the central extension by $\mathbb{C}$ of $(\mathrm{Vect}(S^1)_\mathbb{C})_{\rm ab}$ associated to the $2$-cocycle defined as the natural complexification of the formula \eqref{eq:Heisenberg_bracket}. In order to investigate this central extension we just constructed, it is convenient to present it as generators and relations.
\begin{definition}
Write the above mentioned central extension as
$$
\left( \mathrm{Vect}(S^1)_\mathbb{C} \ltimes (\mathrm{Vect}(S^1)_\mathbb{C})_{\rm ab} \right) \, \times \, \mathbb{C}
$$
as a $\mathbb{C}$-vector space, with the Lie bracket being
$$
[(X_1, a_1), \, (X_2,a_2)] = ([X_1,X_2], \, \ol{\mathrm{G}\beta_\mathbb{C}}(X_1,X_2))
$$
for all $X_1,X_2\in \mathrm{Vect}(S^1)_\mathbb{C} \ltimes (\mathrm{Vect}(S^1)_\mathbb{C})_{\rm ab}$ and $a_1,a_2 \in \mathbb{C}$, where $\ol{\mathrm{G}\beta_\mathbb{C}}$ denotes, by a slight abuse of notation, the $2$-cocycle on $\mathrm{Vect}(S^1)_\mathbb{C} \ltimes (\mathrm{Vect}(S^1)_\mathbb{C})_{\rm ab}$ induced via the isomorphism \eqref{qe:Gar_mod_Vaz_isomorphism_complex} by the $2$-cocycle $\ol{\mathrm{G}\beta_\mathbb{C}}$ on $\mathrm{GVect}_{\rm ar}(D^2)_\mathbb{C}/\mathrm{Vect}_{\rm az}(D^2)_\mathbb{C}$.

\vs

Denote by $\mathbf{c}$ the central generator $(\mathbf{0},1)$, where $\mathbf{0}$ is the zero element of $\mathrm{Vect}(S^1)_\mathbb{C} \ltimes (\mathrm{Vect}(S^1)_\mathbb{C})_{\rm ab}$.

\vs

For each $n\in \mathbb{Z}$ define elements $\wh{L}_n$ and $\wh{J}_n$ of this central extension as
$$
\wh{L}_n := \left( ({\rm i} e^{{\rm i}n\theta} \frac{\partial}{\partial \theta}, \, \mathbf{0}) , ~ 0 \right), \qquad
\wh{J}_n := \left( (\mathbf{0}, \, {\rm i} e^{{\rm i}n\theta} \frac{\partial}{\partial \theta}) , ~ 0 \right),
$$
where each of the two $\mathbf{0}$'s denote the zero element of $\mathrm{Vect}(S^1)_\mathbb{C}$ or $(\mathrm{Vect}(S^1)_\mathbb{C})_{\rm ab}$.
\end{definition}

\begin{lemma}
\label{lem:our_Lie_algebra_presentation}
For each $n,m\in \mathbb{Z}$ one has
\begin{align*}
[\wh{L}_n, \, \wh{L}_m] & = (n-m) \, \wh{L}_{n+m} - 12\pi c_0 \, {\rm i} \, (n^3-2n) \, \delta_{n+m,0} \, \mathbf{c}, \\
[\wh{L}_n, \, \wh{J}_m] & = -m \, \wh{J}_{n+m} + 12\pi c_0 \,{\rm i} \, n^2 \, \delta_{n+m,0} \, \mathbf{c},  \\
[\wh{J}_n, \, \wh{J}_m] & = - 12\pi c_0 \, {\rm i} \, (3n) \, \delta_{n+m,0} \, \mathbf{c}.
\end{align*}
\end{lemma}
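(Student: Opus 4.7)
The plan is to directly evaluate the two parts of the bracket formula
$[(X_1,a_1),(X_2,a_2)] = ([X_1,X_2], \ol{\mathrm{G}\beta_\mathbb{C}}(X_1,X_2))$
on the specified Fourier-mode generators.

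First I would match each generator with its representative in $\mathrm{GVect}_{\rm ar}(D^2)_\mathbb{C}/\mathrm{Vect}_{\rm az}(D^2)_\mathbb{C}$ using the complexified isomorphism of Prop.\ref{prop:semidirect_Vect_S1} read backwards through the map \eqref{eq:map_more_clearly}. The class $\wh{L}_n$ corresponds to boundary data $v_4|_{S^1} = {\rm i}e^{{\rm i}n\theta}$, $v_3|_{S^1} = 0$, while $\wh{J}_n$ corresponds to $v_4|_{S^1} = 0$, $v_3|_{S^1} = {\rm i}e^{{\rm i}n\theta}$. This identification is essential because the cocycle $\ol{\mathrm{G}\beta_\mathbb{C}}$ is only given in terms of $v_3,v_4,w_3,w_4$ via formula \eqref{eq:Gbeta_computation}.

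Next I would compute the semidirect-product part of each bracket using Def.\ref{def:semidirect_product_of_Lie_algebras} together with the action \eqref{eq:Vect_S1_action_on_itself} and the Witt-type bracket \eqref{eq:Vect_S1_Lie_bracket}. For $v = {\rm i}e^{{\rm i}n\theta}$, $w = {\rm i}e^{{\rm i}m\theta}$ one finds $[v\partial_\theta, w\partial_\theta] = (n-m)\,{\rm i}e^{{\rm i}(n+m)\theta}\partial_\theta$ and $v w' = -m\,{\rm i}e^{{\rm i}(n+m)\theta}$. These immediately produce $(n-m)\wh{L}_{n+m}$, $-m\wh{J}_{n+m}$, and $0$ for the three cases respectively, the last vanishing because both the abelian bracket and both $v_4$-slots are zero.

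Then I would evaluate $\ol{\mathrm{G}\beta_\mathbb{C}}$ term-by-term via $\mathbf{I}_{V,W} = (v_4'w_4'' - 2v_4w_4') + (3v_3w_3' - v_3'w_4' + v_4'w_3')$. Only selected pieces survive in each case: for $[\wh{L}_n,\wh{L}_m]$ only $(v_4'w_4'' - 2v_4w_4')$; for $[\wh{L}_n,\wh{J}_m]$ only $v_4'w_3'$; for $[\wh{J}_n,\wh{J}_m]$ only $3v_3 w_3'$. In each case the integrand becomes a polynomial in $n,m$ times $e^{{\rm i}(n+m)\theta}$, and the identity $\int_0^{2\pi} e^{{\rm i}(n+m)\theta}d\theta = 2\pi\,\delta_{n+m,0}$ collapses the integral. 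Substituting $m=-n$ yields the polynomials $n^3-2n$, $n^2$, and $3n$ respectively, and multiplication by the overall factor $-6c_0$ and the appropriate powers of ${\rm i}$ produces the stated central coefficients $-12\pi c_0 {\rm i}(n^3-2n)$, $+12\pi c_0 {\rm i}n^2$, $-36\pi c_0 {\rm i}n$.

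The main obstacle is simply careful bookkeeping of signs, factors of ${\rm i}$, and the normalization $-6c_0$ across all three cases; there is no conceptual difficulty beyond the identification step. One conceptual point worth noting is that the Virasoro-type cocycle appearing here is $n^3-2n$ rather than the customary Gelfand-Fuchs representative $n^3-n$; the difference $-n$ is a coboundary of the Witt bracket (so cohomologically irrelevant), reflecting the specific representative inherited from the three-dimensional construction via the formula $\mathbf{I}_{V,W}$.
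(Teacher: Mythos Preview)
Your proposal is correct and follows essentially the same route as the paper: identify the boundary data $(v_3,v_4,w_3,w_4)$ for each generator via the isomorphism of Prop.\ref{prop:semidirect_Vect_S1}, compute the semidirect-product part from \eqref{eq:Vect_S1_Lie_bracket} and \eqref{eq:Vect_S1_action_on_itself}, and then evaluate the central term by plugging into \eqref{eq:Gbeta_computation} and using $\int_0^{2\pi} e^{{\rm i}(n+m)\theta}d\theta = 2\pi\,\delta_{n+m,0}$. Your explicit listing of which pieces of $\mathbf{I}_{V,W}$ survive in each of the three cases, and the closing remark that $n^3-2n$ differs from the standard Gelfand--Fuchs representative $n^3-n$ only by a coboundary, are helpful additions not spelled out in the paper.
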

\begin{proof}
We first compute the Lie bracket of the `polynomial vector fields' $e^{in\theta} \frac{\partial}{\partial\theta}$ in $\mathrm{Vect}(S^1)_\mathbb{C}$
\begin{align*}
\left[{\rm i} \,e^{{\rm i}n\theta} \frac{\partial}{\partial \theta}, \,\,\, {\rm i}\, e^{{\rm i}m\theta} \frac{\partial}{\partial \theta}\right]_{\mathrm{Vect}(S^1)_\mathbb{C}} \stackrel{\eqref{eq:Vect_S1_Lie_bracket}}{=} - ( \,  e^{{\rm i}n\theta} ({\rm i}m) e^{{\rm i}m\theta} - ({\rm i}n) e^{{\rm i}n\theta} e^{{\rm i}m\theta}) \frac{\partial}{\partial\theta} = (n-m) \, {\rm i} \, e^{{\rm i}(n+m)\theta} \frac{\partial}{\partial\theta}.
\end{align*}
The action of $\mathrm{Vect}(S^1)_\mathbb{C}$ on $(\mathrm{Vect}(S^1))_{\rm ab}$ used in the construction of the semidirect product $\mathrm{Vect}(S^1)_\mathbb{C} \ltimes (\mathrm{Vect}(S^1)_\mathbb{C})_{\rm ab}$ is, from the formula \eqref{eq:Vect_S1_action_on_itself},
$$
\left( {\rm i} \,e^{{\rm i} n\theta} \frac{\partial}{\partial \theta} \right). \left( {\rm i} \,e^{{\rm i} m\theta} \frac{\partial}{\partial \theta} \right) = - m\, {\rm i} \, e^{{\rm i}(n+m)\theta} \frac{\partial}{\partial \theta}.
$$
From Def.\ref{def:semidirect_product_of_Lie_algebras} we compute the following Lie brackets in $\mathrm{Vect}(S^1)_\mathbb{C} \ltimes (\mathrm{Vect}(S^1)_\mathbb{C})_{\rm ab}$:
\begin{align*}
\left[ ({\rm i} e^{{\rm i} n\theta} \frac{\partial}{\partial \theta}, \, \mathbf{0}), \,\,\, ({\rm i} e^{{\rm i} m\theta} \frac{\partial}{\partial \theta}, \, \mathbf{0}) \right]_{\mathrm{Vect}(S^1)_\mathbb{C} \ltimes (\mathrm{Vect}(S^1)_\mathbb{C})_{\rm ab}} & = ( (n-m) \, {\rm i} \, e^{{\rm i} (n+m)\theta} \frac{\partial}{\partial \theta}, \, \mathbf{0}), \\
\left[ ({\rm i} e^{{\rm i} n\theta} \frac{\partial}{\partial \theta}, \, \mathbf{0}), \,\,\, (\mathbf{0}, \, {\rm i} e^{{\rm i} m\theta} \frac{\partial}{\partial \theta}) \right]_{\mathrm{Vect}(S^1)_\mathbb{C} \ltimes (\mathrm{Vect}(S^1)_\mathbb{C})_{\rm ab}} & = ( \mathbf{0}, \, - m \, {\rm i} \, e^{{\rm i} (n+m)\theta} \frac{\partial}{\partial\theta}  ), \\
\left[ (\mathbf{0}, \, {\rm i} e^{{\rm i} n\theta} \frac{\partial}{\partial \theta}), \,\,\, (\mathbf{0}, \, {\rm i} e^{{\rm i} m\theta} \frac{\partial}{\partial \theta}) \right]_{\mathrm{Vect}(S^1)_\mathbb{C} \ltimes (\mathrm{Vect}(S^1)_\mathbb{C})_{\rm ab}} & = ( \mathbf{0}, \, \mathbf{0}  ).
\end{align*}

\vs

Using the formula \eqref{eq:Gbeta_computation} and
$$
\int_0^{2\pi} e^{{\rm i} n\theta} d\theta = 2\pi \, \delta_{n,0},
$$
we now compute $\ol{\mathrm{G}\beta_\mathbb{C}}$:
\begin{align*}
\ol{\mathrm{G}\beta_\mathbb{C}}\left( ({\rm i} e^{{\rm i}n\theta} \frac{\partial}{\partial \theta}, \, \mathbf{0}), \, ({\rm i} e^{{\rm i}m\theta} \frac{\partial}{\partial \theta}, \, \mathbf{0}) \right) & = 6c_0 \int_0^{2\pi} ( ({\rm i}n)({\rm i}m)^2 - 2 ({\rm i}m) )e^{{\rm i}(n+m)\theta} d\theta \\ 
& = - 12\pi c_0 \, {\rm i} \, (nm^2 + 2 m) \, \delta_{n+m,0}
= - 12\pi c_0 \, {\rm i} \, (n^3 - 2n) \, \delta_{n+m,0}, \\
\ol{\mathrm{G}\beta_\mathbb{C}}\left( ({\rm i} e^{{\rm i}n\theta} \frac{\partial}{\partial \theta}, \, \mathbf{0}), \, (\mathbf{0}, \, {\rm i} e^{im\theta} \frac{\partial}{\partial \theta} ) \right) & = 6c_0 \int_0^{2\pi} ({\rm i}n)({\rm i}m) e^{{\rm i}(n+m)\theta} d\theta \\ 
& = - 12\pi c_0 \, {\rm i} \, nm \, \delta_{n+m,0}
= 12\pi c_0 \, {\rm i} \, n^2 \, \delta_{n+m,0}, \\
\ol{\mathrm{G}\beta_\mathbb{C}}\left( (\mathbf{0}, \, {\rm i} e^{{\rm i} n\theta} \frac{\partial}{\partial \theta}), \, (\mathbf{0}, \, {\rm i} e^{{\rm i} m\theta} \frac{\partial}{\partial \theta} ) \right) & = 6c_0 \int_0^{2\pi} 3 ({\rm i} m) e^{{\rm i}(n+m)\theta} d\theta \\ 
& = 12\pi c_0 \, {\rm i} \, (3m) \, \delta_{n+m,0}
= -12\pi c_0 \, {\rm i} \, (3n) \, \delta_{n+m,0}.
\end{align*}
Combining all the above, we get the desired result.

\end{proof}

A very similar Lie algebra appears in the physics literature on 3-dimensional gravity, see e.g. \cite[\S3]{ABDGPR}.

\vs
\line(1,0){400}
\vs

\end{document}